\title[ ]{ A KAM theorem for the Hamiltonian with finite zero normal frequencies and its applications}
\author{ Yuan Wu and Xiaoping Yuan}
\address{School of Mathematical Sciences, Furan University, Shanghai 200433, P. R. China} \email{14110840003@fudan.edu.cn}
\address{School of Mathematical Sciences, Furan University, Shanghai 200433, P. R. China}
\email{xpyuan@fudan.edu.cn}
\keywords{ \ quasi-periodic solution,\ KAM tori,\ Schr\"{o}dinger equation.}
\theoremstyle{plain}
\newtheorem{theorem}{Theorem}[section]
\newtheorem{lemma}[theorem]{Lemma}
\newtheorem{proposition}[theorem]{Proposition}
\theoremstyle{definition}
\newtheorem{definition}[theorem]{Definition}
\newtheorem{remark}[theorem]{Remark}
\begin{document}


\begin{abstract}
In this paper,\ we investigate the existence of KAM tori for an infinite dimensional  Hamiltonian system with finite number of zero normal frequencies. By constructing a constant quantity we show that,  for ``most" frequencies in the sense of Lebesgue measure, either if the quantity is zero, there is a KAM tori or if the quantity is not zero, there is no KAM tori in some domain.  As application, \ we show that the nonlinear Schr\"{o}dinger equation with a zero frequency possesses many quasi-periodic solutions.
\end{abstract}
\maketitle

\section{Introduction and the main results}
Kuksin \cite{ku1989pe} and Wayne \cite{wa1990pe} initiated the study of the existence of lower (finite) dimensional invariant tori for nearly integrable Hamiltonian systems of infinite dimension (Also see in P\"{o}schel \cite{po1996ak}).\ More exactly,\ consider a Hamiltonian function
 \numberwithin{equation}{section}
\begin{eqnarray}\label{002}
\underline{H} = \langle\omega,y\rangle
          +\sum_{j\in \mathbb{Z}}\Omega_{j}z_{j}\bar{z}_{j}+\varepsilon R(x,y,z,\bar{z}),
\end{eqnarray}
where $(x,y,z,\bar{z}) \in \mathbb{T}^{n}\times \mathbb{R}^{n} \times \mathcal{H}\times \mathcal{H}$ and $ \mathcal{H}$ is some Hilbert space.\ We also endow the Hamiltonian $ \underline{H} $ with a symplectic structure $ dy\wedge dx + \mathbf{i}d\bar{z}\wedge dz =
\overset{n}{\underset{j\geq1}{\sum}}dy_{j}\wedge dx_{j} + \mathbf{i}\underset{j\in \mathbb{Z}}{\sum}d\bar {z}_{j}\wedge d{z}_{j}$.

Clearly,\ when $ \varepsilon = 0$,\ $ \mathcal{\underline{T}}^{n}_{0}=\mathbb{T}^{n}\times \{y=0\}\times\{z=0\}\times\{\bar{z}=0\}$ is a $ n$-dimensional invariant torus with rotational frequency $\omega$ for the Hamiltonian system defined by $ \underline{H}$.\ When $ \varepsilon$ is sufficiently small,\ assuming that
\begin{eqnarray}
&&\label{03}\langle k,\omega\rangle \neq 0,\  k\in \mathbb{Z}^{n}\setminus \{0\},\\
\label{04}
&&\langle k,\omega\rangle \pm \Omega_{j} \neq 0,\  k\in \mathbb{Z}^{n}, j\in \mathbb{Z},\\
&&\label{05}
\langle k,\omega\rangle \pm \Omega_{i}\pm \Omega_{j} \neq 0,\  k\in \mathbb{Z}^{n}, i,j\in \mathbb{Z},
\end{eqnarray}
Melnikov \cite{me1965on,me1968ac},\ Eliasson \cite{el1988pe},\ Kuksin \cite{ku1989pe,ku1993ne},\ Wayne \cite{wa1990pe} and P\"{o}schel \cite{po1989on} proved that for ``most'' (in the sense of Lebesgue measure) of the parameters $ \omega$,\ the invariant torus $ \mathcal{\underline{T}}^{n}_{0} $ can be preserved undergoing a small perturbation $ \varepsilon R(x,y,z,\bar{z})$.\ Now the conditions (\ref{04}) are called the first Melnikov's conditions,\ while (\ref{05}) are called the second Melnikov's conditions.\ Actually,\ in Kuksin's famous monograph \cite{ku1993ne},\ he
has concluded that only in the non-degenerate cases non-zero frequencies ($ \Omega_{j}\neq 0$,\ for $\forall j\in \mathbb{Z} $) and simple frequencies ($\Omega_{i}\neq\Omega_{j}$,\ for $\forall i\neq j $ and $ i,j\in \mathbb{Z} $),\ many $n$-dimensional invariant tori do exist.\ More precisely,\ in (\ref{04}),\ letting $ k=0 $,\ we get $ \Omega_{i}\neq 0 $;\ and in (\ref{05}),\ letting $ k=0 $,\ we get $ \Omega_{i}\neq\Omega_{j} $ when $ i\neq j$,\ i.e. the multiplicity of $ \Omega_{j}$ is $ 1$ ($\Omega^{\#}_{j}$ = 1).\ Therefore,\ Kuksin \cite{ku1993ne} writes
\vskip8pt
 { \it In the degenerate case\\
 $c).$ $ 0 \in {M}$ or $ \mu_{j}=\mu_{k}$ for some $ j\neq k$,\ no preservation theorem for the tori $ \mathcal{T}^{n,m}(p)$,\ formulated in terms of the unperturbed equation (8) with $ \varepsilon = 0$ only, is known yet.}
\vskip8pt
Here,\ the notation $ M = \{ \mu_{1},...,\mu_{2m}\}$,\ the notation $ \mu_{j} $ is $ \Omega_{j}$ and the notation $ \mathcal{T}^{n,m}(p) $ is $ \mathcal{\underline{T}}^{n}_{0} $ in our paper. Actually, there are two problems: Problem 1 is  whether or not one can construct a KAM theorem for the multiplicity $\Omega_j^{\sharp}>1$; Problem 2 is  whether or not one can construct a KAM theorem for some $\Omega_j=0$.

 At present time, Problem 1 has been deeply investigated when the perturbation is bounded. \ Bourgain \cite{bo1994co,bo1995co,bo1998qu,bo2004gr} developed Craig and Wayne's method \cite{cr1993ne} in studying the degenerate case $ \Omega_{j}=\Omega_{k} $ for some $ j\neq k $,\ and successfully in \cite{bo1997on} proved that there are many lower dimensional invariant tori (quasi-periodic solutions) for nonlinear Schr\"{o}dinger equations and nonlinear wave equations.\ We also mention the work of Eliasson-Kuksin \cite{el2010kam} where it is proved that there are linearly stable  KAM tori for the higher spatial dimensional nonlinear Schr\"{o}dinger equation by the classical KAM method. There are a lot of subsequent works in this line. We do not list them here.

However,\ for Problem 2,\ there have been fewer results.\ In this paper,\ we will make  attempt.\ In order to state our theorem,\ we need some preparations.

We first give some notations.\ For a fixed $ b$,\ pick a set
\[
\mathcal{J} = \{j_{1} < j_{2} < ...< j_{b}\} \subseteq  \mathbb{N}_{+}
\]
with
\[
\mathbb{N}_{+} = \{1,2,...\}.
\]
Note
\[
z^{*}=(z_{0},z),\  \bar{z}^{*}=(\bar{z}_{0},\bar{z}),
\]
where
\[
z_{0}= (z_{j_{m}})_{j_{m}\in \mathcal{J}}, z= (z_{j})_{j\in \mathbb{N}_{+}\setminus \mathcal{J}},
\]
and
\[
\bar{z}_{0}= (\bar{z}_{j_{m}})_{j_{m}\in \mathcal{J}}, \bar{z}= (\bar{z}_{j})_{j\in \mathbb{N}_{+}\setminus \mathcal{J}}.
\]

Consider a Hamiltonian
\begin{eqnarray}\label{001}
H(x,y,z^{*},\bar{z}^{*},\xi) = N(y,z^{*},\bar{z}^{*},\xi) + R(x,y,z^{*},\bar{z}^{*},\xi),
\end{eqnarray}
which is defined on a phase space
\[
(x,y,z^{*},\bar{z}^{*}) \in \mathcal{P}^{a,p} = \mathbb{T}^{n}\times \mathbb{C}^{n} \times l^{a,p}\times l^{a,p}.
\]
Here $\mathbb{T}^{n}$ is the usual $n-$torus and $l^{a,p} $ is the Hilbert space of all complex sequences $ v^{*}=(v_{0},v)$ for any $ v_{0}= (v_{j_{m}})_{j_{m}\in \mathcal{J}}, v= (v_{j})_{j\in \mathbb{N}_{+}\setminus \mathcal{J}} $ with
\[
\|v^{*}\|^{2}_{a,p}=\sum_{j_{m}\in \mathcal{J}}|v_{j_{m}}|^{2}j_{m}^{2p}e^{2aj_{m}}+ \sum_{j\in \mathbb{N}_{+}\setminus \mathcal{J}}|v_{j}|^{2}j^{2p}e^{2aj} < \infty.
\]
Let $ N(y,z^{*},\bar{z}^{*},\xi)$ be an integrable Hamiltonian which depends on parameters $ \xi\in \Pi $,\ $ \Pi$ a positive measure parameter set in $ \mathbb{R}^{n}$,\ and is of the form
\begin{eqnarray}\nonumber
 N(y,z^{*},\bar{z}^{*},\xi) &=&  N(y,z_{0},\bar{z}_{0},z,\bar{z},\xi)\\
\nonumber         &=&\langle\omega(\xi),y\rangle+\langle\Omega_{0}(\xi)z_{0},\bar{z}_{0}\rangle
          +\langle\Omega(\xi)z,\bar{z}\rangle\\
 &&\nonumber\mbox{( noting  $(\Omega_{0}z_{0})_{m}=\Omega_{j_{m}}z_{j_{m}}, {j_{m}\in \mathcal{J}}$ and $ (\Omega z)_{j}= \Omega_{j}z_{j}, {j\in \mathbb{N}_{+}\setminus \mathcal{J}} $)}\\
 \nonumber &=&\langle\omega(\xi),y\rangle+\sum_{j_{m}\in \mathcal{J}}\Omega_{j_{m}}(\xi)z_{j_{m}}\bar{z}_{j_{m}}
          +\sum_{j\in \mathbb{N}_{+}\setminus   \mathcal{J}}\Omega_{j}(\xi)z_{j}\bar{z}_{j}\\
 &&\nonumber\mbox{( letting  $\Omega_{j_{m}}=0$ for any $ j_{m}\in \mathcal{J}$ )},
\end{eqnarray}
where
\[
\omega(\xi)=(\omega_{1}(\xi),...,\omega_{n}(\xi))
\]
is called the tangential frequency and
\[
\Omega^{*}(\xi)=\Omega^{*}(\xi)=(\Omega_{0}(\xi),\Omega(\xi))=(\Omega_{j_{1}}(\xi),...,\Omega_{j_{b}}(\xi),...,\Omega_{j}(\xi),...)
\]
is called the normal frequency.\\
We assume that $ H$ is smooth sufficiently.\ When $ R=0 $, $ \mathcal{T}^{n}_{0}=\mathbb{T}^{n}\times \{y=0\}\times\{z^{*}=0\}\times\{\bar{z}^{*}=0\}$ is a $ n$-dimensional invariant torus with frequency $\omega(\xi)$ for the Hamiltonian system defined by $ H$ in (\ref{001}).\ The aim in this paper is to prove the persistence of a large portion of rotational tori under small perturbations.

To make this quantitative we introduce complex neighborhoods of $ \mathcal{T}^{n}_{0}$
\[
D(s,r,r)=\{(x,y,z^{*},\bar{z}^{*})\in \mathcal{P}^{a,p}:|\Im x| \leq s,|y|\leq r^{2},
\|z^{*}\|_{a,p}+\|\bar{z}^{*}\|_{a,p}\leq r \}
\]
where $ |\cdot| $ is the sup-norm for complex vectors and without abusing the notation,\
for $ k\in \mathbb{Z}^{n} $,\ we denote
\[
|k| = \underset{1\leq j \leq n}{\sum} |k_{j}|.
\]
We also denote
\begin{eqnarray}\nonumber
|v|_{2} = \sqrt{\underset{1\leq j \leq n}{\sum} |v_{j}|^{2}},\  v\in \mathbb{C}^{n}.
\end{eqnarray}
If $ A$ is a matrix of finite order,\ define
\[
 ||A|| = \underset{|v|_{2}\neq 0}{\sup}\frac{|Av|_{2}}{|v|_{2}},  v\in \mathbb{C}^{n},
\]
where the operator norm is reduce by $ |\cdot|_{2}$.\\
Let $ \beta = (...,\beta_{j_{m}},...,\beta_{j},...)_{j_m\in \mathcal{J},j\in\mathbb{N}_{+}\setminus\mathcal{J}}$ and $ \gamma = (...,\gamma_{j_{m}},...,\gamma_{j},...)_{j_m\in \mathcal{J},j\in\mathbb{N}_{+}\setminus\mathcal{J}}$, $ \beta_{j_{m}},\beta_{j}\in \mathbb{N}$ and $ \gamma_{j_{m}},\gamma_{j}\in\mathbb{N}$ with finitely many nonzero components of positive integers.\ The product $ \{z^{*}\}^\beta \{\bar{z}^{*}\}^\gamma = \prod_{j_m\in \mathcal{J}}z^{\beta_{j_{m}}}_{0m}\bar{z}^{\gamma_{j_{m}}}_{0m}\prod_{j\in \mathbb{N}_{+}\setminus\mathcal{J}}z^{\beta_{j}}_{j}\bar{z}^{\gamma_{j}}_{j} $.

To state the main results,\ let
\[
F(x,y,z^{*},\bar{z}^{*},\xi) =\underset{\beta,\gamma\in \mathbb{N}^{\mathbb{N}}}{\sum}{F}^{\beta\gamma}(x,y,\xi)\{z^{*}\}^\beta \{\bar{z}^{*}\}^\gamma,
\]
where $ {F}^{\beta\gamma}(x,y,\xi) =  \underset{\alpha\in \mathbb{N}^{n},k\in\mathbb{ Z}^{n}}{\sum}\hat{F}^{\alpha\beta\gamma}(k,\xi)e^{\mathbf{i}\langle k,x\rangle}y^\alpha $ is analytic in parameter $ \xi \in \Pi $ in the sense of Whitney.\ Define the weight norm of $ F$ by
\[
 \parallel F \parallel_{D(s,r,r),\Pi}=\underset{\|z^{*}\|_{a,p}+\|\bar{z}^{*}\|_{a,p}\leq r}{\sup}\underset{\beta,\gamma}{\sum}\|{F}^{\beta\gamma}\||z^{*}|^\beta|\bar{z}^{*}|^\gamma,
\]
where $ \langle,.,\rangle$ is the standard inner product and $ \|{F}^{\beta\gamma}\| $ is short for
\[
\|{F}^{\beta\gamma}\| = \underset{\alpha\in \mathbb{N}^{n},k\in \mathbb{Z}^{n}}{\sum}|\widehat{F}^{\alpha\beta\gamma}(k,\xi)|_{\Pi}r^{2\alpha}e^{|k|s},
\]
and
\[
|\hat{F}^{\alpha\beta\gamma}(k,\xi)|_{\Pi}=\underset{\xi \in\Pi} {\sup}\underset{0\leq|\iota|\leq 1}{\max}
 \left(\left|\frac{\partial^{\iota}\widehat{F}^{\alpha\beta\gamma}(k,\xi)}{\partial \xi^{\iota}}\right|\right),
\]
with $ |k|= \underset{1\leq b \leq n}{\sum}|k_{b}|$.\\
In the normal direction of the Hamiltonian vector field,\ we define
\[
X_{F} = (F_{y},-F_{x},\mathbf{i}F_{\bar{z}^{*}},-\mathbf{i}F_{{z}^{*}}),
\]
by
\begin{eqnarray}\nonumber
\|X_{F}\|_{D(s,r,r),\Pi} &=&\|F_{y}\|_{D(s,r,r),\Pi}
+\frac{1}{r^{2}}\|F_{x}\|_{D(s,r,r),\Pi}\\
\nonumber &&+\frac{1}{r}\left(\underset{j_{m}\in \mathcal{J}}{\sum}\|F_{\bar{z}_{j_{m}}}\|^{2}_{D(s,r,r)j_{m}^{2p}e^{2aj_{m}},\Pi}+\sum_{j\in \mathbb{N}_{+}\setminus \mathcal{J}}\|F_{\bar{z}_{j}}\|^{2}_{D(s,r,r),\Pi}j^{2p}e^{2aj}\right)^{\frac{1}{2}}\\
\nonumber &&
+\frac{1}{r}\left(\underset{j_{m}\in \mathcal{J}}{\sum}\|F_{{z}_{j_{m}}}\|^{2}_{D(s,r,r)j_{m}^{2p}e^{2aj_{m}},\Pi}+\sum_{j\in \mathbb{N}_{+}\setminus \mathcal{J}}\|F_{z_{j}}\|^{2}_{D(s,r,r),\Pi}j^{2p}e^{2aj}\right)^{\frac{1}{2}}.
\end{eqnarray}

Now,\ we have the following theorem:
\begin{theorem}\label{mainthm} Consider a perturbation of the integrable Hamiltonian
\begin{eqnarray}\label{001*}
H(x,y,z^{*},\bar{z}^{*},\xi) = N(y,z^{*},\bar{z}^{*},\xi) + R(x,y,z^{*},\bar{z}^{*},\xi)
\end{eqnarray}
defined on the domain $ D(s,r,r)\times \Pi $,\ where
\[
 N(y,z^{*},\bar{z}^{*},\xi)=\langle\omega(\xi),y\rangle+\langle\Omega_{0}(\xi)z_{0},\bar{z}_{0}\rangle
          +\langle\Omega(\xi)z,\bar{z}\rangle,\ \Omega_{0}={0}
\]
is a family of parameter dependent integrable Hamiltonian and
\[
R(x,y,z^{*},\bar{z}^{*},\xi)=\sum_{k\in\mathbb{Z}^{n},\alpha\in \mathbb{N}^{n},\beta,\gamma\in \mathbb{N}^{\mathbb{N}}}
\widehat{R}^{\alpha\beta\gamma}(k,\xi)e^{\mathbf{i}\langle k,x\rangle}y^{\alpha}\{z^{*}\}^{\beta}\{\bar{z}^{*}\}^{\gamma}
\]
is the perturbation.\ Suppose the tangential frequencies and the normal frequencies satisfy the following assumptions:\\
$ \mathbf{(A)} $: $\mathbf{Nondegeneracy}$.\ The map $ \xi \hookrightarrow \omega(\xi) $ is a lipeomorphism between $\Pi$
and its image,\ that is,\ a homeomorphism which is Lipschitz continuous in both directions.\ Moreover,\ for all integer vectors $ (k,l)\in \mathbb{Z}^{n}\times \mathbb{Z}^{\infty} $ with $ 1 \leq |l| \leq 2 $,
\begin{eqnarray}\label{c1}
\mid \{\xi : \langle k,\omega(\xi)\rangle + \langle l,\Omega(\xi)\rangle = 0 \} \mid = 0,
\end{eqnarray}
and
\begin{eqnarray}\label{c2}
\langle l,\Omega(\xi)\rangle \neq 0 \   on \ \Pi,
\end{eqnarray}
where $ |\cdot|$ denotes Lebesgue measure for sets,\ $ |l|=\sum_j|l_{j}|$ for integer vectors,\ and $ \langle \cdot \rangle$ is the usual scalar product.\\
$\mathbf{(B)} $:\ $\mathbf{Spectral\  Asymptotics}$.\ There exists $ d \geq 1 $ and $ \delta < d-1 $ such that
\begin{eqnarray}\label{c3}
\Omega_{j}(\xi)= j^{d} + ... + O(j^{\delta}),\  j\in \mathbb{N}_{+}\setminus \mathcal{J};
\end{eqnarray}
where the dots stands for fixed lower order terms in $ j $,\ allowing also negative exponents.\ More precisely,\ there exists a fixed,parameter-independent sequence $ \overline{\Omega} $ with $ \overline{\Omega}= j^{d} + ...$ such that the tails $\widetilde{\Omega} = \Omega-\overline{\Omega}$ give rise to a Lipschitz map
\[
\widetilde{\Omega}: \Pi \rightarrow l^{-\delta}_{\infty},
\]
where $ l^{p}_{\infty} $ is the space of all real sequences with finite norm $ |v|_{p} = \max\left\{\underset{j_{m}\in \mathcal{J}}{\sup} |v_{j_{m}}|j_{m}^{p},\underset{j\in \mathbb{N}_{+}\setminus \mathcal{J}}{\sup} |v_{j}|j^{p}\right\} $.\\
$\mathbf{(C)} $:\ $\mathbf{Regularity}$.\ The perturbation $ R $ is real analytic in the space coordinates and Lipschitz
in the parameters,\ and for each $\xi\in\Pi$ its Hamiltonian vector field $ X_{R}=(R_{y},-R_{x},\mathbf{i}R_{\bar{z}^{*}},-\mathbf{i}R_{{z^{*}}})^{T} $ defines near $ \mathcal{T}^{n}_{0}$ a real analytic map
\begin{eqnarray}\nonumber
X_{R}:\mathcal{P}^{a,p} \rightarrow \mathcal{P}^{a,\bar{p}},
\begin{cases}
\bar{p}\geq p\  \mbox{for}\  d > 1,\\
\bar{p} > p \  \mbox{for}\  d = 1.
\end{cases}
\end{eqnarray}
We may assume that $ p-\bar{p} \leq \delta < d-1 $ by increasing $ \delta $,\ if necessary.\ And we may also assume that
\[
|\omega|_{\Pi}+|\Omega|_{-\delta,\Pi}\leq M <\infty,\
|\omega|^{-1}_{\omega(\Pi)} \leq L < \infty,
\]
and for $ d=1$,\ we have a $ \kappa > 0$ and a constant $ a \geq 1$ such that
\[
\left|\frac{\Omega_{i}-\Omega_{j}}{i-j}-1\right|\leq \frac{a}{j^{\kappa}},\  i\neq j\in \mathbb{N}_{+}\setminus \mathcal{J}.
\]
The perturbation $ R(x,y,z^{*},\bar{z}^{*},\xi)$ also satisfies the small assumption:
\[
\varepsilon=\|X_{R}\|_{D(s,r,r),\Pi}\leq c\gamma,
\]
where $\gamma\in (0,1] $ is another parameter,\ and $ c $ depends on $ n,s,r $.\
Then there exists a subset $ \Pi_{\gamma} \subset \Pi $ with the estimate
\[
\begin{split}
\mathbf{Meas}\Pi_{\gamma} \geq (\mathbf{Meas}\Pi)(1-O(\gamma)).
\end{split}
\]
For each $ \xi \in \Pi_{\gamma}$,\ there is a symplectic map
\[
\Phi:\ D(\frac{7}{16}s,0,0)\times \Pi_{\gamma}\rightarrow D(s,r,r)\times \Pi,
\]
such that $H$ is conjugated to

\[
\breve{H} =\breve{N}(y,z^{*},\bar{z}^{*},\xi)+\breve{R}(x,y,z^{*},\bar{z}^{*},\xi),
\]
where
\begin{eqnarray}
\nonumber \breve{N}(y,z^{*},\bar{z}^{*},\xi) &=& \breve{N}^{x}(\xi)+\langle\breve{\omega}(\xi),y\rangle
          +\langle\breve{\Omega}(\xi)z,\bar{z}\rangle+ \langle\breve{N}^{z_{0}}(\xi),z_{0}\rangle +\langle\breve{N}^{\bar{z}_{0}}(\xi),\bar{z}_{0}\rangle
\\
 \nonumber    && +\langle\breve{N}^{z_{0}z_{0}}(\xi)z_{0},z_{0}\rangle
 +\langle\breve{N}^{z_{0}\bar{z}_{0}}(\xi)z_{0},\bar{z}_{0}\rangle
+\langle\breve{N}^{\bar{z}_{0}\bar{z}_{0}}(\xi)\bar{z}_{0},\bar{z}_{0}\rangle,
\end{eqnarray}
and
\[
\begin{split}
\breve{R}(x,y,z^{*},\bar{z}^{*},\xi) &=\sum_{k\in\mathbb{ Z}^{n},\alpha\in \mathbb{N}^{n},\beta,\gamma\in \mathbb{N}^{\mathbb{N}},2|\alpha|+|\beta|+|\gamma|\geq3}
\widehat{\breve{R}^{\alpha\beta\gamma}}(k,\xi)e^{\mathbf{i}\langle k,x\rangle}y^{\alpha}\{z^{*}\}^{\beta}\{\bar{z}^{*}\}^{\gamma}.
\end{split}
\]
Moreover,\ the following estimates hold:\\

$(\mathbf{1})$ for each $ \xi\in \Pi_{\gamma}$,\ the symplectic map
\[
\Phi:\ D(\frac{7}{16}s,0,0)\times \Pi_{\gamma}\rightarrow D(s,r,r)\times \Pi,
\]
satisfies:
\[
||\Phi-id||_{D(\frac{7}{16}s,0,0),\Pi_{\gamma}}
\lessdot \varepsilon,
\]
and
\[
\|D\Phi-Id\|_{D(\frac{7}{16}s,0,0),\Pi_{\gamma}}
 \lessdot \varepsilon;
\]

$(\mathbf{2})$ the frequencies $\breve{\omega}(\xi) $ and $\breve{\Omega}(\xi)$ satisfy:
\[
\begin{split}
|\breve{\omega}(\xi)-\omega(\xi)|_{\Pi_{\gamma}}
+
|\breve{\Omega}(\xi)-\Omega(\xi)|_{-\delta,\Pi_{\gamma}}
& \lessdot \varepsilon;
\end{split}
\]

$(\mathbf{3})$ the perturbation $\breve{R}(x,y,z^{*},\bar{z}^{*},\xi) $ satisfies:
\[
\begin{split}
\|X_{\breve{R}}\|_{D(\frac{7}{16}s,0,0),\Pi_{\gamma}} \lessdot\varepsilon.
\end{split}
\]

Furthermore,\ if$ \sqrt{|\breve{N}^{z_{0}}(\xi)|_{{2}}^{2}
+|\breve{N}^{\bar{z}_{0}}(\xi)|_{{2}}^{2}}:=\delta_0=  0 $,\ then there exist a Cantor set $ \Pi_{\gamma} \subset \Pi $,\ a Lipschitz continuous family of tori embedding $ \Phi: \mathbb{T}^{n}\times \Pi_{\gamma} \rightarrow \mathcal{P}^{a,\bar{p}}$,\ and a Lipschitz continuous map $ \omega_{\ast}:\Pi_{\gamma} \rightarrow \mathbb{R}^{n}$,\ such that for each $ \xi $ in $ \Pi_{\gamma}$ the map $ \Phi $ restricted to $ \mathbb{T}^{n}\times \{ \xi\}$ is a real analytic embedding of a rotational torus with the frequencies $\omega_{\ast}$ for the Hamiltonian system defined by (\ref{001*}) at $ \{\xi\}$.\\
If  $ \sqrt{|\breve{N}^{z_{0}}(\xi)|_{{2}}^{2}
+|\breve{N}^{\bar{z}_{0}}(\xi)|_{{2}}^{2}}= \delta_{0} > 0 $,\ then there exist a fixed $ m$ which is large enough such that $ \delta_{0} > 20\varepsilon_{m-1}^{\frac{7}{6}}$ ,\ a district $\Xi_{m} = \{(x,y,z^{*},\bar{z}^{*}):|\Im x| \leq s_{m},|y|\leq r_{m}^{2},
\|z^{*}\|_{a,p}+\|\bar{z}^{*}\|_{a,p}\leq \varepsilon^{\frac{7}{6}}_{m-1} \}$,\ a Cantor set $ \Pi_{m} \subset \Pi $,\ a Lipschitz continuous family of embedding $ \Phi^{m-1}: {\Xi}_{m}\times \Pi_{m} \rightarrow{D}_{0}\times \Pi_{0} $ ($ {D}_{0}={D}_{0} $ and $ \Pi_{0}=\Pi$),\ and a Lipschitz continuous map $ \omega_{m}: \Pi_{m} \rightarrow \mathbb{R}^{n}$,\ such that for each $ \xi $ in $ \Pi_{m}$,\ there is no torus in the domain $ \Phi^{m-1}(\Xi_{m}\times \{\xi\})\subset{D}\times \Pi $ for the Hamiltonian system defined by (\ref{001*}).
\end{theorem}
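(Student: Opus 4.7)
The plan is to run a standard KAM iteration in the spirit of Kuksin--Pöschel, adapted to accommodate the zero normal frequencies $\Omega_{j_m}\equiv 0$, $j_m\in\mathcal{J}$. At step $\nu$ we seek a symplectic map $\Phi_\nu=X_{F_\nu}^{t=1}$ (time-$1$ flow of a Hamiltonian $F_\nu$) conjugating $H_\nu=N_\nu+R_\nu$ to $H_{\nu+1}=N_{\nu+1}+R_{\nu+1}$ with $\|X_{R_{\nu+1}}\|\lessdot\varepsilon_\nu^{4/3}$ on a slightly shrunken domain $D(s_{\nu+1},r_{\nu+1},r_{\nu+1})$, starting from $(N_0,R_0,s_0,r_0)=(N,R,s,r)$ with $s_\nu\searrow \tfrac{7}{16}s$ and $r_\nu\searrow 0$. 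As usual one truncates the perturbation to its low-order part $R_\nu^{\mathrm{lo}}$ (the terms with $2|\alpha|+|\beta|+|\gamma|\le 2$); the higher-order piece is automatically of the correct size after the change of coordinates.

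\textbf{Homological equation.} The generator $F_\nu$ is determined by $\{N_\nu,F_\nu\}+R_\nu^{\mathrm{lo}}=\widehat N_\nu$, where the right-hand side collects the modes we cannot eliminate. Expanding in Fourier--monomial modes one confronts small divisors $\langle k,\omega(\xi)\rangle+\langle l,\Omega^{*}(\xi)\rangle$; since every entry of $\Omega_0$ vanishes, any $l$ supported only on $\mathcal{J}$ contributes nothing to the normal-frequency sum. Consequently, for $k=0$ the following monomials are unavoidably resonant and must be kept in $N_{\nu+1}$: a constant, a term linear in $y$, the usual diagonal $z_j\bar z_j$ with $j\in\mathbb{N}_+\setminus\mathcal{J}$, and---as the new feature---the linear forms $\langle N^{z_0},z_0\rangle,\langle N^{\bar z_0},\bar z_0\rangle$ together with the three quadratic forms $\langle N^{z_0z_0}z_0,z_0\rangle$, $\langle N^{z_0\bar z_0}z_0,\bar z_0\rangle$, $\langle N^{\bar z_0\bar z_0}\bar z_0,\bar z_0\rangle$. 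This is exactly the shape of $\breve N$. The remaining divisors are non-vanishing by hypothesis \textbf{(A)}; excising the usual resonant sets $\mathcal R_{k,l}^{\nu}=\{\xi:\,|\langle k,\omega\rangle+\langle l,\Omega\rangle|<\gamma/(1+|k|)^\tau\}$ produces $\Pi_{\nu+1}\subset\Pi_\nu$ with total measure loss $O(\gamma)$, so $\Pi_\gamma=\bigcap_\nu\Pi_\nu$ has measure $(\mathbf{Meas}\,\Pi)(1-O(\gamma))$.

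\textbf{Convergence and the $\delta_0=0$ case.} Standard Cauchy estimates give $\|X_{F_\nu}\|\lessdot \varepsilon_\nu\gamma^{-1}(s_\nu-s_{\nu+1})^{-c}$, and the super-convergence $\varepsilon_{\nu+1}\lessdot \varepsilon_\nu^{4/3}$ forces $\Phi=\lim_\nu\Phi_0\circ\cdots\circ\Phi_\nu$ to exist on $\Pi_\gamma$, delivering the estimates $(\mathbf 1)$-$(\mathbf 3)$. When $\delta_0=0$, evaluating the Hamiltonian vector field of $\breve N$ at $y=z^{*}=\bar z^{*}=0$ gives $\dot x=\breve\omega$, $\dot y=0$, $\dot z^{*}=\dot{\bar z}^{*}=0$; thus $\mathcal T_0^n$ is invariant and its image under $\Phi$ is the advertised real-analytic rotational torus with frequency $\omega_{*}=\breve\omega$.

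\textbf{The $\delta_0>0$ case and the main obstacle.} Stop the iteration at the smallest $m$ with $20\varepsilon_{m-1}^{7/6}<\delta_0$ and set $\Phi^{m-1}=\Phi_0\circ\cdots\circ\Phi_{m-1}$; on $\Xi_m\times\Pi_m$ this conjugates $H$ to $N_{m-1}+R_{m-1}$. Since the coefficients $N_\nu^{z_0},N_\nu^{\bar z_0}$ change by $O(\varepsilon_\nu)$ at each step, telescoping gives $\sqrt{|N_{m-1}^{z_0}|_2^{2}+|N_{m-1}^{\bar z_0}|_2^{2}}\ge \delta_0/2$. On $\Xi_m$ one has $\|z^{*}\|_{a,p}+\|\bar z^{*}\|_{a,p}\le\varepsilon_{m-1}^{7/6}$, so for each $j_m\in\mathcal{J}$
\begin{equation*}
\dot z_{j_m}=\mathbf{i}\bigl(N^{\bar z_0}_{m-1,j_m}+2(N^{z_0\bar z_0}_{m-1}z_0)_{j_m}+2(N^{\bar z_0\bar z_0}_{m-1}\bar z_0)_{j_m}+\partial_{\bar z_{j_m}}R_{m-1}\bigr),
\end{equation*}
and the right-hand side has modulus bounded below by $\delta_0/2-C\varepsilon_{m-1}^{7/6}>\delta_0/4$ on $\Xi_m$ (the quadratic $N$-contributions are $O(\varepsilon_{m-1}^{7/6})$ by the domain size, and $\partial_{\bar z_{j_m}}R_{m-1}$ is $O(\varepsilon_{m-1})$ by the definition of $\|X_{R_{m-1}}\|$). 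Every trajectory starting in $\Xi_m$ is therefore ejected in time $\lessdot \varepsilon_{m-1}^{7/6}/\delta_0$, so $\Xi_m$ contains no invariant set of positive radius, \emph{a fortiori} no invariant torus; pulling the statement back by $\Phi^{m-1}$ yields the non-existence claim. The real obstacle is the interlocking control of the growing family of normal-form coefficients $N^{z_0}_\nu,N^{\bar z_0}_\nu,N^{z_0z_0}_\nu,N^{z_0\bar z_0}_\nu,N^{\bar z_0\bar z_0}_\nu$: one must show that they form Lipschitz-Cauchy sequences in $\xi$, that the homological equation remains solvable once $N_\nu$ is no longer purely quadratic in $(z,\bar z)$, and that the quantity $\delta_0$ is well-defined and stable under the limiting procedure so that the cutoff index $m$ is meaningful.
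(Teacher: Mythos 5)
Your proposal correctly identifies the overall strategy and the two-case structure, but it leaves unresolved precisely the piece that the paper treats as its central novelty. You write at the end that ``one must show that the homological equation remains solvable once $N_\nu$ is no longer purely quadratic in $(z,\bar z)$'' and present this as an obstacle, not as something you have overcome. That is exactly the gap. From the second iteration onward the normal form $N_\nu$ contains the off-diagonal blocks $\langle N_\nu^{z_0}, z_0\rangle$, $\langle N_\nu^{\bar z_0}, \bar z_0\rangle$ and the quadratic forms $\langle N_\nu^{z_0z_0}z_0,z_0\rangle$, $\langle N_\nu^{z_0\bar z_0}z_0,\bar z_0\rangle$, $\langle N_\nu^{\bar z_0\bar z_0}\bar z_0,\bar z_0\rangle$, so $\{N_\nu, F_\nu\}$ couples the $z_0$-coefficients of $F_\nu$ across indices. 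The homological equation for those coefficients is no longer a scalar equation with divisor $\langle k,\omega\rangle+\langle l,\Omega\rangle$; it is a coupled linear system of the form $\omega\cdot\partial_x F + AF + FB = R$ where $A,B$ are the (small but non-diagonal) matrices built from $N_\nu^{z_0z_0}$, $N_\nu^{z_0\bar z_0}$, $N_\nu^{\bar z_0\bar z_0}$. Your scalar small-divisor set $\mathcal R_{k,l}^\nu=\{\xi:|\langle k,\omega\rangle+\langle l,\Omega\rangle|<\gamma/(1+|k|)^\tau\}$ does not control invertibility of these systems. The paper handles this by vectorizing the matrix equations via Kronecker product and column straightening, so that the Fourier coefficient matrices become $3b^2\times3b^2$, $4b^2\times4b^2$, $2b^2\times2b^2$ blocks of the form $\mathbf i\langle k,\omega_\nu\rangle I + B_{i\nu}(\xi)$, and then imposes \emph{new} non-resonance conditions on the determinants of those matrices, with an accompanying measure estimate. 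Without this construction (or some substitute) the iteration cannot be carried out beyond the first step, and estimates $(\mathbf 1)$--$(\mathbf 3)$, the measure bound, and the definition of $\delta_0$ as a limit are all unavailable.

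Two smaller remarks. First, your bound on $|\dot z_{j_m}|$ from below does not by itself force the trajectory to leave $\Xi_m$; a nonzero velocity could in principle circulate. The paper avoids this by conjugating with $e^{A_0t}$ so that the constant forcing $\alpha_0$ is integrated over $[0,1]$ without cancellation, using $\|e^{\pm A_0 t}\|\in(\tfrac12,2)$; since $\|A_0\|$ is $O(\varepsilon_0)$ this works, but you should make that precise rather than rely on a pointwise lower bound. Second, the iteration in the paper begins with a \emph{preparatory} step (Lemma~2.2) whose normal form $N_0$ is standard, and only after that step does the enlarged normal form with the $z_0$-blocks appear; the new small-divisor machinery applies to steps $\nu\geq1$. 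Your write-up does not distinguish the first step from the subsequent ones, which obscures why the new non-resonance conditions are even needed.
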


Some remarks and a `` guide to the proof '' of  Theorem \ref{mainthm}.

\subsection{} The classical KAM theory is also developed to deal with some one dimensional partial differential equations (PDEs) of unbounded perturbation.\ See, for example, \cite{be2011br,be2013ka,ba2014ka,ba2016ka,ba2018ti,be2019la,fe2015qu,ka2003ka,ku2000an,li2011ka,zh2011ka} and references therein.\ For the degenerate case the sequence $ \{\Omega_{j}\}$ is dense at some finite-point,\ in \cite{yu2018ka} it is showed that some shallow water equations such as Benjamin-Bona-Mahony equation and the generalized $d$-dimensional Pochhammer-Chree equation subject to some boundary conditions possess many (a family of initial values of positive Lebesgue measure of finite dimension)
smooth solutions which are quasi-periodic in time. KAM theory is also applied to many other partial differential equations, for example, \ see \cite{el2016ka,ge2003ka} for the application of KAM theory to beam equation.

\subsection{}

Conditions (\ref{c1})-(\ref{c3}) are the same as those in \cite{po1996ak} when we only consider non-zero normal frequencies. \ See Section 5 in \cite{po1996ak} for more details.

\subsection{}
The basic tool for the proof of Theorem \ref{mainthm} is the usual Newton type iteration,\ as often happens in KAM theory. However, the arguments used in this paper are quite complicated sine zero normal frequencies come out.\ Therefore,\ we give a `` guide to the proof '' of  Theorem \ref{mainthm} for the readers' convenience:
\subsubsection{ The linearized equation (Section 2)}
~~~~~~~~~~~~~~~~~~~~~~~~~~~~~~~~~~~~~~~~~~~~~~~~~~~~~~~~~~~~~~~~~~~~~~~~~~~~~~~~~~~~~~~~~~~~~~~~~~~~~~

This is the heart of the proof.\ The idea consists in a quadratic-convergent iterative procedure apt to reduce at each step of the scheme,\ which is done in order to beat the divergence introduced by small divisors arising in the inversion of non-elliptic differential operators.\ In this paper,\ since there are finite zero normal frequencies,\
The main difficulties we encounter are
\begin{eqnarray}
\label{*}\langle k,\omega(\xi)\rangle \pm \Omega_{j_{m}}=0,\ 1\leq m \leq b,
\end{eqnarray}
and
\begin{eqnarray}
\label{**}\langle k,\omega(\xi)\rangle  \pm\Omega_{j_{m}} \pm \Omega_{j_{n}}=0,\ 1\leq m,n \leq b,
\end{eqnarray}
when $ k= 0$.\\
To overcome these difficulties,\ a basic idea for this article is that we preserve the terms related to (\ref{*}) and (\ref{**}).\\
Let
\begin{eqnarray}\nonumber
R &=& R^{x}(\xi)+\langle R^{y}(x,\xi),y\rangle+\langle R^{z_{0}z}(x,\xi)z_{0},z\rangle
+\langle R^{z_{0}\bar{z}}(x,\xi)z_{0},\bar{z}\rangle\\
\nonumber&&+\langle R^{\bar{z}_{0}{z}}(x,\xi)\bar{z}_{0},{z}\rangle
+\langle R^{\bar{z}_{0}\bar{z}}(x,\xi)\bar{z}_{0},\bar{z}\rangle+\langle R^{z}(x,\xi),z\rangle+\langle R^{\bar{z}}(x,\xi),\bar{z}\rangle\\
\nonumber&&
+\langle R^{zz}(x,\xi)z,z\rangle
+\langle R^{z\bar{z}}(x,\xi)z,\bar{z}\rangle
+\langle R^{\bar{z}\bar{z}}(x,\xi)\bar{z},\bar{z}\rangle\\
\nonumber&&+\langle R^{z_{0}}(x,\xi),z_{0}\rangle+\langle R^{\bar{z}_{0}}(x,\xi),\bar{z}_{0}\rangle
+\langle R^{z_{0}z_{0}}(x,\xi)z_{0},z_{0}\rangle\\
\nonumber&&
+\langle R^{z_{0}\bar{z}_{0}}(x,\xi)z_{0},\bar{z}_{0}\rangle
+\langle R^{\bar{z}_{0}\bar{z}_{0}}(x,\xi)\bar{z}_{0},\bar{z}_{0}\rangle.
\end{eqnarray}
Thus,\ the terms we will preserve are $ \widehat{R^{z_{0}}}(0,\xi),\widehat{R^{\bar{z}_{0}}}(0,\xi),\widehat{R^{z_{0}z_{0}}}(0,\xi),
\widehat{R^{z_{0}\bar{z}_{0}}}(0,\xi),
\widehat{R^{\bar{z}_{0}\bar{z}_{0}}}(0,\xi)$.\\
Since
\[
H_{0}= N_{0}+R_{0}=\langle\omega(\xi),y\rangle+\langle\Omega_{00}(\xi)z_{0},\bar{z}_{0}\rangle
          +\langle\Omega_{0}(\xi)z,\bar{z}\rangle,\  \Omega_{00}=0,
\]
there exists a real-analytic symplectic transformation $ \Phi_{0}$,\ such that
\begin{eqnarray}\label{m}
H_{0}\circ\Phi_{0}= ( N_{0}+R_{0})\circ \Phi_{0}= N_{1}+R_{1}=H_{1},
\end{eqnarray}
where the new normal form
\begin{eqnarray}
\label{q1} N_{1} &=& \widehat{N^{x}_{0}}(\xi)+\sum_{j=1}^{n}\omega_{1}^{j}(\xi)y_{j}
+\sum_{j\in \mathbb{N_{+}}\setminus \mathcal{J}}\Omega_{1}^{j}(\xi)z_{j}\bar{z}_{j}
      + \langle\widehat{N^{z_{0}}_{0}}(\xi),z_{0}\rangle\\
 \nonumber&& +\langle\widehat{N^{\bar{z}_{0}}_{0}}(\xi),\bar{z}_{0}\rangle
 +\langle\widehat{N^{z_{0}z_{0}}_{0}}(\xi)z_{0},z_{0}\rangle
 +\langle\widehat{N^{z_{0}\bar{z}_{0}}_{0}}(\xi)z_{0},\bar{z}_{0}\rangle
+\langle\widehat{N^{\bar{z}_{0}\bar{z}_{0}}_{0}}(\xi)\bar{z}_{0},\bar{z}_{0}\rangle,
\end{eqnarray}
while the new perturbation is of smaller size:
\begin{equation}\nonumber
\begin{split}
\|X_{R_{1}}\|_{D(s_{1},r_{1},r_{1}),\Pi_{1}}\lessdot \|X_{R_{0}}\|^{\frac{4}{3}}_{D(s_{0},r_{0},r_{0}),\Pi_{0}}.
\end{split}
\end{equation}
The parameter $ \xi$ appearing in (\ref{q1}) will vary in small compact set $ \Pi_{1}$ (of relatively large Lebesgue measure).\\
Obviously,\ after $1$-th iteration,\ we obtain a new normal form $ N_{1}$,\ which has more terms than the usual KAM normal form.\ Thus,\ our iterative scheme from $ H_{1}$ is non-standard and,\ from a technical point of view,\ represents the most novel part of the proof.

Similarly,\ for $ H_1$ in (\ref{m}),\ there exists a real-analytic symplectic transformation $ \Phi_{1}$,\ such that
\[
H_{1}\circ\Phi_{1}= ( N_{1}+R_{1})\circ \Phi_{1}= N_{2}+R_{2}=H_{2},
\]
where the new normal form
\begin{eqnarray}
\nonumber N_{2} &=& \sum^{v-1}_{j=0}\widehat{N^{x}_{j}}(\xi)+\sum_{j=1}^{n}\omega_{2}^{j}(\xi)y_{j}
+\sum_{j\in \mathbb{N_{+}}\setminus \mathcal{J}}\Omega_{2}^{j}(\xi)z_{j}\bar{z}_{j}
      + \langle\sum^{1}_{j=0}\widehat{N^{z_{0}}_{j}}(\xi),z_{0}\rangle\\
 \nonumber&& +\langle\sum^{1}_{j=0}\widehat{N^{\bar{z}_{0}}_{j}}(\xi),\bar{z}_{0}\rangle
 +\langle\sum^{1}_{j=0}\widehat{N^{z_{0}z_{0}}_{j}}(\xi)z_{0},z_{0}\rangle
 +\langle\sum^{1}_{j=0}\widehat{N^{z_{0}\bar{z}_{0}}_{j}}(\xi)z_{0},\bar{z}_{0}\rangle
+\langle\sum^{1}_{j=0}\widehat{N^{\bar{z}_{0}\bar{z}_{0}}_{j}}(\xi)\bar{z}_{0},\bar{z}_{0}\rangle,
\end{eqnarray}
while the new perturbation is of smaller size:
\begin{equation}\nonumber
\begin{split}
\|X_{R_{2}}\|_{D(s_{2},r_{2},r_{2}),\Pi_{2}}\lessdot \|X_{R_{1}}\|^{1+\vartheta}_{D(s_{1},r_{1},r_{1}),\Pi_{1}},
\end{split}
\end{equation}
where $ \vartheta \in (\frac{1}{6},\frac{1}{3})$ and the parameter $ \xi$ will vary in small compact set $ \Pi_{2}$ (of relatively large Lebesgue measure).\\
Since the preserved terms are put into the normal form $ N_{1}$,\ the homological equations in this iteration are of the following forms
\begin{eqnarray}
\label{1}\omega\cdot \partial_{x}F_{1} + A_{1}F_{1} + F_{1}B_{1} = R_{1},\\
\label{2}\omega\cdot \partial_{x}F_{2} + A_{2}F_{2}  = R_{2},\\
\label{3}\omega\cdot \partial_{x}F_{3} + \Lambda F_{3} + F_{3}\Lambda = R_{3},
\end{eqnarray}
where $ A_{1},A_{2},B_{1}$ depending only on $ \xi$ are not diagonal while $ \Lambda$ is diagonal.\  (Different from $ 1-$th iteration with normal form $ N_{0}$,\ (\ref{3}) is the only homological equations we have to solve.)\ More narrowly,\ equation (\ref{1}) is derived from the homological equation of the coefficients $ F^{z_{0}z_{0}},F^{z_{0}\bar{z}_{0}},F^{\bar{z}_{0}\bar{z}_{0}}$,\ whose any $k$-th Fourier coefficient matrixes related to the preserved terms are finite dimension (less than $ 4b^{2}\times 4b^{2}$).\ Thus,\ by introducing Kronecker product and column straightening,\ the coefficient equation can be solved provided that its any $ k$ -th
Fourier coefficient matrixes are non-degenerate and satisfy some non-resonant conditions.\ For equations (\ref{2}) and (\ref{3}),\ they are also solvable as long as any $ k$ -th
Fourier coefficient matrixes are non-degenerate and satisfy some non-resonant conditions.\ Therefore,\ once small divisor conditions will be given appropriately (See subsection \ref{non} for more small divisor conditions),\ any estimates we need can be obtained by some complicated computations and the KAM machinery still works well.

\subsubsection{ The iterative lemma (Section 4)}
~~~~~~~~~~~~~~~~~~~~~~~~~~~~~~~~~~~~~~~~~~~~~~~~~~~~~~~~~~~~~~~~~~~~~~~~~~~~~~~~~~~~~~~~~~~~~~~~~~~~~~

We want to construct,\ inductively,\ real-analytic symplectic transformations $ \Phi_{m}, m\geq 0$,\ such that
\begin{eqnarray}\label{q3}
H_{m}\circ\Phi_{m}= ( N_{m}+R_{m})\circ \Phi_{m}= N_{m+1}+R_{m+1}=H_{m+1},
\end{eqnarray}
where the sequences of the new normal form $ N_{m+1}$
\begin{eqnarray}
\nonumber N_{m+1} &=& \sum^{m}_{j=0}\widehat{N^{x}_{j}}(\xi)+\sum_{j=1}^{n}\omega_{m+1}^{j}(\xi)y_{j}
+\sum_{j\in \mathbb{N_{+}}\setminus \mathcal{J}}\Omega_{m+1}^{j}(\xi)z_{j}\bar{z}_{j}
      + \langle\sum^{m}_{j=0}\widehat{N^{z_{0}}_{j}}(\xi),z_{0}\rangle\\
 \nonumber&& +\langle\sum^{m}_{j=0}\widehat{N^{\bar{z}_{0}}_{j}}(\xi),\bar{z}_{0}\rangle
 +\langle\sum^{m}_{j=0}\widehat{N^{z_{0}z_{0}}_{j}}(\xi)z_{0},z_{0}\rangle
 +\langle\sum^{m}_{j=0}\widehat{N^{z_{0}\bar{z}_{0}}_{j}}(\xi)z_{0},\bar{z}_{0}\rangle
+\langle\sum^{m}_{j=0}\widehat{N^{\bar{z}_{0}\bar{z}_{0}}_{j}}(\xi)\bar{z}_{0},\bar{z}_{0}\rangle,
\end{eqnarray}
while the sequences of perturbations $ R_{m+1}$ are of smaller and smaller size:
\begin{equation}\nonumber
\begin{split}
\|X_{R_{m+1}}\|_{D(s_{m+1},r_{m+1},r_{m+1}),\Pi_{m+1}}\lessdot \|X_{R_{m}}\|^{1+\vartheta}_{D(s_{m},r_{m},r_{m}),\Pi_{m}}.
\end{split}
\end{equation}
The parameter $ \xi$ will vary in smaller and smaller compact sets $ \Pi_{m}$ (of relatively large Lebesgue measure)
\[
\Pi_{0}\supset \Pi_{1}\supset\cdot\cdot\cdot\Pi_{m}\supset\Pi_{m+1}\supset\cdot\cdot\cdot\supset\Pi_{\infty}\supset \bigcap_{m=1}^{\infty} \Pi_{m}.
\]
The smallness assumption on $\|X_{R_{0}}\|_{D(s_{0},r_{0},r_{0}),\Pi_{0}}$ will allow to turn on the iteration procedure.

The symplectic map $\Phi^{m}$ will be sought of the form
\[
\Phi^{m}= \Phi^{m-1}\circ\Phi_{m}= \Phi_{0}\circ\cdot\cdot\cdot\circ \Phi_{m}.
\]
In order to work for the approach,\ one has to show that
\begin{eqnarray}\label{q4}
&&\Phi_{m} : D(s_{m+1},r_{m+1},r_{m+1})\rightarrow D(s_{m},r_{m},r_{m}),  ( \forall  m\geq 0),\\ \label{q5}&&\Phi^{m-1} : D(s_{m},r_{m},r_{m})\rightarrow D(s_{0},r_{0},r_{0}),  ( \forall  m\geq 1).
\end{eqnarray}
Relations (\ref{q4}) and (\ref{q5}) are checked in Section 4.

The linearized equation associated to (\ref{q3}) is thoroughly discussed in Section 4.\ This is the place where small divisors arise.\ Such divisors have the form
\begin{itemize}
\item[(1)]
$ \mathcal{R}_{kl}(m)=\{\xi\in\Pi_{m}:\mid \langle k,\omega_{m}(\xi)\rangle+\langle l,\Omega_{m}(\xi)\rangle\mid<\frac{\gamma_{m}\langle l\rangle_{d}}{|k|^{\tau}},K_{m-1}< |k|\leq K_{m},(k,l)\in\mathcal{Z}\}$,
\item[(2)]
$ \mathcal{R}_{1k}(m) =\{\xi \in \Pi_{m}:\mid |\mathbf{i}\langle k,\omega_{m}(\xi)\rangle I_{3b^{2}}- B_{1m}(\xi)|_{d}\mid < \frac{\gamma_{1m}}{|k|^{\tau_{1}}},0 <|k|\leq K_{m} \}$,\\
where $ \tau_{1}= 3b^{2}\tau, \gamma_{1m}=\gamma_{m}/ m^{18b^{4}},\ L_{1}=3b^{2}\times4b^{2}(diam\Pi_{m})^{n-1}$,
\item[(3)]
$ \mathcal{R}_{3k}(m) =\{\xi \in \Pi_{m}:\mid |\mathbf{i}(\langle k,\omega_{m}(\xi)\rangle\pm \Omega^{j}_{m}) I_{4b^{2}}+B_{3m}(\xi)|_{d}\mid<\frac{\gamma_{3m}}{|k|^{\tau_{3}}}, |k|\leq K_{m} \}$,\\
where  $ \tau_{3}= 4b^{2}\tau, \gamma_{3m}=\gamma_{m}/ m^{32b^{4}},\ L_{3}=4b^{2}\times5b^{2}(diam\Pi_{1})^{n-1}$,
\item[(4)]
$ \mathcal{R}_{4k}(m) =\{\xi \in \Pi_{m}:\mid |\mathbf{i}\langle k,\omega_{m}(\xi)\rangle I_{2b^{2}}+B_{4m}(\xi)|_{d}\mid<\frac{\gamma_{4m}}{|k|^{\tau_{4}}},0 <|k|\leq K_{m} \}$,\\
where $\tau_{4}= 2b^{2}\tau, \gamma_{4m}=\gamma_{m}/ m^{8b^{4}}, L_{4}=2b^{2}\times3b^{2}(diam\Pi_{1})^{n-1}$,
\end{itemize}
where $ |\cdot|_{d}$ denotes the determinant of a matrix and $ K_{m}$ is a suitable Fourier ``cut-off'' introduced originally by Arnold \cite{ar1963pr}.

\subsubsection{Convergence of the KAM scheme and proof of theorem \ref{mainthm}( Section 5 and Section 6 )}
~~~~~~~~~~~~~~~~~~~~~~~~~~~~~~~~~~~~~~~~~~~~~~~~~~~~~~~~~~~~~~~~~~~~~~~~~~~~~~~~~~~~~~~~~~~~~~~~~~~~~~

Once the iterative step is set up,\ it has to be equipped with estimates.\ This technique part follows the corresponding part in \cite{po1996ak}.\ Particularly,\ the key results of theorem \ref{mainthm} concerning the new Hamiltonian $ \breve{H}$ and the measure of $ \Pi_{\infty} $ follow easily.\ From the fast convergence of $ N_{m}$ to
\begin{eqnarray}\nonumber
\breve{N} &=&  \breve{N}^{x}(\xi)+\langle\breve{\omega}(\xi),y\rangle
          +\langle\breve{\Omega}(\xi)z,\bar{z}\rangle+ \langle\breve{N}^{z_{0}}(\xi),z_{0}\rangle +\langle\breve{N}^{\bar{z}_{0}}(\xi),\bar{z}_{0}\rangle
\\
 \nonumber    && +\langle\breve{N}^{z_{0}z_{0}}(\xi)z_{0},z_{0}\rangle
 +\langle\breve{N}^{z_{0}\bar{z}_{0}}(\xi)z_{0},\bar{z}_{0}\rangle
+\langle\breve{N}^{\bar{z}_{0}\bar{z}_{0}}(\xi)\bar{z}_{0},\bar{z}_{0}\rangle +\breve{R}(x,y,z^{*},\bar{z}^{*},\xi),
\end{eqnarray}
it follows that
when $ \breve{N}^{z_{0}}(\xi)=\vec{0} $ and $\breve{N}^{\bar{z}_{0}}(\xi)=\vec{0}$,\ $ \Phi(\mathcal{T}^{n}_{0} \times \{\xi\})$ is an invariant torus of $ H$;\ when $
\breve{N}^{z_{0}}(\xi)\neq \vec{0} $ or $\breve{N}^{\bar{z}_{0}}(\xi)\neq\vec{0}$,\ that is,\ $ \sqrt{|\breve{N}^{z}_{0}(\xi)|_{{2}}^{2}
+|\breve{N}^{\bar{z}_{0}}(\xi)|_{{2}}^{2}}= \delta_{0} > 0 $.\ Since $ \underset{m\rightarrow\infty}{\lim}\widehat{J_{m}^{z_{0}}} (\xi)= \breve{N}^{z_{0}}(\xi)$ and $ \underset{m\rightarrow\infty}{\lim}\widehat{J_{m}^{\bar{z}_{0}}} (\xi)= \breve{N}^{\bar{z}_{0}}(\xi)$,\ there exists a fixed $ m_{0}$ such that for any $ m > m_{0} $,
\begin{eqnarray}\label{a000*}
\sqrt{|{J}_{m}^{{z}_{0}}(\xi)|_{2}^{2}
+|{J}_{m}^{\bar{z}_{0}}(\xi)|_{2}^{2}}\geq \frac{\delta_{0}}{2}.
\end{eqnarray}
More exactly,\ we will choose sufficiently large $ m $ such that
\begin{eqnarray}\label{a00*}
 \delta_{0} > 20\varepsilon^{\frac{7}{6}}_{m-1}.
\end{eqnarray}

Consider the Hamiltonian equation defined by $ H_{m}= N_{m} + R_{m} $ and fixed an initial value $ \|z^{*}(0)\|_{a,p}+\|\bar{z}^{*}(0)\|_{a,p}\leq \varepsilon^{\frac{7}{6}}_{m-1} $.\ By making use of (\ref{a000*}), (\ref{a00*}) and some ordinary differential equation tools,\ we have
\[
\|z^{*}(1)\|_{a,p}+\|\bar{z}^{*}(1)\|_{a,p}> \varepsilon^{\frac{7}{6}}_{m-1}.
\]
That is,\ there exists no torus in  the domain $ \Phi^{m-1}(\Xi_{m}\times \{\xi\}) $ for the Hamiltonian $ H$ in (\ref{001}) when we denote $\Xi_{m} = \{(x,y,z^{*},\bar{z}^{*}):|\Im x| \leq s_{m},|y|\leq r_{m}^{2},
\|z^{*}\|_{a,p}+\|\bar{z}^{*}\|_{a,p}\leq \varepsilon^{\frac{7}{6}}_{m-1} \}$.\ Theorem \ref{mainthm},\ at this point,\ is completely proven.

\subsection{Application to nonlinear Schr\"{o}dinger equation (NLS) (Section 7)}
~~~~~~~~~~~~~~~~~~~~~~~~~~~~~~~~~~~~~~~~~~~~~~~~~~~~~~~~~~~~~~~~~~~~~~~~~~~~~~~~~~~~~~~~~~~~~~~~~~~~~~

Consider a specific nonlinear Schr\"{o}dinger equation
\begin{equation}\label{p}
\mathbf{i}u_{t}-u_{xx}+|u|^{2}u=0
\end{equation}
on the finite $x$-interval $[0,2\pi]$ with periodic boundary conditions
\[
u(t,x)=u(t,x+2\pi)=0,\  u(x,t)=u(-x,t).
\]

When applying this abstract theorem to PDEs,\ one meets two difficulties: (1) to study its structure of Hamiltonian in order to extract dynamical information; (2) to verify $ \breve{N}^{z_{0}}(\xi)=\vec{0} $ and $\breve{N}^{\bar{z}_{0}}(\xi)=\vec{0}$ or not.

Concerning (1), in the context of the NLS of (\ref{p}),\
we obtain a Hamiltonian with one zero normal frequency and other frequencies satisfying non-resonant conditions.\ Thus,\ the KAM machinery works well.\ Concerning (2),\ we find that $ \widehat{R_{m}^{z_{0}}}(0,\xi)={0} $ and $\widehat{R_{m}^{\bar{z}_{0}}}(0,\xi)={0}$ in any $ m-$th iteration.\ Therefore,\ we have $ \breve{N}^{z_{0}}(\xi)={0} $ and $\breve{N}^{\bar{z}_{0}}(\xi)={0}$,\ that is,\ there still exist many invariant tori of quasi-periodic oscillations in a sufficiently small neighborhood of the origin for the Schr\"{o}dinger equation of (\ref{p}).\ Detailed,\ quantitative results are collected in Section 7.

\section{The linearized equation}
Assume that all the assumptions of Theorem \ref{mainthm} are satisfied.\ Set $ r_{0}=r,\  s_{0}=s, \gamma_{0}=\gamma,\ \varepsilon_{0}=\varepsilon$,\ and $ H_{0}=H$.\
Recall that the Hamiltonian
\begin{eqnarray}
\label{009}H_{0}=H_{0}(x,y,z^{*},\bar{z}^{*},\xi)
=N_{0}(y,z^{*},\bar{z}^{*},\xi)+R_{0}(x,y,z^{*},\bar{z}^{*},\xi),
\end{eqnarray}
where
\begin{eqnarray}\label{010}
N_{0}(y,z^{*},\bar{z}^{*},\xi) &=&\langle\omega(\xi),y\rangle+\langle\Omega_{00}(\xi)z_{0},\bar{z}_{0}\rangle
          +\langle\Omega_{0}(\xi)z,\bar{z}\rangle\\
\nonumber &=&\sum_{1\leq j\leq n}\omega^{j}_{0}y_{j}+\sum_{j_{m}\in \mathcal{J} }\Omega^{m}_{00}z_{j_{m}}\bar{z}_{j_{m}}
          +\sum_{j\in \mathbb{N_{+}}\setminus \mathcal{J}}\Omega^{j}_{0}z_{j}\bar{z}_{j}\\
&&\nonumber \mbox{ (by noting $ z_{j_{m}}=z_{0{m}}$ for convenience) }\\
  \nonumber &=&\sum_{1\leq j\leq n}\omega^{j}_{0}y_{j}+\sum_{1\leq m \leq b }\Omega^{m}_{00}z_{0{m}}\bar{z}_{0{m}}
          +\sum_{j\in \mathbb{N_{+}}\setminus \mathcal{J}}\Omega^{j}_{0}z_{j}\bar{z}_{j},
\end{eqnarray}
with $ \Omega^{m}_{00}=0, 1\leq m \leq b$ and $ 0 <... \Omega^i_{0}<...<\Omega^{j}_{0}<...\rightarrow +\infty $ for $ i< j \in \mathbb{N_{+}}\setminus \mathcal{J}$.\\
Denote $ R_{0}(x,y,z^{*},\bar{z}^{*},\xi)=R_{0}^{low}(x,y,z^{*},\bar{z}^{*},\xi)+R_{0}^{high}(x,y,z^{*},\bar{z}^{*},\xi)$.\ Then we have
\begin{eqnarray}
\label{011} R_{0}^{low}&=&\sum_{\alpha\in \mathbb{N}^{n},\beta,\gamma\in \mathbb{N}^{\mathbb{N}},2|\alpha|+|\beta|+|\gamma|\leq2}R_{0}^{\alpha\beta\gamma}(x,\xi)
    y^{\alpha}\{z^{*}\}^{\beta}\{\bar{z}^{*}\}^{\gamma},\\
\label{012}R_{0}^{high}&=&\sum_{\alpha\in \mathbb{N}^{n},\beta,\gamma\in \mathbb{N}^{\mathbb{N}},2|\alpha|+|\beta|+|\gamma|\geq3}
R_{0}^{\alpha\beta\gamma}(x,\xi)y^{\alpha}\{z^{*}\}^{\beta}\{\bar{z}^{*}\}^{\gamma}.
\end{eqnarray}
We desire to eliminate the terms $ R_{0}^{low}$ by the coordinate transformation $ \Phi_{0} $,\ which is obtained as the time-1-map $ X_{F_{0}}^{t}\mid_{t=1}$ of a Hamiltonian vector field $ X_{F_{0}} $,\ where $F_{0}(x,y,z^{*},\bar{z}^{*},\xi)$ is of the form
\begin{eqnarray}
\nonumber F_{0}(x,y,z^{*},\bar{z}^{*},\xi) &=&F_{0}^{low}(x,y,z^{*},\bar{z}^{*},\xi)\\
 \nonumber                         &=&\sum_{\alpha\in \mathbb{N}^{n},\beta,\gamma\in \mathbb{N}^{\mathbb{N}},2|\alpha|+|\beta|+|\gamma|\leq2}
F_{0}^{\alpha\beta\gamma}(x,\xi)y^{\alpha}\{z^{*}\}^{\beta}\{\bar{z}^{*}\}^{\gamma}.
\end{eqnarray}
Using Taylor formula,\ we have
\begin{eqnarray}
 \label{013} H_{1} &=& H_{0} \circ X^t_{F_{0}}\mid_{t=1} \\
 \nonumber     &=& N_{0}+\{N_{0},F_{0}\}+ \int_0^1 (1-t)\{\{N_{0},F_{0}\},F_{0} \}\circ X^t_{F_{0}}\mathrm{d}t \\
 \nonumber      &&+R_{0}^{low}+\int_0^1\{R_{0}^{low},F_{0}\}\circ X^t_{F_{0}}\mathrm{d}t+R_{0}^{high}\circ X^t_{F_{0}}\mid_{t=1}.
\end{eqnarray}
Then we obtain the modified homological equation
\begin{eqnarray}
\label{014} {N_{0}}+\{N_{0},F_{0}\}+ R_{0}^{low} = N_{1},
\end{eqnarray}
where
\begin{eqnarray}\label{015}
N_{1}&=& N_{0}+\widehat{N_{0}}\\
\nonumber &=& N_{0}+\widehat{R^{x}_{0}}(0,\xi)+\langle\widehat{R^{y}_{0}}(0,\xi),y\rangle+
\langle\widehat{R^{z_{0}}_{0}}(0,\xi),z_{0}\rangle
+\langle\widehat{R^{\bar{z}_{0}}_{0}}(0,\xi),\bar{z}_{0}\rangle
+\langle\widehat{R^{z_{0}z_{0}}_{0}}(0,\xi)z_{0},z_{0}\rangle
\\
\nonumber&&+\langle\widehat{R^{z_{0}\bar{z}_{0}}_{0}}(0,\xi)z_{0},\bar{z}_{0}\rangle+
\langle\widehat{R^{\bar{z}_{0}\bar{z}_{0}}_{0}}(0,\xi)\bar{z}_{0},\bar{z}_{0}\rangle+ \sum_{j\in \mathbb{N_{+}}\setminus \mathcal{J}}\widehat{R^{z_{j}\bar{z}_{j}}_{0}}(0,\xi)z_{j}\bar{z}_{j},
\end{eqnarray}
and
\begin{eqnarray}\label{016}
R_{1} &=& \int_0^1 (1-t)\{\{N_{0},F_{0}\}+R_{0}^{low},F_{0}\}\circ X^t_{F_{0}}\mathrm{d}t + R_{0}^{high}\circ X^t_{F_{0}}\mid_{t=1}.
\end{eqnarray}

For convenience,\  for any $ j\geq 0 $,\ we also note
\begin{eqnarray}
\nonumber \widehat{N^{x}_{j}}(\xi)&=&\widehat{R^{x}_{j}}(0,\xi),\\
\nonumber\widehat{N^{y}_{j}}(\xi)&=&\widehat{R^{y}_{j}}(0,\xi),\\
\nonumber\widehat{N^{z_{0}}_{j}}(\xi)&=&\widehat{R^{z_{0}}_{j}}(0,\xi),\\
\nonumber\widehat{N^{\bar{z}_{0}}_{j}}(\xi)&=&\widehat{R^{\bar{z}_{0}}_{j}}(0,\xi),
\end{eqnarray}
and
\begin{eqnarray}
\nonumber\widehat{N^{z_{0}z_{0}}_{j}}(\xi)&=&\widehat{R^{z_{0}z_{0}}_{j}}(0,\xi),\\
\nonumber\widehat{N^{z_{0}\bar{z}_{0}}_{j}}(\xi)&=&\widehat{R^{z_{0}\bar{z}_{0}}_{j}}(0,\xi),\\
\nonumber\widehat{N^{\bar{z}_{0}\bar{z}_{0}}_{j}}(\xi)
&=&\widehat{R^{\bar{z}_{0}\bar{z}_{0}}_{j}}(0,\xi),\\
\nonumber\widehat{N^{z_{k}\bar{z}_{k}}_{j}}(\xi)&=&\widehat{R^{z_{k}\bar{z}_{k}}_{j}}(0,\xi),
\end{eqnarray}
where $ k\in \mathbb{N_{+}}\setminus \mathcal{J}$ for the last term.

\subsection{\ The solution of homological equation (\ref{014})} Following Kuksin and P\"{o}schel's notations in \cite{ku1996in},\ we have
\begin{lemma}\label{a} Consider a perturbation of the integrable Hamiltonian
\[
H_{0}=H_{0}(x,y,z^{*},\bar{z}^{*},\xi)=N_{0}(y,z^{*},\bar{z}^{*},\xi)+R_{0}(x,y,z^{*},\bar{z}^{*},\xi),
\]
where
\begin{eqnarray}\label{017}
N_{0}(y,z^{*},\bar{z}^{*},\xi) = \sum_{1\leq j\leq n}\omega_{0}^{j}(\xi)y_{j}+\sum_{1\leq m \leq b }\Omega^{m}_{00}z_{0{m}}\bar{z}_{0{m}}
          +\sum_{j\in \mathbb{N_{+}}\setminus \mathcal{J}}\Omega^{j}_{0}(\xi)z_{j}\bar{z}_{j}
\end{eqnarray}
is a parameter dependent integrable Hamiltonian and
\[
R_{0}(x,y,z^{*},\bar{z}^{*},\xi)=R_{0}^{low}(x,y,z^{*},\bar{z}^{*},\xi)+R_{0}^{high}(x,y,z^{*},\bar{z}^{*},\xi).
\]
Suppose assumption $ \mathbf{(A)} $ and $ \mathbf{(B)} $ are fulfilled for $ \omega_{0}(\xi)$ and $ \Omega_{0}(\xi)$,\
\begin{eqnarray}\label{017*}
\|X_{R^{low}_{0}}\|_{D(s_{0},r_{0},r_{0}),\Pi_{0}}& \leq\|X_{R_{0}}\|_{D(s_{0},r_{0},r_{0}),\Pi_{0}},
\end{eqnarray}
and
\begin{eqnarray}\label{017**}
\|X_{R^{high}_{0}}\|_{D(s_{0},r_{0},r_{0}),\Pi_{0}}& \leq 1,
\end{eqnarray}
for some $ 0 < s_{0},r_{0} \leq 1$.\ For some fixed constant $ \tau > n+1 $,\ let
\begin{eqnarray}\label{018}
\mathcal{R}_{kl}(0)=\{\xi\in\Pi_{0}: |\langle k,\omega_{0}(\xi)\rangle+\langle l,\Omega_{0}(\xi)\rangle|<\frac{\gamma_{0}\langle l\rangle_{d}}{1+ |k|^{\tau}}, |k|> K_{0},
(k,l)\in \mathcal{Z}\},
\end{eqnarray}
where $ \langle l\rangle_{d}= \max(1,|\sum j^{d}l_{j}|),\mathcal{Z}=\{(k,l)| (k,l)\neq 0, |l|\leq 2\}\subset \mathbb{Z}^{n}\times\mathbb{Z}^{\infty}$,\
and let
\begin{eqnarray}\label{019}
\Pi_{1}=\Pi_{0}\setminus\bigcup_{|k|> K_{0},(k,l)\in \mathcal{Z}}\mathcal{R}_{kl}(0),
\end{eqnarray}
where $  K_{0}$ will be given later.\ Then for each $\xi\in\Pi_{1}$,\ the homological equation has a solution $ F_{0}(x,y,z^{*},\bar{z}^{*},\xi)$ with the estimates
\begin{equation}\label{020}
\begin{split}
\|X_{F_{0}}\|_{D(s_{0}-\sigma_{0},r_{0},r_{0}),\Pi_{1}}&\lessdot \frac{B_{\sigma_{0}}}{\gamma_{0}}\| X_{R_{0}}\|_{D(s_{0},r_{0},r_{0}),\Pi_{0}},\\
\|X_{\widehat{N_{0}}}\|_{D(s_{0},r_{0},r_{0}),\Pi_{1}}&\lessdot \|X_{R_{0}}\|_{D(s_{0},r_{0},r_{0}),\Pi_{0}},
\end{split}
\end{equation}
where $ 0<\sigma_{0}=s_{0}/40\leq\frac{1}{4},B_{\sigma_{0}}=\sum_{k}(1+|k|)^{2}(1+|k|^{\tau})^{4}e^{-2|k|\sigma_{0}},
t=2\tau+n+2 $ and $ a\lessdot b$ means there exists a constant $ c>0$ depending on  $ n $ and $\tau$ such that $ a\leq cb$.
Moreover,\ let
\[
\gamma_{1}=\frac{3}{4}\gamma_{0},M_{1}=\frac{3}{2}M_{0},s_{1}=s_{0}-5\sigma_{0},r_{1}=\eta_{0}r_{0},
\eta^{3}_{0}=\frac{\varepsilon_{0}}{\gamma_{0}\sigma^{t}_{0}},K^{\tau+1}_{0}=\frac{1}{\gamma_{0}},
\varepsilon_{1}=\frac{\varepsilon^{\frac{4}{3}}_{0}}{(\gamma_{0}\sigma^{t}_{0})^{\frac{1}{3}}},
\]
then the new Hamiltonian $ H_{1}(x,y,z,\bar{z},\xi)$ has the form
\[
H_{1}(x,y,z^{*},\bar{z}^{*},\xi)=N_{1}(y,z^{*},\bar{z}^{*},\xi)+R_{1}(x,y,z^{*},\bar{z}^{*},\xi),
\]
where
\begin{eqnarray}
\nonumber N_{1} &=& \widehat{N^{x}_{0}}(\xi)+\sum_{j=1}^{n}\omega_{1}^{j}(\xi)y_{j}
+\sum_{j\in \mathbb{N_{+}}\setminus \mathcal{J}}\Omega_{1}^{j}(\xi)z_{j}\bar{z}_{j}
      + \langle\widehat{N^{z_{0}}_{0}}(\xi),z_{0}\rangle\\
 \nonumber&& +\langle\widehat{N^{\bar{z}_{0}}_{0}}(\xi),\bar{z}_{0}\rangle
 +\langle\widehat{N^{z_{0}z_{0}}_{0}}(\xi)z_{0},z_{0}\rangle
 +\langle\widehat{N^{z_{0}\bar{z}_{0}}_{0}}(\xi)z_{0},\bar{z}_{0}\rangle
+\langle\widehat{N^{\bar{z}_{0}\bar{z}_{0}}_{0}}(\xi)\bar{z}_{0},\bar{z}_{0}\rangle,
\end{eqnarray}
with
\begin{equation}\label{021}
\begin{split}
\omega_{1}^{j}(\xi)&=\omega_{0}^{j}(\xi)+\widehat{N^{y_{j}}_{0}}(\xi),\  1\leq j\leq n,\\
\Omega_{1}^{j}(\xi)&=\Omega_{0}^{j}(\xi)+\widehat{N^{z_{j}\bar{z}_{j}}_{0}}(\xi),\  j\in \mathbb{N_{+}}\setminus \mathcal{J},
\end{split}
\end{equation}
and
\[
R_{1}(x,y,z^{*},\bar{z}^{*},\xi)=\int_0^1\{ (1-t)\widehat{N_{0}}+tR^{low}_{0},F_{0}\}\circ X^t_{F_{0}}\mathrm{d}t +R_{0}^{high}\circ X^t_{F_{0}}\mid_{t=1},
\]
with the following estimates holds:\\

$(\mathbf{a})$ for each $ \xi\in \Pi_{1}$,\ the symplectic map $\Phi_{0}=X_{F_{0}}^{t}\mid_{t=1}$ satisfies:
\[
\begin{split}
\|\Phi_{0}-id\|_{r_{0},D(s_{0}-3\sigma_{0},r_{0},r_{0}/4),\Pi_{1}}&\lessdot
\|X_{F_{0}}\|_{r_{0},D(s_{0}-\sigma_{0},r_{0},r_{0}),\Pi_{0}},\\
\|D\Phi_{0}-Id\|_{r_{0},r_{0},D(s_{0}-4\sigma_{0},r_{0},r_{0}/8),\Pi_{1}}&\lessdot \frac{1}{\sigma_{0}}\| X_{F_{0}}\|_{r_{0},D(s_{0}-\sigma_{0},r_{0},r_{0}),\Pi_{0}};
\end{split}
\]

$(\mathbf{b})$ the frequencies $\omega_{1}(\xi) $ and $\Omega_{1}(\xi)$ satisfy:
\[
\begin{split}
|\omega_{1}(\xi)-\omega_{0}(\xi)|_{\Pi_{0}}+|\Omega_{1}(\xi)-\Omega_{0}(\xi)|_{-\delta,\Pi_{0}}
&\lessdot\| X_{R_{0}}\|_{D(s_{0},r_{0},r_{0}),\Pi_{0}};
\end{split}
\]

$(\mathbf{c})$ the perturbation $R_{1}(x,y,z^{*},\bar{z}^{*},\xi) $ satisfies:
\begin{equation}\label{022}
\begin{split}
\|X_{R_{1}}\|_{D(s_{1},r_{1},r_{1}),\Pi_{1}}\lessdot \varepsilon_{1};
\end{split}
\end{equation}

$(\mathbf{d})$ the measure of the $ \Pi_{1}$ of $ \Pi_{0}$ satisfies:
\begin{equation}\label{023}
\begin{split}
\mathbf{Meas}\Pi_{1} \geq (\mathbf{Meas}\Pi_{0})(1-O(\gamma^{\mu}_{0})),
\end{split}
\end{equation}
where
$
\mu =
\begin{cases}
 1\  &for\   d > 1,\\
        \frac{\kappa}{\kappa+1}\ &for\  d=1,
\end{cases}
$ with $ \kappa> 0$.
\end{lemma}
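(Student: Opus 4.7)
The plan is to execute one classical KAM iterative step in the spirit of Kuksin--P\"{o}schel \cite{ku1996in}, modified so that monomials whose homological divisor vanishes because of the zero frequencies $\Omega_{00}=0$ are \emph{preserved} in $\widehat{N_{0}}$ rather than removed. First I would expand both $F_{0}$ and $R_{0}^{low}$ into Fourier series in $x$ and monomials $y^{\alpha}\{z^{*}\}^{\beta}\{\bar{z}^{*}\}^{\gamma}$ and match coefficients in (\ref{014}). A direct computation shows that $\{N_{0},\cdot\}$ acts on such a monomial at Fourier mode $k$ by multiplication by $\mathbf{i}(\langle k,\omega_{0}\rangle+\langle\beta-\gamma,\Omega_{0}^{*}\rangle)$, where $\Omega_{0}^{*}$ includes the zero entries on $\mathcal{J}$. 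This eigenvalue vanishes precisely when $k=0$ and either $\beta-\gamma$ is supported in $\mathcal{J}$ or $\beta=\gamma=e_{j}$ for some $j\in\mathbb{N}_{+}\setminus\mathcal{J}$; these are exactly the terms in (\ref{015}) that must be absorbed into $\widehat{N_{0}}$. All remaining coefficients are recovered by division by the eigenvalue.

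Next, for $|k|\leq K_{0}$ the eigenvalue is bounded below uniformly on $\Pi_{0}$ by assumptions $\mathbf{(A)}$--$\mathbf{(C)}$, while for $|k|>K_{0}$ the excision (\ref{019}) enforces $|\langle k,\omega_{0}\rangle+\langle l,\Omega_{0}\rangle|\geq \gamma_{0}\langle l\rangle_{d}/(1+|k|^{\tau})$ on $\Pi_{1}$ for $(k,l)\in\mathcal{Z}$. Inserting this and summing against the weighted norm delivers (\ref{020}), with the constant $B_{\sigma_{0}}$ absorbing the polynomial factor $(1+|k|^{\tau})^{4}$ against $e^{-2|k|\sigma_{0}}$. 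Part $(\mathbf{a})$ then follows by integrating the flow of $X_{F_{0}}$ and using the smallness $\varepsilon_{0}\leq c\gamma_{0}$ to keep the flow inside the larger domain. Part $(\mathbf{b})$ is an immediate Cauchy estimate of $\widehat{R_{0}^{y}}(0,\xi)$ and $\widehat{R_{0}^{z_{j}\bar{z}_{j}}}(0,\xi)$ via (\ref{021}). For part $(\mathbf{c})$ I would apply Taylor's formula (\ref{016}): the Poisson-bracket integrand is of size $O(\varepsilon_{0}^{2}/(\gamma_{0}\sigma_{0}^{t}))$, while $R_{0}^{high}\circ X_{F_{0}}^{1}$ restricted to the ball $\|z^{*}\|+\|\bar{z}^{*}\|\leq r_{1}=\eta_{0}r_{0}$ loses the factor $\eta_{0}^{3}$ because its Taylor expansion begins at weighted degree $\geq 3$; combining these with $\eta_{0}^{3}=\varepsilon_{0}/(\gamma_{0}\sigma_{0}^{t})$ produces $\|X_{R_{1}}\|\lessdot\varepsilon_{1}$. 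For the measure estimate $(\mathbf{d})$ I would bound $|\mathcal{R}_{kl}(0)|\lessdot \gamma_{0}\langle l\rangle_{d}/|k|^{\tau+1}$ via the Lipeomorphism from $\mathbf{(A)}$ and a one-dimensional slicing argument, then sum over $|l|\leq 2$ (which contributes a factor polynomial in $|k|^{d}$ through $\langle l\rangle_{d}$ together with assumption $\mathbf{(B)}$) and over $|k|>K_{0}$; the tail converges for $\tau$ sufficiently large, yielding the $\gamma_{0}^{\mu}$ bound, with the $d=1$ case requiring the $\kappa$ parameter from $\mathbf{(C)}$ to recover $\mu=\kappa/(\kappa+1)$.

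The principal technical care at this first step is only the bookkeeping --- correctly identifying each resonant monomial and assigning it to $\widehat{N_{0}}$ in the exact form prescribed by (\ref{015}). The truly novel difficulty of the paper is in fact deferred: because $\widehat{N_{0}}$ contains nontrivial couplings $\widehat{N_{0}^{z_{0}z_{0}}},\widehat{N_{0}^{z_{0}\bar{z}_{0}}},\widehat{N_{0}^{\bar{z}_{0}\bar{z}_{0}}}$ in the finite-dimensional $z_{0}$-block, the subsequent normal forms $N_{m}$ for $m\geq 1$ cease to be diagonal there, forcing the matrix-valued homological equations (\ref{1})--(\ref{3}) and the determinant small-divisor conditions $\mathcal{R}_{1k}(m),\mathcal{R}_{3k}(m),\mathcal{R}_{4k}(m)$ of the iterative lemma. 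Lemma \ref{a} itself is thus essentially a reshuffled classical KAM step, whose specific output $N_{1}$ seeds the non-diagonal iteration developed in the sections that follow.
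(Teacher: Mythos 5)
The paper's own proof of Lemma~\ref{a} is a single sentence deferring entirely to \cite{ku1996in}, so your more detailed sketch is filling in what the authors leave implicit, and your overall route --- expand in Fourier--monomial modes, identify the $\{N_0,\cdot\}$ eigenvalue $\mathbf{i}(\langle k,\omega_0\rangle+\langle\beta-\gamma,\Omega_0^{*}\rangle)$, retain in $\widehat{N_0}$ exactly those $k=0$ monomials where the eigenvalue vanishes because of $\Omega_{00}=0$ or $\beta=\gamma$, invert on the non-resonant set, then estimate bracket/flow/measure --- is precisely the Kuksin--P\"oschel scheme the paper is invoking, and your identification of (\ref{015}) as the resulting normal form is correct.

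There is, however, a genuine slip in the way you close part $(\mathbf{c})$. You claim that $R_0^{high}\circ X^1_{F_0}$ ``loses the factor $\eta_0^3$'' when restricted to the radius-$\eta_0 r_0$ ball. The Hamiltonian \emph{function} of a degree-$3$ term does rescale like $\eta_0^3$, but the quantity controlled by (\ref{022}) is the weighted \emph{vector-field} norm $\|X_{R^{high}}\|$, and with the paper's weights ($\|F_y\|$, $r^{-2}\|F_x\|$, $r^{-1}\|F_{z^{*}}\|$, $r^{-1}\|F_{\bar z^{*}}\|$) a term of grading $d=2|\alpha|+|\beta|+|\gamma|$ scales like $\eta^{d-2}$, so the degree-$3$ part only gains a single factor of $\eta_0$. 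Combined with the stated a priori bound $\|X_{R_0^{high}}\|_{D(s_0,r_0,r_0)}\leq 1$ from (\ref{017**}), your computation would give $\eta_0\cdot 1=(\varepsilon_0/(\gamma_0\sigma_0^t))^{1/3}$, which is strictly larger than $\varepsilon_1=\varepsilon_0^{4/3}(\gamma_0\sigma_0^t)^{-1/3}$ for small $\varepsilon_0$, so the step as written does not close. The argument does close once one uses the sharper (and true, since $R_0^{high}$ is a degree-filtered piece of $R_0$) bound $\|X_{R_0^{high}}\|\lessdot\|X_{R_0}\|=\varepsilon_0$, giving $\eta_0\varepsilon_0=\varepsilon_1$; that is the observation your sketch needs to make explicit. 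A similar caution applies to your assertion that divisors with $|k|\leq K_0$ are ``bounded below uniformly on $\Pi_0$ by assumptions $\mathbf{(A)}$--$\mathbf{(C)}$'': condition (\ref{c1}) only gives a null zero-set, not a uniform quantitative bound, and the bookkeeping of which range of $k$ is excised at this first step versus subsequent ones has to be matched carefully against the truncation at $K_0$.
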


\begin{proof} These results can be seen clearly in \cite{ku1996in}.
\end{proof}

Thus we have
\[
H_{1}=H_{1}(x,y,z^{*},\bar{z}^{*},\xi)=N_{1}(y,z^{*},\bar{z}^{*},\xi)+R_{1}(x,y,z^{*},\bar{z}^{*},\xi),
\]
and
\[
R_{1}(x,y,z^{*},\bar{z}^{*},\xi) =R_{1}^{low}(x,y,z^{*},\bar{z}^{*},\xi)+T_{1}(x,y,z^{*},\bar{z}^{*},\xi)+R_{1}^{high}(x,y,z^{*},\bar{z}^{*},\xi),
\]
where $ R_{1}^{low}(x,y,z^{*},\bar{z}^{*},\xi) $ denotes the truncation of $ R_{1}(x,y,z^{*},\bar{z}^{*},\xi) $ given by

\[
\begin{split}
R_{1}^{low}(x,y,z^{*},\bar{z}^{*},\xi) &=\sum_{|k|\leq K_{1},\alpha\in \mathbb{N}^{n},\beta,\gamma\in \mathbb{N}^{\mathbb{N}},2|\alpha|+|\beta|+|\gamma|\leq2}
\widehat{R}_{1}^{\alpha\beta\gamma}(k,\xi)y^{\alpha}\{z^{*}\}^{\beta}\{\bar{z}^{*}\}^{\gamma}e^{\mathbf{i}\langle k,x\rangle},\\
T_{1}(x,y,z^{*},\bar{z}^{*},\xi)&=\sum_{|k|>K_{1},\alpha\in \mathbb{N}^{n},\beta,\gamma\in \mathbb{N}^{\mathbb{N}},2|\alpha|+|\beta|+|\gamma|\leq2}
\widehat{R}_{1}^{\alpha\beta\gamma}(k,\xi)y^{\alpha}\{z^{*}\}^{\beta}\{\bar{z}^{*}\}^{\gamma}e^{\mathbf{i}\langle k,x\rangle},\\
R_{1}^{high}(x,y,z^{*},\bar{z}^{*},\xi) &=\sum_{\alpha\in \mathbb{N}^{n},\beta,\gamma\in \mathbb{N}^{\mathbb{N}},2|\alpha|+|\beta|+|\gamma|\geq3}
\widehat{R}_{1}^{\alpha\beta\gamma}(k,\xi)y^{\alpha}\{z^{*}\}^{\beta}\{\bar{z}^{*}\}^{\gamma}e^{\mathbf{i}\langle k,x\rangle}.
\end{split}
\]
Then the estimate of $T_{1} $'s norm writes
\begin{eqnarray}\label{024}
\|T_{1}\|_{D(s_{1}-\sigma,r_{1},r_{1}),\Pi_{1}}
 & \leq& \sum_{\substack{\alpha\in \mathbb{N}^{n},\beta,\gamma\in \mathbb{N}^{\mathbb{N}},|k|> K_{1},\\
\|z^{*}\|_{a,p}+\|\bar{z}^{*}\|_{a,p}\leq r_{1}}}|\widehat{R^{\alpha\beta\gamma}_{1}}(k,\xi)|_{\Pi_{1}}
r^{2\alpha}|z^{*}|^{\beta}|\bar{z}^{*}|^{\gamma}e^{|k|(s_{1}-\sigma)}\\
                     \nonumber   & \leq& \|R_{1}\|_{D(s_{1},r_{1},r_{1}),\Pi_{1}}\sum_{l>K_{1}}4^{n}l^{n-1}e^{-l\sigma}\\
\nonumber &\lessdot& K^{n}_{1}e^{-K_{1}\sigma}\|R_{1}\|_{D(s_{1},r_{1},r_{1}),\Pi_{1}} ,
\end{eqnarray}
where $ K_{1}$ determines later.\\

We desire to eliminate the term $R_{1}^{low} $ by the coordinate transformation $ \Phi_{1} $ which is obtained as the time-1-map $ X_{F_{1}}^{t}\mid_{t=1}$ of a Hamiltonian vector field $ X_{F_{1}} $,\ then we have
\begin{eqnarray}
 \nonumber H_{2} &=& H_{1} \circ X^t_{F_{1}}\mid_{t=1} \\
 \nonumber      &=& N_{1}+\{N_{1},F_{1}\}+ \int_0^1 (1-t)\{\{N_{1},F_{1}\},F_{1} \}\circ X^t_{F_{1}}\mathrm{d}t \\
 \nonumber      &&+R_{1}^{low}+\int_0^1\{R_{1}^{low},F_{1}\}\circ X^t_{F_{1}}\mathrm{d}t+(T_{1}+R_{1}^{high})\circ X^t_{F_{1}}\mid_{t=1}\\
 \nonumber      &=& N_{2}+ R_{2}+\{N_{1},F_{1}\}+R_{1}^{low}-\widehat{N}_{1}\\
   \label{025}    &=& N_{2}+ R_{2},
\end{eqnarray}
where
\begin{eqnarray}
\label{026} N_{2}&=&N_{1}+\widehat{N_{1}}\\
\nonumber &=& \sum^{1}_{j=0}\widehat{N^{x}_{j}}(\xi)+\langle \omega_{2},y\rangle+ \sum_{j\in \mathbb{N_{+}}\setminus \mathcal{J}} \Omega^{j}_{2}z_{j}\bar{z}_{j}+
\langle\sum^{1}_{j=0}\widehat{N^{z_{0}}_{j}}(\xi),z_{0}\rangle
+\langle\sum^{1}_{j=0}\widehat{N^{\bar{z}_{0}}_{j}}(\xi),\bar{z}_{0}\rangle
\\
\nonumber &&+\langle\sum^{1}_{j=0}\widehat{N^{z_{0}z_{0}}_{j}}(\xi)z_{0},z_{0}\rangle
+\langle\sum^{1}_{j=0}\widehat{N^{z_{0}\bar{z}_{0}}_{j}}(\xi)z_{0},\bar{z}_{0}\rangle+
\langle\sum^{1}_{j=0}\widehat{N^{\bar{z}_{0}\bar{z}_{0}}_{j}}(\xi)\bar{z}_{0},\bar{z}_{0}\rangle,
\end{eqnarray}
and
\begin{eqnarray}
\label{027}
R_{2}&=&\int_0^1 \{(1-t)\{N_{1},F_{1}\}+tR_{1}^{low},F_{1} \}\circ X^t_{F_{1}}\mathrm{d}t
       +(P_{1}+R_{1}^{high})\circ X^t_{F_{1}}\mid_{t=1}.
\end{eqnarray}
Similarly,\ we can obtain the homological equation
\begin{equation}\label{028}
 \{N_{1},F_{1}\}+ R_{1}^{low} = N_{2}- N_{1}.
\end{equation}

Let $ \partial_{\omega}=\omega\cdot\partial_{x}$.\ Then the homological equation (\ref{028}) decomposes into
\begin{eqnarray}
\label{0281}\partial_{\omega_{1}}F^{z_{0i}z_{0j}}_{1} +\mathbf{i}\sum_{l}\left(\widehat{N^{z_{0i}\bar{z}_{0l}}_{0}}(\xi)F^{z_{0l}z_{0j}}_{1}+ \widehat{F^{z_{0j}{z}_{0l}}_{1}}N^{\bar{z}_{0l}z_{0i}}_{0}(\xi)\right)\\
\nonumber-\mathbf{i}\sum_{l}\left(\widehat{N^{z_{0i}z_{0l}}_{0}}(\xi)F^{z_{0j}\bar{z}_{0l}}_{1}
+\widehat{F^{z_{0j}\bar{z}_{0l}}_{1}}N^{{z}_{0l}z_{0i}}_{0}(\xi)\right)&=& R^{z_{0i}z_{0j}}_{1}, 1\leq i,j\leq b;\\
\label{0282}\partial_{\omega_{1}}F^{z_{0i}\bar{z}_{0j}}_{1}
+2\mathbf{i}\sum_{l}\left(\widehat{N^{\bar{z}_{0i}\bar{z}_{0l}}_{0}}(\xi)F^{z_{0l}z_{0j}}_{1}+ \widehat{F^{z_{0j}{z}_{0l}}_{1}}N^{\bar{z}_{0l}\bar{z}_{0i}}_{0}(\xi)\right)\\
\nonumber
-\mathbf{i}\sum_l
\left(\widehat{N^{{z}_{0i}\bar{z}_{0l}}_{0}}(\xi)F^{z_{0l}\bar{z}_{0j}}_{1} -\widehat{N^{z_{0i}\bar{z}_{0l}}_{0}}(\xi)
F^{{z}_{0l}\bar{z}_{0j}}_{1}\right)\\
\nonumber-2\mathbf{i}\sum_{l}\left(\widehat{N^{z_{0i}{z}_{0l}}_{0}}(\xi)F^{\bar{z}_{0l}\bar{z}_{0j}}_{1}+ \widehat{F^{\bar{z}_{0j}\bar{z}_{0l}}_{1}}N^{{z}_{0l}z_{0i}}_{0}(\xi)\right)&=&R^{z_{0i}\bar{z}_{0j}}_{1}, 1\leq i,j\leq b;\\
\label{0283}\partial_{\omega_{1}}F^{\bar{z}_{0i}\bar{z}_{0j}}_{1}-\mathbf{i}\sum_l
\left(\widehat{N^{{z}_{0i}\bar{z}_{0l}}_{0}}(\xi)F^{\bar{z}_{0l}\bar{z}_{0j}}_{1} +\widehat{F^{\bar{z}_{0l}\bar{z}_{0j}}_{1}}N^{{z}_{0l}\bar{z}_{0i}}_{0}(\xi)\right)\\
\nonumber+\mathbf{i}\sum_{l}\left(\widehat{N^{\bar{z}_{0i}\bar{z}_{0l}}_{0}}(\xi)F^{z_{0j}\bar{z}_{0l}}_{1}
+\widehat{F^{z_{0j}\bar{z}_{0l}}_{1}}N^{\bar{z}_{0l}\bar{z}_{0i}}_{0}(\xi)\right)
&=&R^{\bar{z}_{0i}\bar{z}_{0j}}_{1}, 1\leq i,j\leq b;
\end{eqnarray}
\begin{eqnarray}
\label{0284}\left(\partial_{\omega_{1}}+\mathbf{i}\Omega_{1}^{j}\right)
F^{z_{0i}z_{j}}_{1} +\mathbf{i}\sum_{l}\widehat{N^{z_{0i}\bar{z}_{0l}}_{0}}(\xi)
F^{z_{0l}z_{j}}_{1}\\
\nonumber -2\mathbf{i}\sum_{l}\widehat{N^{z_{0i}z_{0l}}_{0}}(\xi)F^{\bar{z}_{0l}{z}_{j}}_{1}
&=& R^{z_{0i}z_{j}}_{1}, 1\leq i\leq b,j\in \mathbb{N_{+}}\setminus \mathcal{J};\\
\label{0285}\left(\partial_{\omega_{1}}-\mathbf{i}\Omega_{1}^{j}\right)F^{{z}_{0i}\bar{z}_{j}}_{1}
-\mathbf{i}\sum_{l}\widehat{N^{{z}_{0i}\bar{z}_{0l}}_{0}}(\xi)F^{{z}_{0l}\bar{z}_{j}}_{1}\\
\nonumber -2\mathbf{i}\sum_{l}\widehat{N^{z_{0i}z_{0l}}_{0}}(\xi)
F^{\bar{z}_{0l}\bar{z}_{j}}_{1} &=&R^{z_{0i}\bar{z}_{j}}_{1}, 1\leq i\leq b,j\in \mathbb{N_{+}}\setminus \mathcal{J};\\
\label{0286}
\left(\partial_{\omega_{1}}+\mathbf{i}\Omega_{1}^{j}\right)F^{\bar{z}_{0i}{z}_{j}}_{1} +\mathbf{i}\sum_{l}\widehat{N^{z_{0l}\bar{z}_{0i}}_{0}}(\xi)F^{\bar{z}_{0l}{z}_{j}}_{1}\\
\nonumber
+2\mathbf{i}\sum_{l}\widehat{N^{\bar{z}_{0i}\bar{z}_{0l}}_{0}}(\xi)
F^{z_{0l}z_{j}}_{1}&=& R^{\bar{z}_{0i}z_{j}}_{1}, 1\leq i\leq b,j\in \mathbb{N_{+}}\setminus \mathcal{J};\\
\label{0287} \left(\partial_{\omega_{1}}-\mathbf{i}\Omega_{1}^{j}\right)F^{\bar{z}_{0i}\bar{z}_{j}}_{1}
-\mathbf{i}\sum_{l}\widehat{N^{z_{0l}\bar{z}_{0i}}_{0}}(\xi)
F^{\bar{z}_{0l}\bar{z}_{j}}_{1}\\
\nonumber +2\mathbf{i}\sum_{l}\widehat{N^{\bar{z}_{0i}\bar{z}_{0l}}_{0}}(\xi)F^{{z}_{0l}\bar{z}_{j}}_{1}
&=&R^{\bar{z}_{0i}\bar{z}_{j}}_{1}, 1\leq i\leq b,j\in \mathbb{N_{+}}\setminus \mathcal{J};
\end{eqnarray}
\begin{eqnarray}
\label{0311}\partial_{\omega_{1}}F^{z_{0i}}_{1}
&+&\mathbf{i}\sum_{l}\widehat{N^{z_{0i}\bar{z}_{0l}}_{0}}(\xi)F^{z_{0l}}_{1} -2\mathbf{i}\sum_{l}\widehat{N^{z_{0i}z_{0l}}_{0}}(\xi)F^{\bar{z}_{0l}}_{1}\\
\nonumber
&=&R^{{z}_{0i}}_{1}+\mathbf{i}\sum_l\left(\widehat{N^{z_{0l}}_{0}}(\xi)F^{\bar{z}_{0l}z_{0j}}_{1}
-\widehat{N^{\bar{z}_{0l}}_{0}}(\xi)F^{z_{0l}z_{0j}}_{1}\right), 1\leq i\leq b; \\
\label{0312}\partial_{\omega_{1}}F^{\bar{z}_{0i}}_{1} &-&\mathbf{i}\sum_l\widehat{N^{z_{0l}\bar{z}_{0i}}_{0}}(\xi)F^{\bar{z}_{0l}}_{1}
+2\mathbf{i}\sum_l\widehat{N^{\bar{z}_{0i}\bar{z}_{0l}}_{0}}(\xi)F^{z_{0l}}_{1}\\
\nonumber &=&R^{\bar{z}_{0i}}_{1}+\mathbf{i}\sum_l\left(\widehat{N^{z_{0l}}_{0}}(\xi)F^{\bar{z}_{0l}\bar{z}_{0j}}_{1}
-\widehat{N^{\bar{z}_{0l}}_{0}}(\xi)F^{z_{0l}\bar{z}_{0j}}_{1}\right), 1\leq i\leq b;
\end{eqnarray}
\begin{eqnarray}
\label{030}
\partial_{\omega_{1}}F^{z_{i}z_{j}}_{1}+\mathbf{i}(\Omega_{1}^{i}+\Omega_{1}^{j})
F^{z_{i}z_{j}}_{1} &=& R^{z_{i}z_{j}}_{1}, i,j\in \mathbb{N_{+}}\setminus \mathcal{J}; \\
\partial_{\omega_{1}}F^{z_{i}\bar{z}_{j}}_{1}
+\mathbf{i}(\Omega_{1}^{i}-\Omega_{1}^{j})F^{z_{i}\bar{z}_{j}}_{1}
 &=& R^{z_{i}\bar{z}_{j}}_{1} - \delta_{ij}\widehat{N^{z_{i}\bar{z}_{j}}_{1}}(\xi), i,j\in \mathbb{N_{+}}\setminus \mathcal{J};\\
\label{031}
\partial_{\omega_{1}}F^{\bar{z}_{i}\bar{z}_{j}}_{1}
-\mathbf{i}(\Omega_{1}^{i}+\Omega_{1}^{j})F^{\bar{z}_{i}\bar{z}_{j}}_{1}
 &=& R^{\bar{z}_{i}\bar{z}_{j}}_{1}, i,j\in \mathbb{N_{+}}\setminus \mathcal{J};
\end{eqnarray}
and
\begin{eqnarray}
\label{032}(\partial_{\omega_{1}}+\mathbf{i}\Omega_{1}^{j})F^{z_{j}}_{1}
 &=& R^{z_{j}}_{1} +\mathbf{i}\sum_l\left(\widehat{N^{z_{0l}}_{0}}(\xi)F^{\bar{z}_{0l}z_{j}}_{1}
-\widehat{N^{\bar{z}_{0l}}_{0}}(\xi)F^{z_{0l}z_{j}}_{1}\right),j\in \mathbb{N_{+}}\setminus \mathcal{J} ;\\
\label{033}(\partial_{\omega_{1}}-\mathbf{i}\Omega_{1}^{j})F^{\bar{z}_{j}}_{1}
 &=& R^{\bar{z}_{j}}_{1}+\mathbf{i}\sum_l\left(\widehat{N^{z_{0l}}_{0}}(\xi)F^{\bar{z}_{0l}\bar{z}_{j}}_{1}
-\widehat{N^{\bar{z}_{0l}}_{0}}(\xi)F^{z_{0l}\bar{z}_{j}}_{1}\right),j\in \mathbb{N_{+}}\setminus \mathcal{J};\\
\label{034}\partial_{\omega_{1}}F^{y}_{1}
 &=& R^{y}_{1} -\widehat{N^{y}_{1}}(\xi);\\
\label{035}\partial_{\omega_{1}}F^{x}_{1}
 &=& R^{x}_{1}-\widehat{N^{x}_{1}}(\xi)+\mathbf{i}\sum_l\left(\widehat{N^{z_{0l}}_{0}}(\xi)F^{\bar{z}_{0l}}_{1}
-\widehat{N^{\bar{z}_{0}}_{0}}(\xi)F^{z_{0}}_{1}\right).
\end{eqnarray}
In the following we will divide the homological equation (\ref{028}) into four types.\ Before estimating the homological equation (\ref{028}) accurately,\ one firstly introduce Kronecker Product of matrices.
\begin{definition}\label{d1}
Let $ A = (a_{ij})\in C^{m\times n}$ and $ B=(b_{ij})\in C^{p\times q}$.\ Then the following partial matrix
\[
A\otimes B = \begin{pmatrix}
a_{11}B & a_{12}B &\cdots &a_{1n}B\\
a_{21}B & a_{22}B &\cdots &a_{2n}B\\
\vdots & \vdots & &\vdots\\
a_{m1}B & a_{m2}B &\cdots & a_{mn}B
\end{pmatrix} \in  C^{mp\times nq}
\]
is called Kronecker product.
\end{definition}
\begin{definition}\label{d2}
Let $ A = (a_{ij})\in C^{m\times n}$ and note $ a_{i}= (a_{1i},a_{2i},\cdots,a_{mi})^{T}$($ i= 1,2,\cdots,n $).\ Denote
\[
vec(A) = \begin{pmatrix}
a_{1}\\
a_{2}\\
\vdots\\
a_{n}
\end{pmatrix}.
\]
Then $ vec(A)$ is called column straightening of $ A$.
\end{definition}
\begin{lemma}\label{l8}
Let $ A \in C^{m\times n}, B \in C^{n\times p}$ and $ C \in C^{p\times q}$.\ Then
\[
vec(ABC) = (C^{T}\otimes A)vec(B).
\]
\end{lemma}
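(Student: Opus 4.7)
The plan is to exploit the fact that both sides of the claimed identity are linear in the matrix $B$, so it suffices to verify the identity on the rank-one generators $B = e_r e_s^T$ with $1\le r\le n$ and $1\le s\le p$, where $e_r$ denotes the $r$-th standard basis column vector of the appropriate dimension. The general case then follows by writing $B = \sum_{r,s}B_{rs}\, e_r e_s^T$ and invoking linearity of $\mathrm{vec}(\cdot)$ and of the map $B\mapsto ABC$.

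First I would record the basic product rule $(X\otimes Y)(u\otimes v) = (Xu)\otimes(Yv)$ for the Kronecker product applied to tensor-product vectors, which follows by direct inspection of Definition \ref{d1}. Next I would compute both sides on the elementary matrix $B = e_r e_s^T$. By Definition \ref{d2} one has $\mathrm{vec}(e_r e_s^T) = e_s\otimes e_r$, so the right-hand side becomes
\[
(C^T\otimes A)(e_s\otimes e_r) = (C^T e_s)\otimes (A e_r).
\]
For the left-hand side I would use the rank-one factorization $A(e_r e_s^T)C = (Ae_r)(C^T e_s)^T$; its $j$-th column equals $C_{sj}(Ae_r)$, so stacking columns in the order prescribed by Definition \ref{d2} produces $(C^T e_s)\otimes (A e_r)$ as well. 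The two sides therefore agree on elementary matrices, and the identity follows.

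The main obstacle is purely notational bookkeeping: one must ensure that the transpose in $C^T\otimes A$ falls on $C$ rather than on $A$, a choice that depends sensitively on the convention in Definition \ref{d2} of stacking columns rather than rows (the opposite convention would produce instead the mirror identity $\mathrm{vec}(ABC) = (A\otimes C^T)\mathrm{vec}(B^T)$). Once this stacking convention is fixed, the verification on rank-one generators is immediate and the general case follows automatically by linearity, so there is no genuine analytic difficulty in the argument.
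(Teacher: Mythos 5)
Your proof is correct. The paper states Lemma \ref{l8} as a standard Kronecker-product fact without supplying a proof, so there is nothing internal to compare against; what you give is the usual textbook argument. Your reduction to rank-one generators $B = e_r e_s^{T}$ by bilinearity, combined with the mixed-product rule $(X\otimes Y)(u\otimes v) = (Xu)\otimes(Yv)$ and the identification $vec(e_r e_s^{T}) = e_s\otimes e_r$ under the column-stacking convention of Definition \ref{d2}, does close the argument: both sides evaluate to $(C^{T}e_s)\otimes(Ae_r)$ on a generator, and linearity finishes. You also correctly flagged the one place where the convention actually matters --- that stacking columns (rather than rows) is what forces the transpose onto $C$ rather than $A$ --- which is consistent with how Lemma \ref{l9} is later derived as the specializations $C=I$ and $A=I$.
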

\begin{lemma}\label{l9}
Let $ A \in C^{m\times m}, B \in C^{n\times n}$ and $ X \in C^{m\times n}$.\ Then\\
$
(1) vec(AX)= (I_{n}\otimes A)vec(X);\\
$
$
(2) vec(XB) = (B^{T}\otimes I_{m})vec(X);\\
$
$
(3) vec(AX +XB) = (I_{n}\otimes A+B^{T}\otimes I_{m})vec(X).
$
\end{lemma}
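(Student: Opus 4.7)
The plan is to derive all three identities as immediate consequences of Lemma \ref{l8} (the triple product formula $\mathrm{vec}(ABC)=(C^T\otimes A)\mathrm{vec}(B)$) together with the obvious linearity of the column straightening operator $\mathrm{vec}$. No new computational machinery is needed; the point is simply to recognize $AX$, $XB$, and $AX+XB$ as degenerate instances of the triple product $ABC$.

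For part $(1)$, I would take $C=I_n$ in Lemma \ref{l8}, so that $\mathrm{vec}(A\,X\,I_n)=(I_n^T\otimes A)\,\mathrm{vec}(X)=(I_n\otimes A)\,\mathrm{vec}(X)$, which is exactly the claimed identity. For part $(2)$, I would take $A=I_m$ in Lemma \ref{l8}, so that $\mathrm{vec}(I_m\,X\,B)=(B^T\otimes I_m)\,\mathrm{vec}(X)$. Both reductions are legitimate because the dimensions match: $A\in\mathbb{C}^{m\times m}$ and $X\in\mathbb{C}^{m\times n}$ so $AX\in\mathbb{C}^{m\times n}$, and similarly $XB\in\mathbb{C}^{m\times n}$, which is the image type required by $\mathrm{vec}$.

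For part $(3)$, I would simply combine the two preceding identities using the linearity of $\mathrm{vec}$, namely $\mathrm{vec}(Y_1+Y_2)=\mathrm{vec}(Y_1)+\mathrm{vec}(Y_2)$ for matrices of the same shape. Applying this to $Y_1=AX$ and $Y_2=XB$, and then factoring out $\mathrm{vec}(X)$ on the right, gives
\[
\mathrm{vec}(AX+XB)=(I_n\otimes A)\,\mathrm{vec}(X)+(B^T\otimes I_m)\,\mathrm{vec}(X)=\bigl(I_n\otimes A+B^T\otimes I_m\bigr)\mathrm{vec}(X),
\]
as required.

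There is no genuine obstacle here: the lemma is a standard vectorization identity and serves only as an algebraic tool for later sections, where the homological equations for the coefficients $F^{z_{0i}z_{0j}}_1,F^{z_{0i}\bar z_{0j}}_1,F^{\bar z_{0i}\bar z_{0j}}_1$ (see \eqref{0281}--\eqref{0283}) will be recast, via column straightening, as linear systems $\partial_{\omega_1}\mathrm{vec}(F)+(I\otimes A+B^T\otimes I)\,\mathrm{vec}(F)=\mathrm{vec}(R)$ whose solvability is governed by the non-resonance conditions $\mathcal{R}_{1k}(m),\mathcal{R}_{3k}(m),\mathcal{R}_{4k}(m)$ introduced in the guide to the proof. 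In that sense, the only real ``work'' the lemma requires is a correct matching of dimensions, which is immediate from the hypotheses $A\in\mathbb{C}^{m\times m}$, $B\in\mathbb{C}^{n\times n}$, $X\in\mathbb{C}^{m\times n}$.
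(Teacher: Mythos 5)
Your proof is correct and is exactly the standard derivation: specialize Lemma \ref{l8} with $C=I_n$ for (1), with $A=I_m$ for (2), and add by linearity of $\mathrm{vec}$ for (3). The paper states Lemma \ref{l9} without proof (treating it, like Lemma \ref{l8}, as a classical vectorization identity), so there is nothing to contrast with; your argument is precisely the one the authors implicitly rely on.
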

Now we come back to divide the homological equation (\ref{028}).

$ \mathbf{Type\ (\dag1).} $ The corresponding coefficient equation of $F^{z_{0}{z}_{0}}_{1}, F^{z_{0}\bar{z}_{0}}_{1}$ and $F^{\bar{z}_{0}\bar{z}_{0}}_{1}$.\\
Then they become into
\begin{equation}
\left\{
\begin{aligned}\nonumber
\partial_{\omega_{1}}F^{z_{0}{z}_{0}}_{1}&+ \mathbf{i}\left(\widehat{N^{z_{0}\bar{z}_{0}}_{0}}(\xi)F^{z_{0}{z}_{0}}_{1}
+F^{z_{0}{z}_{0}}_{1}\widehat{N^{z_{0}\bar{z}_{0}}_{0}}(\xi)\right)\\
\nonumber
&-\mathbf{i}\left(\widehat{N^{z_{0}{z}_{0}}_{0}}(\xi)F^{z_{0}\bar{z}_{0}}_{1}
+F^{z_{0}\bar{z}_{0}}_{1}\widehat{N^{z_{0}{z}_{0}}_{0}}(\xi)\right)=R^{z_{0}z_{0}}_{1},\\
\nonumber
\partial_{\omega_{1}}F^{z_{0}\bar{z}_{0}}_{1}&+ 2\mathbf{i}\left(\widehat{N^{\bar{z}_{0}\bar{z}_{0}}_{0}}(\xi)F^{z_{0}{z}_{0}}_{1}
+F^{z_{0}{z}_{0}}_{1}\widehat{N^{\bar{z}_{0}\bar{z}_{0}}_{0}}(\xi)\right)\\
\nonumber&-\mathbf{i}\left(\widehat{N^{{z}_{0}\bar{z}_{0}}_{0}}(\xi)F^{\bar{z}_{0}\bar{z}_{0}}_{1}
-F^{\bar{z}_{0}\bar{z}_{0}}_{1}\widehat{N^{\bar{z}_{0}\bar{z}_{0}}_{0}}(\xi)\right)\\
\nonumber&+2\mathbf{i}\left(\widehat{N^{{z}_{0}{z}_{0}}_{0}}(\xi)F^{z_{0}\bar{z}_{0}}_{1}
+F^{\bar{z}_{0}\bar{z}_{0}}_{1}\widehat{N^{\bar{z}_{0}\bar{z}_{0}}_{0}}(\xi)\right)=R^{{z}_{0}\bar{z}_{0}}_{1},\\
\nonumber\partial_{\omega_{1}}F^{\bar{z}_{0}\bar{z}_{0}}_{1}&- \mathbf{i}\left(\widehat{N^{z_{0}\bar{z}_{0}}_{0}}(\xi)F^{\bar{z}_{0}\bar{z}_{0}}_{1}
+F^{\bar{z}_{0}\bar{z}_{0}}_{1}\widehat{N^{z_{0}\bar{z}_{0}}_{0}}(\xi)\right)\\
\nonumber&+\mathbf{i}\left(\widehat{N^{\bar{z}_{0}\bar{z}_{0}}_{0}}(\xi)F^{z_{0}\bar{z}_{0}}_{1}
+F^{{z}_{0}\bar{z}_{0}}_{1}\widehat{N^{\bar{z}_{0}\bar{z}_{0}}_{0}}(\xi)\right)
=R^{\bar{z}_{0}\bar{z}_{0}}_{1}.
\end{aligned}
\right.
\end{equation}
It follows that
\begin{equation}
\left\{
\begin{aligned}\nonumber
\partial_{\omega_{1}}vec(F^{z_{0}{z}_{0}}_{1})&+ \mathbf{i}(I_{b}\otimes\widehat{N^{z_{0}\bar{z}_{0}}_{0}}(\xi)
+\widehat{N^{z_{0}\bar{z}_{0}}_{0}}(\xi)\otimes I_{b})vec(F^{z_{0}{z}_{0}}_{1})\\
\nonumber
&-\mathbf{i}(I_{b}\otimes\widehat{N^{z_{0}{z}_{0}}_{0}}(\xi)+ \widehat{N^{z_{0}{z}_{0}}_{0}}(\xi)\otimes I_{b})vec(F^{z_{0}\bar{z}_{0}}_{1})
=vec(R^{z_{0}z_{0}}_{1}),\\
\nonumber
\partial_{\omega_{1}}vec(F^{z_{0}\bar{z}_{0}}_{1})&+ 2\mathbf{i}(I_{b}\otimes\widehat{N^{\bar{z}_{0}\bar{z}_{0}}_{0}}(\xi) + \widehat{N^{\bar{z}_{0}\bar{z}_{0}}_{0}}(\xi)\otimes I_{b})vec(F^{z_{0}{z}_{0}}_{1})\\
\nonumber&-\mathbf{i}(I_{b}\otimes\widehat{N^{{z}_{0}\bar{z}_{0}}_{0}}(\xi)-\widehat{N^{z_{0}\bar{z}_{0}}_{0}}(\xi)\otimes I_{b})vec(F^{\bar{z}_{0}\bar{z}_{0}}_{1})\\
\nonumber&+2\mathbf{i}(I_{b}\otimes\widehat{N^{{z}_{0}{z}_{0}}_{0}}(\xi)
+\widehat{N^{\bar{z}_{0}\bar{z}_{0}}_{0}}(\xi)\otimes I_{b})vec(F^{z_{0}\bar{z}_{0}}_{1})
=vec(R^{{z}_{0}\bar{z}_{0}}_{1}),\\
\nonumber\partial_{\omega_{1}}vec(F^{\bar{z}_{0}\bar{z}_{0}}_{1})&- \mathbf{i}(I_{b}\otimes\widehat{N^{z_{0}\bar{z}_{0}}_{0}}(\xi)
+\widehat{N^{z_{0}\bar{z}_{0}}_{0}}(\xi)\otimes I_{b}
)vec(F^{\bar{z}_{0}\bar{z}_{0}}_{1})\\
\nonumber&+\mathbf{i}(I_{b}\otimes\widehat{N^{\bar{z}_{0}\bar{z}_{0}}_{0}}(\xi)
+\widehat{N^{\bar{z}_{0}\bar{z}_{0}}_{0}}(\xi)\otimes I_{b})vec(F^{z_{0}\bar{z}_{0}}_{1})
=vec(R^{\bar{z}_{0}\bar{z}_{0}}_{1}).
\end{aligned}
\right.
\end{equation}
and rewrite the equation
\begin{eqnarray}
\label{036}
\mathcal{A}\begin{pmatrix}
vec(F^{z_{0}{z}_{0}}_{1})\\
vec(F^{z_{0}\bar{z}_{0}}_{1})\\
vec(F^{\bar{z}_{0}\bar{z}_{0}}_{1})
\end{pmatrix}
= \begin{pmatrix}
vec(R^{z_{0}z_{0}}_{1})\\
vec(R^{z_{0}\bar{z}_{0}}_{1})\\
vec(R^{\bar{z}_{0}\bar{z}_{0}}_{1})
\end{pmatrix},
\end{eqnarray}
where the operator
\[
\mathcal{A}=
\begin{pmatrix}
\partial_{\omega_{1}}+ \mathbf{i}(A_{1}+A_{2}) & -\mathbf{i}(A_{1}+A_{2}) & 0  \\
2\mathbf{i}(A_{5}+A_{6}) & \partial_{\omega_{1}}-\mathbf{i}(A_{3}-A_{4}) & 2\mathbf{i}(A_{1}+A_{2})\\
0 & \mathbf{i}(A_{5}+A_{6}) & \partial_{\omega_{1}}-2\mathbf{i}(A_{3}+A_{4})
\end{pmatrix}_{3b^{2}\times3b^{2}}
\]
and the corresponding matrix are
\begin{eqnarray}
\nonumber A_{1} &=& I_{b}\otimes\widehat{N^{z_{0}{z}_{0}}_{0}}(\xi), A_{2}= \widehat{N^{z_{0}{z}_{0}}_{0}}(\xi)\otimes I_{b},\\
\nonumber A_{3} &=& I_{b}\otimes\widehat{N^{z_{0}\bar{z}_{0}}_{0}}(\xi), A_{4}= \widehat{N^{\bar{z}_{0}\bar{z}_{0}}_{0}}(\xi)\otimes I_{b},\\
\nonumber A_{5} &=& I_{b}\otimes\widehat{N^{\bar{z}_{0}\bar{z}_{0}}_{0}}(\xi), A_{6} = \widehat{N^{\bar{z}_{0}{z}_{0}}_{0}}(\xi)\otimes I_{b}.
\end{eqnarray}

$ \mathbf{Type\ (\dag2).} $ The corresponding coefficient equation of $F^{z_{0}{z}_{j}}, F^{z_{0}\bar{z}_{j}},  F^{\bar{z}_{0}{z}_{j}}$ and $F^{\bar{z}_{0}\bar{z}_{j}}$ for any $ j\in \mathbb{N_{+}}\setminus \mathcal{J}$.\\
Then they become into
\begin{equation}
\left\{
\begin{aligned}
\nonumber \left(\partial_{\omega_{1}}+\mathbf{i}\Omega_{1}^{j}\right)
F^{z_{0}z_{j}}_{1} +\mathbf{i}\widehat{N^{z_{0}\bar{z}_{0}}_{0}}(\xi)
F^{z_{0}z_{j}}_{1} -2\mathbf{i}\widehat{N^{z_{0}z_{0}}_{0}}(\xi)F^{\bar{z}_{0}{z}_{j}}_{1}
= R^{z_{0}z_{j}}_{1},\\
\nonumber \left(\partial_{\omega_{1}}-\mathbf{i}\Omega_{1}^{j}\right)F^{{z}_{0}\bar{z}_{j}}_{1}
+\mathbf{i}\widehat{N^{{z}_{0}\bar{z}_{0}}_{0}}(\xi)
F^{{z}_{0}\bar{z}_{j}}_{1}-2\mathbf{i}\widehat{N^{z_{0}z_{0}}_{0}}(\xi)
F^{\bar{z}_{0}\bar{z}_{j}}_{1}=R^{z_{0}\bar{z}_{j}}_{1},\\
\nonumber
\left(\partial_{\omega_{1}}+\mathbf{i}\Omega_{1}^{j}\right)F^{\bar{z}_{0}{z}_{j}}_{1} -\mathbf{i}\widehat{N^{z_{0}\bar{z}_{0}}_{0}}(\xi)F^{\bar{z}_{0}{z}_{j}}_{1}
+2\mathbf{i}\widehat{N^{\bar{z}_{0}\bar{z}_{0}}_{0}}(\xi)
F^{z_{0}z_{j}}_{1}= R^{\bar{z}_{0}z_{j}}_{1},\\
\nonumber \left(\partial_{\omega_{1}}-\mathbf{i}\Omega_{1}^{j}\right)F^{\bar{z}_{0}\bar{z}_{j}}_{1}
-\mathbf{i}\widehat{N^{z_{0}\bar{z}_{0}}_{0}}(\xi)
F^{\bar{z}_{0}\bar{z}_{j}}_{1}
+2\mathbf{i}\widehat{N^{\bar{z}_{0}\bar{z}_{0}}_{0}}(\xi)F^{{z}_{0}\bar{z}_{j}}_{1}
=R^{\bar{z}_{0}\bar{z}_{j}}_{1},
\end{aligned}
\right.
\end{equation}
and rewrite the equation
\begin{eqnarray}
\label{037}
\mathcal{B}
\begin{pmatrix}
F^{z_{0}z_{j}}_{1}\\
F^{z_{0}\bar{z}_{j}}_{1}\\
F^{\bar{z}_{0}z_{j}}_{1}\\
F^{\bar{z}_{0}\bar{z}_{j}}_{1}
\end{pmatrix}
= \begin{pmatrix}
R^{z_{0}z_{j}}_{1}\\
R^{z_{0}\bar{z}_{j}}_{1}\\
R^{\bar{z}_{0}z_{j}}_{1}\\
R^{\bar{z}_{0}\bar{z}_{j}}_{1}
\end{pmatrix},
\end{eqnarray}
where the operator
\begin{eqnarray}
 \nonumber \mathcal{B} &=&
\mathbf{i}\begin{pmatrix}
\widehat{N^{z_{0}\bar{z}_{0}}_{0}}(\xi) & 0 & -2\widehat{N^{z_{0}z_{0}}_{0}}(\xi) & 0  \\
0 & \widehat{N^{z_{0}\bar{z}_{0}}_{0}}(\xi) & 0 & -2\widehat{N^{z_{0}z_{0}}_{0}}(\xi)\\
2\widehat{N^{\bar{z}_{0}\bar{z}_{0}}_{0}}(\xi) & 0
& -\widehat{N^{z_{0}\bar{z}_{0}}_{0}}(\xi)) & 0  \\
0 & 2\widehat{N^{\bar{z}_{0}\bar{z}_{0}}_{0}}(\xi) & 0
& -\widehat{N^{z_{0}\bar{z}_{0}}_{0}}(\xi)
\end{pmatrix}_{4b^{2}\times4b^{2}}  \\
\nonumber       &&+
\begin{pmatrix}
\partial_{\omega_{1}}+\mathbf{i}\Omega^{j}_{1} & 0 & 0 & 0 \\
0 & \partial_{\omega_{1}}-\mathbf{i}\Omega^{j}_{1} & 0 & 0 \\
0 & 0 & \partial_{\omega_{1}}+\mathbf{i}\Omega^{j}_{1} & 0  \\
0 & 0 & 0 & \partial_{\omega_{1}}-\mathbf{i}\Omega^{j}_{1}
\end{pmatrix}_{4b^{2}\times4b^{2}}.
\end{eqnarray}

$ \mathbf{Type\ (\dag3).} $  The corresponding coefficient equation of $F^{{z}_{0}}$ and $F^{\bar{z}_{0}}$.\
Then they become into
\begin{eqnarray}
\label{038}
\mathcal{C}\begin{pmatrix}
F^{z_{0}}_{1}\\
F^{\bar{z}_{0}}_{1}
\end{pmatrix}
=  \begin{pmatrix}
{R}^{z_{0}}_{1}\\
{R}^{\bar{z}_{0}}_{1}
\end{pmatrix}+\mathbf{i}\begin{pmatrix}
{F}^{\bar{z}_{0}z_{0}}_{1}{N}^{{z}_{0}}_{0}-{F}^{z_{0}z_{0}}_{1}{N}^{\bar{z}_{0}}_{0}\\
{F}^{\bar{z}_{0}\bar{z}_{0}}_{1}{N}^{{z}_{0}}_{0}-{F}^{z_{0}\bar{z}_{0}}_{1}{N}^{\bar{z}_{0}}_{0}
\end{pmatrix},
\end{eqnarray}
where the operator
\[
\mathcal{C} =\mathbf{i}
\begin{pmatrix}
\widehat{N^{z_{0}\bar{z}_{0}}_{0}}(\xi) &  -2\widehat{N^{z_{0}z_{0}}_{0}}(\xi)  \\
2\widehat{N^{\bar{z}_{0}\bar{z}_{0}}_{0}}(\xi) &  -\widehat{N^{z_{0}\bar{z}_{0}}_{0}}(\xi)
\end{pmatrix}_{2b^{2}\times2b^{2}}+
\begin{pmatrix}
\partial_{\omega_{1}} & 0  \\
0 & \partial_{\omega_{1}}
\end{pmatrix}_{2b^{2}\times2b^{2}}.
\]
$ \mathbf{Type\ (\dag4).} $ The corresponding coefficient equation of $ F^{z_{i}{z}_{j}}, F^{z_{i}\bar{z}_{j}}, F^{\bar{z}_{i}\bar{z}_{j}}$ for $ i,j \in \mathbb{N_{+}}\setminus \mathcal{J}$ and $ F^{z_{j}}_{1}, F^{\bar{z}_{j}}_{1}$ for $ j\in \mathbb{N_{+}}\setminus \mathcal{J}$ and $ F^{y}_{1}, F^{x}_{1}$.\ Then they become into
\begin{eqnarray}
\label{0291} \partial_{\omega_{1}}F^{z_{i}z_{j}}_{1}+\mathbf{i}(\Omega_{1}^{i}+\Omega_{1}^{j})
F^{z_{i}z_{j}}_{1} &=& R^{z_{i}z_{j}}_{1}, \\
\label{0292}
\partial_{\omega_{1}}F^{z_{i}\bar{z}_{j}}_{1}
+\mathbf{i}(\Omega_{1}^{i}-\Omega_{1}^{j})F^{z_{i}\bar{z}_{j}}_{1}
 &=& R^{z_{i}\bar{z}_{j}}_{1} - \delta_{ij}\widehat{N^{z_{i}\bar{z}_{j}}_{1}}(\xi),\\
\label{0293}
\partial_{\omega_{1}}F^{\bar{z}_{i}\bar{z}_{j}}_{1}
-\mathbf{i}(\Omega_{1}^{i}+\Omega_{1}^{j})F^{\bar{z}_{i}\bar{z}_{j}}_{1}
 &=& R^{\bar{z}_{i}\bar{z}_{j}}_{1},\\
\label{0294}(\partial_{\omega_{1}}+\mathbf{i}\Omega_{1}^{j})F^{z_{j}}_{1}
 &=& R^{z_{j}}_{1} +\mathbf{i}\sum_l\left(\widehat{N^{z_{0l}}_{0}}(\xi)F^{\bar{z}_{0l}z_{j}}_{1}
-\widehat{N^{\bar{z}_{0l}}_{0}}(\xi)F^{z_{0l}z_{j}}_{1}\right) ,\\
\label{0295}(\partial_{\omega_{1}}-\mathbf{i}\Omega_{1}^{j})F^{\bar{z}_{j}}_{1}
 &=& R^{\bar{z}_{j}}_{1}+\mathbf{i}\sum_l\left(\widehat{N^{z_{0l}}_{0}}(\xi)F^{\bar{z}_{0l}\bar{z}_{j}}_{1}
-\widehat{N^{\bar{z}_{0l}}_{0}}(\xi)F^{z_{0l}\bar{z}_{j}}_{1}\right),\\
\label{0296}\partial_{\omega_{1}}F^{y}_{1}
 &=& R^{y}_{1} -\widehat{N^{y}_{1}}(\xi),\\
\label{0297}\partial_{\omega_{1}}F^{x}_{1}
 &=& R^{x}_{1}-\widehat{N^{x}_{1}}(\xi)+\mathbf{i}\sum_l\left(\widehat{N^{z_{0l}}_{0}}(\xi)F^{\bar{z}_{0l}}_{1}
-\widehat{N^{\bar{z}_{0l}}_{0}}(\xi)F^{z_{0l}}_{1}\right).
\end{eqnarray}
\begin{remark}
Compared to the results given in \cite{ku1996in},\ one can easily see that type $(\dag4)$ is the standard equations which are essential in their analysis.\ In this paper,\ we have to prove another three type's equations except type $(\dag4)$.\ Moreover,\ to solve the type $(\dag1)$,\ type $(\dag2)$ and type $(\dag3)$ homological equations,\ we must find the inverses of operators $ \mathcal{A},\mathcal{B} $ and $ \mathcal{C} $ and calculate the measures newly.
\end{remark}
\subsection{ The solvability of homological equation (\ref{028})}\label{non} Different from the homological equation (\ref{013}) who is diagonal and in view of the four types homological equations above,\ the formal non-resonant conditions in \cite{ku1996in} are not enough for us to obtain a solution of homological equation (\ref{028}).\ In order to solve it,\ we have to introduce some new non-resonant conditions (2),(3) and (4) which are corresponding to another three type (\dag2),\ type (\dag3) and type (\dag4) homological equations below.\\
Denote $ |\cdot|_{d}$ the determinant of a matrix,\ the non-resonant conditions are \\
(1) $\mathcal{R}_{kl}(1)=\{\xi\in\Pi_{1}:\mid \langle k,\omega_{1}(\xi)\rangle+\langle l,\Omega_{1}(\xi)\rangle\mid<\frac{\gamma_{1}\langle l\rangle_{d}}{|k|^{\tau}},K_{0}<|k|\leq K_{1},
(k,l)\in \mathcal{Z}\}$,\\
where $ K_{1}=|\log\varepsilon_{1}|/(s_{1}-s_{2})$;\\
(2) $\mathcal{R}_{1k}(1)=\{\xi\in\Pi_{1}:\mid |\mathbf{i}\langle k,\omega_{1}\rangle I_{3b^{2}}- B_{11}(\xi)|_{d}\mid < \frac{\gamma_{11}}{|k|^{\tau_{1}}},0 <|k|\leq K_{1}\}$,\\
where $\tau_{1}= 3b^{2}\tau,\ \gamma_{11}=\gamma_{1},\ L_{1}=3b^{2}\times4b^{2}(diam\Pi_{1})^{n-1}$ and the $ 3b^{2}\times3b^{2}$ order matrix $ B_{11}$'s norm is small enough;\\
(3) $
\mathcal{R}_{3k}(1)=\{\xi\in\Pi_{1}:\mid |\mathbf{i}(\langle k,\omega_{1}\rangle \pm \Omega^{j}_{1}) I_{4b^{2}}+B_{31}(\xi)|_{d}\mid<\frac{\gamma_{31}}{|k|^{\tau_{3}}},|k|\leq K_{1}\}$,\\
where $ \tau_{3}= 4b^{2}\tau,\ \gamma_{31}=\gamma_{1}$,\ $ L_{3}=4b^{2}\times5b^{2}(diam\Pi_{1})^{n-1}$ and the $ 4b^{2}\times4b^{2}$ order matrix $ B_{31}$'s norm is small enough;\\
(4) $\mathcal{R}_{4k}(1)=\{\xi\in\Pi_{1}:\mid |\mathbf{i}\langle k,\omega_{1}\rangle I_{2b^{2}}+B_{41}(\xi)|_{d}\mid<\frac{\gamma_{41}}{|k|^{\tau_{4}}},0 <|k|\leq K_{1}\}$,\\
where $ \tau_{4}= 2b^{2}\tau,\ \gamma_{41}=\gamma_{1}$,\ $ L_{4}=2b^{2}\times3b^{2}(diam\Pi_{1})^{n-1}$ and the $ 2b^{2}\times2b^{2}$ order matrix $ B_{41}$'s norm is small enough.\\
Let
\[
\| Q\|_{\Pi}:= \sup_{\xi\in\Pi}\| Q(\xi)\|,
\]
where $ \|\cdot\| $ is the sup-norm of matrix.\\
We also note
\begin{eqnarray}
\label{039}\Pi_{2}=\Pi_{1}\setminus \bigcup_{K_{0}<|k|\leq K_{1},
(k,l)\in \mathcal{Z}}\mathcal{R}_{kl}(1) \setminus \bigcup_{0<|k|\leq K_{1},i=1,4}\mathcal{R}_{ik}(1)\setminus \bigcup_{|k|\leq K_{1},i=1,4}\mathcal{R}_{3k}(1)
\end{eqnarray}
and
\[
 D^{i}_{2}=D(s_{2}+\frac{i}{4}(s_{1}-s_{2},),\frac{i}{4}\eta_{1}r_{1},\frac{i}{4}\eta_{1}r_{1}),\ 0 < i \leq 4.
\]
\begin{lemma} If the parameter $\xi$ satisfies the non-resonant conditions,\ that is $ \xi\in\Pi_{2}$,\ then the homological equation (\ref{028}) has a solution $ F_{1}(x,y,z^{*},\bar{z}^{*},\xi)$ with the estimate
\begin{equation}\label{040}
\|X_{F_{1}}\|_{D^{3}_{2},\Pi_{2}} \lessdot \gamma^{-6}_{1}K^{(10b^{2}+2)\tau+10b^{2}}_{1}(s_{1}-s_{2})^{-n-1}\varepsilon_{1}.
\end{equation}
\end{lemma}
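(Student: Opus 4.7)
The plan is to expand the homological equation (\ref{028}) in Fourier series in the angle $x$, so that for each $k\in\mathbb Z^n$ with $0<|k|\le K_1$ the differential operator $\partial_{\omega_1}$ becomes multiplication by $\mathbf i\langle k,\omega_1(\xi)\rangle$, and the four types of equations reduce to finite-dimensional linear algebraic systems in the Fourier coefficients. I would then solve each type separately in the order $(\dag 1)\to(\dag 2)\to(\dag 4)\to(\dag 3)$, using the previously solved coefficients as source terms in the right-hand sides of the later equations; finally, Cauchy's estimate on the shrunken strip $|\Im x|\le s_2+\tfrac34(s_1-s_2)$ will convert the Fourier bounds into a bound on $X_{F_1}$.

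For type $(\dag 1)$, after taking the $k$-th Fourier coefficient, equation (\ref{036}) becomes
\[
\bigl(\mathbf i\langle k,\omega_1(\xi)\rangle I_{3b^2}-B_{11}(\xi)\bigr)\widehat{\mathcal F}(k)=\widehat{\mathcal R}(k),
\]
where $B_{11}(\xi)$ is assembled from $A_1,\dots,A_6$ and $\widehat{\mathcal F}(k),\widehat{\mathcal R}(k)\in\mathbb C^{3b^2}$ are the Fourier coefficients of the column-straightened unknowns and right-hand sides. The non-resonant condition (2) defining $\mathcal R_{1k}(1)$ supplies
$|\det(\mathbf i\langle k,\omega_1\rangle I_{3b^2}-B_{11})|\ge \gamma_{11}|k|^{-\tau_1}$ on $\Pi_2$, whence Cramer's rule gives
$\bigl\|\bigl(\mathbf i\langle k,\omega_1\rangle I_{3b^2}-B_{11}\bigr)^{-1}\bigr\|\lessdot \gamma_{11}^{-1}|k|^{\tau_1}\cdot L_1^{\,\cdot}$, where the adjugate norm is controlled because the entries are polynomial of fixed degree in $\langle k,\omega_1\rangle$ and in the (uniformly bounded) matrices $A_i$. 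The same recipe handles type $(\dag 2)$ by inverting the $4b^2\times4b^2$ operator $\mathbf i(\langle k,\omega_1\rangle\pm\Omega_1^j)I_{4b^2}+B_{31}(\xi)$ mode-by-mode in $j\in\mathbb N_+\setminus\mathcal J$, using condition (3), and type $(\dag 3)$ by inverting the $2b^2\times2b^2$ operator $\mathbf i\langle k,\omega_1\rangle I_{2b^2}+B_{41}(\xi)$, using condition (4); the right-hand side of the $(\dag 3)$ equation is modified by source terms supplied by the already-obtained $F^{z_0 z_0}_1,F^{z_0\bar z_0}_1,F^{\bar z_0\bar z_0}_1$.

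Type $(\dag 4)$ consists of the ``standard'' homological equations treated in \cite{ku1996in}: equations (\ref{0291})--(\ref{0293}) are scalar with eigenvalues $\mathbf i(\langle k,\omega_1\rangle\pm\Omega_1^i\pm\Omega_1^j)$, handled by the usual small-divisor condition (1) defining $\mathcal R_{kl}(1)$; the mean-zero equations (\ref{0296})--(\ref{0297}) require subtracting off the averages $\widehat{N^y_1},\widehat{N^x_1}$; and (\ref{0294})--(\ref{0295}) again carry source terms built from type $(\dag 2)$ outputs. For each equation the standard bound
$|\widehat{F_1}(k)|\lessdot \gamma_1^{-1}|k|^\tau\,|\widehat{R_1}(k)|e^{|k|\sigma}$ applies (modified by the adjugate/Cramer estimates above), and summing over $|k|\le K_1$ with the Cauchy-type loss produces the factor $(s_1-s_2)^{-n-1}$ on the slightly shrunken strip $D^3_2$, via $\sum_{|k|\le K_1}|k|^{n}e^{-|k|(s_1-s_2)/4}\lessdot (s_1-s_2)^{-n-1}$. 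Combining all exponents—$\tau_1=3b^2\tau,\tau_3=4b^2\tau,\tau_4=2b^2\tau$, plus one extra scalar factor $|k|^\tau$ from type $(\dag 4)$, multiplied across the composed inversions used to solve the cascade $(\dag 1)\to(\dag 3)$ and $(\dag 2)\to(\dag 4)$—accounts for the aggregate exponent $(10b^2+2)\tau+10b^2$ on $|k|$, and for the power $\gamma_1^{-6}$ coming from the six invocations of non-resonant denominators (once each for the four types plus two coupling steps).

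The main obstacle is the Lipschitz dependence on $\xi\in\Pi_2$: one must show that the above inverses, extracted via Cramer's rule, remain Lipschitz in $\xi$ with a quantitative bound of the same order as the sup-bound. This is where the condition that $B_{11},B_{31},B_{41}$ have small norm is crucial, because the adjugate matrices are polynomials in the entries and the differentiated determinant is bounded below uniformly by the same non-resonance excision; the geometric arguments (volume estimates via $L_1,L_3,L_4$) that guarantee the measure excision $\Pi_1\setminus\Pi_2$ is small also guarantee that the Lipschitz constants of the denominators stay within the budget $\gamma_1^{-1}|k|^\tau$. Finally, passing from Fourier bounds on $F_1$ and its $(y,z^*,\bar z^*)$-derivatives to the vector-field norm $\|X_{F_1}\|_{D^3_2,\Pi_2}$ uses the Cauchy estimate in $y$ (shrinkage $r_1\to\tfrac34\eta_1 r_1$) combined with the weight structure of $\|\cdot\|_{a,p}$, exactly as in \cite{ku1996in,po1996ak}, yielding the stated bound (\ref{040}).
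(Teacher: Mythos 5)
Your overall strategy---expand in Fourier modes in $x$, invert the $3b^{2}\times3b^{2}$, $4b^{2}\times4b^{2}$, and $2b^{2}\times2b^{2}$ blocks via Cramer's rule under the non-resonance conditions, feed the outputs into the right-hand sides of the dependent equations, and then sum over $|k|\le K_1$ to pass to the shrunken domain $D^3_2$---matches the paper, which organizes the computation into six Parts. However, your stated solving order $(\dag1)\to(\dag2)\to(\dag4)\to(\dag3)$ cannot be executed as written: the $F^{x}_{1}$ equation inside $(\dag4)$ carries the source $\mathbf{i}\sum_l\bigl(\widehat{N^{z_{0l}}_0}F^{\bar z_{0l}}_1-\widehat{N^{\bar z_{0l}}_0}F^{z_{0l}}_1\bigr)$, which requires $F^{z_0}_1,F^{\bar z_0}_1$ from $(\dag3)$. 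The paper therefore interleaves: $(\dag1)$, then the quadratic $z_i z_j$ piece of $(\dag4)$, then $(\dag2)$, then $(\dag3)$, then the linear $z_j$ piece of $(\dag4)$, and only last the $(F^{x}_1,F^{y}_1)$ piece.

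More seriously, your accounting for $\gamma_1^{-6}$ and for the exponent $(10b^2+2)\tau+10b^2$ is incorrect and your stated Lipschitz budget is too small by one power of $\gamma$ per inversion. The dominant contribution is the single chain $(\dag1)\to(\dag3)\to F^{x}_1$, and each inversion on it costs a \emph{squared} $\gamma^{-1}$ in the Whitney norm $|\cdot|_{\Pi_2}$: writing $A^{-1}=\mathrm{adj}\,A/\det A$, the $\xi$-derivative of $A^{-1}$ produces a $(\det A)^{-2}$ term, so the paper records $\|\partial_{\omega_1}A_{11}^{-1}\|_{\Pi_2}\lessdot\gamma_{11}^{-2}|k|^{2\tau_1+6b^2-1}$ and $\|\partial_{\omega_1}A_{41}^{-1}\|_{\Pi_2}\lessdot\gamma_{41}^{-2}|k|^{2\tau_4+4b^2-1}$. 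Composing these in Part 6 gives $|\widehat{F^{x}_1}(k)|_{\Pi_2}\lessdot\gamma_{61}^{-2}\gamma_{11}^{-2}\gamma_{41}^{-2}|k|^{(10b^2+2)\tau+10b^2-1}(\cdots)$, and with $\tau_1=3b^2\tau$, $\tau_4=2b^2\tau$, plus the scalar $\tau$ for $F^{x}_1$, one gets $2(3b^2+2b^2+1)\tau=(10b^2+2)\tau$. Your count $\tau_1+\tau_3+\tau_4+\tau=(9b^2+1)\tau$ is numerically wrong, involves $\tau_3$ (which does not enter the dominant chain at all), omits the doubling forced by the Lipschitz norm, and your claim that the Lipschitz constants stay within the budget $\gamma_1^{-1}|k|^{\tau}$ is precisely what fails---with that budget one would only obtain $\gamma^{-3}$, not the $\gamma^{-6}$ in the statement.
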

\begin{proof}
Observing that the above four types whose quadrant terms will decide its 1-th terms,\ we then solve the homological equation in the following order and divide it into six parts more clear for convenience.

$ \mathbf{Part\ 1.} $\ Writing expansions for $ F_{1}$ and $ R_{1} $ and by comparison of coefficients of equation (\ref{036}),\ one finds
\[
A_{11}
\begin{pmatrix}
vec(\widehat{F^{z_{0}z_{0}}_{1}}(k,\xi))\\
vec(\widehat{F^{z_{0}\bar{z}_{0}}_{1}}(k,\xi))\\
vec(\widehat{F^{\bar{z}_{0}\bar{z}_{0}}_{1}}(k,\xi))
\end{pmatrix}
= \begin{pmatrix}
vec(\widehat{R^{z_{0}z_{0}}_{1}}(k,\xi))\\
vec(\widehat{R^{z_{0}\bar{z}_{0}}_{1}}(k,\xi))\\
vec(\widehat{R^{\bar{z}_{0}\bar{z}_{0}}_{1}}(k,\xi))
\end{pmatrix},
\]
where
\[
A_{11}(\omega_{1})=\mathbf{i}\begin{pmatrix}
\langle k,\omega_{1}\rangle+ (A_{1}+A_{2}) & -(A_{1}+A_{2}) & 0  \\
2(A_{5}+A_{6}) & \langle k,\omega_{1}\rangle-(A_{3}-A_{4}) & 2(A_{1}+A_{2})\\
0 & (A_{5}+A_{6}) & \langle k,\omega_{1}\rangle-2(A_{3}+A_{4})
\end{pmatrix}_{3b^{2}\times3b^{2}}.
\]
We note
\[
\langle k,\omega_{1}\rangle I_{3b^{2}}=
\begin{pmatrix}
\langle k,\omega_{1}\rangle & 0 & 0 \\
0 & \langle k,\omega_{1}\rangle & 0 \\
0 & 0 & \langle k,\omega_{1}\rangle
\end{pmatrix}_{3b^{2}\times3b^{2}}
\]
and
\[
B_{11}(\xi)=\mathbf{i}\begin{pmatrix}
(A_{1}+A_{2}) & -(A_{1}+A_{2}) & 0  \\
2(A_{5}+A_{6}) & -(A_{3}-A_{4}) & 2(A_{1}+A_{2})\\
0 & (A_{5}+A_{6}) & -2(A_{3}+A_{4})
\end{pmatrix}_{3b^{2}\times3b^{2}},
\]
then $ A_{11}(\xi) =\mathbf{i}(\langle k,\omega_{1}\rangle I_{3b^{2}}+ B_{11}(\xi))$.\\
Let
\[
\begin{split}
\mathcal{{R}}_{1k}(1)&=\{\xi \in \Pi_{1}:\mid |\mathbf{i}\langle k,\omega_{1}\rangle I_{3b^{2}}- B_{11}(\omega^{-1}_{1})|_{d}\mid < \gamma_{11}/|k|^{\tau_{1}},0<|k|\leq K_{1} \},\\
\mathcal{\tilde{R}}_{1k}(1)&=\{\omega_{1} \in \Pi_{1}:\mid |\mathbf{i}\langle k,\omega_{1}\rangle I_{3b^{2}}- B_{11}(\xi)|_{d}\mid < \gamma_{11}/|k|^{\tau_{1}},0<|k|\leq K_{1} \},
\end{split}
\]
for $ \tau_{1}= 3b^{2}\tau,\ \gamma_{11}=\gamma_{1},\ L_{1}=3b^{2}\times4b^{2}(diam\Pi_{1})^{n-1}$.\\
From the relation (\ref{011}) and (\ref{012}) of
\[
\omega_{1}(\xi)=\omega_{0}(\xi)+\widehat{N^{y}_{0}}(\xi),
\]
we deduce the inequality
\[
|\omega_{1}|_{\Pi_{1}}\leq M+1,
\]
and equality
\[
\omega^{-1}_{1}(\xi)=(\omega_{0}(\xi)+\widehat{N^{y}_{0}}(\xi))^{-1}.
\]
Hence,\ taking account of the assumption $ \mathbf{(A)} $ and the equality
\[
\begin{split}
\partial_{\xi}(\omega^{-1}_{1}(\xi))&=\partial_{\xi}(\omega_{0}(\xi)(1+\omega^{-1}_{0}(\xi)\widehat{N^{y}_{0}}(\xi)))\\
&=\partial_{\xi}\omega_{0}(\xi)(1+\omega^{-1}_{0}(\xi)\widehat{N^{y}_{0}}(\xi))
+\omega_{0}(\xi)\partial_{\xi}(1+\omega^{-1}_{0}(\xi)\widehat{N^{y}_{0}}(\xi))\\
&=\partial_{\xi}\omega_{0}(\xi)(1+\omega^{-1}_{0}(\xi)\widehat{N^{y}_{0}}(\xi))
+\omega_{0}(\xi)(\partial_{\xi}\omega^{-1}_{0}(\xi)\widehat{N^{y}_{0}}(\xi)
+\partial_{\xi}\widehat{N^{y}_{0}}(\xi)\omega^{-1}_{0}(\xi)),
\end{split}
\]
we obtain
\[
|\partial_{\xi}(\omega^{-1}_{1})|_{\omega(\Pi_{1})}\leq M + 3LM\varepsilon_{1},
\]
which means the map $ \xi\mapsto \omega_{1}(\xi)$ is a lipeomorphism between $ \Pi_{1}$ and its image.\\

When $ \omega_{1} \in \Pi_{1} \setminus \cup_{0<|k|\leq K_{1}}\mathcal{\tilde{R}}_{1k}(1)$,\ since we have assumed
\[
\mid |A_{11}(\omega_{1})|_{d} \mid =\mid \mathcal{M}_{k}(\omega_{1}) \mid > \gamma_{11}/|k|^{\tau_{1}},
\]
it implies $ A_{11}^{-1}(\omega_{1})$ exists,\ and making use of the formula
\[
A_{11}^{-1}(\omega_{1})=\frac{adj A_{11}(\omega_{1})}{\mathcal{M}_{k}(\omega_{1})},
\]
where $ adj A$ means the adjoint matrix of $ A$,\ it is easy to see that there exist two constants $c_{1}$,$c_{2}$ such that
\[
\parallel A_{11}(\omega_{1}) \parallel_{\Pi_{2}} \leq c_{1}|k|,
\]
\[
\parallel A_{11}^{-1}(\omega_{1}) \parallel_{\Pi_{2}} \leq c_{2}\frac{|k|^{3b^{2}-1}}{\gamma_{11}/|k|^{\tau_{1}}} \leq c_{2}\gamma^{-1}_{11}|k|^{\tau_{1}+3b^{2}-1}.
\]
By a direct computation, we can prove that
\[
|\frac{d{\mathcal{M}_{k}(\omega_{1})}}{d^{3}\omega^{1}_{1}}|\geq \frac{1}{2}(3b^{2})!|k_{1}|^{3b^{2}},
\]
where $|k_{1}|=\max(|k_{1}|,...|k_{n}|) $,\
and
\[
\begin{split}
\mathbf{Meas} \mathcal{\tilde{R}}_{1k}(1)&\leq L_{1} (\gamma_{1}^{\frac{1}{3b^{2}}}/ |k|^{\tau}),0<|k|\leq K_{1}.
\end{split}
\]
Then,\ for the lipeomorphism of $ \xi\mapsto \omega_{1}(\xi)$ in $ \Pi_{1}$,\ when $ \xi \in \Pi_{1} \setminus \cup_{0<|k|\leq K_{1}}\mathcal{R}_{1k}(1)$,\ we have
\[
\begin{split}
\mathbf{Meas} \mathcal{R}_{1k}(1 )&\leq L_{1} (\gamma_{1}^{\frac{1}{3b^{2}}}/ |k|^{\tau}),0<|k|\leq K_{1}.
\end{split}
\]
And we also have
\[
\begin{split}
\parallel \partial_{\omega_{1}}A_{11}^{-1}(\omega_{1}) \parallel_{\Pi_{2}}
& \leq |\partial_{\omega_{1}}\mathcal{M}^{-1}_{k}(\omega_{1})|\parallel{adj A_{11}(\omega_{1})} \parallel_{\Pi_{2}}+ |\mathcal{M}^{-1}_{k}(\omega_{1})|\parallel \partial_{\omega_{1}}{adj A_{11}(\omega^{1})}\parallel_{\Pi_{2}} \\
& \leq c_{1}\frac{|k|^{2\tau_{1}+6b^{2}-1}}{\gamma^{2}_{11}}+c_{2} \frac{|k|^{\tau_{1}}}{\gamma_{11}}\parallel \partial_{\omega_{1}}(-|k\cdot\omega_{1}|^{2}I_{3b^{2}}+C_{11}(\omega_{1})) \parallel_{\Pi_{2}}\\
& \leq c_{1}\frac{|k|^{2\tau_{1}+6b^{2}-1}}{\gamma^{2}_{11}}
+c_{2}\frac{|k|^{\tau_{1}}}{\gamma_{11}}(|k|^{3b^{2}-1}+|k|^{3b^{2}-2}+...+|k|+\varepsilon^{\frac{1}{12}}_{1})\\
& \leq c_{3}\frac{|k|^{2\tau_{1}+6b^{2}-1}}{\gamma^{2}_{11}},\\
\end{split}
\]
where $ adj A_{11}=-|k\cdot\omega_{1}|^{2}I_{3b^{2}}+C_{11}$ and the constant $ c_{3}$.\\
Thus,\ one obtains
\[
\begin{pmatrix}
|vec(\widehat{F^{z_{0}z_{0}}_{1}}(k,\xi))|\\
|vec(\widehat{F^{z_{0}\bar{z}_{0}}_{1}}(k,\xi))|\\
|vec(\widehat{F^{\bar{z}_{0}\bar{z}_{0}}_{1}}(k,\xi))|
\end{pmatrix}
\leq c_{2}\gamma^{-1}_{11}|k|^{3b^{2}\tau+3b^{2}-1}
\begin{pmatrix}
|vec(\widehat{R^{z_{0}z_{0}}_{1}}(k,\xi))|\\
|vec(\widehat{R^{z_{0}\bar{z}_{0}}_{1}}(k,\xi))|\\
|vec(\widehat{R^{\bar{z}_{0}\bar{z}_{0}}_{1}}(k,\xi))|
\end{pmatrix},
\]
and
\[
\begin{pmatrix}
|vec(\widehat{F^{z_{0}z_{0}}_{1}}(k))|_{\Pi_{2}}\\
|vec(\widehat{F^{z_{0}\bar{z}_{0}}_{1}}(k))|_{\Pi_{2}}\\
|vec(\widehat{F^{\bar{z}_{0}\bar{z}_{0}}_{1}}(k))|_{\Pi_{2}}
\end{pmatrix}
\leq c_{3}\gamma^{-2}_{11}|k|^{6b^{2}\tau+6b^{2}-1}
\begin{pmatrix}
|vec(\widehat{R^{z_{0}z_{0}}_{1}}(k))|_{\Pi_{2}}\\
|vec(\widehat{R^{z_{0}\bar{z}_{0}}_{1}}(k))|_{\Pi_{2}}\\
|vec(\widehat{R^{\bar{z}_{0}\bar{z}_{0}}_{1}}(k))|_{\Pi_{2}}
\end{pmatrix}.
\]

$ \mathbf{Part\ 2.} $\ Considering the quadratic terms of $ z_{i}$ and $\bar{z}_{j}$ for $ i\neq j \in \mathbb{N}_{+}\setminus \mathcal{J}$ and comparing the Fourier coefficients,\ (\ref{0291})-(\ref{0293}) yield
\[
\begin{pmatrix}
\mathbf{i}\widehat{F^{z_{i}z_{j}}_{1}}(k,\xi)\\
\mathbf{i}\widehat{F^{z_{i}\bar{z}_{j}}_{1}}(k,\xi)\\
\mathbf{i}\widehat{F^{\bar{z}_{i}\bar{z}_{j}}_{1}}(k,\xi)
\end{pmatrix}
=
\begin{pmatrix}
\frac{\widehat{R^{z_{i}z_{j}}_{1}}(k,\xi)}{\langle k,\omega_{1}\rangle+\Omega^{i}_{1}+\Omega^{j}_{1}}\\
\frac{\widehat{R^{z_{i}\bar{z}_{j}}_{1}}(k,\xi)}{\langle k,\omega_{1}\rangle+\Omega^{i}_{1}-\Omega^{j}_{1}}\\
\frac{\widehat{R^{\bar{z}_{i}\bar{z}_{j}}_{1}}(k,\xi)}{\langle k,\omega_{1}\rangle-\Omega^{i}_{1}-\Omega^{j}_{1}}
\end{pmatrix}.
\]
Let $ |l|=2 $.\ Then for any $ \xi \in \Pi_{1} \setminus \cup_{K_{0}<|k|\leq K_{1}}\mathcal{R}_{lk}(1)$,\ we have
\[
\begin{pmatrix}
|\widehat{F^{z_{i}z_{j}}_{1}}(k,\xi)|\\
|\widehat{F^{z_{i}\bar{z}_{j}}_{1}}(k,\xi)|\\
|\widehat{F^{\bar{z}_{i}\bar{z}_{j}}_{1}}(k,\xi)|
\end{pmatrix}
\leq \gamma^{-1}_{21}|k|^{\tau}
\begin{pmatrix}
|\widehat{R^{z_{i}z_{j}}_{1}}(k,\xi)|\\
|\widehat{R^{z_{i}\bar{z}_{j}}_{1}}(k,\xi)|\\
|\widehat{R^{\bar{z}_{i}\bar{z}_{j}}_{1}}(k,\xi)|
\end{pmatrix},
\]
and
\[
\begin{pmatrix}
|\widehat{F^{z_{i}z_{j}}_{1}}(k)|_{\Pi_{2}}\\
|\widehat{F^{z_{i}\bar{z}_{j}}_{1}}(k)|_{\Pi_{2}}\\
|\widehat{F^{\bar{z}_{i}\bar{z}_{j}}_{1}}(k)|_{\Pi_{2}}
\end{pmatrix}
\leq \gamma^{-2}_{21}|k|^{2\tau+1}
\begin{pmatrix}
|\widehat{R^{z_{i}z_{j}}_{1}}(k)|_{\Pi_{2}}\\
|\widehat{R^{z_{i}\bar{z}_{j}}_{1}}(k)|_{\Pi_{2}}\\
|\widehat{R^{\bar{z}_{i}\bar{z}_{j}}_{1}}(k)|_{\Pi_{2}}
\end{pmatrix}.
\]

$ \mathbf{Part\ 3.} $\ Considering the quadratic terms of $ z_{0}$ and $\bar{z}_{j}$ for $ j\in \mathbb{N_{+}}\setminus \mathcal{J} $ and
comparing the Fourier coefficients,\ (\ref{037}) yields

\[
A_{31}
\begin{pmatrix}
\widehat{F^{z_{0}z_{j}}_{1}}(k,\xi)\\
\widehat{F^{z_{0}\bar{z}_{j}}_{1}}(k,\xi)\\
\widehat{F^{\bar{z}_{0}z_{j}}_{1}}(k,\xi)\\
\widehat{F^{\bar{z}_{0}\bar{z}_{j}}_{1}}(k,\xi)
\end{pmatrix}
= \begin{pmatrix}
\widehat{R^{z_{0}z_{j}}_{1}}(k,\xi)\\
\widehat{R^{z_{0}\bar{z}_{j}}_{1}}(k,\xi)\\
\widehat{R^{\bar{z}_{0}z_{j}}_{1}}(k,\xi)\\
\widehat{R^{\bar{z}_{0}\bar{z}_{j}}_{1}}(k,\xi)
\end{pmatrix},
\]
where
\begin{align*}
A_{31}&=
\mathbf{i}\begin{pmatrix}
\langle k,\omega_{1}\rangle+\Omega^{j}_{1} & 0 & 0 & 0 \\
0 & \langle k,\omega_{1}\rangle-\Omega^{j}_{1} & 0 & 0 \\
0 & 0 & \langle k,\omega_{1}\rangle+\Omega^{j}_{1} & 0  \\
0 & 0 & 0 & \langle k,\omega_{1}\rangle-\Omega^{j}_{1}
\end{pmatrix}_{4b^{2}\times4b^{2}}  +B_{31}
\end{align*}
with
\[ B_{31}=\begin{pmatrix}
\mathbf{i}\widehat{N^{z_{0}\bar{z}_{0}}_{0}}(\xi) & 0 & -2\widehat{N^{z_{0}z_{0}}_{0}}(\xi) & 0  \\
0 & \widehat{N^{z_{0}\bar{z}_{0}}_{0}}(\xi) & 0 & -2\widehat{N^{z_{0}z_{0}}_{0}}(\xi)\\
2\widehat{N^{\bar{z}_{0}\bar{z}_{0}}_{0}}(\xi) & 0
& -\widehat{N^{z_{0}\bar{z}_{0}}_{0}}(\xi) & 0  \\
0 & 2\widehat{N^{\bar{z}_{0}\bar{z}_{0}}_{0}}(\xi) & 0
& -\widehat{N^{z_{0}\bar{z}_{0}}_{0}})(\xi)
\end{pmatrix}_{4b^{2}\times4b^{2}}.
\]
Note
\[
\begin{split}
\mathcal{R}_{3k}(1)&=\bigcup_{|j|\leq 2K_{1}}\mathcal{R}_{3jk}(1),|k|\leq K_{1},\\
\mathcal{\tilde{R}}_{3k}(1)&=\bigcup_{|j|\leq 2K_{1}}\mathcal{\tilde{R}}_{3jk}(1), |k|\leq K_{1},
\end{split}
\]
where
\[
\begin{split}
\mathcal{R}_{3jk}(1)&=\{ \xi \in \Pi_{1}:\mid |\mathbf{i}(\langle k,\omega_{1}\rangle\pm\Omega^{j}_{1}) I_{4b^{2}}+B_{31}(\omega^{-1}_{1})|_{d}\mid<\gamma_{31}/|k|^{\tau_{3}},|k|\leq K_{1},|j|\leq 2|k| \},\\
\mathcal{\tilde{R}}_{3jk}(1)&=\{\omega_{1} \in \Pi_{1}:\mid |\mathbf{i}(\langle k,\omega_{1}\rangle\pm\Omega^{j}_{1}) I_{4b^{2}}+B_{31}|_{d}\mid<\gamma_{31}/|k|^{\tau_{3}},|k|\leq K_{1},|j|\leq 2|k| \},
\end{split}
\]
for $ \tau_{3}= 4b^{2}\tau,\gamma_{31}=\gamma_{1},L_{3}=4b^{2}\times5b^{2}(diam\Pi_{1})^{n-1}$.\\
When $ \omega_{1} \in \Pi_{1} \setminus \cup_{|k|\leq K_{1}}\mathcal{R}_{3k}(1)$,\ similar with
$ \mathbf{Part\ 1}$,\ for constant $ c_{4}, c_{5} $,\ using the formula
\[
A_{31}^{-1}(\omega_{1})=\frac{adj A_{31}(\omega_{1})}{\mathcal{M}_{k}(\omega_{1})},
\]
we then have
\[
\parallel A_{31}^{-1}(\omega_{1}) \parallel_{\Pi_{2}} \leq c_{5}\frac{|k|^{4b^{2}-1}}{\gamma_{31}/|k|^{\tau_{3}}} \leq c_{5}\gamma^{-1}_{31}|k|^{\tau_{3}+4b^{2}-1}.
\]
Similarly,\ we get the measure estimates
\[
\begin{split}
\mathbf{Meas} \mathcal{\tilde{R}}_{3jk}(1)&\leq L_{3} (\gamma_{1}^{\frac{1}{4b^{2}}}/ |k|^{\tau}),\\
\mathbf{Meas} \mathcal{R}_{3jk}(1)&\leq L_{3} (\gamma_{1}^{\frac{1}{4b^{2}}}/ |k|^{\tau}).
\end{split}
\]
Summing up all $ j$,\ we finally obtain
\[
\begin{split}
\mathbf{Meas} \mathcal{\tilde{R}}_{3k}(1)&\leq 2L_{3}(\gamma_{1}^{\frac{1}{4b^{2}}}/ |k|^{\tau-1}),\\
\mathbf{Meas} \mathcal{R}_{3k}(1)&\leq 2L_{3}(\gamma_{1}^{\frac{1}{4b^{2}}}/ |k|^{\tau-1}),
\end{split}
\]
and
\[
\begin{split}
\parallel \partial_{\omega_{1}}A_{31}^{-1}(\omega_{1}) \parallel_{\Pi_{2}}
& \leq |\partial_{\omega_{1}}\mathcal{M}^{-1}_{k}(\omega_{1})|\parallel{adj A_{31}(\omega_{1})} \parallel_{\Pi_{2}}+ |\mathcal{M}^{-1}_{k}(\omega_{1})|\parallel \partial_{\omega_{1}}{adj A_{31}(\omega_{1})}\parallel_{\Pi_{2}} \\
& \leq c_{4}\frac{|k|^{2\tau_{3}+8b^{2}-1}}{\gamma^{2}_{31}}+c_{5} \frac{|k|^{\tau_{3}}}{\gamma_{31}}\parallel \partial_{\omega_{1}}(-|k\cdot\omega_{1}|^{3}I_{4b^{2}}+C_{31}(\omega_{1})) \parallel_{\Pi_{2}}\\
& \leq c_{4}\frac{|k|^{2\tau_{3}+8b^{2}-1}}{\gamma^{2}_{31}}
+c_{5}\frac{|k|^{\tau_{3}}}{\gamma_{31}}(|k|^{4b^{2}-1}+|k|^{4b^{2}-2}+...+|k|+\varepsilon^{\frac{1}{16}}_{1})\\
& \leq c_{6}\frac{|k|^{2\tau_{3}+8b^{2}-1}}{\gamma^{2}_{31}},\\
\end{split}
\]
where $ adj A_{31}=-|k\cdot\omega_{1}|^{3}I_{4b^{2}}+C_{31}$ and constant $c_{6}$.\\
Consequently,\ for any $ \xi \in \Pi_{1}\setminus\bigcup_{|k|\leq K_{1}}\mathcal{R}_{3k}(1)$,\ we have
\[
\begin{pmatrix}
|\widehat{F^{z_{0}z_{j}}_{1}}(k,\xi)|\\
|\widehat{F^{z_{0}\bar{z}_{j}}_{1}}(k,\xi)|\\
|\widehat{F^{\bar{z}_{0}z_{j}}_{1}}(k,\xi)|\\
|\widehat{F^{\bar{z}_{0}\bar{z}_{j}}_{1}}(k,\xi)|
\end{pmatrix}
\leq c_{5}\gamma^{-1}_{31}|k|^{4b^{2}\tau+4b^{2}-1}
\begin{pmatrix}
|\widehat{R^{z_{0}z_{j}}_{1}}(k,\xi)|\\
|\widehat{R^{z_{0}\bar{z}_{j}}_{1}}(k,\xi)|\\
|\widehat{R^{\bar{z}_{0}z_{j}}_{1}}(k,\xi)|\\
|\widehat{R^{\bar{z}_{0}\bar{z}_{j}}_{1}}(k,\xi)|
\end{pmatrix},
\]
and
\[
\begin{pmatrix}
|\widehat{F^{z_{0}z_{j}}_{1}}(k)|_{\Pi_{2}}\\
|\widehat{F^{z_{0}\bar{z}_{j}}_{1}}(k)|_{\Pi_{2}}\\
|\widehat{F^{\bar{z}_{0}z_{j}}_{1}}(k)|_{\Pi_{2}}\\
|\widehat{F^{\bar{z}_{0}\bar{z}_{j}}_{1}}(k)|_{\Pi_{2}}
\end{pmatrix}
\leq c_{6}\gamma^{-2}_{31}|k|^{8b^{2}\tau+8b^{2}-1}
\begin{pmatrix}
|\widehat{R^{z_{0}z_{j}}_{1}}(k)|_{\Pi_{2}}\\
|\widehat{R^{z_{0}\bar{z}_{j}}_{1}}(k)|_{\Pi_{2}}\\
|\widehat{R^{\bar{z}_{0}z_{j}}_{1}}(k)|_{\Pi_{2}}\\
|\widehat{R^{\bar{z}_{0}\bar{z}_{j}}_{1}}(k)|_{\Pi_{2}}
\end{pmatrix}.
\]

$ \mathbf{Part\ 4.} $\ Considering equation (\ref{038}) and comparing the Fourier coefficients of two side,\ (\ref{038}) reduces to
\[
A_{41}
\begin{pmatrix}
\widehat{F^{z_{0}}_{1}}(k,\xi)\\
\widehat{F^{\bar{z}_{0}}_{1}}(k,\xi)\\
\end{pmatrix}
= \begin{pmatrix}
\widehat{\tilde{R}^{z_{0}}_{1}}(k,\xi) \\
\widehat{\tilde{R}^{\bar{z}_{0}}_{1}}(k,\xi)
\end{pmatrix},
\]
where
\[
A_{41}= \mathbf{i}
\begin{pmatrix}
\langle k,\omega_{1}\rangle & 0  \\
0 & \langle k,\omega_{1}\rangle
\end{pmatrix}_{2b^{2}\times2b^{2}} + B_{41}
\]
with
\[
B_{41}=
\begin{pmatrix}\mathbf{i}
\widehat{N^{z_{0}\bar{z}_{0}}_{0}}(\xi) &  -2\widehat{N^{z_{0}z_{0}}_{0}}(\xi)  \\
2\widehat{N^{\bar{z}_{0}\bar{z}_{0}}_{0}}(\xi) &  -\widehat{N^{z_{0}\bar{z}_{0}}_{0}}(\xi)
\end{pmatrix}_{2b^{2}\times2b^{2}},
\]
and
\[
\begin{pmatrix}
\widehat{\tilde{R}^{z_{0}}_{1}}(k,\xi)\\
\widehat{\tilde{R}^{\bar{z}_{0}}_{1}}(k,\xi)
\end{pmatrix}
= \begin{pmatrix}
\widehat{R^{z_{0}}_{1}}(k,\xi)-\mathbf{i}(\widehat{N^{z_{0}}_{0}}(\xi)\widehat{F^{z_{0}\bar{z}_{0}}_{1}}(k,\xi)
+2\widehat{N^{\bar{z}_{0}}_{0}}(\xi)\widehat{F^{z_{0}z_{0}}_{1}}(k,\xi)) \\
\widehat{R^{\bar{z}_{0}}_{1}}(k,\xi)+\mathbf{i}(\widehat{N^{\bar{z}_{0}}_{0}}(\xi)
\widehat{F^{z_{0}\bar{z}_{0}}_{1}}(k,\xi)
-2\widehat{N^{z_{0}}_{0}}(\xi)\widehat{F^{\bar{z}_{0}\bar{z}_{0}}_{1}}(k,\xi))
\end{pmatrix}.
\]
Let
\[
\begin{split}
\mathcal{R}_{4k}(1)&=\{\xi \in \Pi_{1}:\mid |\mathbf{i}\langle k,\omega_{1}\rangle I_{2b^{2}}+B_{41}(\omega^{-1}_{1})|_{d}\mid<\gamma_{41}/|k|^{\tau_{4}},0<|k|\leq K_{1} \},\\
\mathcal{\tilde{R}}_{4k}(1)&=\{\omega_{1} \in \Pi_{1}:\mid |\mathbf{i}\langle k,\omega_{1}\rangle I_{2b^{2}}+B_{41}|_{d}\mid<\gamma_{41}/|k|^{\tau_{4}},0<|k|\leq K_{1} \},
\end{split}
\]
where $ \tau_{4}= 2b^{2}\tau,\gamma_{42}=\gamma_{1},L_{4}=3b^{2}\times4b^{2}(diam\Pi_{1})^{n-1}$.\\
When $ \omega_{1} \in \Pi_{1} \setminus \bigcup_{0<|k|\leq K_{1}}\mathcal{\tilde{R}}_{4k}(1)$,\ for some constants $c_{7}, c_{8}$,\ it is easy to see that
\[
\parallel A_{41}(\omega_{1}) \parallel_{\Pi_{2}} \leq c_{7}|k|,k\neq 0,
\]

\[
\parallel A_{41}^{-1}(\omega_{1}) \parallel_{\Pi_{2}} \leq c_{8}\frac{|k|^{2b^{2}-1}}{\gamma_{41}/|k|^{\tau_{4}}} \leq c_{8}\gamma^{-1}_{41}|k|^{\tau_{4}+b^{2}-1},
\]
together with
\[
\begin{split}
\mathbf{Meas} \mathcal{\tilde{R}}_{4k}(1)&\leq L_{4} (\gamma_{1}^{\frac{1}{2b^{2}}}/ |k|^{\tau}),\\
\mathbf{Meas} \mathcal{R}_{4k}(1)&\leq L_{4} (\gamma_{1}^{\frac{1}{2b^{2}}}/ |k|^{\tau}).
\end{split}
\]
Moreover,\ one has
\[
\begin{split}
\parallel \partial_{\omega_{1}}A_{41}^{-1}(\omega_{1}) \parallel_{\Pi_{2}}
& \leq |\partial_{\omega_{1}}\mathcal{M}^{-1}_{k}(\omega_{1})|\parallel{adj A_{41}(\omega_{1})} \parallel_{\Pi_{2}}+ |\mathcal{M}^{-1}_{k}(\omega_{1})|\parallel \partial_{\omega_{1}}{adj A_{41}(\omega_{1})}\parallel_{\Pi_{2}} \\
& \leq c_{7}\frac{|k|^{2\tau_{4}+4b^{2}-1}}{\gamma^{2}_{41}}+ c_{8}\frac{|k|^{\tau_{4}}}{\gamma_{41}}\parallel \partial_{\omega_{1}}(-|k\cdot\omega_{1}|E_{2b^{2}}+C_{41}(\omega_{1})) \parallel_{\Pi_{2}}\\
& \leq c_{7}\frac{|k|^{2\tau_{4}+4b^{2}-1}}{\gamma^{2}_{41}}
+c_{8}\frac{|k|^{\tau_{4}}}{\gamma_{41}}(|k|^{2b^{2}-1}+...+|k|+\varepsilon^{\frac{1}{16}}_{1})\\
& \leq c_{9}\frac{|k|^{2\tau_{4}+4b^{2}-1}}{\gamma^{2}_{41}},\\
\end{split}
\]
where $ adj A_{41}=-|k\cdot\omega_{1}|I_{2b^{2}}+C_{41}$ and constant $ c_{9}$.\\
Following from estimates of $Part 1$,\ we also have
\begin{eqnarray}
\nonumber|\widehat{\tilde{R}^{z_{0}}_{1}}(k,\xi)|&\leq& |\widehat{R^{z_{0}}_{1}}(k,\xi)|
+c_{2}\gamma^{-1}_{11}|k|^{3b^{2}\tau+3b^{2}-1}\left(|\widehat{R^{z_{0}\bar{z}_{0}}_{1}}(k,\xi)|
+2|\widehat{R^{z_{0}z_{0}}_{1}}(k,\xi)|\right)\varepsilon_{0}\\
\nonumber &\leq& |\widehat{R^{z_{0}}_{1}}(k,\xi)|
+\gamma^{-1}_{11}|k|^{3b^{2}\tau+3b^{2}-1}\left(|\widehat{R^{z_{0}\bar{z}_{0}}_{1}}(k,\xi)|
+|\widehat{R^{z_{0}z_{0}}_{1}}(k,\xi)|\right),\\
\nonumber|\widehat{\tilde{R}^{\bar{z}_{0}}_{1}}(k,\xi)|&\leq&|\widehat{R^{\bar{z}_{0}}_{1}}(k,\xi)|
+c_{2}\varepsilon_{0}\gamma^{-1}_{11}|k|^{3b^{2}\tau+3b^{2}-1}
\left(|\widehat{R^{z_{0}\bar{z}_{0}}_{1}}(k,\xi)|
+2|\widehat{R^{\bar{z}_{0}\bar{z}_{0}}_{1}}(k,\xi)|\right)\\
\nonumber &\leq& |\widehat{R^{\bar{z}_{0}}_{1}}(k,\xi)|
+\gamma^{-1}_{11}|k|^{3b^{2}\tau+3b^{2}-1}\left(|\widehat{R^{z_{0}\bar{z}_{0}}_{1}}(k,\xi)|
+|\widehat{R^{\bar{z}_{0}\bar{z}_{0}}_{1}}(k,\xi)|\right),
\end{eqnarray}
and
\begin{eqnarray}
\nonumber|\widehat{\tilde{R}^{z_{0}}_{1}}(k)|_{\Pi_{2}} &\leq& |\widehat{R^{z_{0}}_{1}}(k)|_{\Pi_{2}}
+ c_{3}\gamma^{-2}_{11}|k|^{6b^{2}\tau+6b^{2}-1}\left(|\widehat{R^{z_{0}\bar{z}_{0}}_{1}}(k)|_{\Pi_{2}}
+2|\widehat{R^{z_{0}z_{0}}_{1}}(k)|_{\Pi_{2}}\right)\varepsilon_{0}\\
\nonumber &\leq& |\widehat{R^{z_{0}}_{1}}(k)|_{\Pi_{2}}
+\gamma^{-2}_{11}|k|^{6b^{2}\tau+6b^{2}-1}\left(|\widehat{R^{z_{0}\bar{z}_{0}}_{1}}(k)|_{\Pi_{2}}
+|\widehat{R^{z_{0}z_{0}}_{1}}(k)|_{\Pi_{2}}\right),\\
\nonumber|\widehat{\tilde{R}^{\bar{z}_{0}}_{1}}(k)|_{\Pi_{2}}& \leq & |\widehat{R^{\bar{z}_{0}}_{1}}(k)|_{\Pi_{2}}
+ c_{3}\gamma^{-2}_{11}|k|^{6b^{2}\tau+6b^{2}-1}\left(|\widehat{R^{z_{0}\bar{z}_{0}}_{1}}(k)|_{\Pi_{2}}
+2|\widehat{R^{\bar{z}_{0}\bar{z}_{0}}_{1}}(k)|_{\Pi_{2}}\right)\varepsilon_{0} \\
\nonumber& \leq& |\widehat{R^{\bar{z}_{0}}_{1}}(k)|_{\Pi_{2}}
+ \gamma^{-2}_{11}|k|^{6b^{2}\tau+6b^{2}-1}\left(|\widehat{R^{z_{0}\bar{z}_{0}}_{1}}(k)|_{\Pi_{2}}
+|\widehat{R^{\bar{z}_{0}\bar{z}_{0}}_{1}}(k)|_{\Pi_{2}}\right).
\end{eqnarray}
Therefore,\  we get the followings
\begin{eqnarray}
\nonumber|\widehat{F^{z_{0}}_{1}}(k,\xi)|&\leq& \gamma^{-1}_{41}|k|^{2b^{2}\tau+2b^{2}-1}|\widehat{R^{z_{0}}_{1}}(k,\xi)|\\
\nonumber&&+\gamma^{-1}_{41}\gamma^{-1}_{11}|k|^{5b^{2}\tau+5b^{2}-2}\left(|\widehat{R^{z_{0}\bar{z}_{0}}_{1}}(k,\xi)|
+|\widehat{R^{z_{0}z_{0}}_{1}}(k,\xi)|\right),\\
\nonumber|\widehat{F^{\bar{z}_{0}}_{1}}(k,\xi)|
&\leq& c_{7}\gamma^{-1}_{41}|k|^{2b^{2}\tau+2b^{2}-1}|\widehat{R^{\bar{z}_{0}}_{1}}(k,\xi)|\\
\nonumber&&
+\gamma^{-1}_{41}\gamma^{-1}_{11}|k|^{5b^{2}\tau+5b^{2}-2}\left(|\widehat{R^{z_{0}\bar{z}_{0}}_{1}}(k,\xi)|
+|\widehat{R^{\bar{z}_{0}\bar{z}_{0}}_{1}}(k,\xi)|\right),
\end{eqnarray}
and
\begin{eqnarray}
\nonumber|\widehat{F^{z_{0}}_{1}}(k)|_{\Pi_{2}}& \leq& \gamma^{-2}_{41}|k|^{4b^{2}\tau+4b^{2}-1}|\widehat{R^{z_{0}}_{1}}(k)|_{\Pi_{2}}\\
\nonumber &&+ \gamma^{-2}_{11}\gamma^{-2}_{41}|k|^{10b^{2}\tau+10b^{2}-2}
\left(|\widehat{R^{z_{0}\bar{z}_{0}}_{1}}(k)|_{\Pi_{2}}
+|\widehat{R^{z_{0}z_{0}}_{1}}(k)|_{\Pi_{2}}\right),\\
\nonumber|\widehat{F^{\bar{z}_{0}}_{1}}(k)|_{\Pi_{2}}& \leq& \gamma^{-2}_{41}|k|^{4b^{2}\tau+4b^{2}-1}|\widehat{R^{\bar{z}_{0}}_{1}}(k)|_{\Pi_{2}}\\
\nonumber &&+\gamma^{-2}_{11}\gamma^{-2}_{41}|k|^{10b^{2}\tau+10b^{2}-2}
\left(|\widehat{R^{z_{0}\bar{z}_{0}}_{1}}(k)|_{\Pi_{2}}
+|\widehat{R^{\bar{z}_{0}\bar{z}_{0}}_{1}}(k)|_{\Pi_{2}}\right).
\end{eqnarray}

$ \mathbf{Part\ 5.} $\ Considering the 1-th terms of $ F^{z_{j}}_{1}$ and $F_{1}^{\bar{z}_{j}}$ for $ j\in \mathbb{N}_{+}\setminus\mathcal{J} $,\ (\ref{0294})-(\ref{0295}) yield
\begin{eqnarray}\nonumber
(-\partial_{\omega_{1}}-\mathbf{i}\Omega_{1}^{j})F^{z_{j}}_{1}
+\mathbf{i}(\widehat{N^{z_{0}}_{0}}(\xi)F^{\bar{z}_{0}z_{j}}_{1}
-\widehat{N^{\bar{z}_{0}}_{0}}(\xi)F^{z_{0}z_{j}}_{1})
+R^{z_{j}}_{1} &=& 0 ,\\
\nonumber(-\partial_{\omega_{1}}+\mathbf{i}\Omega_{1}^{j})F^{\bar{z}_{j}}_{1}
+\mathbf{i}(\widehat{N^{z_{0}}_{0}}(\xi)F^{\bar{z}_{0}\bar{z}_{j}}_{1}
-\widehat{N^{\bar{z}_{0}}_{0}}(\xi)F^{z_{0}\bar{z}_{j}}_{1})
+R^{\bar{z}_{j}}_{1} &=&0.
\end{eqnarray}
Let $ |l|=1 $.\ Thus for any $ \xi \in \Pi_{1} \setminus\bigcup_{K_{0}<|k|\leq K_{1}} \mathcal{R}_{kl}(1)$,\ we have
\begin{eqnarray}
\nonumber |\widehat{F^{z_{j}}_{1}}(k,\xi)| &\leq& \gamma^{-1}_{51}|k|^{\tau}|\widehat{R^{z_{j}}_{1}}(k,\xi)|\\
\nonumber && +
\gamma^{-1}_{51}\gamma^{-1}_{31}|k|^{(4b^{2}+1)\tau+4b^{2}-1}\left(|\widehat{R^{\bar{z}_{0}z_{j}}_{1}}(k,\xi)|
+|\widehat{R^{z_{0}z_{j}}_{1}}(k,\xi)|\right),\\
\nonumber |\widehat{F^{\bar{z}_{j}}_{1}}(k,\xi)| &\leq & \gamma^{-1}_{51}|k|^{\tau}|\widehat{R^{\bar{z}_{j}}_{1}}(k,\xi)|\\
\nonumber &&+
\gamma^{-1}_{51}\gamma^{-1}_{31}|k|^{(4b^{2}+1)\tau+4b^{2}-1}\left(|\widehat{R^{\bar{z}_{0}\bar{z}_{j}}_{1}}(k,\xi)|
+|\widehat{R^{z_{0}\bar{z}_{j}}_{1}}(k,\xi)|\right),
\end{eqnarray}
and
\begin{eqnarray}
\nonumber|\widehat{F^{z_{j}}_{1}}(k)|_{\Pi_{2}}
& \leq& \gamma^{-2}_{51}|k|^{2\tau+1}|\widehat{R^{z_{j}}_{1}}(k)|_{\Pi_{2}}\\
\nonumber &&+ \gamma^{-2}_{51}\gamma^{-1}_{31}|k|^{(8b^{2}+2)\tau+8b^{2}}
\left(|\widehat{R^{\bar{z}_{0}z_{j}}_{1}}(k)|_{\Pi_{2}}
+|\widehat{R^{z_{0}z_{j}}_{1}}(k)|_{\Pi_{2}}\right),\\
\nonumber |\widehat{F^{\bar{z}_{j}}_{1}}(k)|_{\Pi_{2}}
& \leq& \gamma^{-2}_{51}|k|^{2\tau+1}|\widehat{R^{\bar{z}_{j}}_{1}}(k)|_{\Pi_{2}}\\
\nonumber&&+ \gamma^{-2}_{51}\gamma^{-1}_{31}|k|^{(8b^{2}+2)\tau+8b^{2}}
\left(|\widehat{R^{\bar{z}_{0}\bar{z}_{j}}_{1}}(k)|_{\Pi_{2}}
+|\widehat{R^{z_{0}\bar{z}_{j}}_{1}}(k)|_{\Pi_{2}}\right).
\end{eqnarray}

$ \mathbf{Part\ 6.} $\ Considering the terms of $ F_{1}^{x} $ and $ F_{1}^{y} $,\ (\ref{0296})-(\ref{0297}) reduce to
\begin{eqnarray}\nonumber
\mathbf{i}\widehat{F^{y}_{1}}(k,\xi)
&=& \frac{\widehat{R^{y}_{1}}(k,\xi)}{\langle k,\omega_{1}\rangle},\\
\nonumber \mathbf{i}\widehat{F^{x}_{1}}(k,\xi)
&=& \frac{\widehat{R^{x}_{1}}(k,\xi)+\mathbf{i}\left(\widehat{N^{z_{0}}_{0}}(\xi)\widehat{F^{\bar{z}_{0}}_{1}}(k,\xi)
-\widehat{N^{\bar{z}_{0}}_{0}}(\xi)\widehat{F^{z_{0}}_{1}}(k,\xi)\right)}{\langle k,\omega_{1}\rangle}.
\end{eqnarray}
Let $ |l|=0 $.\ Hence for any $ \xi \in \Pi_{1} \setminus\bigcup_{K_{0}<|k|\leq K_{1}} \mathcal{R}_{lk}(1)$,\ one obtains
\begin{eqnarray}
\nonumber|\widehat{F^{y}_{1}}(k,\xi)|&\leq& \gamma^{-1}_{61}|k|^{\tau}|\widehat{R^{y}_{1}}(k,\xi)|,\\
\nonumber|\widehat{F^{x}_{1}}(k,\xi)| &\leq& \gamma^{-1}_{61}|k|^{\tau}|\widehat{R^{x}_{1}}(k,\xi)|\\
\nonumber&&+\gamma^{-1}_{61}\gamma^{-1}_{41}|k|^{(2b^{2}+1)\tau+2b^{2}-1}
\left(|\widehat{R^{\bar{z}_{0}}_{1}}(k,\xi)|+|\widehat{R^{z_{0}}_{1}}(k,\xi)|\right)\\
\nonumber&&+\gamma^{-1}_{61}\gamma^{-1}_{41}\gamma^{-1}_{11}|k|^{(5b^{2}+1)\tau+5b^{2}-2}
\left(|\widehat{R^{z_{0}\bar{z}_{0}}_{1}}(k,\xi)|
+|\widehat{R^{\bar{z}_{0}\bar{z}_{0}}_{1}}(k,\xi)|
+|\widehat{R^{z_{0}z_{0}}_{1}}(k,\xi)|\right),
\end{eqnarray}
and
\begin{eqnarray}
\nonumber|\widehat{F^{y}_{1}}(k)|_{\Pi_{2}}&\leq& \gamma^{-2}_{61}|k|^{2\tau+1}|\widehat{R^{y}_{1}}(k)|_{\Pi_{2}},\\
|\widehat{F^{x}_{1}}(k)|_{\Pi_{2}}
\nonumber &\leq&
\gamma^{-2}_{61}|k|^{2\tau+1}|\widehat{R^{x}_{1}}(k)|_{\Pi_{2}}\\
\nonumber&&
+\gamma^{-2}_{61}\gamma^{-2}_{42}|k|^{(4b^{2}+2)\tau+4b^{2}}
\left(|\widehat{R^{\bar{z}_{0}}_{1}}(k)|_{\Pi_{2}}+|\widehat{R^{z_{0}}_{1}}(k)|_{\Pi_{2}}\right)\\
\nonumber&& +\gamma^{-2}_{61}\gamma^{-2}_{11}\gamma^{-2}_{41}|k|^{(10b^{2}+2)\tau+10b^{2}-1}
\left(|\widehat{R^{\bar{z}_{0}\bar{z}_{0}}_{1}}(k)|_{\Pi_{2}}
+|\widehat{R^{z_{0}\bar{z}_{0}}_{1}}(k)|_{\Pi_{2}}
+|\widehat{R^{z_{0}z_{0}}_{1}}(k)|_{\Pi_{2}}\right).
\end{eqnarray}

In view of this six parts,\ when $ \xi \in \Pi_{2}$,\ we have
\begin{eqnarray}
&& \frac{1}{r^{2}_{2}}\parallel (F_{1})_{x}\parallel_{D^{3}_{2},\Pi_{2}}\\
\nonumber &\leq& \frac{1}{r^{2}_{2}}\left( |\widehat{F^{x}_{1}}(k)|+|\widehat{F^{y}_{1}}(k)|r^{2}_{2}+ \sum_{1\leq j \leq b}\left(|\widehat{F^{z_{0j}}_{1}}(k)||z_{0j}| +|\widehat{F^{\bar{z}_{0j}}_{1}}(k)||\bar{z}_{0j}|\right)\right.\\
\nonumber&&\left.
+\sum_{j\in \mathbb{N}_{+}\setminus\mathcal{J}}\left(|\widehat{F^{z_{j}}_{1}}(k)||z_{j}| +|\widehat{F^{\bar{z}_{j}}_{1}}(k)||\bar{z}_{j}|\right)\right.\\
\nonumber&&\left. + \sum_{1\leq i,j \leq b}\left(|\widehat{F^{z_{0i}z_{0j}}_{1}}(k)||z_{0i}||z_{0j}|
+|\widehat{F^{z_{0i}\bar{z}_{0j}}_{1}}(k)||z_{0i}||\bar{z}_{0j}|
+|\widehat{F^{\bar{z}_{0i}\bar{z}_{0j}}_{1}}(k)||\bar{z}_{0i}||\bar{z}_{0j}|\right)\right.\\
\nonumber&&\left. +\sum_{i,j\in \mathbb{N}_{+}\setminus\mathcal{J}}\left(|\widehat{F^{z_{i}z_{j}}_{1}}(k)||z_{i}||z_{j}|
+|\widehat{F^{z_{i}\bar{z}_{j}}_{1}}(k)||z_{i}||\bar{z}_{j}|
+|\widehat{F^{\bar{z}_{i}\bar{z}_{j}}_{1}}(k)||\bar{z}_{i}||\bar{z}_{j}|\right) \right)\\
\nonumber &&\times\left(\sum_{|k|\leq K_{1}}|k|e^{|k|(s_{2}+\frac{3}{4}(s_{1}-s_{2}))}\right)\\
 \nonumber&\leq & \left(\gamma^{-2}_{1}K^{8b^{2}\tau+8b^{2}-1}_{1}+\gamma^{-4}_{1}K^{(10b^{2}+2)\tau+10b^{2}-2}_{1}
+\gamma^{-6}_{1}K^{(10b^{2}+2)\tau+10b^{2}-1}_{1}\right)\\
\nonumber&&\times
 \|X_{R_{1}}\|_{D^{2}_{4}\times\Pi_{2}}\sum_{|k|\leq K_{1}}|k|e^{-\frac{1}{4}|k|(s_{1}-s_{2})} \\
\nonumber &\lessdot&  (s_{1}-s_{2})^{-n-1}\left(\gamma^{-2}_{1}K^{(8b^{2}+2)\tau+8b^{2}}_{1}
+\gamma^{-4}_{1}K^{(10b^{2}+2)\tau+10b^{2}-1}_{1}+\gamma^{-6}_{1}
K^{(10b^{2}+2)\tau+10b^{2}}_{1}\right)\varepsilon_{1}\\
\nonumber &\lessdot & \gamma^{-6}_{1}(s_{1}-s_{2})^{-n-1}K^{(10b^{2}+2)\tau+10b^{2}}_{1}\varepsilon_{1}.
\end{eqnarray}
Similarly,\ we have
\[
\|(F_{1})_{y}\|_{D^{3}_{2},\Pi_{2}},
\frac{1}{r_{2}}\|(F_{1})_{z^{*}}\|_{D^{3}_{2},\Pi_{2}},
\frac{1}{r_{2}}\|
(F_{1})_{{\bar{z}^{*}}}\|_{D^{3}_{2},\Pi_{2}}
\lessdot \gamma^{-6}_{1}K^{(10b^{2}+2)\tau+10b^{2}}_{1}(s_{1}-s_{2})^{-n-1}\varepsilon_{1}.
\]
To sum up,\ we obtain
\[
\|X_{F_{1}}\|_{D^{3}_{2},\Pi_{2}}\lessdot \gamma^{-6}_{1}K^{(10b^{2}+2)\tau+10b^{2}}_{1}(s_{1}-s_{2})^{-n-1}\varepsilon_{1}.
\]
\end{proof}

\section{the new hamiltonian}
In view of  (\ref{025}),\ we obtain the new Hamiltonian
\[
H_{2} = N_{2} + R_{2},
\]
where $ N_{2}$ and $ R_{2}$ are given in (\ref{026}) and (\ref{027}) respectively.
\subsection{The new normal form $N_{2}$.} The new normal form is given in (\ref{026}).\\
Note that
\[
\begin{split}
\widehat{J^{x}_{m+1}}(\xi)&=\sum^{m}_{j=0}\widehat{N^{x}_{j}}(\xi),
\widehat{J^{z_{0}}_{m+1}}(\xi)=\sum^{m}_{j=0}\widehat{N^{z_{0}}_{j}}(\xi),\\
\widehat{J^{\bar{z}_{0}}_{m+1}}(\xi)&=\sum^{m}_{j=0}\widehat{N^{\bar{z}_{0}}_{j}}(\xi),
\widehat{J^{z_{0}z_{0}}_{m+1}}(\xi)=\sum^{m}_{j=0}\widehat{N^{z_{0}z_{0}}_{j}}(\xi),\\
\widehat{J^{z_{0}\bar{z}_{0}}_{m+1}}(\xi)&=\sum^{m}_{j=0}\widehat{N^{z_{0}\bar{z}_{0}}_{j}}(\xi),
\widehat{J^{\bar{z}_{0}\bar{z}_{0}}_{m+1}}(\xi)
=\sum^{m}_{j=0}\widehat{N^{\bar{z}_{0}\bar{z}_{0}}_{j}}(\xi).
\end{split}
\]
We then have
\begin{eqnarray}
\nonumber N_{2} &=& \widehat{J^{x}_{2}}(\xi)+\sum_{j=1}^{n}\omega^{j}_{2}(\xi)y_{j}
+\sum_{j\in \mathbb{N}_{+}\setminus\mathcal{J}}\Omega^{j}_{2}(\xi)z_{j}\bar{z}_{j}
 + \langle\widehat{J^{z_{0}}_{2}}(\xi),z_{0}\rangle\\
\nonumber&&  +\langle\widehat{J^{\bar{z}_{0}}_{2}}(\xi),\bar{z}_{0}\rangle
 +\langle\widehat{J^{z_{0}z_{0}}_{2}}(\xi)z_{0},z_{0}\rangle
 +\langle\widehat{J^{z_{0}\bar{z}_{0}}_{2}}(\xi)z_{0},\bar{z}_{0}\rangle
+\langle\widehat{J^{\bar{z}_{0}\bar{z}_{0}}_{2}}(\xi)\bar{z}_{0},\bar{z}_{0}\rangle.
\end{eqnarray}
Based on (\ref{022}),\ one has
\begin{equation}\label{041}
\begin{split}
\|{X_{\widehat{N_{2}}}}\|_{D_{1},\Pi_{1}}\lessdot\varepsilon_{1}.
\end{split}
\end{equation}

\subsection{The new perturbation $R_{2}$.} The new perturbation is given in (\ref{027}),\ i.e.
\[
R_{2}(x,y,z,\bar{z},\xi)=\int_0^1\{ (1-t)\widehat{N_{1}}+tR^{low}_{1},F_{1}\}\circ X^t_{F_{1}}\mathrm{d}t +P_{1}\circ X^t_{F_{1}}\mid_{t=1} +R_{1}^{high}\circ X^t_{F_{1}}\mid_{t=1}.
\]
where $ R_{1}(t)=(1-t)\widehat{N_{1}}+tR^{low}_{1}$,\ then
\[
 X_{R_{2}}= \int_0^1 (\Phi^{t}_{1})^{\ast}X_{\{R_{1}(t),F_{1}\}}\mathrm{d}t +(\Phi^{t}_{1})^{\ast}X_{P_{1}+R_{1}^{high}}.
\]
By the generalized Cauchy's inequality and the estimates (\ref{024}) and (\ref{040}),\ one has
\begin{equation}\label{042}
\begin{split}
\|X_{R_{2}}\|_{D^{1}_{2},\Pi_{2}}
&\leq \|X_{\{R_{1}(t),F_{1}\}}\|_{D^{1}_{2},\Pi_{2}}
+\|X_{P_{1}}\|_{D^{1}_{2},\Pi_{2}}+\|X_{R^{high}_{1}}\|_{D^{1}_{2},\Pi_{2}}\\
&\lessdot \eta^{-2}_{1}\gamma^{-6}_{1}K^{(10b^{2}+2)\tau+10b^{2}}_{1}(s_{1}-s_{2})^{-n-1}\varepsilon^{2}_{1}+
\eta_{1}\varepsilon_{1} + K^{n}_{1}e^{-K_{1}(s_{1}-s_{2})}\varepsilon_{1}.
\end{split}
\end{equation}

\section{Iteration lemma }
To iterate the KAM step infinitely,\ we should choose sequences for the pertinent parameters.\ The guiding principle
 is to choose a geometric sequences for $ s_m$ and the error estimate $ \eta_m, \gamma_m, M_m$.\ We define for all $ m\geq 2 $ the following sequences

\begin{itemize}
 \item[]$s_{m} =s_{1}(1-\sum^{m+1}_{i=1}2^{-i})$,

 \item[]$\gamma_{m}=\frac{\gamma_{1}}{2}(1+2^{-m+1})$,

 \item[]$\varepsilon_{m} = \gamma^{-6}_{m-1}(m-1) ^{64b^{4}}(s_{m-1}-s_{m})^{-n-1}\varepsilon^{\frac{4}{3}}_{m-1}$,\ which dominates the size of the perturbation,

 \item[]$\eta_{m}=\varepsilon^{\frac{1}{3}}_{m}$,

 \item[]$r_{m} =\frac{1}{8}\eta_{m}r_{m-1}$,

\item[]$K_{m}=|\log\varepsilon_{m}|/(s_{m}-s_{m+1})$,

\item[]$\varepsilon^{\frac{1}{6}}_{m} =(K_{m})^{n}e^{-K_{m}(s_{m}-s_{m+1})}$,

\item[]$ D_{m} = D(s_{m},r_{m},r_{m}) $,

\item[]$ D^{i}_{m}= D_{m-1}\left(s_{m}+\frac{i}{4}(s_{m-1}-s_{m}),
     \frac{i}{4}\eta_{m-1}r_{m-1},\frac{i}{4}\eta_{m-1}r_{m-1}\right),0<i\leq4$,
\end{itemize}
and the new non-resonant conditions
\begin{itemize}
\item[(1)]
$ \mathcal{R}_{kl}(m)=\{\xi\in\Pi_{m}:\mid \langle k,\omega_{m}(\xi)\rangle+\langle l,\Omega_{m}(\xi)\rangle\mid<\frac{\gamma_{m}\langle l\rangle_{d}}{|k|^{\tau}},K_{m-1}< |k|\leq K_{m},(k,l)\in\mathcal{Z}\}$,
\item[(2)]
$ \mathcal{R}_{1k}(m) =\{\xi \in \Pi_{m}:\mid |\mathbf{i}\langle k,\omega_{m}(\xi)\rangle I_{3b^{2}}- B_{1m}(\xi)|_{d}\mid < \frac{\gamma_{1m}}{|k|^{\tau_{1}}},0 <|k|\leq K_{m} \}$,\\
where $ \tau_{1}= 3b^{2}\tau, \gamma_{1m}=\gamma_{m}/ m^{18b^{4}},\ L_{1}=3b^{2}\times4b^{2}(diam\Pi_{m})^{n-1}$,
\item[(3)]
$ \mathcal{R}_{3k}(m) =\{\xi \in \Pi_{m}:\mid |\mathbf{i}(\langle k,\omega_{m}(\xi)\rangle\pm \Omega^{j}_{m}) I_{4b^{2}}+B_{3m}(\xi)|_{d}\mid<\frac{\gamma_{3m}}{|k|^{\tau_{3}}}, |k|\leq K_{m} \}$,\\
where  $ \tau_{3}= 4b^{2}\tau, \gamma_{3m}=\gamma_{m}/ m^{32b^{4}},\ L_{3}=4b^{2}\times5b^{2}(diam\Pi_{1})^{n-1}$,
\item[(4)]
$ \mathcal{R}_{4k}(m) =\{\xi \in \Pi_{m}:\mid |\mathbf{i}\langle k,\omega_{m}(\xi)\rangle I_{2b^{2}}+B_{4m}(\xi)|_{d}\mid<\frac{\gamma_{4m}}{|k|^{\tau_{4}}},0 <|k|\leq K_{m} \}$,\\
where $\tau_{4}= 2b^{2}\tau, \gamma_{4m}=\gamma_{m}/ m^{8b^{4}}, L_{4}=2b^{2}\times3b^{2}(diam\Pi_{1})^{n-1}$.
\end{itemize}
We note
\[
\Pi_{m+1}=\Pi_{m}\setminus \bigcup_{K_{m-1}<|k|\leq K_{m},(k,l)\in\mathcal{Z}}\mathcal{R}_{kl}(m) \setminus \bigcup_{0<|k|\leq K_{m},i=1,4}\mathcal{R}_{ik}(m)\setminus \bigcup_{|k|\leq K_{m}}\mathcal{R}_{3k}(m).
\]

\begin{lemma}\label{ml}(Iterative lemma)\ The integrable Hamiltonian with the perturbation $ H_{m} = N_{m}+R_{m}$ is analytic on $ D_{m}\times \Pi_{m}$,\ where
\begin{eqnarray}
\nonumber N_{m} &=& \widehat{J^{x}_{m}}(\xi)+\langle\omega_{m}(\xi),y\rangle
+\langle\Omega_{m}(\xi)z,\bar{z}\rangle
      + \langle\widehat{J^{z_{0}}_{m}}(\xi),z_{0}\rangle\\
\nonumber && +\langle\widehat{J^{\bar{z}_{0}}_{m}}(\xi),\bar{z}_{0}\rangle
 +\langle\widehat{J^{z_{0}z_{0}}_{m}}(\xi)z_{0},z_{0}\rangle
 +\langle\widehat{J^{z_{0}\bar{z}_{0}}_{m}}(\xi)z_{0},\bar{z}_{0}\rangle
+\langle\widehat{J^{\bar{z}_{0}\bar{z}_{0}}_{m}}(\xi)\bar{z}_{0},\bar{z}_{0}\rangle.
\end{eqnarray}
is a normal form with the estimate
\begin{equation}\label{072}
\begin{split}
\|{X_{\widehat{N_{m}}}}\|_{D_{m-1},\Pi_{m-1}}\lessdot\varepsilon_{m-1}.
\end{split}
\end{equation}
and the perturbation $ R_{m}$ satisfying
\numberwithin{equation}{section}
\begin{eqnarray}
\label{073}\|X_{R^{low}_{m}}\|_{D_{m},\Pi_{m}}&\lessdot& \varepsilon_{m},\\
\label{074}\|X_{P_{m}}\|_{D_{m+1},\Pi_{m}}&\lessdot& K^{n}_{m}e^{-K_{m}(s_{m}-s_{m+1})}\varepsilon_{m},\\
\label{075}\|X_{R^{high}_{m}}\|_{D(s_{m},4\eta_{m}r_{m},\eta_{m}r_{m}),\Pi_{m}}&\lessdot& \eta_{m}\varepsilon_{m}.
\end{eqnarray}
Suppose the assumption $ \mathbf{(A)}$ and $ \mathbf{(B)}$ are fulfilled for $ \omega_{m}(\xi)$ and $ \Omega_{m}(\xi)$ with $ m= 1$ and
\[
|\omega_{m}(\xi)-\omega_{1}(\xi)|_{\Pi_{m-1}}+
|\Omega_{m}(\xi)-\Omega_{1}(\xi)|_{-\delta,\Pi_{m-1}}\leq {\sum}^{m-1}_{i=1}\varepsilon_{i}.
\]
Then for each $\xi\in\Pi_{m+1}$,\ there exist real analytic symplectic coordinate transformations $ \Phi_{m+1} : D_{m+1}\rightarrow D_{m}$ satisfying
\begin{eqnarray}
\label{076}\|\Phi_{m+1}-id\|_{D_{m+1}^{2},\Pi_{m+1}}&\lessdot \varepsilon^{\frac{5}{6}}_{m},\\
\label{077}\|D\Phi_{m+1}-Id\|_{D_{m+1}^{1},\Pi_{m+1}}&\lessdot \varepsilon^{\frac{5}{6}}_{m},
\end{eqnarray}
such that for $ H_{m+1} = H_{m}\circ\Phi_{m} = N_{m+1} + R_{m+1}$,\ the same assumptions as above are satisfied with $ m+1$ in place of $ m$,\ that is,\
\begin{eqnarray}\label{077*}
|\omega_{m+1}(\xi)-\omega_{1}(\xi)|_{\Pi_{m}}
+|\Omega_{m+1}(\xi)-\Omega_{1}(\xi)|_{-\delta,\Pi_{m}}\leq {\sum}^{m}_{i=1}\varepsilon_{i},
\end{eqnarray}
and
\begin{equation}\label{078}
\begin{split}
\|X_{R_{m+1}}\|_{D_{m+1}^{1},\Pi_{m+1}}\lessdot \varepsilon_{m+1},
\end{split}
\end{equation}
and
\begin{equation}\label{079}
\begin{split}
\mathbf{Meas}\Pi_{m+1} \geq \mathbf{Meas}\Pi_{m}-\gamma^{\mu}_{1}\cdot\frac{1}{1+K_{m-1}}-\gamma^{\frac{1}{4b^{2}}}_{1}\cdot\frac{1}{m^{2}},
\end{split}
\end{equation}
where $ \mu$ is given in (\ref{023}).
\end{lemma}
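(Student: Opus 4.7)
The plan is to iterate verbatim the construction already carried out in detail for the step $m=1\to 2$ (equations (025)--(040) and Section 3). Given $H_m=N_m+R_m$, split $R_m = R_m^{low}+R_m^{high}$ by degree and further split $R_m^{low} = P_m + T_m$ by the Fourier cut-off $|k|\leq K_m$ (the tail $T_m$ being estimated as in (024), giving (074)). Seek a Hamiltonian $F_m$ with the same block structure as $F_1$, so that $\Phi_{m+1} = X^t_{F_m}\mid_{t=1}$ solves the modified homological equation
\begin{equation*}
\{N_m,F_m\} + R_m^{low} = \widehat{N_m},
\end{equation*}
where $\widehat{N_m}$ collects the $k=0$ low-degree part of $R_m^{low}$. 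Then $N_{m+1}=N_m+\widehat{N_m}$ preserves the normal-form shape and produces the updates $\widehat{J^{\bullet}_{m+1}} = \widehat{J^{\bullet}_m}+\widehat{N^{\bullet}_m}$, $\omega_{m+1}=\omega_m+\widehat{N^y_m}$, $\Omega^j_{m+1}=\Omega^j_m+\widehat{N^{z_j\bar z_j}_m}$, while
\begin{equation*}
R_{m+1} = \int_0^1\{(1-t)\widehat{N_m}+tR_m^{low},F_m\}\circ X^t_{F_m}\,\mathrm{d}t + (P_m+R_m^{high})\circ X^t_{F_m}\mid_{t=1}.
\end{equation*}

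The first main step is to solve the homological equation. Decomposing it exactly as in (0281)--(035) and organizing the unknowns into the four types $(\dag1)$--$(\dag4)$, the coefficient operators $\mathcal{A},\mathcal{B},\mathcal{C}$ are now built from the accumulated matrices $\widehat{J^{z_0z_0}_m}$, $\widehat{J^{z_0\bar z_0}_m}$, $\widehat{J^{\bar z_0\bar z_0}_m}$, which by the hypothesis (072) and telescoping satisfy $\|B_{im}\|\lessdot \sum_{j<m}\varepsilon_j\lessdot \varepsilon_1$ uniformly in $m$. On $\Pi_{m+1}$, where all four families of non-resonance conditions hold, the six-part block inversion used in the proof of (040) goes through verbatim and yields
\begin{equation*}
\|X_{F_m}\|_{D_{m+1}^3,\Pi_{m+1}}\lessdot \gamma_m^{-6}\,m^{64b^4}(s_m-s_{m+1})^{-n-1}K_m^{(10b^2+2)\tau+10b^2}\varepsilon_m,
\end{equation*}
the direct analogue of (040). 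Integrating the Hamiltonian flow $X^t_{F_m}$ on $D^2_{m+1}$ then gives $\Phi_{m+1}:D_{m+1}\to D_m$ together with (076)--(077), and a generalized Cauchy estimate for the Poisson brackets, combined with (074)--(075), yields
\begin{equation*}
\|X_{R_{m+1}}\|_{D_{m+1},\Pi_{m+1}}\lessdot \eta_m^{-2}\gamma_m^{-6}m^{64b^4}K_m^{(10b^2+2)\tau+10b^2}(s_m-s_{m+1})^{-n-1}\varepsilon_m^2 + \eta_m\varepsilon_m + K_m^n e^{-K_m(s_m-s_{m+1})}\varepsilon_m.
\end{equation*}
The prescribed sequences $\eta_m=\varepsilon_m^{1/3}$ and $\varepsilon_m^{1/6}=K_m^n e^{-K_m(s_m-s_{m+1})}$ render all three summands $\lessdot \varepsilon_{m+1}$, which gives (078). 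The frequency drift (077*) is immediate from $\|X_{\widehat{N_m}}\|\lessdot \varepsilon_m$ and telescoping.

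The hard part is the measure estimate (079). The set removed at step $m$ is
\begin{equation*}
\bigcup_{K_{m-1}<|k|\leq K_m,(k,l)\in\mathcal{Z}}\mathcal{R}_{kl}(m)\;\cup\;\bigcup_{0<|k|\leq K_m,\,i\in\{1,4\}}\mathcal{R}_{ik}(m)\;\cup\;\bigcup_{|k|\leq K_m}\mathcal{R}_{3k}(m).
\end{equation*}
For $\mathcal{R}_{kl}(m)$ the P\"oschel-type argument of Lemma 2.1 gives the $\gamma_1^\mu/(1+K_{m-1})$ contribution. For the three matrix-determinant families the plan is to exploit that $B_{im}$ stays of size $\mathcal{O}(\varepsilon_1)$, so that $|\mathbf{i}\langle k,\omega_m\rangle I_{d_i}\pm B_{im}|_d$ is a polynomial in $\langle k,\omega_m\rangle$ of degree $d_i\in\{2b^2,3b^2,4b^2\}$ with leading coefficient $1$ and remaining coefficients uniformly bounded in $m$; bounding $|\partial^{d_i}_{\omega_m}\det|$ from below and transferring the measure from $\omega_m$-space back to $\xi$-space via the Lipschitz bound on $\omega_m^{-1}$ inherited from (077*), exactly as in Parts 1, 3 and 4 of the proof of (040), gives $\mathbf{Meas}\,\mathcal{R}_{ik}(m)\lessdot L_i\gamma_{im}^{1/d_i}/|k|^{\tau}$. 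The choice $\gamma_{im}=\gamma_m/m^{c_i b^4}$ with $c_i\in\{18,32,8\}$ absorbs an extra $m^{-2}$ factor after the $d_i$-th root, producing the $\gamma_1^{1/(4b^2)}/m^2$ term in (079). The delicate point of the whole induction is that the $m$-dependent shrinkage of $\gamma_{im}$ must be compensated by the $m^{64b^4}$ growth already built into the definition of $\varepsilon_m$, so that $\varepsilon_m$ still decays super-exponentially; this rigid coupling between the measure losses and the size of $F_m$ is what forces the precise exponents $18b^4,32b^4,8b^4$ and $64b^4$ appearing in the parameter sequences.
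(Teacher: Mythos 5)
Your proposal reproduces the paper's own strategy: iterating the $m=1\to 2$ construction verbatim (same modified homological equation $\{N_m,F_m\}+R_m^{low}=\widehat{N_m}$, same six-part block inversion of the operators $\mathcal{A},\mathcal{B},\mathcal{C}$ built from the accumulated matrices $\widehat{J^{\bullet}_m}$, same Fourier cut-off split $R_m^{low}=P_m+T_m$, same $\eta_m=\varepsilon_m^{1/3}$ and $K_m$ choices giving (078), and the same $\gamma_{im}=\gamma_m/m^{c_ib^4}$ mechanism to produce the summable $m^{-2}$ factor in (079) after taking the $d_i$-th root of the determinant condition). The only thing the paper makes explicit that you leave implicit is the short verification that the non-resonance conditions for $|k|\leq K_m$ persist once $\omega_m,\Omega_m,B_{im}$ are replaced by their $(m+1)$-versions, but this is folded into your framework and is a routine perturbation argument.
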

\begin{proof}
In the step $ m \rightarrow m+1 $,\ first of all,\ dropping the index $ 1$ of the homological equation (\ref{013}),\ the $ m-th $ homological equation writes
\begin{equation}\label{080}
\{N_{m},F_{m} \} + R_{m}= \widehat{N_{m}},
\end{equation}
and the corresponding six parts are:\\
$ \mathbf{Part\ 1.} $\ Consider the Fourier coefficients of $ F_{m}^{z_{0}z_{0}}, F_{m}^{z_{0}\bar{z}_{0}}$ and $F_{m}^{\bar{z}_{0}\bar{z}_{0}}$,\ which yield
\[
A_{1m}
\begin{pmatrix}
vec(\widehat{F^{z_{0}z_{0}}_{m}}(k,\xi))\\
vec(\widehat{F^{z_{0}\bar{z}_{0}}_{m}}(k,\xi))\\
vec(\widehat{F^{\bar{z}_{0}\bar{z}_{0}}_{m}}(k,\xi))
\end{pmatrix}
= \begin{pmatrix}
vec(\widehat{R^{z_{0}z_{0}}_{m}}(k,\xi))\\
vec(\widehat{R^{z_{0}\bar{z}_{0}}_{m}}(k,\xi))\\
vec(\widehat{R^{\bar{z}_{0}\bar{z}_{0}}_{m}}(k,\xi))
\end{pmatrix},
\]
where $ A_{1m}= \mathbf{i}\langle k,\omega_{m}\rangle I_{3b^{2}} + B_{1m} $ and
\[
B_{1m}(\xi)=\mathbf{i}
\begin{pmatrix}
(A_{1m1}+A_{1m2}) & -(A_{1m1}+A_{1m2}) & 0  \\
2(A_{1m5}+A_{1m6}) & -(A_{1m3}-A_{1m4}) & 2(A_{1m1}+A_{1m2})\\
0 & (A_{1m5}+A_{1m6}) & 2(A_{1m3}+A_{1m4})
\end{pmatrix}_{3b^{2}\times3b^{2}}
\]
with the corresponding matrix are
\begin{eqnarray}
\nonumber A_{1m1} &=& I_{b}\otimes\widehat{J^{z_{0}{z}_{0}}_{m}}(\xi), A_{1m2}= \widehat{J^{z_{0}{z}_{0}}_{m}}(\xi)\otimes I_{b},\\
\nonumber A_{1m3} &=& I_{b}\otimes\widehat{J^{z_{0}\bar{z}_{0}}_{m}}(\xi), A_{1m4}= \widehat{J^{\bar{z}_{0}\bar{z}_{0}}_{m}}(\xi)\otimes I_{b},\\
\nonumber A_{1m5} &=& I_{b}\otimes\widehat{J^{\bar{z}_{0}\bar{z}_{0}}_{m}}(\xi), A_{1m6} = \widehat{J^{\bar{z}_{0}{z}_{0}}_{m}}(\xi)\otimes I_{b}.
\end{eqnarray}
Thus one obtains the estimates
\[
\begin{pmatrix}
|vec(\widehat{F^{z_{0}z_{0}}_{m}}(k,\xi))|\\
|vec(\widehat{F^{z_{0}\bar{z}_{0}}_{m}}(k,\xi))|\\
|vec(\widehat{F^{\bar{z}_{0}\bar{z}_{0}}_{m}}(k,\xi))|
\end{pmatrix}
\lessdot\gamma_{1m}^{-1}|k|^{3b^{2}\tau+3b^{2}-1}
\begin{pmatrix}
|vec(\widehat{R^{z_{0}z_{0}}_{m}}(k,\xi))|\\
|vec(\widehat{R^{z_{0}\bar{z}_{0}}_{m}}(k,\xi))|\\
|vec(\widehat{R^{\bar{z}_{0}\bar{z}_{0}}_{m}}(k,\xi))|
\end{pmatrix},\]
and
\[
\begin{pmatrix}
|vec(\widehat{F^{z_{0}z_{0}}_{m}}(k,\xi))|_{\Pi_{m+1}}\\
|vec(\widehat{F^{z_{0}\bar{z}_{0}}_{m}}(k,\xi))|_{\Pi_{m+1}}\\
|vec(\widehat{F^{\bar{z}_{0}\bar{z}_{0}}_{m}}(k,\xi))|_{\Pi_{m+1}}
\end{pmatrix}
\lessdot \gamma_{1m}^{-2}|k|^{6b^{2}\tau+6b^{2}-1}
\begin{pmatrix}
|vec(\widehat{R^{z_{0}z_{0}}_{m}}(k,\xi))|_{\Pi_{m}}\\
|vec(\widehat{R^{z_{0}\bar{z}_{0}}_{m}}(k,\xi))|_{\Pi_{m}}\\
|vec(\widehat{R^{\bar{z}_{0}\bar{z}_{0}}_{m}}(k,\xi))|_{\Pi_{m}}
\end{pmatrix}.
\]
$ \mathbf{Part\ 2.} $\ Compare the Fourier coefficients $F_{m}^{z_{i}z_{j}}, F_{m}^{z_{i}\bar{z}_{j}}$ and $ F_{m}^{\bar{z}_{i}\bar{z}_{j}}$,\ which yield
\[
A_{2m}
\begin{pmatrix}
\widehat{F^{z_{i}z_{j}}_{m}}(k,\xi)\\
\widehat{F^{z_{i}\bar{z}_{j}}_{m}}(k,\xi)\\
\widehat{F^{\bar{z}_{i}\bar{z}_{j}}_{m}}(k,\xi)
\end{pmatrix}
=
\begin{pmatrix}
\widehat{R^{z_{i}z_{j}}_{m}}(k,\xi)\\
\widehat{R^{z_{i}\bar{z}_{j}}_{m}}(k,\xi)\\
\widehat{R^{\bar{z}_{i}\bar{z}_{j}}_{m}}(k,\xi)
\end{pmatrix},
\]
for any $ i \neq j$ and $ i,j \in \mathbb{N}_{+}\setminus\mathcal{J} $,\ where
\[
A_{2m}=\mathbf{i}
\begin{pmatrix}
\langle k,\omega_{m}\rangle+ \Omega_{m}^{i}+\Omega_{m}^{j} & 0& 0\\
0 & \langle k,\omega_{m}\rangle +\Omega_{m}^{i}-\Omega_{m}^{j} & 0\\
0 & 0 &\langle k,\omega_{m}\rangle-\Omega_{m}^{i}-\Omega_{m}^{j}
\end{pmatrix}.
\]
We then have
\[
\begin{pmatrix}
|\widehat{F^{z_{i}z_{j}}_{m}}(k,\xi)|\\
|\widehat{F^{z_{i}\bar{z}_{j}}_{m}}(k,\xi)|\\
|\widehat{F^{\bar{z}_{i}\bar{z}_{j}}_{m}}(k,\xi)|
\end{pmatrix}
\lessdot \gamma^{-1}_{2m}|k|^{\tau}
\begin{pmatrix}
|\widehat{R^{z_{i}z_{j}}_{m}}(k,\xi)|\\
|\widehat{R^{z_{i}\bar{z}_{j}}_{m}}(k,\xi)|\\
|\widehat{R^{\bar{z}_{i}\bar{z}_{j}}_{m}}(k,\xi)|
\end{pmatrix},
\]
and
\[
\begin{pmatrix}
|\widehat{F^{z_{i}z_{j}}_{m}}(k,\xi)|_{\Pi_{m+1}}\\
|\widehat{F^{z_{i}\bar{z}_{j}}_{m}}(k,\xi)|_{\Pi_{m+1}}\\
|\widehat{F^{\bar{z}_{i}\bar{z}_{j}}_{m}}(k,\xi)|_{\Pi_{m+1}}
\end{pmatrix}
\lessdot \gamma^{-2}_{2m}|k|^{2\tau+1}
\begin{pmatrix}
|\widehat{R^{z_{i}z_{j}}_{m}}(k,\xi)|_{\Pi_{m}}\\
|\widehat{R^{z_{i}\bar{z}_{j}}_{m}}(k,\xi)|_{\Pi_{m}}\\
|\widehat{R^{\bar{z}_{i}\bar{z}_{j}}_{m}}(k,\xi)|_{\Pi_{m}}
\end{pmatrix}.
\]
$ \mathbf{Part\ 3.} $\ Consider the Fourier coefficients of $ F_{m}^{z_{0}z_{j}}, F_{m}^{z_{0}\bar{z}_{j}}, F_{m}^{\bar{z}_{0}z_{j}}$ and $F_{m}^{\bar{z}_{0}\bar{z}_{j}}$ for $ j\in \mathbb{N}_{+}\setminus\mathcal{J}$,\ which reduce to
\[
A_{3m}
\begin{pmatrix}
\widehat{F^{z_{0}z_{j}}_{m}}(k,\xi)\\
\widehat{F^{z_{0}\bar{z}_{j}}_{m}}(k,\xi)\\
\widehat{F^{\bar{z}_{0}z_{j}}_{m}}(k,\xi)\\
\widehat{F^{\bar{z}_{0}\bar{z}_{j}}_{m}}(k,\xi)
\end{pmatrix}
= \begin{pmatrix}
\widehat{R^{z_{0}z_{j}}_{m}}(k,\xi)\\
\widehat{R^{z_{0}\bar{z}_{j}}_{m}}(k,\xi)\\
\widehat{R^{\bar{z}_{0}z_{j}}_{m}}(k,\xi)\\
\widehat{R^{\bar{z}_{0}\bar{z}_{j}}_{m}}(k,\xi)
\end{pmatrix},
\]
where
\begin{align*}
A_{3m} &=
\mathbf{i}\begin{pmatrix}
\langle k,\omega_{m}\rangle+\Omega^{j}_{m} & 0 & 0 & 0 \\
0 & \langle k,\omega_{m}\rangle-\Omega^{j}_{m} & 0 & 0 \\
0 & 0 & \langle k,\omega_{m}\rangle+\Omega^{j}_{m} & 0  \\
0 & 0 & 0 & \langle k,\omega_{m}\rangle-\Omega^{j}_{m}
\end{pmatrix}_{4b^{2}\times 4b^{2}} + B_{3m},
\end{align*}
with
\[
B_{3m}=\mathbf{i}\begin{pmatrix}
\widehat{J^{z_{0}\bar{z}_{0}}_{m}}(\xi) & 0 & -2\widehat{J^{z_{0}z_{0}}_{m}}(\xi) & 0  \\
0 & \widehat{J^{z_{0}\bar{z}_{0}}_{m}}(\xi) & 0 & -2\widehat{J^{z_{0}z_{0}}_{m}}(\xi)\\
2\widehat{J^{\bar{z}_{0}\bar{z}_{0}}_{m}}(\xi) & 0
& -\widehat{J^{z_{0}\bar{z}_{0}}_{m}}(\xi) & 0  \\
0 & 2\widehat{J^{\bar{z}_{0}\bar{z}_{0}}_{m}}(\xi) & 0
& -\widehat{J^{z_{0}\bar{z}_{0}}_{m}})(\xi)
\end{pmatrix}_{4b^{2}\times 4b^{2}}.
\]
Similarly,\ we get the estimates
\[
\begin{pmatrix}
|\widehat{F^{z_{0}z_{j}}_{m}}(k,\xi)|\\
|\widehat{F^{z_{0}\bar{z}_{j}}_{m}}(k,\xi)|\\
|\widehat{F^{\bar{z}_{0}z_{j}}_{m}}(k,\xi)|\\
|\widehat{F^{\bar{z}_{0}\bar{z}_{j}}_{m}}(k,\xi)|
\end{pmatrix}
\lessdot \gamma^{-1}_{3m}|k|^{4b^{2}\tau+4b^{2}-1}
\begin{pmatrix}
|\widehat{R^{z_{0}z_{j}}_{m}}(k,\xi)|\\
|\widehat{R^{z_{0}\bar{z}_{j}}_{m}}(k,\xi)|\\
|\widehat{R^{\bar{z}_{0}z_{j}}_{m}}(k,\xi)|\\
|\widehat{R^{\bar{z}_{0}\bar{z}_{j}}_{m}}(k,\xi)|
\end{pmatrix},
\]
and
\[
\begin{pmatrix}
|\widehat{F^{z_{0}z_{j}}_{m}}(k,\xi)|_{\Pi_{m+1}}\\
|\widehat{F^{z_{0}\bar{z}_{j}}_{m}}(k,\xi)|_{\Pi_{m+1}}\\
|\widehat{F^{\bar{z}_{0}z_{j}}_{m}}(k,\xi)|_{\Pi_{m+1}}\\
|\widehat{F^{\bar{z}_{0}\bar{z}_{j}}_{m}}(k,\xi)|_{\Pi_{m+1}}
\end{pmatrix}
\lessdot \gamma^{-2}_{3m}|k|^{8b^{2}\tau+8b^{2}-1}
\begin{pmatrix}
|\widehat{R^{z_{0}z_{j}}_{m}}(k,\xi)|_{\Pi_{m}}\\
|\widehat{R^{z_{0}\bar{z}_{j}}_{m}}(k,\xi)|_{\Pi_{m}}\\
|\widehat{R^{\bar{z}_{0}z_{j}}_{m}}(k,\xi)|_{\Pi_{m}}\\
|\widehat{R^{\bar{z}_{0}\bar{z}_{j}}_{m}}(k,\xi)|_{\Pi_{m}}
\end{pmatrix}.
\]
$ \mathbf{Part\ 4.} $\ In the following,\ we consider the Fourier coefficients of $ F_{m}^{z_{0}}$ and $F_{m}^{\bar{z}_{0}}$,\ which yield
\[
A_{4m}
\begin{pmatrix}
\widehat{F^{z_{0}}_{m}}(k,\xi)\\
\widehat{F^{\bar{z}_{0}}_{m}}(k,\xi)\\
\end{pmatrix}
= \begin{pmatrix}
\widehat{R^{z_{0}}_{m}}(k,\xi)+\mathbf{i}(
\widehat{F^{z_{0}\bar{z}_{0}}_{m}}(k,\xi)\widehat{J^{z_{0}}_{m}}(\xi)
-2\widehat{F^{z_{0}z_{0}}_{m}}(k,\xi)\widehat{J^{\bar{z}_{0}}_{m}}(\xi)) \\
\widehat{R^{\bar{z}_{0}}_{m}}(k,\xi)+\mathbf{i}(
\widehat{F^{z_{0}\bar{z}_{0}}_{m}}(k,\xi)\widehat{J^{\bar{z}_{0}}_{m}}(\xi)
-2\widehat{F^{\bar{z}_{0}\bar{z}_{0}}_{m}}(k,\xi)\widehat{J^{z_{0}}_{m}}(\xi))
\end{pmatrix},
\]
where
\[
A_{4m}= \mathbf{i}\langle k,\omega_{m}\rangle E_{2b^{2}}+ B_{4m}
\]
with
\[
B_{4m}=\mathbf{i}
\begin{pmatrix}
\widehat{J^{z_{0}\bar{z}_{0}}_{m}}(\xi) &  -2\widehat{J^{z_{0}z_{0}}_{m}}(\xi)  \\
2\widehat{J^{\bar{z}_{0}\bar{z}_{0}}_{m}}(\xi) &  -\widehat{J^{z_{0}\bar{z}_{0}}_{m}}(\xi)
\end{pmatrix}.
\]
Therefore,\  we get the following
\begin{eqnarray}
\nonumber|\widehat{F^{z_{0}}_{m}}(k,\xi)|&\lessdot& \gamma^{-1}_{4m}|k|^{3b^{2}\tau+3b^{2}-1}|\widehat{R^{z_{0}}_{m}}(k,\xi)|\\
\nonumber &&+ \gamma^{-1}_{4m}\gamma^{-1}_{1m}|k|^{5b^{2}\tau+5b^{2}-2}
\left(|\widehat{R^{z_{0}\bar{z}_{0}}_{m}}(k,\xi)|
+|\widehat{R^{z_{0}z_{0}}_{m}}(k,\xi)|\right),\\
\nonumber|\widehat{F^{\bar{z}_{0}}_{m}}(k,\xi)|&\lessdot&  \gamma^{-1}_{4m}|k|^{2b^{2}\tau+2b^{2}-1}|\widehat{R^{\bar{z}_{0}}_{m}}(k,\xi)|\\
\nonumber&&+ \gamma^{-1}_{4m}\gamma^{-1}_{1m}|k|^{5b^{2}\tau+5b^{2}-2}
\left(|\widehat{R^{z_{0}\bar{z}_{0}}_{m}}(k,\xi)|
+|\widehat{R^{\bar{z}_{0}\bar{z}_{0}}_{m}}(k,\xi)|\right),
\end{eqnarray}
and
\begin{eqnarray}
\nonumber|\widehat{F^{z_{0}}_{m}}(k)|_{\Pi_{m+1}}& \lessdot & \gamma^{-2}_{4m}|k|^{4b^{2}\tau+4b^{2}-1}|\widehat{R^{z_{0}}_{m}}(k)|_{\Pi_{m}}\\
\nonumber&&+ \gamma^{-2}_{4m}\gamma^{-2}_{1m}|k|^{6b^{2}\tau+6b^{2}-1}
\left(|\widehat{R^{z_{0}\bar{z}_{0}}_{m}}(k)|_{\Pi_{m}}
+2|\widehat{R^{z_{0}z_{0}}_{m}}(k)|_{\Pi_{m}}\right),\\
\nonumber|\widehat{F^{\bar{z}_{0}}_{m}}(k)|_{\Pi_{m+1}}& \lessdot& \gamma^{-2}_{4m}|k|^{4b^{2}\tau+4b^{2}-1}|\widehat{R^{\bar{z}_{0}}_{m}}(k)|_{\Pi_{m}}\\
\nonumber&&+ \gamma^{-2}_{4m}\gamma^{-2}_{1m}|k|^{10b^{2}\tau+10b^{2}-2}
\left(|\widehat{R^{z_{0}\bar{z}_{0}}_{m}}(k)|_{\Pi_{m}}
+|\widehat{R^{\bar{z}_{0}\bar{z}_{0}}_{m}}(k)|_{\Pi_{m}}\right).
\end{eqnarray}
$ \mathbf{Part\ 5.} $\ Compare the Fourier coefficients of $ F^{z_{j}}_{m} $ and $F^{\bar{z}_{j}}_{m}$ for $ j\geq1 $,\ which reduce to
\[
A_{5m}
\begin{pmatrix}
\widehat{F^{z_{j}}_{m}}(k,\xi)\\
\widehat{F^{\bar{z}_{j}}_{m}}(k,\xi)
\end{pmatrix}
=
\begin{pmatrix}
\widehat{R^{z_{j}}_{m}}(k,\xi)+\mathbf{i}(\widehat{J^{z_{0}}_{m}}(\xi)
\widehat{F}^{\bar{z}_{0}z_{j}}_{m}(k,\xi)
-\widehat{J^{\bar{z}_{0}}_{m}}(\xi)\widehat{F}^{z_{0}z_{j}}_{m}(k,\xi))\\
\widehat{R^{\bar{z}_{j}}_{m}}(k,\xi)+\mathbf{i}(\widehat{J^{z_{0}}_{m}}(\xi)
\widehat{F}^{\bar{z}_{0}\bar{z}_{j}}_{m}(k,\xi)
-\widehat{J^{\bar{z}_{0}}_{m}}(\xi)\widehat{F}^{z_{0}\bar{z}_{j}}_{m}(k,\xi))
\end{pmatrix},
\]
where
\[
A_{5m}=
\mathbf{i}\begin{pmatrix}
\langle k,\omega_{m}\rangle +\Omega^{j}_{m} & 0\\
0 & \langle k,\omega_{m}\rangle -\Omega^{j}_{m}
\end{pmatrix}.
\]
Thus we have the following estimates
\begin{eqnarray}
\nonumber|\widehat{F^{z_{j}}_{m}}(k,\xi)|&\lessdot& \gamma^{-1}_{5m}|k|^{\tau}|\widehat{R^{z_{j}}_{m}}(k,\xi)|\\
\nonumber&&+
 \gamma^{-1}_{5m}\gamma^{-1}_{3m}|k|^{(4b^{2}+1)\tau+4b^{2}-1}
\left(|\widehat{R^{\bar{z}_{0}z_{j}}_{m}}(k,\xi)|
+|\widehat{R^{z_{0}z_{j}}_{m}}(k,\xi)|\right),\\
\nonumber|\widehat{F^{\bar{z}_{j}}_{m}}(k,\xi)|&\lessdot& \gamma^{-1}_{5m}|k|^{\tau}|\widehat{R^{\bar{z}_{j}}_{m}}(k,\xi)|\\
\nonumber &&+
 \gamma^{-1}_{5m}\gamma^{-1}_{3m}|k|^{(4b^{2}+1)\tau+4b^{2}-1}
\left(|\widehat{R^{\bar{z}_{0}\bar{z}_{j}}_{m}}(k,\xi)|
+|\widehat{R^{z_{0}\bar{z}_{j}}_{m}}(k,\xi)|\right),
\end{eqnarray}
and
\begin{eqnarray}
\nonumber|\widehat{F^{z_{j}}_{m}}(k)|_{\Pi_{m+1}}
& \lessdot& \gamma^{-2}_{5m}|k|^{2\tau+1}|\widehat{R^{z_{j}}_{m}}(k)|_{\Pi_{m}}\\
\nonumber&&+  \gamma^{-2}_{5m}\gamma^{-2}_{3m}|k|^{(8b^{2}+2)\tau+8b^{2}}
\left(|\widehat{R^{\bar{z}_{0}z_{j}}_{m}}(k)|_{\Pi_{m}}
+|\widehat{R^{z_{0}z_{j}}_{m}}(k)|_{\Pi_{m}}\right),\\
\nonumber|\widehat{F^{\bar{z}_{j}}_{m}}(k)|_{\Pi_{m+1}}& \lessdot& \gamma^{-2}_{5m}|k|^{2b^{2}\tau+1}|\widehat{R^{\bar{z}_{j}}_{m}}(k)|_{\Pi_{m+1}}\\
\nonumber&&+ \gamma^{-2}_{5m}\gamma^{-2}_{3m}|k|^{(8b^{2}+2)\tau+8b^{2}}
\left(|\widehat{R^{\bar{z}_{0}\bar{z}_{j}}_{m}}(k)|_{\Pi_{m}}
+|\widehat{R^{z_{0}\bar{z}_{j}}_{m}}(k)|_{\Pi_{m}}\right).
\end{eqnarray}
$ \mathbf{Part\ 6.} $\ Consider the Fourier coefficients of $ F^{x}_{m} $ and $ F^{y}_{m}$,\  which yield
\[
A_{6m}
\begin{pmatrix}
\widehat{F^{y}_{m}}(k,\xi)\\
\widehat{F^{x}_{m}}(k,\xi)
\end{pmatrix}
=
\begin{pmatrix}
\widehat{R^{y}_{m}}(k,\xi)\\
\widehat{R^{x}_{m}}(k,\xi)+\mathbf{i}(\widehat{J^{z_{0}}_{m}}(\xi)\widehat{F}^{\bar{z}_{0}}_{m}(k,\xi)
-\widehat{J^{\bar{z}_{0}}_{m}}(\xi)\widehat{F}^{z_{0}}_{m}(k,\xi))
\end{pmatrix},
\]
where
\[
A_{6m}=\mathbf{i}
\begin{pmatrix}
\langle k,\omega_{m}\rangle & 0 \\
0 & \langle k,\omega_{m}\rangle
\end{pmatrix}
\]
for $ k \neq 0$.\\
We thus obtain that
\begin{eqnarray}
\nonumber|\widehat{F^{y}_{m}}(k,\xi)|&\lessdot& \gamma^{-1}_{61}|k|^{\tau}|\widehat{R^{y}_{m}}(k,\xi)|,\\
\nonumber|\widehat{F^{x}_{m}}(k,\xi)|
&\lessdot& \gamma^{-1}_{6m}|k|^{\tau}|\widehat{R^{x}_{m}}(k,\xi)|\\
\nonumber&&
+\gamma^{-1}_{6m}\gamma^{-1}_{4m}|k|^{(2b^{2}+1)\tau+2b^{2}-1}
\left(|\widehat{R^{\bar{z}_{0}}_{m}}(k,\xi)|+|\widehat{R^{z_{0}}_{m}}(k,\xi)|\right)\\
\nonumber&&+ \gamma^{-1}_{6m}\gamma^{-1}_{4m}\gamma^{-1}_{1m}|k|^{(5b^{2}+1)\tau+5b^{2}-2}
\left(|\widehat{R^{z_{0}\bar{z}_{0}}_{m}}(k,\xi)|
+|\widehat{R^{\bar{z}_{0}\bar{z}_{0}}_{m}}(k,\xi)|
+|\widehat{R^{z_{0}z_{0}}_{m}}(k,\xi)|\right),
\end{eqnarray}
and
\begin{eqnarray}
\nonumber|\widehat{F^{y}_{m}}(k)|_{\Pi_{m+1}}&\lessdot& \gamma^{-2}_{6m}|k|^{2\tau+1}|\widehat{R^{y}_{m}}(k)|_{\Pi_{m}},\\
\nonumber|\widehat{F^{x}_{m}}(k)|_{\Pi_{m+1}}&\lessdot& \gamma^{-2}_{6m}|k|^{2\tau+1}|\widehat{R^{x}_{m}}(k)|_{\Pi_{m}}\\
\nonumber&& + \gamma^{-2}_{6m}\gamma^{-2}_{4m}|k|^{(4b^{2}+2)\tau+4b^{2}}
\left(|\widehat{R^{\bar{z}_{0}}_{m}}(k)|_{\Pi_{m}}+|\widehat{R^{z_{0}}_{m}}(k)|_{\Pi_{m}}\right) \\
\nonumber&& + \gamma^{-2}_{6m}\gamma^{-2}_{4m}\gamma^{-2}_{1m}|k|^{(10b^{2}+2)\tau+10b^{2}-1}
\left(|\widehat{R^{\bar{z}_{0}\bar{z}_{0}}_{m}}(k)|_{\Pi_{m}}
+|\widehat{R^{z_{0}\bar{z}_{0}}_{m}}(k)|_{\Pi_{m}}
+|\widehat{R^{z_{0}z_{0}}_{m}}(k)|_{\Pi_{m}}\right).
\end{eqnarray}
In view of the six parts,\ one has
\begin{eqnarray}
&& \frac{1}{r^{2}_{m+1}}\parallel (F_{m})_{x}\parallel_{D^{3}_{m+1},\Pi_{m+1}}\\
\nonumber &\leq& \frac{1}{r^{2}_{m+1}}\left( |\widehat{F^{x}_{m}}(k)|+|\widehat{F^{y}_{m}}(k)|r^{2}_{m+1}+ \sum_{1\leq j \leq b}\left(|\widehat{F^{z_{0j}}_{m}}(k)||z_{0j}| +|\widehat{F^{\bar{z}_{0j}}_{m}}(k)||\bar{z}_{0j}|\right)\right.\\
\nonumber&&\left.
+\sum_{j\in \mathbb{N}_{+}\setminus\mathcal{J}}\left(|\widehat{F^{z_{j}}_{m}}(k)||z_{j}| +|\widehat{F^{\bar{z}_{j}}_{m}}(k)||\bar{z}_{j}|\right)\right.\\
\nonumber&&\left. + \sum_{1\leq i,j \leq b}\left(|\widehat{F^{z_{0i}z_{0j}}_{m}}(k)||z_{0i}||z_{0j}|
+|\widehat{F^{z_{0i}\bar{z}_{0j}}_{m}}(k)||z_{0i}||\bar{z}_{0j}|
+|\widehat{F^{\bar{z}_{0i}\bar{z}_{0j}}_{m}}(k)||\bar{z}_{0i}||\bar{z}_{0j}|\right)\right.\\
\nonumber&&\left. +\sum_{i,j\in \mathbb{N}_{+}\setminus\mathcal{J}}\left(|\widehat{F^{z_{i}z_{j}}_{m}}(k)||z_{i}||z_{j}|
+|\widehat{F^{z_{i}\bar{z}_{j}}_{m}}(k)||z_{i}||\bar{z}_{j}|
+|\widehat{F^{\bar{z}_{i}\bar{z}_{j}}_{m}}(k)||\bar{z}_{i}||\bar{z}_{j}|\right) \right)\\
\nonumber &&\times\left(\sum_{|k|\leq K_{m}}|k|e^{|k|(s_{m}+\frac{3}{4}(s_{m}-s_{m+1}))}\right)\\
\nonumber &\lessdot &  (s_{m}-s_{m+1})^{-n-1}\left(\gamma^{-2}_{m}m^{64b^{2}}K^{8b^{2}\tau+8b^{2}}_{m}
+\gamma^{-4}_{m}m^{128b^{2}}K^{10b^{2}\tau+10b^{2}-1}_{m}\right.\\
\nonumber&&+\left. \gamma^{-6}_{m}m^{192b^{2}}K^{(10b^{2}+2)\tau+10b^{2}}_{m}
\right)\varepsilon_{m}\\
\nonumber&\lessdot &  \gamma^{-6}_{m}m^{192b^{2}}(s_{m}-s_{m+1})^{-n-1}K^{(10b^{2}+2)\tau+10b^{2}}_{m}\varepsilon_{m}\\
\nonumber&\lessdot & \varepsilon^{\frac{5}{6}}_{m}.
\end{eqnarray}
Similarly,\ we have
\[
\|(F_{m})_{y}\|_{D_{m+1}^{3},\Pi_{m+1}},
\frac{1}{r_{m+1}}\|(F_{m})_{z^{*}}\|_{D_{m+1}^{3},\Pi_{m+1}},
\frac{1}{r_{m+1}}\|
(F_{m})_{_{\bar{z}^{*}}}\|_{D_{m+1}^{3},\Pi_{m+1}}
\lessdot \varepsilon^{\frac{5}{6}}_{m}.
\]
To sum up,\ one obtain
\begin{eqnarray}\label{060}
\|X_{F_{m}}\|_{D_{m+1}^{3},\Pi_{m+1}}\lessdot \varepsilon^{\frac{5}{6}}_{m}.
\end{eqnarray}
Thus,\ (\ref{076}) and (\ref{077}) are obvious.\\

Next,\ we will show that the new non-resonant conditions preserve under small perturbation $ \widehat{N_{m}}$.\ Since
$\omega_{m+1}^{j}(\xi) = \omega_{m}^{j}(\xi)+\widehat{N^{y_{j}}_{m}}(\xi)$
and $\Omega_{m+1}^{j}(\xi)=\Omega_{m}^{j}(\xi)+\widehat{N^{z_{j}\bar{z}_{j}}_{m}}(\xi)$,\ one has
\begin{eqnarray}
\label{081}&&| \langle k,\omega_{m+1}(\xi)\rangle+\langle l,\Omega_{m+1}(\xi)\rangle|\\
\nonumber&\geq& \mid \langle k,\omega_{m}(\xi)\rangle+\langle l,\Omega_{m}(\xi)\rangle|
+ |\langle k,\widehat{N^{y}_{m}}(\xi)\rangle+\langle l,\widehat{N^{z\bar{z}}_{m}}(\xi)\rangle|\\
\nonumber&\geq& \frac{\gamma_{m}\langle l\rangle_{d}}{|k|^{\tau}}-(|k||\widehat{N^{y}_{m}}(\xi)|
+|l|_{\delta}|\widehat{N^{z\bar{z}}_{m}}(\xi)|_{-\delta})\\
\nonumber&\geq& \frac{\gamma_{m}\langle l\rangle_{d}}{|k|^{\tau}}-2|k|\varepsilon_{m}\langle l\rangle_{d}\\
\nonumber&\geq& \frac{\gamma_{m+1}\langle l\rangle_{d}}{|k|^{\tau}},
\end{eqnarray}
for $ \xi\in\Pi_{m}$ and $ K_{m-1}<|k|\leq K_{m}$.\\
On the other hand,\ another non-resonant condition becomes
\begin{eqnarray}
\nonumber &&\mid|\mathbf{i}\langle k,\omega_{m+1}\rangle I_{3b^{2}}- B_{1{m+1}}|_{d}\mid\\
\nonumber &\geq& \mid |\mathbf{i}\langle k,\omega_{m}\rangle I_{3b^{2}}- B_{1m}|_{d}\mid\cdot
\mid |I_{3b^{2}}+(\mathbf{i}\langle k,\omega_{m}\rangle I_{3b^{2}}- B_{1m})^{-1}(\mathbf{i}\langle k,\widehat{N^{y}_{m}}\rangle I_{3b^{2}}- \widetilde{B_{1m}})|_{d}\mid\\
\nonumber&\geq& \frac{\gamma_{m}}{|k|^{\tau_{1}}}\cdot\mid|I_{3b^{2}}+(\mathbf{i}\langle k,\omega_{m}\rangle I_{3b^{2}}- B_{1m})^{-1}(\mathbf{i}\langle k,\widehat{N^{y}_{m}}\rangle I_{3b^{2}}- \widetilde{B_{1m}})|_{d}\mid
\end{eqnarray}
where $ \widetilde{B_{1m}}= B_{1{m+1}}- B_{1m}$.\\
Recalling that
\[
\mathbf{i}\langle k,\widehat{N^{y}_{m}}\rangle I_{3b^{2}}- \widetilde{B_{1m}}=\mathbf{i}\begin{pmatrix}
\langle k,\widehat{N^{y}_{m}}\rangle+2\widehat{N^{z_{0}\bar{z}_{0}}_{m}}(\xi) & -2\widehat{N^{z_{0}z_{0}}_{m}}(\xi) & 0  \\
4\widehat{N^{\bar{z}_{0}\bar{z}_{0}}_{m}}(\xi) & \langle k,\widehat{N^{y}_{m}}\rangle & -4\widehat{N^{z_{0}z_{0}}_{m}}(\xi)\\
0 & 2\widehat{N^{\bar{z}_{0}\bar{z}_{0}}_{m}}(\xi) & \langle k,\widehat{N^{y}_{m}}\rangle-2\widehat{N^{z_{0}\bar{z}_{0}}_{m}}(\xi)
\end{pmatrix}_{3b^{2}\times3b^{2}}
\]
for $ 0< |k|\leq K_{m}$,\ one has
\begin{eqnarray}
\nonumber &&\mid|\mathbf{i}\langle k,\omega_{m+1}\rangle I_{3b^{2}}- B_{1{m+1}}|_{d}\mid\\
\nonumber&\geq& \frac{\gamma_{1m}}{|k|^{\tau_{1}}}\cdot\mid |I_{3b^{2}}+(\mathbf{i}\langle k,\omega_{m}\rangle I_{3b^{2}}- B_{1m})^{-1}(\mathbf{i}\langle k,\widehat{N^{y}_{m}}\rangle I_{3b^{2}}- \widetilde{B_{1m}})|_{d}\mid\\
\nonumber&\geq& \frac{\gamma_{1m}}{|k|^{\tau_{1}}}\cdot\mid |I_{3b^{2}}+\frac{adj(\mathbf{i}\langle k,\omega_{m}\rangle I_{3b^{2}}- B_{1m})}{|\mathbf{i}\langle k,\omega_{m}\rangle I_{3b^{2}}- B_{1m}|_{d}}(\mathbf{i}\langle k,\widehat{N^{y}_{m}}\rangle I_{3b^{2}}- \widetilde{B_{1m}})|_{d}\mid\\
\nonumber&\geq& \frac{\gamma_{1m}}{|k|^{\tau_{1}}}\cdot\left(1-\frac{1}{m+1}\right)^{18b^{4}}\\
\nonumber&\geq& \frac{\gamma_{1\{m+1\}}}{|k|^{\tau_{1}}}.
\end{eqnarray}
Similarly,\ for $ |k|\leq K_{m}$,\ one has
\begin{eqnarray}
\nonumber &&\mid|\mathbf{i}(\langle k,\omega_{m+1}\pm \Omega^{j}_{m+1})I_{4b^{2}}- B_{3{m+1}}|_{d}\mid \geq \frac{\gamma_{3\{m+1\}}}{|k|^{\tau_{3}}},
\end{eqnarray}
and for $ 0 < |k|\leq K_{m}$,\ we obtain
\begin{eqnarray}
\nonumber&&\mid|\mathbf{i}\langle k,\omega_{m+1}\rangle I_{2b^{2}}- B_{4{m+1}}|_{d}\mid \geq \frac{\gamma_{4\{m+1\}}}{|k|^{\tau_{4}}}.
\end{eqnarray}
Moreover,\ one has
\[
\|X_{\hat{N}_{m}}\|_{D_{m}^{1},\Pi_{m}}\leq \|X_{R_{m}}\|_{D_{m}^{1},\Pi_{m}}\leq \varepsilon_{m},
\]
which implies
\[
|\omega_{m+1}-\omega_{m}|\lessdot \|X_{R_{m}}\|_{D_{m}^{1},\Pi_{m}},
\]
and
\[
\|(\Omega_{m+1}-\Omega_{m})z\|_{l^{a,\bar{p}}}\lessdot r_{m}\|X_{R_{m}}\|_{D_{m}^{1},\Pi_{m}}.
\]
Hence,\ on $ \Pi_{m}$ and with $ -\delta \leq \bar{p}-p $,\ we have
\[
|\Omega_{m+1}-\Omega_{m}|_{-\delta}\leq |\Omega_{m+1}-\Omega_{m}|_{\bar{p}-p}\lessdot \|X_{R_{m}}\|_{D_{m}^{1},\Pi_{m}},
\]
which ends the proof of (\ref{077*}).

Thirdly,\ under the assumptions (\ref{073})-(\ref{075}) at stage $ m$,\ we get from (\ref{060}) that
\begin{eqnarray}
\nonumber\|X_{R_{m+1}}\|_{D_{m+1}^{1},\Pi_{m+1}} &\lessdot& \eta^{-2}_{m}
\varepsilon^{\frac{11}{6}}_{m}+
\eta_{m}\varepsilon_{m} + K^{n}_{m}e^{-K_{m}(s_{m}-s_{m+1})}\varepsilon_{m}\\
\nonumber &\lessdot& \varepsilon_{m+1},
\end{eqnarray}
which ends the proof of (\ref{078}).

Finally,\ we estimate the measures of the resonant zones.\ Since
\[
\Pi_{m+1}=\Pi_{m}\setminus \bigcup_{K_{m-1}<|k|\leq K_{m},(k,l)\in\mathcal{Z}}\mathcal{R}_{kl}(m) \setminus \bigcup_{0 <|k|\leq K_{m},i=1,4}\mathcal{R}_{ik}(m)\setminus \bigcup_{|k|\leq K_{m}}\mathcal{R}_{3k}(m),
\]
we have
\begin{eqnarray}
\nonumber \mathbf{Meas} \Pi_{m+1} &=&  \mathbf{Meas}\Pi_{m}- \sum_{(k,l)\in\mathcal{Z},K_{m-1}<|k|\leq K_{m}} \mathcal{R}_{kl}(m)\\
\nonumber &&-\sum_{i=1,4,0 <|k|\leq K_{m}}\mathcal{R}_{ik}(m)-\sum_{|k|\leq K_{m}}\mathcal{R}_{3k}(m)\\
\nonumber  &=&\mathbf{Meas} \Pi_{m}-\sum_{ K_{m-1} <|k|\leq K_{m}}\gamma^{\mu}_{1}\cdot\frac{1}{|k|^{\tau}}-\sum_{0 <|k|\leq K_{m}}L_{1}(\gamma^{\frac{1}{3b^{2}}}_{1}/|k|^{\tau})\cdot\frac{1}{m^{2}}\\
\nonumber &&
-\sum_{0 <|k|\leq K_{m}}L_{4}(\gamma^{\frac{1}{2b^{2}}}_{1}/| k|^{\tau})\cdot\frac{1}{m^{2}}-2\sum_{|k|\leq K_{m}}L_{3}(\gamma^{\frac{1}{4b^{2}}}_{1}/| k|^{\tau-1})\cdot\frac{1}{m^{2}}\\
\nonumber &=& \mathbf{Meas} \Pi_{m}-\gamma^{\mu}_{1}\cdot\frac{1}{1+K_{m-1}}-\gamma^{\frac{1}{4b^{2}}}_{1}\cdot\frac{1}{m^{2}},
\end{eqnarray}
which ends the proof of (\ref{079}).
\end{proof}

\section{convergence}
Let $ D(\frac{s_{1}}{2},0,0)\subset \bigcap^{\infty}_{m=1}D(s_{m},r_{m},r_{m}),
\Phi=\Pi^{\infty}_{m=1}\Phi_{m}$ and $ \Pi_{\gamma_{1}}=\bigcap^{\infty}_{m=1}\Pi_{m}$.\ By the Lemma 3.1,\ we conclude that $ \Phi,D\Phi,H_{m},X_{H_{m}}$ converge uniformly on the domain $ D(\frac{s_{1}}{2},0)\times\Pi_{\gamma_{1}}=D(\frac{7}{16}s_{0},0,0)\times\Pi_{\frac{3}{4}\gamma_{0}}$ with

\[
H_{\infty}(x,y,z^{*},\bar{z}^{*},\xi):\lim_{m\rightarrow\infty}H_{m}
=\breve{N}(y,z^{*},\bar{z}^{*},\xi)+\breve{R}(x,y,z^{*},\bar{z}^{*},\xi),
\]
where
\begin{eqnarray}
\nonumber  \breve{N}&=& \breve{N}^{x}(\xi)+\langle\breve{\omega}(\xi),y\rangle
+\langle\breve{\Omega}(\xi)z,\bar{z}\rangle
+ \langle \breve{N}^{z_{0}}(\xi),z_{0}\rangle +\langle \breve{N}^{\bar{z}_{0}}(\xi),\bar{z}_{0}\rangle\\
\nonumber      &&
 + \langle \breve{N}^{z_{0}z_{0}}(\xi)z_{0},z_{0}\rangle
 + \langle \breve{N}^{z_{0}\bar{z}_{0}}(\xi)z_{0},\bar{z}_{0}\rangle
+ \langle \breve{N}^{\bar{z}_{0}\bar{z}_{0}}(\xi)\bar{z}_{0},\bar{z}_{0}\rangle,
\end{eqnarray}
and

\[
\begin{split}
\breve{R}(x,y,z^{*},\bar{z}^{*},\xi) &=\sum_{k,\alpha\in \mathbb{N}^{n},\beta,\gamma\in \mathbb{N}^{\mathbb{N}},2|\alpha|+|\beta|+|\gamma|\geq3}
\breve{R}^{k\alpha\beta\gamma}(\xi)y^{\alpha}\{z^{*}\}^{\beta}\{\bar{z}^{*}\}^{\gamma}e^{\mathbf{i}\langle k,x\rangle}.
\end{split}
\]
Moreover,\ the following estimates hold:\\

$(\mathbf{1})$ for each $ \xi\in \Pi_{1}$,\ the symplectic map
\[
\Phi:\ D(s,r,r)\times \Pi \rightarrow D(\frac{7}{16}s,0,0)\times \Pi_{\gamma},
\]
satisfies:
\[
\|\Phi-id\|_{D(\frac{7}{16}s,0,0),\Pi_{\gamma}}
\lessdot\varepsilon;
\]
and
\[
\|D\Phi-Id\|_{D(\frac{7}{16}s,0,0),\Pi_{\gamma}}
\lessdot\varepsilon;
\]

$(\mathbf{2})$ the frequencies $\breve{\omega}(\xi) $ and $\breve{\Omega}(\xi)$ satisfy:
\[
\begin{split}
|\breve{\omega}(\xi)-\omega(\xi)|_{\Pi_{\gamma}}
+
|\breve{\Omega}(\xi)-\Omega(\xi)|_{-\delta,\Pi_{\gamma}}
&\lessdot\varepsilon;
\end{split}
\]

$(\mathbf{3})$ the perturbation $\breve{R}(x,y,z^{*},\bar{z}^{*},\xi) $ satisfies:
\begin{equation}
\begin{split}
\|X_{\breve{R}}\|_{D(\frac{7}{16}s_{0},0,0),\Pi_{\gamma}}\lessdot\varepsilon;
\end{split}
\end{equation}

$(\mathbf{4})$ the measure of the $ \Pi_{\gamma}$ satisfies:
\begin{equation}
\begin{split}
\mathbf{Meas}\Pi_{\gamma} \geq (\mathbf{Meas}\Pi_{1})(1-O(\gamma^{\nu})),
\end{split}
\end{equation}
where $ \nu=\min\{\mu,\frac{1}{4b^{2}}\}$.
\begin{proof}
\ See the details in \cite{po1996ak}.
\end{proof}

\section{Proof of Theorem \ref{mainthm}}
\begin{proof}\ In view of Lemma \ref{ml},\ we get
\[
\breve{H}=\breve{N}(y,z^{*},\bar{z}^{*},\xi)+\breve{R}(x,y,z^{*},\bar{z}^{*},\xi),
\]
where
\begin{eqnarray}
\breve{N}(x,y,z^{*},\bar{z}^{*},\xi) &=& \breve{N}^{x}(\xi)+\langle\breve{\omega}(\xi),y\rangle
+\langle\breve{\Omega}(\xi)z,\bar{z}\rangle
      + \langle\breve{N}^{z_{0}}(\xi),z_{0}\rangle +\langle\breve{N}^{\bar{z}_{0}}(\xi),\bar{z}_{0}\rangle\\
\nonumber  &&
 +\langle\breve{N}^{z_{0}z_{0}}(\xi)z_{0},z_{0}\rangle
 +\langle\breve{N}^{z_{0}\bar{z}_{0}}(\xi)z_{0},\bar{z}_{0}\rangle
+\langle\breve{N}^{\bar{z}_{0}\bar{z}_{0}}(\xi)\bar{z}_{0},\bar{z}_{0}\rangle,
\end{eqnarray}
and
\begin{eqnarray}
\breve{R}(x,y,z^{*},\bar{z}^{*},\xi) &=&\sum_{k\in\mathbb{Z}^{n},\alpha\in \mathbb{N}^{n},\beta,\gamma\in \mathbb{N}^{\mathbb{N}},2|\alpha|+|\beta|+|\gamma|\geq3}
\widehat{\breve{R}^{k\alpha\beta\gamma}}(k,\xi)y^{\alpha}\{z^{*}\}^{\beta}\{\bar{z}^{*}\}^{\gamma}e^{\mathbf{i}\langle k,x\rangle}.
\end{eqnarray}

If $ \breve{N}^{z_{0}}(\xi)=0 $\ and\ $ \breve{N}^{\bar{z}_{0}}(\xi)=0 $,\ then the corresponding Hamiltonian equation defined by (\ref{001}) can be written into the form
\begin{equation}
\begin{cases}
\dot{x}=\frac{\partial\breve{H}}{\partial y}=\breve{\omega}(\xi)+\mathcal{O}(|y|+\|z\|_{a,p}),\\
\dot{y}=\frac{\partial\breve{H}}{\partial x}=\mathcal{O}(|y|^{2}+|y|\|z\|_{a,p}+\|z\|^{3}_{a,p}),\\
\dot{z}_{0}=\mathbf{i}\frac{\partial\breve{H}}{\partial \bar{z}_{0}}= \mathbf{i}(\breve{N}^{z_{0}\bar{z}_{0}}(\xi)z_{0}
+2\breve{N}^{\bar{z}_{0}\bar{z}_{0}}(\xi)\bar{z}_{0}
+\breve{N}^{\bar{z}_{0}}(\xi)
+\mathcal{O}(|y|+\|z\|^{2}_{a,p})),\\
\dot{\bar{z}}_{0}=-\mathbf{i}\frac{\partial\breve{H}}{\partial z_{0}}=-\mathbf{i}(2\breve{N}^{z_{0}z_{0}}(\xi)z_{0}+\breve{N}^{z_{0}\bar{z}_{0}}(\xi)\bar{z}_{0}
+\breve{N}^{z_{0}}(\xi)
+\mathcal{O}(|y|+\|z\|^{2}_{a,p})),\\
\dot{z}_{j}=\mathbf{i}\frac{\partial\breve{H}}{\partial \bar{z}_{j}}=\mathbf{i}(\breve{\Omega}_{j}(\xi)z_{j}
+\mathcal{O}(|y|+\|z\|^{2}_{a,p})),\  j\in \mathbb{N}_{+}\setminus\mathcal{J},\\
\dot{\bar{z}}_{j}=-\mathbf{i}\frac{\partial\breve{H}}{\partial z_{j}}=-\mathbf{i}(\breve{\Omega}_{j}(\xi)\bar{z}_{j}
+\mathcal{O}(|y|+\|z\|^{2}_{a,p})),\  j\in \mathbb{N}_{+}\setminus\mathcal{J}.
\end{cases}
\end{equation}
It is easy to verify that
\[
\mathcal{T}^{n}_{0}={\mathbb{T}}^{n}\times\{y=0\}\times\{z^{*}=0\}\times\{\bar{z}^{*}=0\}
\]
is an embedding torus of the Hamiltonian vector field $ X_{\breve{H}}$ with frequency $ \breve{\omega}$.\ Moreover,\ $ \Phi(\mathcal{T}_{0}^{n}\times\{\xi\})$ is the invariant torus of the original Hamiltonian function $ H$.\
We finish the proof of the existence of KAM torus in this case.

If $ \breve{N}^{z_{0}}(\xi) \neq \vec{0} $\ or\ $\breve{N}^{\bar{z}_{0}}(\xi)\neq \vec{0} $,\ then we can let $ \left(|\breve{N}^{z_{0}}(\xi)|_{2}^{2}
+|\breve{N}^{\bar{z}_{0}}(\xi)|_{2}^{2}\right)^{\frac{1}{2}} = \delta_{0} > 0 $.\ Since $ \underset{m\rightarrow\infty}{\lim}\widehat{J_{m}^{z_{0}}} (\xi)= \breve{N}^{z_{0}}(\xi)$ and $ \underset{m\rightarrow\infty}{\lim}\widehat{J_{m}^{\bar{z}_{0}}} (\xi)= \breve{N}^{\bar{z}_{0}}(\xi)$,\ there exists a fixed $ m_{0}$ such that for any $ m > m_{0} $,
\begin{eqnarray}\label{a000}
\sqrt{|{J}_{m}^{{z}_{0}}(\xi)|_{2}^{2}
+|{J}_{m}^{\bar{z}_{0}}(\xi)|_{2}^{2}} \geq \frac{\delta_{0}}{2}.
\end{eqnarray}
More exactly,\ we will choose sufficiently large $ m $ such that
\begin{eqnarray}\label{a00}
 \delta_{0} > 20\varepsilon^{\frac{7}{6}}_{m-1}.
\end{eqnarray}
Thus,\ considering $ H_{m}(x,y,z^{*},\bar{z}^{*},\xi) = N_{m}(y,z^{*},\bar{z}^{*},\xi)+ R_{m}(x,y,z^{*},\bar{z}^{*},\xi)$,\ one obtains
\begin{equation}\label{a0}
\begin{cases}
\dot{x}=\frac{\partial H_{m}}{\partial y}=\omega_{m}(\xi)+\frac{\partial R_{m}}{\partial y},\\
\dot{y}=-\frac{\partial H_{m}}{\partial x}=-\frac{\partial R_{m}}{\partial x},\\
\dot{z}_{0}=\mathbf{i}\frac{\partial H_{m}}{\partial \bar{z}_{0}}= \mathbf{i}\left(\widehat{J_{m}^{\bar{z}_{0}}}(\xi)+\widehat{J_{m}^{z_{0}\bar{z}_{0}}}(\xi)z_{0}
+2\widehat{J_{m}^{\bar{z}_{0}\bar{z}_{0}}}(\xi)\bar{z}_{0}
+\frac{\partial R_{m}}{\partial \bar{z}_{0}}\right),\\
\dot{\bar{z}}_{0} =-\mathbf{i}\frac{\partial H_{m}}{\partial  z_{0}}= -\mathbf{i}\left(\widehat{J_{m}^{{z}_{0}}}(\xi)+2\widehat{J_{m}^{z_{0}z_{0}}}(\xi)z_{0}
+\widehat{J_{m}^{z_{0}\bar{z}_{0}}}(\xi)\bar{z}_{0}
+\frac{\partial R_{m}}{\partial z_{0}}\right),\\
\dot{z}_{j}=\mathbf{i}\frac{\partial H_{m}}{\partial \bar{z}_{j}}
=\mathbf{i}\left(\Omega^{j}_{m}(\xi)z_{j}
+\frac{\partial R_{m}}{\partial \bar{z}_{j}}\right),\  j\in \mathbb{N}_{+}\setminus\mathcal{J},\\
\dot{\bar{z}}_{j}=-\mathbf{i}\frac{\partial H_{m}}{\partial  z_{j}}=-\mathbf{i}\left(\Omega^{j}_{m}(\xi)\bar{z}_{j}
+\frac{\partial R_{m}}{\partial z_{j}}\right),\  j\in \mathbb{N}_{+}\setminus\mathcal{J},
\end{cases}
\end{equation}
on $\mathcal{D}(s_{m},r_{m},r_{m})$.\\
Let
\[
X_{0} = \begin{pmatrix}
z_{0}\\
\bar{z}_{0}
\end{pmatrix},
X_{j} = \begin{pmatrix}
z_{j}\\
\bar{z}_{j}
\end{pmatrix},\  j\in \mathbb{N}_{+}\setminus\mathcal{J},
\]
and
\[
\alpha_{0}= \begin{pmatrix}
 \mathbf{i}\widehat{J_{m}^{{z}_{0}}}(\xi)\\
- \mathbf{i}\widehat{J_{m}^{\bar{z}_{0}}}(\xi)
\end{pmatrix}.
\]
We also let
\[
A_{0} =
\begin{pmatrix}
\mathbf{i}\widehat{J_{m}^{z_{0}\bar{z}_{0}}}(\xi) &
2\mathbf{i}\widehat{J_{m}^{\bar{z}_{0}\bar{z}_{0}}}(\xi)\\
-2\mathbf{i}\widehat{J_{m}^{z_{0}z_{0}}}(\xi) &
-\mathbf{i}\widehat{J_{m}^{z_{0}\bar{z}_{0}}}(\xi)
\end{pmatrix},
A_{j} =
\begin{pmatrix}
\mathbf{i}\Omega_{j}(\xi) & 0\\
0 &
-\mathbf{i}\Omega_{j}(\xi)
\end{pmatrix},
\]
and
\[
g_{0}=  \begin{pmatrix}\mathbf{i}\frac{\partial R_{m}}{\partial \bar{z}_{0}}\\
-\mathbf{i}\frac{\partial R_{m}}{\partial z_{0}}
\end{pmatrix},
g_{j}=  \begin{pmatrix}\mathbf{i}\frac{\partial R_{m}}{\partial \bar{z}_{j}}\\
-\mathbf{i}\frac{\partial R_{m}}{\partial z_{j}}
\end{pmatrix}, j\in \mathbb{N}_{+}\setminus\mathcal{J},
\]
for any $ j\in \mathbb{N}_{+}\setminus\mathcal{J}$.\\
Then the last four equations of (\ref{a0}) can be written into the form
\begin{eqnarray}\label{a1}
\dot{X}_{0} = \alpha_{0} + A_{0}X_{0} + g_{0},\  \dot{X}_{j} = A_{j}X_{j} + g_{j},j\in \mathbb{N}_{+}\setminus\mathcal{J},
\end{eqnarray}
respectively.\\
Let us pass to the new variables
\begin{eqnarray}\label{a01}
\tilde{X}_{0} = e^{A_{0}t}X_{0},\  \tilde{X}_{j} = e^{A_{j}t}X_{j}, j\in \mathbb{N}_{+}\setminus\mathcal{J},
\end{eqnarray}
and rewritten (\ref{a1}) as
\begin{eqnarray}
\label{a2}
\dot{\tilde{X}}_{0}= e^{-A_{0}t}\alpha_{0}+\tilde{g}_{0},\  \dot{\tilde{X}}_{j}= \tilde{g}_{j}, j\in \mathbb{N}_{+}\setminus\mathcal{J},
\end{eqnarray}
where
\[
\tilde{g}_{0}= e^{-A_{0}t}{g}_{0},\
\tilde{g}_{j}= e^{-A_{j}t}{g}_{j}, j\in \mathbb{N}_{+}\setminus\mathcal{J}.
\]
Since
\[
\|A_{0} \| = \underset{|x|_{2}\neq 0}{\sup}\frac{|A_{0}x|_{2}}{|x|_{2}}\leq 2b\varepsilon_{0} \ll 1,\   x\in \mathbb{C}^{2b},
\]
then for any $ 0\leq t \leq 1$,\ one has
\begin{eqnarray}\label{a3*}
\|e^{-A_{0}t}\|= \left\|E + \underset{k\geq 1}{\sum}\frac{(-A_{0}t)^{k}}{k!}\right\| \leq 1+\underset{k\geq 1}{\sum}\frac{\|A_{0}\|^{k}}{k!} < 2,
\end{eqnarray}
and
\begin{eqnarray}\label{a3}
\|e^{-A_{0}t}\|= \left\|E + \underset{k\geq 1}{\sum}\frac{(-A_{0}t)^{k}}{k!}\right\| \geq 1-\underset{k\geq 1}{\sum}\frac{\|A_{0}\|^{k}}{k!}> \frac{1}{2}.
\end{eqnarray}
Fix an initial value $\|z^{*}(0)\|_{a,p}+\|\bar{z}^{*}(0)\|_{a,p}\leq \varepsilon^{\frac{7}{6}}_{m-1} $.\ Since
\[
\|z^{*}(0)\|^{2}_{a,p}=\sum_{j_{m}\in \mathcal{J}}|z_{j_{m}}(0)|^{2}j_{m}^{2p}e^{2aj_{m}}+ \sum_{j\in \mathbb{N}_{+}\setminus \mathcal{J}}|z_{j}(0)|^{2}j^{2p}e^{2aj},
\]
and
\[
\|\bar{z}^{*}(0)\|^{2}_{a,p}=\sum_{j_{m}\in \mathcal{J}}|\bar{z}_{j_{m}}(0)|^{2}j_{m}^{2p}e^{2aj_{m}}+ \sum_{j\in \mathbb{N}_{+}\setminus \mathcal{J}}|\bar{z}_{j}(0)|^{2}j^{2p}e^{2aj},
\]
then one has
\[
\left(\underset{j_{m}\in\mathcal{J}}{\sum}|{X}_{0m}(0)|^{2}_{{2}}j_{m}^{2p}e^{2aj_{m}}
+\underset{j\in \mathbb{N}_{+}\setminus\mathcal{J}}{\sum}|{X}_{j}(0)|^{2}_{{2}}j^{2p}e^{2aj}\right)^{\frac{1}{2}}\leq \sqrt{2}(\|z^{*}(0)\|_{a,p}+\|\bar{z}^{*}(0)\|_{a,p}),
\]
where
$ {X}_{0m} = \begin{pmatrix}
z_{j_m}\\
\bar{z}_{j_m}
\end{pmatrix}$ for any $ j_{m}\in \mathcal{J} $
and $ {X}_{j} = \begin{pmatrix}
z_{j}\\
\bar{z}_{j}
\end{pmatrix}$ for any $ j\in \mathbb{N}_{+}\setminus\mathcal{J}$.\\
It follows from (\ref{a01}) that
\begin{eqnarray}\label{238}
\left(\underset{j_{m}\in\mathcal{J}}{\sum}|\tilde{X}_{0m}(0)|^{2}_{{2}}j_{m}^{2p}e^{2aj_{m}}
+\underset{j\in \mathbb{N}_{+}\setminus\mathcal{J}}{\sum}|\tilde{X}_{j}(0)|^{2}_{{2}}j^{2p}e^{2aj}\right)^{\frac{1}{2}}\leq \sqrt{2}\varepsilon^{\frac{7}{6}}_{m-1}.
\end{eqnarray}
Note
\begin{eqnarray}\label{239}
\|\tilde{X}_{0}(0)\|^{2}_{l^{a,p}}= \underset{j_{m}\in\mathcal{J}}{\sum}|\tilde{X}_{0m}(0)|^{2}_{{2}}j_{m}^{2p}e^{2aj_{m}},
\end{eqnarray}
and
\begin{eqnarray}\label{240}
\|g_{0}\|^{2}_{l^{a,p}}= \underset{j_{m}\in\mathcal{J}}{\sum}|g_{0m}(0)|^{2}_{{2}}j_{m}^{2p}e^{2aj_{m}}.
\end{eqnarray}
By integrating $ t$ from $0$ to $1$ of (\ref{a2}),\ one thus obtains
\begin{eqnarray}
\nonumber\left|\tilde{X}_{0}(1)-\int^{1}_{0} e^{-A_{0}t}\alpha_{0}\mathrm{d}t\right|_{{2}}&\leq& \sqrt{2}|\tilde{X}_{0}(0)|_{{2}}+ \sqrt{2}\int^{1}_{0} |\tilde{g}_{0}|_{2}\mathrm{d}t\\
\nonumber&\leq& \sqrt{2}|\tilde{X}_{0}(0)|_{{2}}+ 2\sqrt{2}\int^{1}_{0} |{g}_{0}|_{2}\mathrm{d}t\
  (\mbox{by}\   (\ref{a3*})\  )\\
\nonumber&\leq& \sqrt{2}\|\tilde{X}_{0}(0)\|_{l^{a,p}}+ 2\sqrt{2}\int^{1}_{0} \|{g}_{0}\|_{l^{a,p}}\mathrm{d}t\   ( \mbox{by}\   (\ref{239})\  \mbox{and}\  (\ref{240}) ) \\
\nonumber&\leq& \sqrt{2}\|\tilde{X}_{0}(0)\|_{l^{a,p}}+4\int^{1}_{0} \|{X_{R_m}}\|_{\mathcal{D}_{m}\times\Pi_{m}}\mathrm{d}t\\
\nonumber &\leq& 2\varepsilon^{\frac{7}{6}}_{m-1}+ 4\varepsilon_{m}\   (\mbox{by}\  (\ref{238})\  ) \\
\nonumber &\leq& 3\varepsilon^{\frac{7}{6}}_{m-1}.
\end{eqnarray}
Consequently,\ we obtain
\begin{eqnarray}\label{a5}
|\tilde{X}_{0}(1)|_{{2}}&\geq& \left|\int^{1}_{0} e^{-A_{0}t}\alpha_{0}\mathrm{d}t\right|_{{2}}-3\varepsilon^{\frac{7}{6}}_{m-1}\\
\nonumber &\geq& {\frac{\delta_{0}}{4}}-3\varepsilon^{\frac{7}{6}}_{m-1}\  ( \mbox{by}\  (\ref{a000})\    \mbox{and}\  (\ref{a3}) )\\
\nonumber &>& 2\varepsilon^{\frac{7}{6}}_{m-1}\  ( \mbox{by}\  (\ref{a00}) ),
\end{eqnarray}
which implies that no invariant torus exists in the domain
\begin{eqnarray}\nonumber
\left(\underset{j_{m}\in\mathcal{J}}{\sum}|\tilde{X}_{0m}(0)|^{2}_{{2}}j_{m}^{2p}e^{2aj_{m}}
+\underset{j\in \mathbb{N}_{+}\setminus\mathcal{J}}{\sum}|\tilde{X}_{j}(0)|^{2}_{{2}}j^{2p}e^{2aj}\right)^{\frac{1}{2}}\leq \sqrt{2}\varepsilon^{\frac{7}{6}}_{m-1}.
\end{eqnarray}
Let $ \Xi_{m}= \{(x,y,z^{*},\bar{z}^{*}):|\Im x| \leq s_{m},|y|\leq r_{m}^{2},
\|z^{*}\|_{a,p}+\|\bar{z}^{*}\|_{a,p}\leq \varepsilon^{\frac{7}{6}}_{m-1} \}$ and $\Phi^{m-1}=\Pi^{m-1}_{j=0}\Phi_{j}$.\ It follows from (\ref{a5}) that there exists no invariant torus for the Hamiltonian system defined by (\ref{001}) on $ \Phi^{m-1}(\Xi_{m}\times\{\xi\})$.
\end{proof}

\section{application to NLS}
We discuss the nonlinear Schr$\ddot{o}$dinger equation
\begin{equation}\label{eq}
\mathbf{i}u_{t}-u_{xx}+|u|^{2}u=0
\end{equation}
on the finite $x$-interval $[0,2\pi]$ with periodic boundary conditions
\[
u(t,x)=u(t,x+2\pi)=0,\  u(x,t)=u(-x,t).
\]
Denote the Sobolev space of complex valued $ L^{2}$-functions $[0,2\pi]$ with an $ L^{2}$-derivative and vanishing boundary values by $\mathcal{P}=W_{0}^{1}([0,2\pi])$.\ With the inner product
\[
\langle u,v\rangle=Re\int_{0}^{2\pi}u\overline{v}\mathrm{d}x,
\]
and the Hamiltonian
\[
H=\frac{1}{2}\langle Au,u\rangle+\frac{1}{4}\int_{0}^{2\pi}|u|^{4}\mathrm{d}x
\]
where $ A=-\frac{d^{2}}{dx^{2}}$,\ the system can be written in the  Hamiltonian form

\[
\dot{u}=\mathbf{i}\nabla H(u)
\]
the gradient of $ H$ is defined with respect to $ \langle \cdot,\cdot\rangle$,\ and the dot indicates differentiation with respect to time.

Denote $ \mathbb{N}= \{0,1,...,n,...\}$ and $ \mathbb{N}_{+}= \{1,...,n,...\}$.\ Let
\[
\begin{cases}
\phi_{0}(x)=\sqrt{\frac{1}{2\pi}},\  \lambda_{0}= 0, \\
\phi_{j}(x)=\sqrt{\frac{1}{\pi}}\cos jx,\  \lambda_{j}=j^{2},\  j\in\mathbb{N}_{+},
\end{cases}
\]
be the basic modes and their frequencies for the linear Schr$\ddot{o}$dinger equation $\mathbf{i}u_{t}-u_{xx}=0$ with periodic boundary conditions.\ We rewrite $H$ as a Hamiltonian in infinitely many coordinates by making the ansatz
\[
u(t,x)=\sum_{j\in\mathbb{N}}q_{j}(t)\phi_{j}(x).
\]
The coordinates are taken from the Hilbert space $ l^{a,p}$ of all complex-valued sequences $ q=(q_{0}, q_{1},...)$ with
\[
|| q||^{2}_{a,p}= |q_{0}|^{2}+\sum_{j\in\mathbb{N}_{+}}|q_{j}|^{2}j^{2p}e^{2aj} < \infty,
\]
where $ a > 0$ and $ p> \frac{1}{2}$ will be fixed later.\\
We then obtain the Hamiltonian
\begin{eqnarray}\label{199}
H=\Lambda+G= \frac{1}{2}\underset{j\in\mathbb{N}}{\sum}\lambda_{j}| q_{j}|^{2}+\frac{1}{4}\int_{0}^{2\pi}|u|^{4}\mathrm{d}x
\end{eqnarray}
on the phase space $ l^{a,p}$ with the symplectic structure ${\mathbf{i}}\sum_{j\in\mathbb{N}}dq_{j}\bigwedge d\bar{q}_{j}$.\ The corresponding equation is
\begin{eqnarray}\label{200}
\dot{q}_{j}=2\mathbf{i}\frac{\partial H}{\partial \bar{q}_{j}},\  j\in\mathbb{N}.
\end{eqnarray}
\begin{lemma}\label{l1} Let $ a > 0$ and $ p> \frac{1}{2}$.\ If a curse $ I\rightarrow l^{a,p},  t\mapsto q(t)$ is an analytic solution of $(\ref{200})$,\ then
\[
u(t,x)=\sum_{j\in\mathbb{N}}q_{j}(t)\phi_{j}(x),
\]
is a solution of $(\ref{200})$ which is analytic on $ I\times [0,2\pi]$.
\end{lemma}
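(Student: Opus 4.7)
The plan is to treat the series $u(t,x) = \sum_{j\in\mathbb{N}} q_j(t)\phi_j(x)$ in three stages: spatial convergence and analyticity, justification of term-by-term differentiation in both $t$ and $x$, and substitution into (\ref{eq}) using the Hamiltonian form of (\ref{200}).

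First, I would exploit the exponential weight in $l^{a,p}$ to control the spatial series. For $j\ge 1$ we have $|q_j(t)|\le \|q(t)\|_{a,p}\,j^{-p}e^{-aj}$, while each $\phi_j$ and each of its $x$-derivatives $\phi_j^{(k)}$ grows only polynomially in $j$ and extends holomorphically to all of $\mathbb{C}$. Consequently the series $\sum_j q_j(t)\phi_j^{(k)}(x)$ converges absolutely and uniformly on every strip $\{|\Im x|\le a'<a\}$, for every $k\ge 0$. In particular $u(t,\cdot)$ is real-analytic on $[0,2\pi]$ with exponentially decaying cosine coefficients, and term-by-term $x$-differentiation is legitimate; using $\phi_j''=-\lambda_j\phi_j$ this gives $u_{xx}=-\sum_j\lambda_j q_j(t)\phi_j(x)$.

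Second, since $t\mapsto q(t)$ is analytic as an $l^{a,p}$-valued curve, each component $q_j(t)$ is analytic on a complex neighborhood of any compact $I_0\subset I$, and $\|q(t)\|_{a,p}$ and $\|\dot q(t)\|_{a,p}$ are uniformly bounded there. Combining this with the uniform spatial bound above yields uniform convergence of $\sum_j q_j(t)\phi_j(x)$ and of $\sum_j\dot q_j(t)\phi_j(x)$ on compact subsets of $I_0\times\{|\Im x|\le a'\}$, whence a Morera/Weierstrass argument shows that $u$ is jointly analytic on $I\times[0,2\pi]$ and that $u_t=\sum_j\dot q_j(t)\phi_j(x)$ term-by-term.

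Third, I would insert the Hamiltonian equations. From $H=\Lambda+G$ with $G=\tfrac14\int_0^{2\pi}|u|^4\,dx$, a direct computation gives $\partial G/\partial\bar q_j=\tfrac12\int_0^{2\pi}|u|^2u\,\phi_j\,dx$, so (\ref{200}) reads
\[
\dot q_j \;=\; 2\mathbf{i}\,\frac{\partial H}{\partial\bar q_j} \;=\; \mathbf{i}\lambda_j q_j+\mathbf{i}\int_0^{2\pi}|u|^2u\,\phi_j\,dx .
\]
Multiplying by $\mathbf{i}\phi_j(x)$ and summing over $j$, the first batch reproduces $-u_{xx}$ while the second is the cosine Fourier expansion of $|u|^2u(t,x)$, so $\mathbf{i}u_t=u_{xx}-|u|^2u$, i.e., $u$ solves (\ref{eq}).

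The main technical obstacle is the last identification: one must know that $\sum_j\bigl(\int_0^{2\pi}|u|^2u\,\phi_j\,dx\bigr)\phi_j(x)$ converges pointwise (indeed uniformly) to $|u|^2u(t,x)$. I would handle this by invoking the analyticity of $u(t,\cdot)$ established in the first step: since $u(t,\cdot)$ is analytic, even, and $2\pi$-periodic, so is $|u|^2u(t,\cdot)$, hence its cosine Fourier coefficients with respect to $\{\phi_j\}$ decay exponentially and its Fourier series converges uniformly to $|u|^2u(t,\cdot)$. Once this Fourier identity is in hand, the substitution above is termwise legitimate, and the joint analyticity of $u$ on $I\times[0,2\pi]$ has already been established, completing the proof.
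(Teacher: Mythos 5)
Your proof is correct and follows the standard argument; the paper itself gives no details, simply deferring to Kuksin--P\"oschel \cite{ku1996in}, and your three-stage plan (exponential-weight estimates giving spatial analyticity on a strip of half-width $a'<a$, joint analyticity via Weierstrass/Morera from locally uniform convergence of $\sum q_j\phi_j$ and $\sum\dot q_j\phi_j$, and then term-by-term substitution of $\dot q_j=\mathbf{i}\lambda_j q_j+\mathbf{i}\int_0^{2\pi}|u|^2u\,\phi_j\,dx$ together with uniform convergence of the cosine Fourier series of the real-analytic, even, $2\pi$-periodic function $|u|^2u(t,\cdot)$) is exactly what the cited reference does for the Dirichlet/sine case, adapted here to the even/cosine case. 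One small slip in the prose of your last step: after multiplying by $\mathbf{i}\phi_j(x)$ and summing, the $\lambda_j$-term contributes $\mathbf{i}^2\sum_j\lambda_j q_j\phi_j=-\sum_j\lambda_j q_j\phi_j=+u_{xx}$ (not $-u_{xx}$ as written), which is in fact what your stated conclusion $\mathbf{i}u_t=u_{xx}-|u|^2u$ requires; the final identity and hence the proof are correct.
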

\begin{proof}\ More details can be found in \cite{ku1996in}.
\end{proof}
For the nonlinearity $ |u|^{2}u$,\ we find
\begin{eqnarray}\label{201}
G=\frac{1}{4}\int_{0}^{2\pi}|u|^{4}\mathrm{d}x=\frac{1}{4}\sum_{i,j,k,l}
G_{ijkl}q_{i}q_{j}\bar{q}_{k}\bar{q}_{l}
\end{eqnarray}
with
\[
G_{ijkl}=\int_{0}^{2\pi}\phi_{i}\phi_{j}\phi_{k}\phi_{l}\mathrm{d}x.
\]
\begin{proposition}\label{p1}
From the Hamiltonian $ H=\Lambda+G $ with the nonlinearity (\ref{201}),\ there exists a real analytic,\ symplectic change coordinates $ \Gamma$ in some neighborhood of the origin in $ l^{a,p} $ that takes the Hamiltonian (\ref{199}) into
\begin{eqnarray}\label{2011}
H \circ \Gamma = \Lambda + \bar{G} + K,
\end{eqnarray}
where Hamiltonian vector fields $ X_{\bar{G}}$ and $ X_{K} $ are real analytic vector fields in a neighborhood of the origin in $ l^{a,p} $,
\begin{eqnarray}\label{202}
\bar{G} = \underset{i,j\in\mathbb{N}}{\sum}\bar{G}_{ij}|q_{i}|^{2}|q_{j}|^{2},\   |K| = \mathcal{O}(||q||_{a,p}^{6}),
\end{eqnarray}
with uniquely determined coefficients $ \bar{G}_{ij} = {G}_{iijj} = \begin{cases}
\frac{2+\delta_{ij}}{16\pi},\ &i,j \in\mathbb{N}_{+},\\
\frac{1}{8\pi},\ &\mbox{either one of}\  i,j=0\  \mbox{or both}.
\end{cases}$In addition,\ $ K(q,\bar{q})= \underset{a,a'}{\sum}K_{aa'}\prod_{n=0}^{\infty}q^{a_{n}}_{n}\bar{q}^{a'_{n}}_{n}$ ($ a,a'\in \mathbb{N}^{\mathbb{N}}$) has properties : $K_{aa'}=0 $ if $ \underset{n}{\sum}(a_{n}-a'_{n})n \neq 0$ and $ \underset{n}{\sum}a_{n}+a'_{n} $ is even ($ \geq 6 $) for any monomial $ \prod_{n=0}^{\infty}q^{a_{n}}_{n}\bar{q}^{a'_{n}}_{n}$.
\end{proposition}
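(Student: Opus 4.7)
The plan is to prove Proposition~\ref{p1} by a fourth-order Birkhoff normal form argument. Write $H=\Lambda+G$ as given in (\ref{199})--(\ref{201}) and seek a real analytic polynomial Hamiltonian $F$, homogeneous of degree four, whose time-$1$ flow $\Gamma=\Phi^{1}_{F}$ removes the non-resonant quartic part of $G$, leaving only the resonant monomials $|q_i|^{2}|q_j|^{2}$ that constitute $\bar G$.

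First I would compute $G_{ijkl}=\int_{0}^{2\pi}\phi_{i}\phi_{j}\phi_{k}\phi_{l}\,dx$ via the elementary identity $2\cos(ax)\cos(bx)=\cos((a+b)x)+\cos((a-b)x)$; applied twice it shows that $G_{ijkl}\neq 0$ only when some sign combination $\pm i\pm j\pm k\pm l=0$ holds, and it yields the explicit values of $\bar G_{ij}=G_{iijj}$ stated in the proposition. The key algebraic fact needed is then the following resonance identity: if both $\lambda_i+\lambda_j=\lambda_k+\lambda_l$ (i.e.\ $i^{2}+j^{2}=k^{2}+l^{2}$) and $G_{ijkl}\neq 0$, then necessarily $\lbrace i,j\rbrace=\lbrace k,l\rbrace$. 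I expect this to be the main technical obstacle: it is proved by combining the linear constraint coming from the cosine resonance with the Pythagorean constraint in a case analysis on the signs, with the cases where the index $0$ (for which $\lambda_{0}=0$) appears in the quadruple treated separately.

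Granted this identity, I would set
\[
F \;=\; \frac{\mathbf{i}}{4}\sum_{(i,j,k,l)\in\mathcal{N}}\frac{G_{ijkl}}{\lambda_i+\lambda_j-\lambda_k-\lambda_l}\; q_{i}q_{j}\bar q_{k}\bar q_{l},
\]
where $\mathcal{N}=\lbrace(i,j,k,l):\lambda_i+\lambda_j\neq\lambda_k+\lambda_l\rbrace$. The denominators are nonzero integers and hence bounded below in modulus by $1$, so $F$ is a well-defined bounded quartic form whose Hamiltonian vector field is real analytic on a neighborhood of the origin in $l^{a,p}$; consequently $\Gamma=\Phi^{1}_{F}$ is a real analytic symplectic change of coordinates fixing the origin. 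A direct Poisson bracket computation gives $\lbrace\Lambda,F\rbrace=\bar G-G$, with the resonant part of $G$ identified with $\bar G$ by the resonance identity, and the Lie series yields
\[
H\circ\Gamma \;=\; \Lambda + G + \lbrace\Lambda,F\rbrace + \tfrac{1}{2}\lbrace\lbrace\Lambda,F\rbrace,F\rbrace + \lbrace G,F\rbrace + \cdots \;=\; \Lambda + \bar G + K.
\]
Since every extra Poisson bracket with the quartic $F$ raises the polynomial degree by two, and $\Phi^{t}_{F}$ is analytic near the origin, the remainder satisfies $|K|=\mathcal{O}(\|q\|_{a,p}^{6})$ on a sufficiently small neighborhood.

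Finally, the structural properties of $K$ propagate from those of $\Lambda$, $G$ and $F$. Every monomial appearing in $K$ has total degree even and at least $6$, because $F$ is homogeneous of even degree $4$ and the Poisson bracket of even-degree polynomials is again of even degree. The grading identity $\sum_{n}(a_{n}-a'_{n})n=0$ claimed for $K$ is verified by first checking it for every monomial in $G$ (equivalently, for $F$, by the explicit formula above) and then observing that the Poisson bracket of two Hamiltonians preserving this $n$-grading again preserves it; hence the property propagates through each term of the Lie series and descends to $K$.
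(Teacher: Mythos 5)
Your proposal is correct and follows the same Birkhoff normal form route as the paper: the quartic generating function $F$, the choice $\mathbf{i}F_{ijkl} = G_{ijkl}/(\lambda_i+\lambda_j-\lambda_k-\lambda_l)$ on the non-resonant set, the identification $G + \{\Lambda,F\} = \bar G$, and the propagation of the parity and momentum grading through the Lie series all match the paper's argument. The only substantive difference is presentational: you isolate as a named lemma the resonance identity (that $i^2+j^2=k^2+l^2$ together with some $\pm i\pm j\pm k\pm l=0$ and $G_{ijkl}\neq 0$ forces $\{i,j\}=\{k,l\}$), whereas the paper leaves this fact implicit in its definition of $F_{ijkl}$.
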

\begin{proof}\ Let $ \Gamma = X^{t}_{F}|_{t=1}$ be the time-1-map of the flow of the Hamiltonian vector field $ X_{F}$ given by the Hamiltonian
\[
F = \frac{1}{4}\sum_{i,j,k,l}
F_{ijkl}q_{i}q_{j}\bar{q}_{k}\bar{q}_{l}.
\]
Expanding at $ t= 0$ and using Taylor's formula we have
\begin{eqnarray}
\nonumber H\circ \Gamma &=& \Lambda + ( G +\{ \Lambda,F\}) + \mathcal{O}(||q||_{a,p}^{6})\\
\nonumber &=& \Lambda + \bar{G} +K
\end{eqnarray}
with
\[
\{\Lambda,F\}= -\frac{1}{4}\mathbf{i}\sum_{i,j,k,l}
(\lambda_{i}+\lambda_{j}-\lambda_{k}-\lambda_{l})F_{ijkl}q_{i}q_{j}\bar{q}_{k}\bar{q}_{l}.
\]
Let
\[
\mathbf{i}F_{ijkl}=
\begin{cases}
\frac{G_{ijkl}}{\lambda_{i}+\lambda_{j}-\lambda_{k}-\lambda_{l}}\  &\mbox{for}\  i\pm j \pm k \pm l=0\  \mbox{and}\  \{i,j\}\neq \{k,l\},\\
0\                     &\mbox{otherwise}.
\end{cases}
\]
Thus we have
\[
\bar{G} = \underset{i,j\in\mathbb{N}}{\sum}\bar{G}_{ij}|q_{i}|^{2}|q_{j}|^{2}
\]
with
\[
\bar{G}_{ij} = {G}_{iijj} = \begin{cases}
\frac{2+\delta_{ij}}{16\pi},\ &i,j \in\mathbb{N}_{+},\\
\frac{1}{8\pi},\  &\mbox{either one of}\  i,j=0\  \mbox{or both}.
\end{cases}
\]

Next we prove $K(q,\bar{q})= \underset{a,a'}{\sum}K_{aa'}\prod_{n=0}^{\infty}q^{a_{n}}_{n}\bar{q}^{a'_{n}}_{n}$ ($ a,a'\in \mathbb{N}^{\mathbb{N}}$) which has the properties that $K_{aa'}=0 $ if $ \underset{n}{\sum}(a_{n}-a'_{n})n \neq 0$ and $ \underset{n}{\sum}a_{n}+a'_{n} $ is even ($ \geq 6 $) for any monomial $ \prod_{n=0}^{\infty}q^{a_{n}}_{n}\bar{q}^{a'_{n}}_{n}$.\\
Since
\begin{eqnarray}
\nonumber K &=& \{G,F\} + \frac{1}{2!}\{\{ \Lambda,F\},F\} +  \frac{1}{2!}\{\{G,F\},F\}\\
\nonumber &&+ \cdot\cdot\cdot+\frac{1}{n!}\{\cdot\cdot\cdot\{ \Lambda,F\}\cdot\cdot\cdot,F\}+\frac{1}{n!}\{\cdot\cdot\cdot\{ G,F\}\cdot\cdot\cdot,F\}+\cdot\cdot\cdot
\end{eqnarray}
We first consider $  \{G,F\}$,\ due to
\begin{eqnarray}
\nonumber G(q,\bar{q})&=& \underset{a,a',\underset{n}{\sum}n(a_{n}-a'_{n})=0}{\sum}G_{aa'}\prod_{n=0}^{\infty}q^{a_{n}}_{n}\bar{q}^{a'_{n}}_{n},\\
\nonumber F(q,\bar{q})&=& \underset{b,b',\underset{n}{\sum}n(b_{n}-b'_{n})=0}{\sum}F_{bb'}\prod_{n=0}^{\infty}q^{b_{n}}_{n}\bar{q}^{b'_{n}}_{n},
\end{eqnarray}
then
\begin{eqnarray}
\nonumber \{G,F\} &=& \mathbf{i}\underset{n}{\sum}\left(\frac{\partial G}{\partial q_{n}}\frac{\partial F}{\partial \bar{q}_{n}}-\frac{\partial G}{\partial\bar{q}_{n}}\frac{\partial F}{\partial q_{n}}\right)\\
\nonumber &=& \mathbf{i}\sum_{\substack{a,a',\underset{n}{\sum}n(a_{n}-a'_{n})=0,\\ b,b',\underset{n}{\sum}n(b_{n}-b'_{n})=0}}G_{aa'}F_{bb'}\underset{j}{\sum}\left( \prod_{n\neq j}^{\infty}q^{a_{n}+b_{n}}_{n}
\bar{q}^{a'_{n}+b'_{n}}_{n}\right)\left((a_{j}b'_{j}-a'_{j}b_{j})q^{a_{j}+b_{j}-1}_{j}
\bar{q}^{a'_{j}+b'_{j}-1}_{j}\right)\\
\nonumber &=& \mathbf{i}\underset{j}{\sum}\underset{*}{\sum}(a_{j}b'_{j}-a'_{j}b_{j})G_{aa'}F_{bb'}
\prod_{n=0}^{\infty}q^{c_{n}}_{n}\bar{q}^{c'_{n}}_{n},
\end{eqnarray}
where
\begin{equation*}
\sum_{*}=\sum_{\substack{a,a',b,b'\\ \mbox{when}\ n\neq j, a_n+b_n=c_n,a_n'+b_n'=c_n',\\ \mbox{when}\ n=j, a_n+b_n-1=c_n,a_n'+b_n'-1=c_n'}}.
\end{equation*}
It follows easily that for every monomial,\ one has
\[
\underset{n}{\sum}n(c_{n}-c'_{n})= \underset{n\neq j}{\sum}n(a_{n}+b_{n}-a'_{n}-b'_{n})+ j(a_{j}+b_{j}-1-a'_{j}-b'_{j}+1)=0,
\]
and
\begin{eqnarray}
\nonumber \underset{n}{\sum}c_{n}+c'_{n}&=&\underset{n\neq j}{\sum}(a_{n}+b_{n}+a'_{n}+b'_{n})+ (a_{j}+b_{j}-1+a'_{j}+b'_{j}-1)\\
\nonumber &=&\underset{n}{\sum}(a_{n}+a'_{n})+\underset{n}{\sum}(b_{n}+b'_{n}) -2.
\end{eqnarray}
Analogously, $ \frac{1}{n!}\{\cdot\cdot\cdot\{ \Lambda,F\}\cdot\cdot\cdot,F\} $ and $\frac{1}{n!}\{\cdot\cdot\cdot\{ G,F\}\cdot\cdot\cdot,F\}$ have also this properties.\ Therefore,\ $ K$  has also this properties.
\end{proof}

Now our Hamiltonian is
\[
H = \Lambda + \bar{G} + K = \underset{j\geq0}{\sum}\lambda_{j}| q_{j}|^{2}+ \underset{i,j}{\sum}\bar{G}_{ij}|q_{i}|^{2}|q_{j}|^{2} + \mathcal{O}(||q||_{a,p}^{6}),
\]
where
\[
\bar{G}_{ij} = \begin{cases}
\frac{2+\delta_{ij}}{16\pi},\ i,j\in\mathbb{N}_{+},\\
\frac{1}{8\pi},\ \mbox{either one of}\  i,j=0\  \mbox{or both}.
\end{cases}
\]
Then consider the 4-order term $ \bar{G}$.\ By some simple calculations,\ one obtains
\begin{eqnarray}\nonumber
&&\frac{1}{8\pi}|q_{0}|^{4}+\frac{3}{16\pi}\underset{j\in\mathbb{N}_{+}}{\sum}|q_{j}|^{4}+ \frac{1}{8\pi}\underset{i,j\in\mathbb{N}_{+},i\neq j}{\sum}|q_{i}|^{2}|q_{j}|^{2}\\
\label{205}&=& \frac{1}{16\pi}\underset{j\in\mathbb{N}_{+}}{\sum}|q_{j}|^{4}
+\frac{1}{8\pi}\left(\underset{j\in\mathbb{N}}{\sum}|q_{j}|^{2}\right)^{2}.
\end{eqnarray}
Observe that the equation $ (\ref{eq})$ has a conservation $ \int_{\mathbb{T}} |u|^{2}d\mathrm x$,\ that is,\ $ \underset{j\in\mathbb{N}}{\sum}|q_{j}|^{2}= {C} $.\ Thus,\ (\ref{205}) becomes $ D+ \frac{C^{2}}{8\pi}$,\ where
\[
D = \frac{1}{16\pi}\underset{j\in\mathbb{N}^{*}}{\sum}|q_{j}|^{4}.
\]
Fix a positive $ n$.\ Pick a set
\[
 J=\{ j_1 < j_2 < ...< j_n\}\subseteq \mathbb{N}_{+},
\]
and take $ \xi = (\xi_{1},...,\xi_{n})\in \Pi\subset \mathbb{R}^{n}$ as parameters,\ where $ \Pi$ is a closed bounded set a positive Lebesgue measure.\ Introduce symplectic polar and real coordinates $ (x,y,z^*,\bar{z}^{*}) $ by setting
\[
\begin{cases}
q_{j_{b}}=\sqrt{\xi_{{b}}+y_{{b}}}e^{-\mathbf{i}x_{{b}}},\  b=1,...,n,\\
q_{j}=z^{*}_{j},\  j\notin J,
\end{cases}
\]
where $ z^{*} = (z_{0},z)$.\ Then we have,\ up to a constant term,\ Hamiltonian (\ref{2011}) can be rewritten as
\begin{eqnarray}
H&=&N+R\\
\nonumber &=& \underset{1\leq b\leq n}{\sum}\omega_{b}y_{b}+\underset{j\notin J}{\sum}\Omega_{j}z^{*}_{j}\bar{z}^{*}_{j}+R(x,y,z^{*},\bar{z}^{*},\xi)\\
&=& \langle \omega(\xi),y\rangle + \langle\Omega(\xi)z,\bar{z}\rangle+R(x,y,z^{*},\bar{z}^{*},\xi),
\end{eqnarray}
where $ \omega(\xi)=(\omega_{1}(\xi),...,\omega_{b}(\xi)),\ \Omega(\xi)=(\Omega_{j}(\xi))_{j\notin J} $ are given by
\begin{eqnarray}
\label{203}\omega(\xi) &=& \alpha + A\xi,\\
\label{204}\Omega(\xi)&=& \beta + B\xi
\end{eqnarray}
and $ \alpha = (\lambda_{j_{1}},...,\lambda_{j_{b}}),\ \beta =(\lambda_{j})_{j\notin J},\ A =(\bar{G}_{j_{k}j_{l}})_{1\leq k,l\leq b},\  B=(\bar{G}_{j_{k}j})_{1\leq k\leq b,j\notin J}= 0$.\ $ R$ is just $ K + \mathcal{O}(|y|^{2})+ \mathcal{O}(|y|\|z\|^{2}_{a,p})+\mathcal{O}(\|z\|^{4}_{a,p})$ with the variables $ q_{b},\bar{q}_{b},\ b=1,...,n$ expressed in terms of $ y$ and $ x$.\ In addition,\ $ R$ is analytic in $ x,y,z,\bar{z}$ in a sufficiently small neighborhood of the origin,\ and analytic in $ \xi$ lying on the closed bounded set $ \Pi$ in the sense of Whitney.

In order to apply the above theorem \ref{mainthm},\ all notations are the same as Theorem \ref{mainthm} in the following parts.\ Then the corresponding terms are
\begin{eqnarray}\label{207}
N_{0}(y,z^{*},\bar{z}^{*},\xi)&=&\langle \omega_{0},y\rangle+\langle \Omega_{00}z_{0},\bar{z}_{0}\rangle+\langle \Omega_{0}z,\bar{z}\rangle\\
\nonumber &=&\underset{j_{b}\in J}{\sum}\omega^{b}_{0}y_{b}+\Omega^{0}_{0}z_{0}\bar{z}_{0}+\underset{j\in \mathbb{N}_{+}\setminus J }{\sum}\Omega^{j}_{0}z_{j}\bar{z}_{j},
\end{eqnarray}
with $ \omega^{b}_{0} = j^{2}_{b} + \frac{1}{16\pi}\xi_{b}$ for $ b= 1,2,...n $ and $ \Omega^{0}_{0}=0,\ \Omega^{j}_{0}= j^{2}$ for $ j \in \mathbb{N}_{+}\setminus J$;
\[
R_{0}(x,y,z^{*},\bar{z}^{*},\xi)=R_{0}^{low}(x,y,z^{*},\bar{z}^{*},\xi)+R_{0}^{high}(x,y,z^{*},\bar{z}^{*},\xi),
\]
where
\begin{eqnarray}\label{208}
R_{0}^{low}(x,y,z^{*},\bar{z}^{*},\xi)
&=&\sum_{\alpha\in \mathbb{N}^{n},\beta,\gamma\in \mathbb{N}^{\mathbb{N}},2|\alpha|+|\beta|+|\gamma|\leq2}R_{0}^{\alpha\beta\gamma}(x,\xi)
y^{\alpha}\{z^{*}\}^{\beta}\{\bar{z}^{*}\}^{\gamma}\\
\nonumber &=&R_{0}^{x}(x,\xi)+\langle R_{0}^{y}(x,\xi),y\rangle+\langle R_{0}^{z}(x,\xi),z\rangle
    + \langle R_{0}^{\bar{z}}(x,\xi),\bar{z}\rangle \\
\nonumber &&+\langle R_{0}^{zz}(x,\xi)z,z\rangle+\langle R_{0}^{z\bar{z}}(x,\xi)z,\bar{z}\rangle
    +\langle R_{0}^{\bar{z}\bar{z}}(x,\xi)\bar{z},\bar{z}\rangle\\
\nonumber &&+\langle R_{0}^{z_{0}}(x,\xi),z_{0}\rangle
    + \langle R_{0}^{\bar{z}_{0}}(x,\xi),\bar{z}_{0}\rangle
    +\langle R_{0}^{z_{0}z_{0}}(x,\xi)z_{0},z_{0}\rangle\\
\nonumber &&+\langle R_{0}^{z_{0}\bar{z}_{0}}(x,\xi)z_{0},\bar{z}_{0}\rangle
    +\langle R_{0}^{\bar{z}_{0}\bar{z}_{0}}(x,\xi)\bar{z}_{0},\bar{z}_{0}\rangle
    +\langle R_{0}^{z_{0}z}(x,\xi)z_{0},z\rangle\\
\nonumber &&+ \langle R_{0}^{\bar{z}_{0}z}(x,\xi)\bar{z}_{0},z\rangle
    +\langle R_{0}^{z_{0}\bar{z}}(x,\xi)z_{0},\bar{z}\rangle
    +\langle R_{0}^{\bar{z}_{0}\bar{z}}(x,\xi)\bar{z}_{0},\bar{z}\rangle,
\end{eqnarray}
and
\begin{eqnarray}\label{209}
R_{0}^{high}(x,y,z^{*},\bar{z}^{*},\xi)&=& \sum_{\alpha\in \mathbb{N}^{n},\beta,\gamma\in \mathbb{N}^{\mathbb{N}},2|\alpha|+|\beta|+|\gamma|\geq3}
    R_{0}^{\alpha\beta\gamma}(x,\xi)y^{\alpha}\{z^{*}\}^{\beta}\{\bar{z}^{*}\}^{\gamma}.
\end{eqnarray}

After introducing action-angle coordinates for tangential variables,\ the monomials of $ R_{0}$ take the form
\begin{eqnarray}\label{210}
e^{\mathbf{i}k_{1}x_{1}+\cdot\cdot\cdot+\mathbf{i}k_{n}x_{n}}y_{1}^{m_{1}}\cdot\cdot\cdot y_{n}^{m_{n}}z^{l_{0}}_{0}\bar{z}^{l'_{0}}_{0}\underset{j\in \mathbb{N}_{+}\setminus J }{\prod}z^{l_{j}}_{j}\bar{z}^{l'_{j}}_{j},
\end{eqnarray}
with
\begin{eqnarray}\label{211}
-\underset{1\leq b\leq n}{\sum}k_{b}j_{b}+\underset{j\in \mathbb{N}_{+}\setminus J }{\sum}(l_{j}-l'_{j})j = 0.
\end{eqnarray}

\begin{lemma}\label{l3}
If $ |k|$ is even,\ the  corresponding Fourier coefficients of $R _{0}$ in (\ref{208}) satisfy:
\begin{eqnarray}
\label{218*}&&\widehat{R_{0}^{z_{0}}}(k)= \widehat{R_{0}^{\bar{z}_{0}}}(k)=0,\\
\label{219}&&\widehat{R_{0}^{z_{j}}}(k)= \widehat{R_{0}^{\bar{z}_{j}}}(k)=0,\  j\in \mathbb{N}_{+}\setminus J,\\
\label{220}&&\widehat{R_{0}^{y_{b}z_{0}}}(k)= \widehat{R_{0}^{y_{b}\bar{z}_{0}}}(k)= \widehat{R_{0}^{y_{b}z_{j}}}(k)= \widehat{R_{0}^{y_{b}\bar{z}_{j}}}(k)=0,\  1\leq b \leq n, j\in \mathbb{N}_{+}\setminus J,\\
\label{221}&&\widehat{R_{0}^{z^{l_{0}}_{0}\bar{z}^{l'_{0}}_{0}z^{l_{j}}_{j}\bar{z}^{l'_{j}}_{j}}}(k)=0,\   l_{0}+ l'_{0}+l_{j}+l'_{j}=3\  \mbox{and}\  j\in \mathbb{N}_{+}\setminus J.
\end{eqnarray}
\end{lemma}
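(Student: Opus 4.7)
\medskip

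The plan is to exploit two invariants of the monomials of $H=\Lambda+\bar G+K$ before the action--angle substitution, namely the gauge/mass invariance $\sum_n(a_n-a'_n)=0$ (which follows because the NLS Hamiltonian is invariant under $u\mapsto e^{\mathbf{i}\theta}u$, and this symmetry is preserved by the Birkhoff normalization of Proposition~\ref{p1} since a gauge--invariant $G$ can be eliminated by a gauge--invariant generator $F$) and the even--degree property $\sum_n(a_n+a'_n)\in 2\mathbb{Z}$ (from Proposition~\ref{p1} together with the fact that $\Lambda$ and $\bar G$ are of even degree). First I would fix such a monomial $\prod_{b=1}^{n}q_{j_b}^{\alpha_b}\bar q_{j_b}^{\alpha'_b}\cdot q_0^{l_0}\bar q_0^{l'_0}\prod_{j\in\mathbb{N}_+\setminus J}q_j^{l_j}\bar q_j^{l'_j}$ in $H$, rewrite the tangential factor after the substitution $q_{j_b}=\sqrt{\xi_b+y_b}\,e^{-\mathbf{i}x_b}$ as
\[
(\xi_b+y_b)^{(\alpha_b+\alpha'_b)/2}\,e^{\mathbf{i}(\alpha'_b-\alpha_b)x_b},
\]
and observe that the only effect on the Fourier index is $k_b=\alpha'_b-\alpha_b$, while the expansion of the square root contributes only $y$--monomials (so it cannot alter parity in the normal variables).

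Next I would read off the key parity identity. Since $|\alpha'_b-\alpha_b|\equiv \alpha_b+\alpha'_b\pmod 2$, one has
\[
|k|=\sum_{b=1}^{n}|\alpha'_b-\alpha_b|\equiv \sum_{b=1}^{n}(\alpha_b+\alpha'_b)\equiv \sum_{b=1}^{n}(\alpha_b-\alpha'_b)\pmod 2.
\]
Mass/gauge invariance of the originating monomial gives
\[
\sum_{b=1}^{n}(\alpha_b-\alpha'_b)+(l_0-l'_0)+\sum_{j\in\mathbb{N}_+\setminus J}(l_j-l'_j)=0,
\]
so, reducing modulo $2$,
\[
|k|\;\equiv\;(l_0+l'_0)+\sum_{j\in\mathbb{N}_+\setminus J}(l_j+l'_j)\pmod 2.
\]
In short, the parity of $|k|$ equals the parity of the total degree in the normal variables $(z^{*},\bar z^{*})$.

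Finally I would apply this parity identity to each item of the lemma. The coefficients $\widehat{R_0^{z_0}}(k)$, $\widehat{R_0^{\bar z_0}}(k)$, $\widehat{R_0^{z_j}}(k)$, $\widehat{R_0^{\bar z_j}}(k)$, and each of the hybrid coefficients $\widehat{R_0^{y_b z_0}}(k)$, $\widehat{R_0^{y_b\bar z_0}}(k)$, $\widehat{R_0^{y_b z_j}}(k)$, $\widehat{R_0^{y_b\bar z_j}}(k)$ all collect monomials whose total normal degree equals $1$, while the cubic coefficient $\widehat{R_0^{z_0^{l_0}\bar z_0^{l'_0}z_j^{l_j}\bar z_j^{l'_j}}}(k)$ with $l_0+l'_0+l_j+l'_j=3$ has normal degree $3$; in either case the normal degree is odd, and combined with the parity identity this forces $|k|$ to be odd whenever the monomial contributes. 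Hence for even $|k|$ every such coefficient must vanish, which is exactly (\ref{218*})--(\ref{221}).

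The one genuinely delicate point is justifying gauge invariance of $K$, i.e.\ that the Birkhoff transformation $\Gamma$ of Proposition~\ref{p1} preserves the gauge symmetry. The rest is a bookkeeping computation of parities, and no small--divisor or analytic estimate intervenes. I would handle this obstacle by noting that the generator $F=\sum F_{ijkl}q_iq_j\bar q_k\bar q_l$ produced in the proof of Proposition~\ref{p1} is supported on indices with $F_{ijkl}\neq 0$ only when $G_{ijkl}\neq 0$ and $\{i,j\}\neq\{k,l\}$, hence on four--tuples satisfying the gauge constraint $i+j=k+l$ implicit in $G_{ijkl}\ne 0$; therefore $F$ itself is gauge invariant, the flow $X_F^t$ commutes with $u\mapsto e^{\mathbf{i}\theta}u$, and consequently $H\circ\Gamma=\Lambda+\bar G+K$ is gauge invariant, so that $K$ satisfies $\sum(a_n-a'_n)=0$ monomial by monomial, as required above.
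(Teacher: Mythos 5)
Your proof is correct, and the underlying idea is the same one the paper uses, though you have packaged it more cleanly. The paper proves Lemma~\ref{l3} by writing out the coefficients $R_0^{z_0}, R_0^{y_b z_j},\dots$ explicitly case by case (tracking the tangential indices $\delta_b=\pm 1$ that appear in each product of $\sqrt{\xi_b}\,e^{\pm\mathbf{i}x_b}$ factors) and observing directly that an odd number of $e^{\pm\mathbf{i}x_b}$ factors yields odd $|k|$. You replace this catalog of cases by the single parity identity $|k|\equiv d\pmod 2$, where $d$ is the normal degree, which covers all the items (\ref{218*})--(\ref{221}) at once and handles arbitrarily high original degree (from $K$) rather than only the illustrated degree-four contributions. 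That is a genuine improvement in clarity, and the key computation $|k|=\sum_b|\alpha'_b-\alpha_b|\equiv\sum_b(\alpha_b+\alpha'_b)\pmod 2$ is exactly right.

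Two remarks on how you justify the input to the parity count. First, you treat gauge invariance of $K$ as the "delicate point" and go to some effort to argue it, but you only ever use it modulo $2$, and modulo $2$ it is the same statement as $\sum_n(a_n+a'_n)\in 2\mathbb{Z}$, which Proposition~\ref{p1} already gives for every monomial of $\bar G$ and $K$ (and for $\Lambda$ trivially). So you could skip the gauge discussion entirely and cite the even-degree property directly; nothing would be lost. Second, your justification that $F$ preserves the gauge symmetry is slightly off: the constraint $i+j=k+l$ visible in $G_{ijkl}\neq 0$ is momentum conservation, not the gauge constraint. Gauge invariance of $F$ holds for the more elementary reason that every monomial $q_iq_j\bar q_k\bar q_l$ has equal numbers of $q$'s and $\bar q$'s, so $F$ is automatically invariant under $q\mapsto e^{\mathbf{i}\theta}q$ regardless of the index condition. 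The conclusion (gauge invariance of $K$, hence even degree) is right, but if you keep that paragraph you should fix the reason.
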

\begin{proof}
Since
\begin{eqnarray}
\nonumber q_{j_{b}} &=&\sqrt{\xi_{{b}}+y_{{b}}}e^{-\mathbf{i}x_{{b}}}=(\xi_{{b}}+y_{{b}})^{\frac{1}{2}}e^{-\mathbf{i}x_{{b}}}= \xi^{\frac{1}{2}}_{b}\left(1+\frac{y_{b}}{\xi_{b}}\right)^{\frac{1}{2}}e^{-\mathbf{i}x_{{b}}}\\
\nonumber &=& \xi^{\frac{1}{2}}_{b}\left(1+ \frac{1}{2}\frac{y_{b}}{\xi_{b}}-\frac{1}{8}\left(\frac{y_{b}}{\xi_{b}}\right)^{2}\right)e^{-\mathbf{i}x_{{b}}}
,\  b=1,...,n,
\end{eqnarray}
and
\begin{eqnarray}
\nonumber q_{j}=z_{j},\  j\notin J,
\end{eqnarray}
then the monomials of the Hamiltonian $ P = P(q,\bar{q})= \underset{a,a'}{\sum}P_{aa'}\prod_{n=0}^{\infty}q^{a_{n}}_{n}\bar{q}^{a'_{n}}_{n}$ take the form
\[
e^{\mathbf{i}\delta_{1}x_{1}+\cdot\cdot\cdot+\mathbf{i}\delta_{n}x_{n}}y_{1}^{m_{1}}\cdot\cdot\cdot y_{n}^{m_{n}}z^{l_{0}}_{0}\bar{z}^{l'_{0}}_{0}\underset{j\in \mathbb{N}_{+}\setminus J }{\prod}z^{l_{j}}_{j}\bar{z}^{l'_{j}}_{j},
\]
where $$\begin{cases}
q_{j_{b}}= \sqrt{\xi_{{b}}+y_{{b}}}e^{\mathbf{i}\delta_{b}x_{{b}}}, \delta_{b}=-1,\\
\bar{q}_{j_{b}}= \sqrt{\xi_{{b}}+y_{{b}}}e^{\mathbf{i}\delta_{b}x_{{b}}}, \delta_{b}=1.
\end{cases}$$
The corresponding coefficients of $ P$ are as follows:
\[
\begin{split}
P^{y_{b}}(x,\xi) &=\underset{a,a'}{\sum}P_{aa'}\sum_{\underset{1\leq b\leq n}{\sum}\delta_{b}j_{b}=0}\left(\sqrt{\xi_{i}\xi_{k}}\xi_{b}e^{\mathbf{i}(\delta_{i}x_{i}+\delta_{k}x_{k})}
+\frac{1}{2}\sqrt{\xi_{i}\xi_{k}\xi_{l}/\xi_{b}}
e^{\mathbf{i}(\delta_{i}x_{i}+\delta_{k}x_{k}+\delta_{l}x_{l}-x_{b})}\right),\\
P^{z_{0}}(x,\xi) &=\underset{a,a'}{\sum}P_{aa'}\sum_{\underset{1\leq b\leq n}{\sum}\delta_{b}j_{b}=0}\sqrt{\xi_{i}\xi_{k}\xi_{l}}e^{\mathbf{i}(\delta_{i}x_{i}+\delta_{k}x_{k}+\delta_{l}x_{l})},
\widehat{P^{z_{0}}}(0,\xi) =0,\\
P^{\bar{z}_{0}}(x,\xi) &=\underset{a,a'}{\sum}P_{aa'}\sum_{\underset{1\leq b\leq n}{\sum}\delta_{b}j_{b}=0}\sqrt{\xi_{i}\xi_{k}\xi_{l}}e^{\mathbf{i}(\delta_{i}x_{i}+\delta_{k}x_{k}+\delta_{l}x_{l})},
\widehat{P^{\bar{z}_{0}}}(0,\xi)=0,\\
P^{z_{j}}(x,\xi) &=\underset{a,a'}{\sum}P_{aa'}\sum_{\underset{1\leq b\leq n}{\sum}\delta_{b}j_{b}-j=0}\sqrt{\xi_{i}\xi_{k}\xi_{l}}e^{\mathbf{i}(\delta_{i}x_{i}+\delta_{k}x_{k}+\delta_{l}x_{l})},
\widehat{P^{z_{j}}}(0,\xi) =0, j\in \mathbb{N}^{*}\setminus J\\
P^{\bar{z}_{j}}(x,\xi) &=\underset{a,a'}{\sum}P_{aa'}\sum_{\underset{1\leq b\leq n}{\sum}\delta_{b}j_{b}+j=0}\sqrt{\xi_{i}\xi_{k}\xi_{l}}e^{\mathbf{i}(\delta_{i}x_{i}+\delta_{k}x_{k}+\delta_{l}x_{l})},
\widehat{P^{\bar{z}_{j}}}(0,\xi) =0, j\in \mathbb{N}^{*}\setminus J\\
P^{y_{b}z_{0}}(x,\xi) &=\underset{a,a'}{\sum}P_{aa'}\sum_{\underset{1\leq b\leq n}{\sum}\delta_{b}j_{b}=0}\left(\sqrt{\xi_{i}}\xi_{b}e^{\mathbf{i}\delta_{i}x_{i}}
+\frac{1}{2}\sqrt{\xi_{i}\xi_{k}/\xi_{b}}e^{\mathbf{i}(\delta_{i}x_{i}+\delta_{k}x_{k}-x_{b})}\right),
\widehat{P^{z_{0}}}(0,\xi) =0,\\
P^{y_{b}\bar{z}_{0}}(x,\xi) &=\underset{a,a'}{\sum}P_{aa'}\sum_{\underset{1\leq b\leq n}{\sum}\delta_{b}j_{b}=0}\left(\sqrt{\xi_{i}}\xi_{b}e^{\mathbf{i}\delta_{i}x_{i}}
+\frac{1}{2}\sqrt{\xi_{i}\xi_{k}/\xi_{b}}e^{\mathbf{i}(\delta_{i}x_{i}+\delta_{k}x_{k}-x_{b})}\right),
\widehat{P^{\bar{z}_{0}}}(0,\xi)=0,\\
P^{y_{b}z_{j}}(x,\xi) &=\underset{a,a'}{\sum}P_{aa'}\sum_{\underset{1\leq b\leq n}{\sum}\delta_{b}j_{b}-j=0}\left(\sqrt{\xi_{i}}\xi_{b}e^{\mathbf{i}\delta_{i}x_{i}}
+\frac{1}{2}\sqrt{\xi_{i}\xi_{k}/\xi_{b}}e^{\mathbf{i}(\delta_{i}x_{i}+\delta_{k}x_{k}-x_{b})}\right),
\widehat{P^{y_{b}z_{j}}}(0,\xi) =0,\\
P^{y_{b}\bar{z}_{j}}(x,\xi) &=\underset{a,a'}{\sum}P_{aa'}\sum_{\underset{1\leq b\leq n}{\sum}\delta_{b}j_{b}+j=0}\left(\sqrt{\xi_{i}}\xi_{b}e^{\mathbf{i}\delta_{i}x_{i}}
+\frac{1}{2}\sqrt{\xi_{i}\xi_{k}/\xi_{b}}e^{\mathbf{i}(\delta_{i}x_{i}+\delta_{k}x_{k}-x_{b})}\right),
\widehat{P^{y_{b}\bar{z}_{j}}}(0,\xi) =0,
\end{split}
\]
and
\begin{eqnarray}\nonumber
P^{z^{l_{0}}_{0}\bar{z}^{l'_{0}}_{0}z^{l_{j}}_{j}\bar{z}^{l'_{j}}_{j}}(x,\xi) &=\underset{a,a'}{\sum}P_{aa'}\underset{\delta_{b}j_{b}+\underset{j\in \mathbb{N}_{+}\setminus J }{\sum}(l_{j}-l'_{j})j = 0}{\sum}\sqrt{\xi_{b}}e^{\mathbf{i}\delta_{b}x_{b}},
\widehat{P^{z^{l_{0}}_{0}\bar{z}^{l'_{0}}_{0}z^{l_{j}}_{j}\bar{z}^{l'_{j}}_{j}}}(0,\xi) =0,
\end{eqnarray}
with $ l_{0}+ l'_{0}+l_{j}+l'_{j}=3 $ and  $j\in \mathbb{N}_{+}\setminus J$.\\
Since $ R_{0}$ has the same structure with $ P$,\ (\ref{218*}), (\ref{219}), (\ref{220}) and (\ref{221}) can be obtained in the same way.
\end{proof}
In order to use the Theorem \ref{mainthm} above,\ we have to verify  that whether the values of $ \breve{R}_{0}^{z_{0}}(0,\xi)$ and $\breve{R}_{0}^{\bar{z}_{0}}(0,\xi)$ are equal to $ 0$.\ Since
\[
\breve{R}_{0}^{z_{0}}(0,\xi)=0,\breve{R}_{0}^{\bar{z}_{0}}(0,\xi)=0,
\]
then for each $ \xi \in \Pi_{\gamma}$,\ the map $ \Phi $ restricted to $ \mathbb{T}^{n} \times \{\xi \}$ is a real analytic embedding of a rotational torus with the frequencies $ \omega_{\ast}$ for the Hamiltonian $ H $ at $ \xi $;\ otherwise in the small neighborhood of initial data,\ non-invariant torus exists.

\subsection{Some computation}
In fact,\ one has
\[
\breve{R}_{0}^{z_{0}}(\xi)=\widehat{N^{z_{0}}_{\infty}}(\xi)
=\underset{m\rightarrow\infty}{\lim}\widehat{N^{z_{0}}_{m}}(\xi)
=\overset{\infty}{\underset{m=1}{\sum}}\widehat{R^{z_{0}}_{m}}(0,\xi),
\]
and $ \widehat{R^{z_{0}}_{0}}(0,\xi)=0$.\ It is obvious that we have to estimate the value of $\widehat{R^{z_{0}}_{m}}(0,\xi) $ for any $ m $.\ In the following,\ we will complete it step by step.\\
Expanding $  H_{0}\circ {X}^t_{F_{0}}\mid_{t=1} $,\ we obtain the homological equation
\begin{equation}
\{N_{0},F_{0}\}+ R_{0}^{low} = N_{1}- N_{0}.
\end{equation}
Now we get new Hamiltonian
\[
H_{1}= N_{1}+ R_{1},
\]
where
\[
N_{1}=N_{0}+\widehat{N}_{0},
\]
and
\[
R_{1}=\int^{1}_{0}\{(1-t)\widehat{N}_{0}+tR^{low}_{0},F_{0}\}\circ X^{t}_{F_{0}}\mathrm{d}t+R^{high}_{0}\circ X^{1}_{F_{0}}.
\]

First of all,\ we begin to verify $ R^{z_{0}}_{1}=0 $.\ For the convenience of notations,\ we note $ z=(z_{1},...,z_{j},...)$ for any $ j\geq 1$ while $z_{0}$ will be represented alone.\\
Denote
\[
P_{0}=\int^{1}_{0}\{(1-t)\widehat{N}_{0}+tR^{low}_{0},F_{0}\}\circ X^{t}_{F_{0}}\mathrm{d}t,
\]
and
\[
Q_{0}= R^{high}_{0}\circ X^{1}_{F_{0}}.
\]
It follows easily that
\[
\widehat{R^{z_{0}}_{1}}(0,\xi) = \widehat{P^{z_{0}}_{0}}(0,\xi) + \widehat{Q^{z_{0}}_{0}}(0,\xi).
\]

\begin{lemma}\label{l5} $ \widehat{P^{z_{0}}_{0}}(0,\xi) = 0.$
\end{lemma}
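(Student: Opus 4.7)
The plan rests on the $U(1)$ gauge symmetry of the NLS. Since $\int_{0}^{2\pi}|u|^{2}\mathrm{d}x$ is conserved, the Hamiltonian $H$ in (\ref{199}) is invariant under $q_{j}\mapsto e^{\mathbf{i}\theta}q_{j}$, which in the action--angle coordinates becomes $x_{b}\mapsto x_{b}-\theta$, $z_{j}\mapsto e^{\mathbf{i}\theta}z_{j}$, $\bar z_{j}\mapsto e^{-\mathbf{i}\theta}\bar z_{j}$, and $y_{b}\mapsto y_{b}$. To each monomial $e^{\mathbf{i}\langle k,x\rangle}y^{\alpha}z_{0}^{l_{0}}\bar z_{0}^{l'_{0}}\prod_{j\notin\mathcal{J}}z_{j}^{l_{j}}\bar z_{j}^{l'_{j}}$ I attach the gauge weight
\[
w=-\sum_{b}k_{b}+(l_{0}-l'_{0})+\sum_{j\notin\mathcal{J}}(l_{j}-l'_{j}).
\]
A function is gauge invariant exactly when every monomial has $w=0$, so $N_{0}$, $R_{0}$, $R_{0}^{low}$, $R_{0}^{high}$ and $\widehat{N}_{0}$ are all of weight zero (this is essentially the content of Lemma \ref{l3}).

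A direct calculation using the symplectic form $dy\wedge dx+\mathbf{i}\,d\bar z^{\ast}\wedge dz^{\ast}$ shows that the Poisson bracket respects this grading: $\{f,g\}$ has weight $w_{f}+w_{g}$ whenever $f,g$ are weight-homogeneous. Indeed, $\partial_{x_{b}}$ and $\partial_{y_{b}}$ preserve weight, $\partial_{z_{j}}$ shifts it by $-1$, and $\partial_{\bar z_{j}}$ shifts it by $+1$, so the two contributions in the bracket exactly compensate the shifts. Consequently, once the generator $F_{0}$ is known to be of weight zero, the Hamiltonian vector field $X_{F_{0}}$ is weight-preserving, and for any weight-zero function $G$ the pullback $G\circ X^{t}_{F_{0}}$ remains of weight zero.

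It remains to verify that $F_{0}$ is itself of weight zero. By Lemma \ref{a}, the homological equation $\{N_{0},F_{0}\}+R_{0}^{low}=\widehat{N}_{0}$ is solved Fourier-coefficient by Fourier-coefficient, each unknown coefficient being obtained by dividing the corresponding coefficient of $\widehat{N}_{0}-R_{0}^{low}$ by the small divisor $\mathbf{i}(\langle k,\omega_{0}\rangle+\langle l,\Omega_{0}\rangle)$. Because $N_{0}$ is weight zero, the operator $\{N_{0},\cdot\}$ commutes with the weight decomposition, and this division procedure never mixes weight sectors; since the forcing $\widehat{N}_{0}-R_{0}^{low}$ lives entirely in weight zero, so does the resulting $F_{0}$.

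Combining these ingredients, $P_{0}=\int_{0}^{1}\{(1-t)\widehat{N}_{0}+tR_{0}^{low},F_{0}\}\circ X^{t}_{F_{0}}\,\mathrm{d}t$ is a weight-zero function. But the monomial $z_{0}$ whose coefficient equals $\widehat{P_{0}^{z_{0}}}(0,\xi)$ has $k=0$, $l_{0}=1$, and all other exponents vanishing, so its weight is $w=1\neq0$; hence the coefficient must vanish, yielding $\widehat{P_{0}^{z_{0}}}(0,\xi)=0$. The one genuinely delicate point in this outline is the weight-preservation property of the homological-equation solution, but that reduces to a sector-by-sector inspection of the formulas of Lemma \ref{a}; the remainder is bookkeeping.
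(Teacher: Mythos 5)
Your proposal is correct, and it takes a genuinely cleaner route than the paper's. The paper proves $\widehat{P_0^{z_0}}(0,\xi)=0$ by expanding the Lie series term by term, introducing explicit catalogues $V_1,\dots,V_4$ and $S_1,\dots,S_4$ of Fourier modes, and then checking for each bracket-product that the resonance condition $k+l+\cdots+m=0$ is impossible because $k\cdot v_0=\pm 1$ (odd) while every other summand has $l\cdot v_0\in\{0,\pm 2\}$ (even), where $v_0=(1,\dots,1)^T$. That dot product $k\cdot v_0$ is, up to a fixed shift depending on the monomial type, exactly your gauge weight $w$, so the two arguments exploit the same underlying $U(1)$ symmetry. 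What you do differently is package the bookkeeping once and for all: observe that the weight grading is respected by the Poisson bracket, by the homological-equation division (which is diagonal in the monomial basis), and hence by the Lie-series pullback $\cdot\circ X^t_{F_0}$; then $\widehat{P_0^{z_0}}(0,\xi)$ is the coefficient of a weight-$1$ monomial of a weight-$0$ function and must vanish. This buys you three things the paper's case analysis does not give directly: Lemma~\ref{l6} ($\widehat{Q_0^{z_0}}(0,\xi)=0$) comes for free by the same sentence, the claim propagates automatically through the whole iteration (each $F_m$ is weight zero, hence each $R_m$ is), and you in fact obtain the stronger statement that $\widehat{R_m^{z_0z_0}}(0,\xi)=\widehat{R_m^{\bar z_0\bar z_0}}(0,\xi)=0$ as well, which the paper never uses but which simplifies the normal form.

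One imprecision worth fixing: you attribute the weight-zero property of $R_0$ (and hence of $\widehat{N}_0$) to Lemma~\ref{l3}, but Lemma~\ref{l3} only records the parity-of-$|k|$ consequence of the symmetry, which is strictly weaker. In particular your argument needs $\widehat{R_0^{z_0z_0}}(0,\xi)=\widehat{R_0^{\bar z_0\bar z_0}}(0,\xi)=0$ so that $\widehat{N}_0$ has weight zero, and this requires $\sum_b k_b=\pm 2$ exactly, not merely even; at $k=0$ the parity constraint gives nothing. The correct justification is to verify the full $U(1)$ invariance from scratch: the NLS Hamiltonian $H=\tfrac12\langle Au,u\rangle+\tfrac14\int|u|^4$ is invariant under $u\mapsto e^{\mathbf{i}\theta}u$, the normalizing transformation $\Gamma$ of Proposition~\ref{p1} is the time-$1$ flow of a gauge-invariant $F$ and so preserves this invariance, and passing to action--angle coordinates converts $U(1)$ invariance into the weight-zero condition exactly as you compute. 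Since $|k|\equiv\sum_b k_b\pmod 2$, the parity statements of Lemma~\ref{l3} and Proposition~\ref{p1} are corollaries, not the full content. With that justification supplied, the proof is complete.
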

\begin{proof}
By using Taylor formula,\ one has
\begin{eqnarray}\label{230}
&&(1-t)\widehat{N}_{0}+tR^{low}_{0},F_{0}\}\circ X^{t}_{F_{0}}\\
\nonumber &=&\{(1-t)\widehat{N}_{0}+tR^{low}_{0},F_{0}\}+\{\{(1-t)\widehat{N}_{0}+tR^{low}_{0},F_{0}\},F_{0}\}t
+...\\
\nonumber &&+\frac{1}{n!}\{...\{(1-t)\widehat{N}_{0}+tR^{low}_{0},F_{0}\},F_{0}\}...,F_{0}\}{t^{n}}+....
\end{eqnarray}
Let
\[
P_{1}=\{(1-t)\widehat{N}_{0}+tR^{low}_{0},F_{0}\}
\]
where $ P_{1}$ is of the same form as $ R_{0}$:
\begin{eqnarray}\nonumber
P_{1} &=&\{(1-t)\widehat{N}_{0}+tR^{low}_{0},F_{0}\}\\
\nonumber  &=&P_{1}^{x}(x,\xi)+\langle P_{1}^{y}(x,\xi),y\rangle+\langle P_{1}^{z}(x,\xi),z\rangle
      + \langle P_{1}^{\bar{z}}(x,\xi),\bar{z}\rangle \\
\nonumber      &&+\langle P_{1}^{zz}(x,\xi)z,z\rangle+\langle P_{1}^{z\bar{z}}(x,\xi)z,\bar{z}\rangle
      +\langle P_{1}^{\bar{z}\bar{z}}(x,\xi)\bar{z},\bar{z}\rangle\\
\nonumber      &&+\langle P_{1}^{z_{0}}(x,\xi),z_{0}\rangle
    + \langle P_{1}^{\bar{z}_{0}}(x,\xi),\bar{z}_{0}\rangle
    +\langle P_{1}^{z_{0}z_{0}}(x,\xi)z_{0},z_{0}\rangle\\
\nonumber      &&+\langle P_{1}^{z_{0}\bar{z}_{0}}(x,\xi)z_{0},\bar{z}_{0}\rangle
    +\langle P_{1}^{\bar{z}_{0}\bar{z}_{0}}(x,\xi)\bar{z}_{0},\bar{z}_{0}\rangle
    +\langle P_{1}^{z_{0}z}(x,\xi)z_{0},z\rangle\\
\nonumber      &&+ \langle P_{1}^{\bar{z}_{0}z}(x,\xi)\bar{z}_{0},z\rangle
    +\langle P_{1}^{z_{0}\bar{z}}(x,\xi)z_{0},\bar{z}\rangle
    +\langle P_{1}^{\bar{z}_{0}\bar{z}}(x,\xi)\bar{z}_{0},\bar{z}\rangle,
\end{eqnarray}
and let
\[
P_{2}=\{P_{1},F_{0}\},...P_{n}=\{P_{n-1},F_{0}\},...,
\]
where $ P_{2},...,P_{n}$ has the same form as $ P_{1}$.
Thus
\[
P_{0}=\int^{1}_{0} (P_{1}+P_{2}t+...+P_{n+1}\frac{t^{n}}{n!}+...)\mathrm{d}t,
\]
and more precisely,
\begin{eqnarray}\label{222}
\widehat{R^{z_{0}}_{1}}(0,\xi)=\int^{1}_{0}( \widehat{P^{z_{0}}_{1}}(0,\xi)
+\widehat{P^{z_{0}}_{2}}(0,\xi)t+...+\widehat{P^{z_{0}}_{n+1}}(0,\xi)\frac{t^{n}}{n!}+...)\mathrm{d}t.
\end{eqnarray}
Due to
\begin{eqnarray}\nonumber
P_{1}&=& \{(1-t)\widehat{N}_{0}+tR^{low}_{0},F_{0}\}\\
\nonumber &=&(1-t)\left(\frac{\partial\widehat{N}_{0}}{\partial x}\frac{\partial F_{0}}{\partial y}-\frac{\partial\widehat{N}_{0}}{\partial y}\frac{\partial F_{0}}{\partial x}\right.\\
\nonumber &&\left.+\mathbf{i}\left(\frac{\partial\widehat{N}_{0}}{\partial z_{0}}\frac{\partial F_{0}}{\partial\bar{z}_{0}}-\frac{\partial\widehat{N}_{0}}{\partial\bar{z}_{0}}\frac{\partial F_{0}}{\partial z_{0}}+\frac{\partial\widehat{N}_{0}}{\partial z}\frac{\partial F_{0}}{\partial\bar{z}}-\frac{\partial\widehat{N}_{0}}{\partial\bar{z}}\frac{\partial F_{0}}{\partial z}\right)\right)\\
\nonumber &&+t\left(\frac{\partial R^{low}_{0}}{\partial x}\frac{\partial F_{0}}{\partial y}-\frac{\partial R^{low}_{0}}{\partial y}\frac{\partial F_{0}}{\partial x}\right.\\
\nonumber &&\left.+\mathbf{i}\left(\frac{\partial R^{low}_{0}}{\partial z_{0}}\frac{\partial F_{0}}{\partial\bar{z}_{0}}-\frac{\partial R^{low}_{0}}{\partial\bar{z}_{0}}\frac{\partial F_{0}}{\partial z_{0}}
 +\frac{\partial R^{low}_{0}}{\partial z}\frac{\partial F_{0}}{\partial\bar{z}}-\frac{\partial R^{low}_{0}}{\partial\bar{z}}\frac{\partial F_{0}}{\partial z}\right)\right),
\end{eqnarray}
by some calculations,\ we have the followings which can be divided into two categories:\\
$\textbf{Case. 1.}$  The 0-th Fourier coefficients of the following functions equal to 0.
\begin{eqnarray}\nonumber
\nonumber P^{z_{0}}_{1}(x,\xi) &=&(1-t)(-\widehat{\omega}_{0}\partial_{x}F^{z_{0}}_{0}
-\mathbf{i}\widehat{\Omega_{00}}F^{z_{0}}_{0})+t(\partial_{x}R^{z_{0}}_{0}F_{0}^{y}
-\partial_{x}F^{z_{0}}_{0}R_{0}^{y})\\
\nonumber&&+\mathbf{i}t\left(2R^{z_{0}z_{0}}_{0}F^{\bar{z}_{0}}_{0}-2R^{\bar{z}_{0}}_{0}F^{z_{0}z_{0}}_{0}
+R^{z_{0}}_{0}F^{z_{0}\bar{z}_{0}}_{0}-R^{z_{0}\bar{z}_{0}}_{0}F^{z_{0}}_{0}\right.\\
\nonumber&&\left.+R^{z_{0}z}_{0}F^{\bar{z}}_{0}-R^{\bar{z}}_{0}F^{z_{0}z}_{0}
+R^{z}_{0}F^{z_{0}\bar{z}}_{0}-R^{z_{0}\bar{z}}_{0}F^{z}_{0}\right),\\
\nonumber P^{\bar{z}_{0}}_{1}(x,\xi) &=&(1-t)(-\widehat{\omega}_{0}\partial_{x}F^{\bar{z}_{0}}_{0}
-\mathbf{i}\widehat{\Omega_{00}}F^{\bar{z}_{0}}_{0})+t(\partial_{x}R^{\bar{z}_{0}}_{0}F_{0}^{y}
-\partial_{x}F^{\bar{z}_{0}}_{0}R_{0}^{y})\\
\nonumber&&+\mathbf{i}t(2R^{z_{0}}_{0}F^{\bar{z}_{0}\bar{z}_{0}}_{0}-2R^{\bar{z}_{0}\bar{z}_{0}}_{0}F^{z_{0}}_{0}
+R^{z_{0}\bar{z}_{0}}_{0}F^{\bar{z}_{0}}_{0}-R^{\bar{z}_{0}}_{0}F^{z_{0}\bar{z}_{0}}_{0}\\
\nonumber&&+R^{\bar{z}_{0}z}_{0}F^{\bar{z}}_{0}-R^{\bar{z}}_{0}F^{\bar{z}_{0}z}_{0}
+R^{z}_{0}F^{\bar{z}_{0}\bar{z}}_{0}-R^{\bar{z}_{0}\bar{z}}_{0}F^{z}_{0}),\\
\nonumber P^{z_{j}}_{1}(x,\xi) &=&(1-t)(-\widehat{\omega}_{0}\partial_{x}F^{z_{j}}_{0}
-\mathbf{i}\widehat{\Omega^{j}_{0}}F^{z_{j}}_{0})+t(\partial_{x}R^{z_{j}}_{0}F_{0}^{y}
-\partial_{x}F^{z_{j}}_{0}R_{0}^{y})\\
\nonumber&&+\mathbf{i}t(R^{z_{0}z_{j}}_{0}F^{\bar{z}_{0}}_{0}-R^{\bar{z}_{0}}_{0}F^{z_{0}z_{j}}_{0}
+R^{z_{0}}_{0}F^{z_{j}\bar{z}_{0}}_{0}-R^{z_{j}\bar{z}_{0}}_{0}F^{z_{0}}_{0}\\
\nonumber&&+R^{z_{j}z}_{0}F^{\bar{z}}_{0}-R^{\bar{z}}_{0}F^{z_{j}z}_{0}
+R^{z}_{0}F^{z_{j}\bar{z}}_{0}-R^{z_{j}\bar{z}}_{0}F^{z}_{0}),\\
\nonumber P^{\bar{z}_{j}}_{1}(x,\xi) &=&(1-t)(-\widehat{\omega}_{0}\partial_{x}F^{\bar{z}_{j}}_{0}
-\mathbf{i}\widehat{\Omega^{j}_{0}}F^{\bar{z}_{j}}_{0})+t(\partial_{x}R^{\bar{z}_{j}}_{0}F_{0}^{y}
-\partial_{x}F^{\bar{z}_{j}}_{0}R_{0}^{y})\\
\nonumber&&+\mathbf{i}t(R^{z_{0}}_{0}F^{\bar{z}_{0}\bar{z}_{j}}_{0}-R^{\bar{z}_{0}\bar{z}_{j}}_{0}F^{z_{0}}_{0}
+R^{z_{0}\bar{z}_{j}}_{0}F^{\bar{z}_{0}}_{0}-R^{\bar{z}_{0}}_{0}F^{z_{0}\bar{z}_{j}}_{0}\\
\nonumber&&+R^{\bar{z}_{j}z}_{0}F^{\bar{z}}_{0}-R^{\bar{z}}_{0}F^{\bar{z}_{j}z}_{0}
+R^{z}_{0}F^{\bar{z}_{j}\bar{z}}_{0}-R^{\bar{z}_{j}\bar{z}}_{0}F^{z}_{0}).
\end{eqnarray}
$\textbf{Case. 2.} $ The 0-th Fourier coefficients of the following functions are uncertain.
\begin{eqnarray}\nonumber
P^{y}_{1}(x,\xi) &=&(1-t)(-\widehat{\omega}_{0}\partial_{x}F^{y}_{0}+t(\partial_{x}R^{y}_{0}F_{0}^{y}
-\partial_{x}F^{y}_{0}R_{0}^{y})),\\
\nonumber P_{1}^{z_{0}z_{0}}(x,\xi) &=&(1-t)(-\widehat{\omega}_{0}\partial_{x}F^{z_{0}z_{0}}_{0}
+2\mathbf{i}\widehat{\Omega_{00}}F^{z_{0}z_{0}}_{0})+t(\partial_{x}R^{z_{0}z_{0}}_{0}F_{0}^{y}
-\partial_{x}F^{z_{0}z_{0}}_{0}R_{0}^{y})\\
\nonumber&&+\mathbf{i}t(2R^{z_{0}z_{0}}_{0}F^{z_{0}\bar{z}_{0}}_{0}-2R^{z_{0}\bar{z}_{0}}_{0}F^{z_{0}z_{0}}_{0}
+R^{z_{0}z}_{0}F^{z_{0}\bar{z}}_{0}-R^{z_{0}\bar{z}}_{0}F^{z_{0}z}_{0}),\\
\nonumber P_{1}^{z_{0}\bar{z}_{0}}(x,\xi) &=&(1-t)(-\widehat{\omega}_{0}\partial_{x}F^{z_{0}z_{0}}_{0})
+t(\partial_{x}R^{z_{0}\bar{z}_{0}}_{0}F_{0}^{y}
-\partial_{x}F^{z_{0}\bar{z}_{0}}_{0}R_{0}^{y})\\
\nonumber&&+\mathbf{i}t(4R^{z_{0}z_{0}}_{0}F^{\bar{z}_{0}\bar{z}_{0}}_{0}
-4R^{\bar{z}_{0}\bar{z}_{0}}_{0}F^{z_{0}z_{0}}_{0}
+R^{z_{0}z}_{0}F^{\bar{z}_{0}\bar{z}}_{0}-R^{\bar{z}_{0}\bar{z}}_{0}F^{z_{0}z}_{0}
+R^{\bar{z}_{0}z}_{0}F^{z_{0}\bar{z}}_{0}-R^{z_{0}\bar{z}}_{0}F^{\bar{z}_{0}z}_{0}),\\
\nonumber P_{1}^{\bar{z}_{0}\bar{z}_{0}}(x,\xi) &=&(1-t)(-\widehat{\omega}_{0}\partial_{x}F^{\bar{z}_{0}\bar{z}_{0}}_{0}
-2\mathbf{i}\widehat{\Omega_{00}}F^{\bar{z}_{0}\bar{z}_{0}}_{0})
+t(\partial_{x}R^{\bar{z}_{0}\bar{z}_{0}}_{0}F_{0}^{y}
-\partial_{x}F^{\bar{z}_{0}\bar{z}_{0}}_{0}R_{0}^{y})\\
\nonumber&&+\mathbf{i}t(2R^{z_{0}\bar{z}_{0}}_{0}F^{\bar{z}_{0}\bar{z}_{0}}_{0}
-2R^{\bar{z}_{0}\bar{z}_{0}}_{0}F^{z_{0}\bar{z}_{0}}_{0}
+R^{\bar{z}_{0}z}_{0}F^{\bar{z}_{0}\bar{z}}_{0}
-R^{\bar{z}_{0}\bar{z}}_{0}F^{\bar{z}_{0}z}_{0}),\\
\nonumber P_{1}^{z_{i}z_{j}}(x,\xi) &=&(1-t)(-\widehat{\omega}_{0}\partial_{x}F^{z_{i}z_{j}}_{0}
+\mathbf{i}(\widehat{\Omega^{i}_{0}}+\widehat{\Omega^{j}_{0}})F^{z_{i}z_{j}}_{0})
+t(\partial_{x}R^{z_{i}z_{j}}_{0}F_{0}^{y}
-\partial_{x}F^{z_{i}z_{j}}_{0}R_{0}^{y})\\
\nonumber&&+\mathbf{i}t(R^{z_{0}z_{i}}_{0}F^{z_{j}\bar{z}_{0}}_{0}-R^{z_{j}\bar{z}_{0}}_{0}F^{z_{i}z_{0}}_{0}
+R^{z_{i}z}_{0}F^{z_{j}\bar{z}}_{0}-R^{z_{j}\bar{z}}_{0}F^{z_{i}z}_{0}),\\
\nonumber P_{1}^{z_{i}\bar{z}_{j}}(x,\xi) &=&(1-t)(-\widehat{\omega}_{0}\partial_{x}F^{z_{i}\bar{z}_{j}}_{0}
+\mathbf{i}(\widehat{\Omega^{i}_{0}}-\widehat{\Omega^{j}_{0}})F^{z_{i}\bar{z}_{j}}_{0})
+t(\partial_{x}R^{z_{i}\bar{z}_{j}}_{0}F_{0}^{y}
-\partial_{x}F^{z_{i}\bar{z}_{j}}_{0}R_{0}^{y})\\
\nonumber&&+\mathbf{i}t(R^{z_{0}z_{i}}_{0}F^{\bar{z}_{0}\bar{z}_{j}}_{0}
-R^{\bar{z}_{0}\bar{z}_{j}}_{0}F^{z_{0}z_{i}}_{0}
+R^{z_{i}z}_{0}F^{\bar{z}_{j}\bar{z}}_{0}-R^{\bar{z}_{j}\bar{z}}_{0}F^{z_{i}z}_{0}
+R^{z_{0}\bar{z}_{j}}_{0}F^{\bar{z}_{0}z_{i}}_{0}
-R^{z_{0}\bar{z}_{j}}_{0}F^{\bar{z}_{0}z_{i}}_{0})\\
\nonumber P_{1}^{\bar{z}_{i}\bar{z}_{j}}(x,\xi)
&=&(1-t)(-\widehat{\omega}_{0}\partial_{x}F^{\bar{z}_{i}\bar{z}_{j}}_{0}
+\mathbf{i}(\widehat{\Omega^{i}_{0}}+\widehat{\Omega^{j}_{0}})F^{\bar{z}_{i}\bar{z}_{j}}_{0})
+t(\partial_{x}R^{\bar{z}_{i}\bar{z}_{j}}_{0}F_{0}^{y}
-\partial_{x}F^{\bar{z}_{i}\bar{z}_{j}}_{0}R_{0}^{y})\\
\nonumber&&+\mathbf{i}t(R^{z_{0}\bar{z}_{i}}_{0}F^{\bar{z}_{0}\bar{z}_{j}}_{0}
-R^{\bar{z}_{0}\bar{z}_{j}}_{0}F^{z_{0}{z}_{i}}_{0}
+R^{\bar{z}_{i}z}_{0}F^{\bar{z}_{j}\bar{z}}_{0}
-R^{\bar{z}_{j}\bar{z}}_{0}F^{\bar{z}_{i}z}_{0}).
\end{eqnarray}
We firstly consider the term
\begin{eqnarray}\nonumber
&&\partial_{x}R^{z_{0}}_{0}F_{0}^{y}-\partial_{x}F^{z_{0}}_{0}R_{0}^{y}(x,\xi)\\ \nonumber&=&\sum^{n}_{j=1}\partial_{x_{j}}R^{z_{0}}_{0}F_{0}^{y_{j}}
-\sum^{n}_{j=1}\partial_{x_{j}}F^{z_{0}}_{0}R_{0}^{y_{j}}\\
\nonumber&=&\sum^{n}_{j=1}\left(\sum_{k\neq0}\mathbf{i}k_{j}\widehat{R^{z_{0}}_{0}}(k,\xi)e^{\mathbf{i}\langle k,x\rangle}\right)\left(\sum_{l\neq0}\widehat{F_{0}^{y_{j}}}(l,\xi)e^{\mathbf{i}\langle l,x\rangle}\right)\\
\nonumber&&-\sum^{n}_{j=1}\left(\sum_{k\neq0}\mathbf{i}k_{j}\widehat{F^{z_{0}}_{0}}(k,\xi)e^{\mathbf{i}\langle k,x\rangle}\right)\left(\sum_{l\neq0}\widehat{R_{0}^{y_{j}}}(l,\xi)e^{\mathbf{i}\langle l,x\rangle}\right)\\
\nonumber&=&\sum^{n}_{j=1}\left(\sum_{k\neq0}\mathbf{i}k_{j}\widehat{R^{z_{0}}_{0}}(k,\xi)e^{\mathbf{i}\langle k,x\rangle}\right)\left(\sum_{l\neq0}\frac{\widehat{R_{0}^{y_{j}}}(l,\xi)}{\mathbf{i}\langle l,\omega_{0}\rangle}e^{\mathbf{i}\langle l,x\rangle}\right)\\
\nonumber&&-\sum^{n}_{j=1}\left(\sum_{k\neq0}\mathbf{i}k_{j}\frac{\widehat{R^{z_{0}}_{0}}(k,\xi)}{\mathbf{i}\langle k,\omega_{0}\rangle}e^{\mathbf{i}\langle k,x\rangle}\right)\left(\sum_{l\neq0}\widehat{R_{0}^{y_{j}}}(l,\xi)e^{\mathbf{i}\langle l,x\rangle}\right)\\
\nonumber&=&\sum^{n}_{j=1}\left(\sum_{k,l\neq0}k_{j}\widehat{R^{z_{0}}_{0}}(k,\xi)\widehat{R_{0}^{y_{j}}}(l,\xi)
e^{\mathbf{i}\langle k+l,x\rangle}\left(\frac{1}{\langle l,\omega_{0}\rangle}-\frac{1}{\langle k,\omega_{0}\rangle}\right)\right).
\end{eqnarray}
It follows that
\begin{eqnarray}\label{223}
&&\widehat{(\partial_{x}R^{z_{0}}_{0}F_{0}^{y}-\partial_{x}F^{z_{0}}_{0}R_{0}^{y})}(0,\xi)\\
\nonumber &=&\sum^{n}_{j=1}\left(\underset{k,l\neq0,k+l=0}{\sum}k_{j}\widehat{R^{z_{0}}_{0}}(k,\xi)
\widehat{F_{0}^{y_{j}}}(l,\xi)
\left(\frac{1}{\langle l,\omega_{0}\rangle}-\frac{1}{\langle k,\omega_{0}\rangle}\right)\right).
\end{eqnarray}
Let
\begin{eqnarray}\nonumber
V_{1}&=&\left\{x|x_{1}=(0,...,-1,...,1,...1,0,...)^{T}, x_{2}=(0,...,-1,...,2,...0,...)^{T},\right.\\
\nonumber &&\left. x_{3}=(0,...,0,...,1,...,0,...)^{T}, x_{4}=(0,...,1,...,-1,...,-1,0,...)^{T}\right.,\\
\nonumber &&\left. x_{5}=(0,...,1,...,-2,...,0,...)^{T}, x_{6}=(0,...,0,...,-1,...,0,...)^{T}\right\},\\
\nonumber V_{2}&=&\left\{y|y_{1}=(0,...,-1,...,1,...,0,...)^{T}, y_{2}=(0,...,1,...,-1,...0,...)^{T}\right.,\\
\nonumber &&\left. y_{3}=(0,...1,...1,...0,...)^{T},
y_{4}=(0,...,-1,...,-1,...,0,...)^{T}\right.,\\
\nonumber &&\left. y_{5}(0,...,0,...,2,...,0,...)^{T}, y_{6}=(0,...,0,...,-2,...,0,...)^{T}\right\},
\end{eqnarray}
where the nonzero elements in the above n-dimension vectors is at any positions.\\
Observing the structure of $ R_{0}$,\ it is easy to verify that if and only if $ k\in V_{1}$,
\[
\widehat{R^{z_{0}}_{0}}(k,\xi)\neq0,
\]
and if and only if $ l\in V_{2}$,\
\[
\widehat{R^{y_{j}}_{0}}(l,\xi)\neq0.
\]
In order to estimate (\ref{223}),\ the equation
\begin{eqnarray}\label{224}
k+l=0, k\in V_{1},l\in V_{2}
\end{eqnarray}
should be solved.\ If (\ref{224}) has a solution,\ let $ v_{0}=(1,1,...1)^{T}$,\ then $ (k+l)^{T}v_{0}=0$.\ But \[
k\cdot v_{0}=\pm 1,
\]
for any $ k\in V_{1}$,\ and
\begin{eqnarray}\nonumber
l\cdot v_{0}=0\ \mbox{or}\  \pm2,
\end{eqnarray}
for any $ l\in V_{2}$.\ Clearly,\ (\ref{224}) is unsolved.\\
Thus
\[
\widehat{(\partial_{x}R^{z_{0}}_{0}F_{0}^{y}-\partial_{x}F^{z_{0}}_{0}R_{0}^{y})}(0,\xi)=0.
\]
Similarly,\ one has
\begin{eqnarray}
\nonumber &&\widehat{(R^{z_{0}z_{0}}_{0}F^{\bar{z}_{0}}_{0}-R^{\bar{z}_{0}}_{0}F^{z_{0}z_{0}}_{0})}(0,\xi)=0,\\
\nonumber &&\widehat{(R^{z_{0}}_{0}F^{z_{0}\bar{z}_{0}}_{0}-R^{z_{0}\bar{z}_{0}}_{0}F^{z_{0}}_{0})}(0,\xi)=0,\\
\nonumber &&\widehat{(R^{z_{0}z}_{0}F^{\bar{z}}_{0}-R^{\bar{z}}_{0}F^{z_{0}z}_{0})}(0,\xi)=0,
\end{eqnarray}
and
\[
\widehat{(R^{z}_{0}F^{z_{0}\bar{z}}_{0}-R^{z_{0}\bar{z}}_{0}F^{z}_{0})}(0,\xi)=0.
\]
Therefore,
\[
\widehat{P^{z_{0}}_{1}}(0,\xi)=0.
\]
Also,\ one has
\begin{eqnarray}
\nonumber \widehat{P^{z_{0}}_{1}}(0,\xi)=0,\\
\nonumber \widehat{P^{z_{j}}_{1}}(0,\xi)=0,\\
\nonumber \widehat{P^{\bar{z}_{j}}_{1}}(0,\xi)=0.
\end{eqnarray}
Analogously,\ the coefficients of $ P_{1}$ can also be written into the form as follows:\\
$\textbf{Case. 1.} $
\begin{eqnarray}
\nonumber P_{1}^{z_{0}}(x,\xi) &=&\sum_{k\neq0}b_{1}(\xi,t)\widehat{R_{0}^{z_{0}}}(k,\xi)e^{\mathbf{i}\langle k,x\rangle}
+\sum_{k\neq0,l\neq0}b_{2}(\xi,t)
\widehat{R_{0}^{z_{0}z_{0}}}(k,\xi)\widehat{R_{0}^{\bar{z}_{0}}}(l,\xi)e^{\mathbf{i}\langle k+l,x\rangle}\\
\nonumber&&+\sum_{k\neq0,l\neq0}b_{3}(\xi,t)
\widehat{R_{0}^{z_{0}\bar{z}_{0}}}(k,\xi)\widehat{R_{0}^{z_{0}}}(l,\xi)e^{\mathbf{i}\langle k+l,x\rangle}\\
\nonumber&&+\sum_{k,l}b_{4}(\xi,t)
\widehat{R_{0}^{\bar{z}}}(k,\xi)
\widehat{R_{0}^{z_{0}\bar{z}}}(l,\xi)e^{\mathbf{i}\langle k+l,x\rangle},\\
\nonumber P_{1}^{\bar{z}_{0}}(x,\xi) &=&\sum_{k\neq0}c_{1}(\xi,t)\widehat{R_{0}^{\bar{z}_{0}}}(k,\xi)e^{\mathbf{i}\langle k,x\rangle}
+\sum_{k\neq0,l\neq0}c_{2}(\xi,t)
\widehat{R_{0}^{z_{0}\bar{z}_{0}}}(k,\xi)\widehat{R_{0}^{\bar{z}_{0}}}(l,\xi)e^{\mathbf{i}\langle k+l,x\rangle}\\
\nonumber&&+\sum_{k\neq0,l\neq0}c_{3}(\xi,t)
\widehat{R_{0}^{\bar{z}_{0}\bar{z}_{0}}}(k,\xi)\widehat{R_{0}^{z_{0}}}(l,\xi)e^{\mathbf{i}\langle k+l,x\rangle}\\
\nonumber&&+\sum_{k,l}c_{4}(\xi,t)
\widehat{R_{0}^{\bar{z}}}(k,\xi)
\widehat{R_{0}^{\bar{z}_{0}\bar{z}}}(l,\xi)e^{\mathbf{i}\langle k+l,x\rangle},\\
\nonumber P_{1}^{z_{j}}(x,\xi) &=&\sum_{k\neq0}d_{1}(\xi,t)\widehat{R_{0}^{z_{j}}}(k,\xi)e^{\mathbf{i}\langle k,x\rangle}
+\sum_{k,l\neq0}d_{2}(\xi,t)
\widehat{R_{0}^{z_{0}z_{j}}}(k,\xi)\widehat{R_{0}^{\bar{z}_{0}}}(l,\xi)e^{\mathbf{i}\langle k+l,x\rangle}\\
\nonumber&&+\sum_{k,l\neq0}d_{3}(\xi,t)
\widehat{R_{0}^{z_{j}\bar{z}_{0}}}(k,\xi)\widehat{R_{0}^{z_{0}}}(l,\xi)e^{\mathbf{i}\langle k+l,x\rangle}\\
\nonumber&&+\sum_{k,l}d_{4}(\xi,t)
\widehat{R_{0}^{\bar{z}}}(k,\xi)
\widehat{R_{0}^{z_{j}\bar{z}}}(l,\xi)e^{\mathbf{i}\langle k+l,x\rangle},\\
\nonumber P_{1}^{\bar{z}_{j}}(x,\xi) &=&\sum_{k\neq0}e_{1}(\xi,t)\widehat{R_{0}^{\bar{z}_{j}}}(k,\xi)e^{\mathbf{i}\langle k,x\rangle}
+\sum_{k,l\neq0}e_{2}(\xi,t)
\widehat{R_{0}^{z_{0}\bar{z}_{j}}}(k,\xi)\widehat{R_{0}^{\bar{z}_{0}}}(l,\xi)e^{\mathbf{i}\langle k+l,x\rangle}\\
\nonumber&&+\sum_{k,l\neq0}b_{3}(\xi,t)
\widehat{R_{0}^{\bar{z}_{j}\bar{z}_{0}}}(k,\xi)\widehat{R_{0}^{z_{0}}}(l,\xi)e^{\mathbf{i}\langle k+l,x\rangle}\\
\nonumber&&+\sum_{k,l}b_{4}(\xi,t)
\widehat{R_{0}^{\bar{z}}}(k,\xi)
\widehat{R_{0}^{\bar{z}_{j}\bar{z}}}(l,\xi)e^{\mathbf{i}\langle k+l,x\rangle},
\end{eqnarray}
$\textbf{Case. 2.}$
\begin{eqnarray}
\nonumber P_{1}^{y_{j}}(x,\xi) &=&\sum_{k\neq0}a_{1}(\xi,t)\widehat{R_{0}^{y_{j}}}(k,\xi)e^{\mathbf{i}\langle k,x\rangle}+
\sum_{k\neq0,l\neq0}a_{2}(\xi,t)\widehat{R_{0}^{y_{j}}}(k,\xi)\widehat{R_{0}^{y_{j}}}(l,\xi)e^{\mathbf{i}\langle k+l,x\rangle},\\
\nonumber P_{1}^{z_{0}z_{0}}(x,\xi) &=&\sum_{k\neq0}f_{1}(\xi,t)\widehat{R_{0}^{z_{0}z_{0}}}(k,\xi)e^{\mathbf{i}\langle k,x\rangle}+\sum_{k\neq0,l\neq0}f_{2}(\xi,t)
\widehat{R_{0}^{z_{0}z_{0}}}(k,\xi)\widehat{R_{0}^{y_{j}}}(l,\xi)e^{\mathbf{i}\langle k+l,x\rangle}\\
\nonumber&&+\sum_{k\neq0,l\neq0}f_{3}(\xi,t)
\widehat{R_{0}^{z_{0}z_{0}}}(k,\xi)\widehat{R_{0}^{z_{0}\bar{z}_{0}}}(l,\xi)e^{\mathbf{i}\langle k+l,x\rangle}\\
\nonumber&&+\sum_{k\neq0,l\neq0}f_{4}(\xi,t)
\widehat{R_{0}^{z_{0}z}}(k,\xi)
\widehat{R_{0}^{z_{0}\bar{z}}}(l,\xi)e^{\mathbf{i}\langle k+l,x\rangle},\\
\nonumber P_{1}^{z_{0}\bar{z}_{0}}(x,\xi) &=&\sum_{k\neq0}g_{1}(\xi,t)\widehat{R_{0}^{z_{0}\bar{z}_{0}}}(k,\xi)e^{\mathbf{i}\langle k,x\rangle}+\sum_{k\neq0,l\neq0}g_{2}(\xi,t)
\widehat{R_{0}^{z_{0}\bar{z}_{0}}}(k,\xi)\widehat{R_{0}^{y_{j}}}(l,\xi)e^{\mathbf{i}\langle k+l,x\rangle}\\
\nonumber&&+\sum_{k\neq0,l\neq0}f_{3}(\xi,t)
\widehat{R_{0}^{z_{0}z_{0}}}(k,\xi)\widehat{R_{0}^{\bar{z}_{0}\bar{z}_{0}}}(l,\xi)e^{\mathbf{i}\langle k+l,x\rangle}\\
\nonumber&&+\sum_{k,l}f_{4}(\xi,t)
\widehat{R_{0}^{z_{0}z}}(k,\xi)
\widehat{R_{0}^{\bar{z}_{0}\bar{z}}}(l,\xi)e^{\mathbf{i}\langle k+l,x\rangle}\\
\nonumber&&+\sum_{k,l}f_{5}(\xi,t)
\widehat{R_{0}^{z_{0}\bar{z}}}(k,\xi)
\widehat{R_{0}^{\bar{z}_{0}z}}(l,\xi)e^{\mathbf{i}\langle k+l,x\rangle},\\
\nonumber P_{1}^{\bar{z}_{0}\bar{z}_{0}}(x,\xi) &=&\sum_{k\neq0}h_{1}(\xi,t)\widehat{R_{0}^{\bar{z}_{0}\bar{z}_{0}}}(k,\xi)e^{\mathbf{i}\langle k,x\rangle}+\sum_{k\neq0,l\neq0}h_{2}(\xi,t)
\widehat{R_{0}^{\bar{z}_{0}\bar{z}_{0}}}(k,\xi)\widehat{R_{0}^{y_{j}}}(l,\xi)e^{\mathbf{i}\langle k+l,x\rangle}\\
\nonumber&&+\sum_{k\neq0,l\neq0}h_{3}(\xi,t)
\widehat{R_{0}^{z_{0}{z}_{0}}}(k,\xi)\widehat{R_{0}^{\bar{z}_{0}\bar{z}_{0}}}(l,\xi)e^{\mathbf{i}\langle k+l,x\rangle}\\
\nonumber&&+\sum_{k,l}h_{4}(\xi,t)
\widehat{R_{0}^{\bar{z}_{0}z}}(k,\xi)
\widehat{R_{0}^{\bar{z}_{0}\bar{z}}}(l,\xi)e^{\mathbf{i}\langle k+l,x\rangle},\\
\nonumber P_{1}^{z_{i}z_{j}}(x,\xi) &=&\sum_{k}l_{1}(\xi,t)\widehat{R_{0}^{z_{i}z_{j}}}(k,\xi)e^{\mathbf{i}\langle k,x\rangle}+\sum_{k,l\neq0}l_{2}(\xi,t)
\widehat{R_{0}^{z_{i}z_{j}}}(k,\xi)\widehat{R_{0}^{y_{j}}}(l,\xi)e^{\mathbf{i}\langle k+l,x\rangle}\\
\nonumber&&+\sum_{k,l}l_{3}(\xi,t)
\widehat{R_{0}^{z_{0}z_{i}}}(k,\xi)\widehat{R_{0}^{z_{j}\bar{z}_{0}}}(l,\xi)e^{\mathbf{i}\langle k+l,x\rangle}\\
\nonumber&&+\sum_{k,l}l_{4}(\xi,t)
\widehat{R_{0}^{z_{i}z}}(k,\xi)
\widehat{R_{0}^{z_{j}\bar{z}}}(l,\xi)e^{\mathbf{i}\langle k+l,x\rangle},\\
\nonumber P_{1}^{z_{i}\bar{z}_{j}}(x,\xi) &=&\sum_{k}m_{1}(\xi,t)\widehat{R_{0}^{z_{i}\bar{z}_{j}}}(k,\xi)e^{\mathbf{i}\langle k,x\rangle}+\sum_{k,l\neq0}m_{2}(\xi,t)
\widehat{R_{0}^{z_{i}\bar{z}_{j}}}(k,\xi)\widehat{R_{0}^{y_{j}}}(l,\xi)e^{\mathbf{i}\langle k+l,x\rangle}\\
\nonumber&&+\sum_{k,l}m_{3}(\xi,t)
\widehat{R_{0}^{z_{0}z_{i}}}(k,\xi)\widehat{R_{0}^{\bar{z}_{0}\bar{z}_{j}}}(l,\xi)e^{\mathbf{i}\langle k+l,x\rangle}\\
\nonumber&&+\sum_{k,l}m_{4}(\xi,t)
\widehat{R_{0}^{z_{i}z}}(k,\xi)
\widehat{R_{0}^{\bar{z}_{j}\bar{z}}}(l,\xi)e^{\mathbf{i}\langle k+l,x\rangle}\\
\nonumber&&+\sum_{k,l}m_{5}(\xi,t)
\widehat{R_{0}^{z_{0}\bar{z}_{j}}}(k,\xi)
\widehat{R_{0}^{\bar{z}_{0}z_{i}}}(l,\xi)e^{\mathbf{i}\langle k+l,x\rangle},\\
\nonumber P_{1}^{\bar{z}_{i}\bar{z}_{j}}(x,\xi) &=&\sum_{k}n_{1}(\xi,t)\widehat{R_{0}^{\bar{z}_{i}\bar{z}_{j}}}(k,\xi)e^{\mathbf{i}\langle k,x\rangle}+\sum_{k,l}n_{2}(\xi,t)
\widehat{R_{0}^{\bar{z}_{i}\bar{z}_{j}}}(k,\xi)\widehat{R_{0}^{y_{j}}}(l,\xi)e^{\mathbf{i}\langle k+l,x\rangle}\\
\nonumber&&+\sum_{k,l}n_{3}(\xi,t)
\widehat{R_{0}^{z_{0}\bar{z}_{i}}}(k,\xi)\widehat{R_{0}^{\bar{z}_{0}\bar{z}_{j}}}(l,\xi)
e^{\mathbf{i}\langle k+l,x\rangle}\\
\nonumber&&+\sum_{k,l}n_{4}(\xi,t)
\widehat{R_{0}^{\bar{z}_{i}z}}(k,\xi)
\widehat{R_{0}^{\bar{z}_{j}\bar{z}}}(l,\xi)e^{\mathbf{i}\langle k+l,x\rangle}.
\end{eqnarray}
For the convenience of notations, we omit the specific structures of the terms $ P_{1}^{z_{0}z_{i}},P_{1}^{z_{0}\bar{z}_{i}},P_{1}^{\bar{z}_{0}z_{i}}$ and $
P_{1}^{\bar{z}_{0}\bar{z}_{0}}$,\ which has the same form with $ P_{1}^{z_{i}z_{j}}$ for any $ i,j \in \mathbb{N}_{+}\setminus J$.\\
We now consider the term
\begin{eqnarray}
\nonumber P^{z_{0}}_{2}(x,\xi) &=&\partial_{x}P^{z_{0}}_{1}F_{0}^{y}
-P_{1}^{y}\partial_{x}F^{z_{0}}_{0}
+\mathbf{i}(2P^{z_{0}z_{0}}_{1}F^{\bar{z}_{0}}_{0}-2P^{\bar{z}_{0}}_{1}F^{z_{0}z_{0}}_{0}\\
\nonumber&&+P^{z_{0}}_{1}F^{z_{0}\bar{z}_{0}}_{0}-P^{z_{0}\bar{z}_{0}}_{1}F^{z_{0}}_{0}
+P^{z_{0}z}_{1}F^{{z}}_{0}-P^{\bar{z}}_{1}F^{z_{0}z}_{0}
+P^{z}_{1}F^{z_{0}{z}}_{0}-P^{z_{0}\bar{z}}_{1}F^{z}_{0}).
\end{eqnarray}
Since
\begin{eqnarray}
\nonumber&&\partial_{x}P^{z_{0}}_{1}F_{0}^{y}-P_{1}^{y}\partial_{x}F^{z_{0}}_{0}\\
\nonumber&=&\sum^{n}_{j=1}\partial_{x_{j}}P^{z_{0}}_{1}F_{0}^{y_{j}}
-\sum^{n}_{j=1}\partial_{x_{j}}F^{z_{0}}_{0}P_{0}^{y_{j}}\\
\nonumber&=&\sum^{n}_{j=1}\left(\sum_{k\neq0}b_{1}(\xi,t)\mathbf{i}k_{j}\widehat{R_{0}^{z_{0}}}(k,\xi)e^{\mathbf{i}\langle k,x\rangle}+\sum_{k\neq0,l\neq0}b_{2}(\xi,t)\mathbf{i}(k_{j}+l_{j})
\widehat{R_{0}^{\bar{z}_{0}}}(k,\xi)\widehat{R_{0}^{z_{0}z_{0}}}(l,\xi)e^{\mathbf{i}\langle k+l,x\rangle}\right.\\
\nonumber&&\left.+\sum_{k\neq0,l\neq0}b_{3}(\xi,t)\mathbf{i}(k_{j}+l_{j})
\widehat{R_{0}^{z_{0}}}(l,\xi)\widehat{R_{0}^{z_{0}\bar{z}_{0}}}(l,\xi)e^{\mathbf{i}\langle k+l,x\rangle}\right.\\
\nonumber&&\left.+\sum_{k,l}b_{4}(\xi,t)\mathbf{i}(k_{j}+l_{j})
\widehat{R_{0}^{\bar{z}}}(k,\xi)
\widehat{R_{0}^{z_{0}\bar{z}}}(l,\xi)e^{\mathbf{i}\langle k+l,x\rangle}\right)
\left(\sum_{m\neq0}\widehat{F_{0}^{y_{j}}}(m,\xi)e^{\mathbf{i}\langle m,x\rangle}\right)\\
\nonumber&&-\sum^{n}_{j=1}\left(\sum_{k\neq0}\mathbf{i}k_{j}\widehat{F^{z_{0}}_{0}}(k,\xi)e^{\mathbf{i}\langle k,x\rangle}\right)
\left(\sum_{l\neq0}a_{1}(\xi,t)\widehat{R_{0}^{y_{j}}}(l,\xi)e^{\mathbf{i}\langle l,x\rangle}\right.\\
\nonumber&&\left.+\sum_{l\neq0,m\neq0}a_{2}(\xi,t)\widehat{R_{0}^{y_{j}}}(l,\xi)\widehat{R_{0}^{y_{j}}}(m,\xi)
e^{\mathbf{i}\langle l+m,x\rangle}\right),
\end{eqnarray}
One then obtains
\begin{eqnarray}
\nonumber&&\partial_{x}P^{z_{0}}_{1}F_{0}^{y}-P_{1}^{y}\partial_{x}F^{z_{0}}_{0}\\
\nonumber&=&\sum^{n}_{j=1}
\left(\sum_{k\neq0,m\neq0}p_{1}(\xi,t)\widehat{R_{0}^{z_{0}}}(k,\xi)\widehat{R_{0}^{y_{j}}}(m,\xi)e^{\mathbf{i}\langle k+m,x\rangle}\right.\\
\nonumber&&\left.+\sum_{k\neq0,l\neq0,m\neq0}p_{2}(\xi,t)
\widehat{R_{0}^{\bar{z}_{0}}}(k,\xi)\widehat{R_{0}^{z_{0}z_{0}}}(l,\xi)\widehat{R_{0}^{y_{j}}}(m,\xi)
e^{\mathbf{i}\langle k+l+m,x\rangle}\right.\\
\nonumber&&\left.+\sum_{k\neq0,l\neq0,m\neq0}p_{3}(\xi,t)
\widehat{R_{0}^{z_{0}}}(k,\xi)\widehat{R_{0}^{z_{0}\bar{z}_{0}}}(l,\xi)\widehat{R_{0}^{y_{j}}}(m,\xi)
e^{\mathbf{i}\langle k+l+m,x\rangle}\right.\\
\nonumber&&\left.+\sum_{k,l, m\neq0}p_{4}(\xi,t)
\widehat{R_{0}^{\bar{z}}}(k,\xi)
\widehat{R_{0}^{z_{0}\bar{z}}}(l,\xi)\widehat{R_{0}^{y_{j}}}(m,\xi)e^{\mathbf{i}\langle k+l+m,x\rangle}\right)\\
\nonumber&&-\sum^{n}_{j=1}\left(\sum_{k\neq0,m\neq0}q_{1}(\xi,t)\widehat{R_{0}^{z_{0}}}(k,\xi)\widehat{R_{0}^{y_{j}}}(m,\xi)
e^{\mathbf{i}\langle k+m,x\rangle}\right.\\
\nonumber&&\left.+\sum_{k\neq0,l\neq0,m\neq0}q_{2}(\xi,t)\widehat{R_{0}^{z_{0}}}(k,\xi)\widehat{R_{0}^{y_{j}}}(l,\xi)
\widehat{R_{0}^{y_{j}}}(m,\xi)
e^{\mathbf{i}\langle k+l+m,x\rangle}\right).
\end{eqnarray}
First of all,\ it is easy to verify that
\[
\widehat{R_{0}^{z_{0}}}(k,\xi)\widehat{R_{0}^{y_{j}}}(m,\xi)=0,
\]
when $k,m$ satisfies the equation
\[
k+m=0,k\in V_{1},m\in V_{2}.
\]
Similarly,\ one has
\begin{eqnarray}
\nonumber&&\widehat{R_{0}^{\bar{z}_{0}}}(k,\xi)\widehat{R_{0}^{z_{0}z_{0}}}(l,\xi)\widehat{R_{0}^{y_{j}}}(m,\xi)=0,\\
\nonumber&&\widehat{R_{0}^{z_{0}}}(k,\xi)\widehat{R_{0}^{z_{0}\bar{z}_{0}}}(l,\xi)\widehat{R_{0}^{y_{j}}}(m,\xi)=0,\\
\nonumber&&\widehat{R_{0}^{\bar{z}}}(k,\xi)\widehat{R_{0}^{z_{0}\bar{z}}}(l,\xi)\widehat{R_{0}^{y_{j}}}(m,\xi)=0,\\
\nonumber&&\widehat{R_{0}^{z_{0}}}(k,\xi)\widehat{R_{0}^{y_{j}}}(l,\xi)\widehat{R_{0}^{y_{j}}}(m,\xi)=0,
\end{eqnarray}
when
$ k,l,m$ satisfies the equation
\[
k+l+m=0,k\in V_{1},l,m\in V_{2}.
\]
Hence
\[
\widehat{P^{z_{0}}_{2}}(0,\xi)=0.
\]
For the same process of computing  $\widehat{P_{n+1}^{z_{0}}}(0,\xi) $.\ That is,\
\begin{eqnarray}
\nonumber P^{z_{0}}_{n+1}(x,\xi) &=&\partial_{x}P^{z_{0}}_{n}F_{0}^{y}
-P_{n}^{y}\partial_{x}F^{z_{0}}_{0}
+\mathbf{i}(2P^{z_{0}z_{0}}_{n}F^{\bar{z}_{0}}_{0}-2P^{\bar{z}_{0}}_{n}F^{z_{0}z_{0}}_{0}\\
\nonumber&&+P^{z_{0}}_{n}F^{z_{0}\bar{z}_{0}}_{0}-P^{z_{0}\bar{z}_{0}}_{n}F^{z_{0}}_{0}
+P^{z_{0}z}_{n}F^{\bar{z}}_{0}-P^{\bar{z}}_{n}F^{z_{0}z}_{0}
+P^{z}_{n}F^{z_{0}\bar{z}}_{0}-P^{z_{0}\bar{z}}_{n}F^{z}_{0}),
\end{eqnarray}
and we learn that when $ |k|$ is even,
\[
S_{1}=\left\{\widehat{R_{0}^{z_{0}}}(k,\xi),\widehat{R_{0}^{\bar{z}_{0}}}(k,\xi)
\widehat{R_{0}^{z}}(k,\xi)\widehat{R_{0}^{\bar{z}}}(k,\xi)\right\}
\]
are all equal to $0$,\ and when $ |k|$ is odd,
\begin{eqnarray}\nonumber
S_{2}= &&\left\{\widehat{R_{0}^{y_{j}}}(k,\xi),\widehat{R_{0}^{z_{0}z_{0}}}(k,\xi),
\widehat{R_{0}^{z_{0}\bar{z}_{0}}}(k,\xi),\widehat{R_{0}^{\bar{z}_{0}\bar{z}_{0}}}(k,\xi),
\widehat{R_{0}^{z_{i}z_{j}}}(k,\xi),\widehat{R_{0}^{z_{i}\bar{z}_{j}}}(k,\xi),\right.\\
\nonumber&&\left.\widehat{R_{0}^{\bar{z}_{i}\bar{z}_{j}}}(k,\xi),\widehat{R_{0}^{z_{0}z_{j}}}(k,\xi),
\widehat{R_{0}^{z_{0}\bar{z}_{j}}}(k,\xi),
\widehat{R_{0}^{\bar{z}_{0}z_{j}}}(k,\xi),
\widehat{R_{0}^{\bar{z}_{0}\bar{z}_{j}}}(k,\xi)\right\}
\end{eqnarray}
are all equal to $0$.\\
Let $\widehat{R_{0}^{z_{0}}}(k,\xi)$ represent all the elements of $ S_{1}$ and
$\widehat{R_{0}^{y_{j}}}(k,\xi)$ represent all the elements of $ S_{2}$.\\
Then the coefficient of $ P_{n}$ can also be written into the form as follows:\\
$\textbf{Case. 1.}$
\begin{eqnarray}
\nonumber P_{n}^{z_{0}}(x,\xi) &=&\sum_{k\neq0}s_{1}(\xi,t)\widehat{R_{0}^{z_{0}}}(k,\xi)e^{\mathbf{i}\langle k,x\rangle}
+\sum_{k,l\ \mbox{are not all zero}}s_{2}(\xi,t)\widehat{R_{0}^{z_{0}}}(k,\xi)
\widehat{R_{0}^{y_{j}}}(l,\xi)e^{\mathbf{i}\langle k+l,x\rangle}+...\\
\nonumber&&+\sum_{k,l,...,m\ \mbox{are not all zero}}s_{n+1}(\xi,t)\widehat{R_{0}^{z_{0}}}(k,\xi)
\widehat{R_{0}^{y_{j}}}(l,\xi)\cdot\cdot\cdot\widehat{R_{0}^{y_{j}}}(m,\xi) e^{\mathbf{i}\langle k+l+...+m,x\rangle},\\
\nonumber P_{n}^{\bar{z}_{0}}(x,\xi)
&=&\sum_{k\neq0}t_{1}(\xi,t)\widehat{R_{0}^{z_{0}}}(k,\xi)e^{\mathbf{i}\langle k,x\rangle}
+\sum_{k,l\ \mbox{are not all zero}}t_{2}(\xi,t)\widehat{R_{0}^{z_{0}}}(k,\xi)
\widehat{R_{0}^{y_{j}}}(l,\xi)e^{\mathbf{i}\langle k+l,x\rangle}+...\\
\nonumber&&+\sum_{k,l,...,m\ \mbox{are not all zero}}t_{n+1}(\xi,t)\widehat{R_{0}^{z_{0}}}(k,\xi)
\widehat{R_{0}^{y_{j}}}(l,\xi)\cdot\cdot\cdot\widehat{R_{0}^{y_{j}}}(m,\xi) e^{\mathbf{i}\langle k+l+...+m,x\rangle},\\
\nonumber P_{n}^{z_{j}}(x,\xi)&=&\sum_{k\neq0}u_{1}(\xi,t)\widehat{R_{0}^{z_{0}}}(k,\xi)e^{\mathbf{i}\langle k,x\rangle}
+\sum_{k,l\ \mbox{are not all zero}}u_{2}(\xi,t)\widehat{R_{0}^{z_{0}}}(k,\xi)
\widehat{R_{0}^{y_{j}}}(l,\xi)e^{\mathbf{i}\langle k+l,x\rangle}+...\\
\nonumber&&+\sum_{k,l,...,m\ \mbox{are not all zero}}u_{n+1}(\xi,t)\widehat{R_{0}^{z_{0}}}(k,\xi)
\widehat{R_{0}^{y_{j}}}(l,\xi)\cdot\cdot\cdot\widehat{R_{0}^{y_{j}}}(m,\xi) e^{\mathbf{i}\langle k+l+...+m,x\rangle},\\
\nonumber P_{n}^{\bar{z}_{j}}(x,\xi)
&=&\sum_{k\neq0}v_{1}(\xi,t)\widehat{R_{0}^{z_{0}}}(k,\xi)e^{\mathbf{i}\langle k,x\rangle}
+\sum_{k,l\ \mbox{are not all zero}}v_{2}(\xi,t)\widehat{R_{0}^{z_{0}}}(k,\xi)
\widehat{R_{0}^{y_{j}}}(l,\xi)e^{\mathbf{i}\langle k+l,x\rangle}+...\\
\nonumber&&+\sum_{k,l,...,m\ \mbox{are not all zero}}v_{n+1}(\xi,t)\widehat{R_{0}^{z_{0}}}(k,\xi)
\widehat{R_{0}^{y_{j}}}(l,\xi)\cdot\cdot\cdot\widehat{R_{0}^{y_{j}}}(m,\xi) e^{\mathbf{i}\langle k+l+...+m,x\rangle},
\end{eqnarray}
$\textbf{Case. 2.}$
\begin{eqnarray}
\nonumber P_{n}^{y_{j}}(x,\xi) &=&\sum_{k\neq0}r_{1}(\xi,t)\widehat{R_{0}^{y_{j}}}(k,\xi)e^{\mathbf{i}\langle k,x\rangle}+
\sum_{k,l\ \mbox{are not all zero}}r_{2}(\xi,t)\widehat{R_{0}^{y_{j}}}(k,\xi)\widehat{R_{0}^{y_{j}}}(l,\xi)e^{\mathbf{i}\langle k+l,x\rangle}+...\\
\nonumber&&+\sum_{k,l,...,m\ \mbox{are not all zero}}a_{n+1}(\xi,t)\widehat{R_{0}^{y_{j}}}(k,\xi)
\widehat{R_{0}^{y_{j}}}(l,\xi)\cdot\cdot\cdot\widehat{R_{0}^{y_{j}}}(m,\xi) e^{\mathbf{i}\langle k+l+...+m,x\rangle},\\
\nonumber P_{n}^{z_{0}z_{0}}(x,\xi) &=&\sum_{k\neq0}w_{1}(\xi,t)\widehat{R_{0}^{y_{j}}}(k,\xi)e^{\mathbf{i}\langle k,x\rangle}+
\sum_{k,l\ \mbox{are not all zero}}w_{2}(\xi,t)\widehat{R_{0}^{y_{j}}}(k,\xi)\widehat{R_{0}^{y_{j}}}(l,\xi)e^{\mathbf{i}\langle k+l,x\rangle}+...\\
\nonumber&&+\sum_{k,l,...,m\ \mbox{are not all zero}}w_{n+1}(\xi,t)\widehat{R_{0}^{y_{j}}}(k,\xi)
\widehat{R_{0}^{y_{j}}}(l,\xi)\cdot\cdot\cdot\widehat{R_{0}^{y_{j}}}(m,\xi) e^{\mathbf{i}\langle k+l+...+m,x\rangle},\\
\nonumber P_{n}^{z_{0}\bar{z}_{0}}(x,\xi) &=&\sum_{k\neq0}\alpha_{1}(\xi,t)\widehat{R_{0}^{y_{j}}}(k,\xi)e^{\mathbf{i}\langle k,x\rangle}+
\sum_{k,l\ \mbox{are not all zero}}\alpha_{2}(\xi,t)\widehat{R_{0}^{y_{j}}}(k,\xi)\widehat{R_{0}^{y_{j}}}(l,\xi)e^{\mathbf{i}\langle k+l,x\rangle}+...\\
\nonumber&&+\sum_{k,l,...,m\ \mbox{are not all zero}}\alpha_{n+1}(\xi,t)\widehat{R_{0}^{y_{j}}}(k,\xi)
\widehat{R_{0}^{y_{j}}}(l,\xi)\cdot\cdot\cdot\widehat{R_{0}^{y_{j}}}(m,\xi) e^{\mathbf{i}\langle k+l+...+m,x\rangle},\\
\nonumber P_{n}^{\bar{z}_{0}\bar{z}_{0}}(x,\xi) &=&\sum_{k\neq0}\beta_{1}(\xi,t)\widehat{R_{0}^{y_{j}}}(k,\xi)e^{\mathbf{i}\langle k,x\rangle}+
\sum_{k,l\ \mbox{are not all zero}}\beta_{2}(\xi,t)\widehat{R_{0}^{y_{j}}}(k,\xi)\widehat{R_{0}^{y_{j}}}(l,\xi)e^{\mathbf{i}\langle k+l,x\rangle}+...\\
\nonumber&&+\sum_{k,l,...,m\ \mbox{are not all zero}}\beta_{n+1}(\xi,t)\widehat{R_{0}^{y_{j}}}(k,\xi)
\widehat{R_{0}^{y_{j}}}(l,\xi)\cdot\cdot\cdot\widehat{R_{0}^{y_{j}}}(m,\xi) e^{\mathbf{i}\langle k+l+...+m,x\rangle},\\
\nonumber P_{n}^{z_{i}z_{j}}(x,\xi)  &=&\sum_{k\neq0}\gamma_{1}(\xi,t)\widehat{R_{0}^{y_{j}}}(k,\xi)e^{\mathbf{i}\langle k,x\rangle}+
\sum_{k,l\ \mbox{are not all zero}}\gamma_{2}(\xi,t)\widehat{R_{0}^{y_{j}}}(k,\xi)\widehat{R_{0}^{y_{j}}}(l,\xi)e^{\mathbf{i}\langle k+l,x\rangle}+...\\
\nonumber&&+\sum_{k,l,...,m\ \mbox{are not all zero}}\gamma_{n+1}(\xi,t)\widehat{R_{0}^{y_{j}}}(k,\xi)
\widehat{R_{0}^{y_{j}}}(l,\xi)\cdot\cdot\cdot\widehat{R_{0}^{y_{j}}}(m,\xi) e^{\mathbf{i}\langle k+l+...+m,x\rangle},\\
\nonumber P_{n}^{z_{i}\bar{z}_{j}}(x,\xi)  &=&\sum_{k\neq0}\delta_{1}(\xi,t)\widehat{R_{0}^{y_{j}}}(k,\xi)e^{\mathbf{i}\langle k,x\rangle}+
\sum_{k,l\ \mbox{are not all zero}}\delta_{2}(\xi,t)\widehat{R_{0}^{y_{j}}}(k,\xi)\widehat{R_{0}^{y_{j}}}(l,\xi)e^{\mathbf{i}\langle k+l,x\rangle}+...\\
\nonumber&&+\sum_{k,l,...,m\ \mbox{are not all zero}}\delta_{n+1}(\xi,t)\widehat{R_{0}^{y_{j}}}(k,\xi)
\widehat{R_{0}^{y_{j}}}(l,\xi)\cdot\cdot\cdot\widehat{R_{0}^{y_{j}}}(m,\xi) e^{\mathbf{i}\langle k+l+...+m,x\rangle},\\
\nonumber P_{n}^{\bar{z}_{i}\bar{z}_{j}}(x,\xi)  &=&\sum_{k\neq0}\sigma_{1}(\xi,t)\widehat{R_{0}^{y_{j}}}(k,\xi)e^{\mathbf{i}\langle k,x\rangle}+
\sum_{k,l\ \mbox{are not all zero}}\sigma_{2}(\xi,t)\widehat{R_{0}^{y_{j}}}(k,\xi)\widehat{R_{0}^{y_{j}}}(l,\xi)e^{\mathbf{i}\langle k+l,x\rangle}+...\\
\nonumber&&+\sum_{k,l,...,m\ \mbox{are not all zero}}\sigma_{n+1}(\xi,t)\widehat{R_{0}^{y_{j}}}(k,\xi)
\widehat{R_{0}^{y_{j}}}(l,\xi)\cdot\cdot\cdot\widehat{R_{0}^{y_{j}}}(m,\xi) e^{\mathbf{i}\langle k+l+...+m,x\rangle}.
\end{eqnarray}
For the convenience of notations, we omit the specific structures of the terms $ P_{n}^{z_{0}z_{i}},P_{n}^{z_{0}\bar{z}_{i}},P_{n}^{\bar{z}_{0}z_{i}}$ and $
P_{n}^{\bar{z}_{0}\bar{z}_{0}}$,\ which has the same form with $ P_{n}^{z_{i}z_{j}}$ for any $ i,j \in \mathbb{N}_{+}\setminus J$.\\
We now consider the term
\begin{eqnarray}\nonumber
P^{z_{0}}_{n+1}(x,\xi) &=&\partial_{x}P^{z_{0}}_{n}F_{0}^{y}
-P_{n}^{y}\partial_{x}F^{z_{0}}_{0}
+\mathbf{i}(2P^{z_{0}z_{0}}_{n}F^{\bar{z}_{0}}_{0}-2P^{\bar{z}_{0}}_{n}F^{z_{0}z_{0}}_{0}\\
\nonumber&&+P^{z_{0}}_{n}F^{z_{0}\bar{z}_{0}}_{0}-P^{z_{0}\bar{z}_{0}}_{n}F^{z_{0}}_{0}
+P^{z_{0}z}_{n}F^{\bar{z}}_{0}-P^{\bar{z}}_{n}F^{z_{0}z}_{0}
+P^{z}_{n}F^{z_{0}\bar{z}}_{0}-P^{z_{0}\bar{z}}_{n}F^{z}_{0}).
\end{eqnarray}
Consider the term
\begin{eqnarray}
\nonumber
\partial_{x}P^{z_{0}}_{n}F_{0}^{y}-P_{n}^{y}\partial_{x}F^{z_{0}}_{0}&=&\sum^{n}_{j=1}\partial_{x_{j}}P^{z_{0}}_{n}F_{0}^{y_{j}}
-\sum^{n}_{j=1}\partial_{x_{j}}F^{z_{0}}_{0}P_{n}^{y_{j}},
\end{eqnarray}
where
\begin{eqnarray}
\nonumber &&\partial_{x_{j}}P^{z_{0}}_{n}F_{0}^{y_{j}}\\
\nonumber&=&\left(\sum_{k\neq0}s_{1}(\xi,t)\mathbf{i}k_{j}\widehat{R_{0}^{z_{0}}}(k,\xi)e^{\mathbf{i}\langle k,x\rangle}+\sum_{k,l\ \mbox{are not all zero}}s_{2}(\xi,t)\mathbf{i}(k_{j}+l_{j})\widehat{R_{0}^{z_{0}}}(k,\xi)
\widehat{R_{0}^{y_{j}}}(l,\xi)e^{\mathbf{i}\langle k+l,x\rangle}\right.\\
\nonumber&&\left.+...+\sum_{k,l,...,m\ \mbox{are not all zero}}s_{n+1}(\xi,t)\mathbf{i}(k_{j}+l_{j}+...+m_{j})\widehat{R_{0}^{z_{0}}}(k,\xi)
\widehat{R_{0}^{y_{j}}}(l,\xi)\cdot\cdot\cdot\widehat{R_{0}^{y_{j}}}(m,\xi)\right.\\
\nonumber&&\left.\times e^{\mathbf{i}\langle k+l+...+m,x\rangle}\right)\left(\sum_{n\neq0}\widehat{F_{0}^{y_{j}}}(n,\xi)e^{\mathbf{i}\langle n,x\rangle}\right)
\end{eqnarray}
and
\begin{eqnarray}
\nonumber&& \partial_{x_{j}}F^{z_{0}}_{0}P_{n}^{y_{j}}\\
\nonumber&&\left(\sum_{k\neq0}\mathbf{i}k_{j}\widehat{F_{0}^{z_{0}}}(k,\xi)e^{\mathbf{i}\langle k,x\rangle}\right)\left(\sum_{l\neq0}r_{1}(\xi,t)\widehat{R_{0}^{y_{j}}}(l,\xi)e^{\mathbf{i}\langle l,x\rangle}\right.\\
\nonumber&&\left.+\sum_{l,m\ \mbox{are not all zero}}r_{2}(\xi,t)\widehat{R_{0}^{y_{j}}}(l,\xi)
\widehat{R_{0}^{y_{j}}}(m,\xi)e^{\mathbf{i}\langle l+m,x\rangle}+...\right.\\
\nonumber&&\left.+\sum_{l,...,m,n\ \mbox{are not all zero}}r_{n+1}(\xi,t)\widehat{R_{0}^{y_{j}}}(l,\xi)\cdot\cdot\cdot
\widehat{R_{0}^{y_{j}}}(m,\xi)\widehat{R_{0}^{y_{j}}}(n,\xi)
 e^{\mathbf{i}\langle l+...+m+n,x\rangle}\right).
\end{eqnarray}
We thus have
\begin{eqnarray}
\nonumber&&\partial_{x}P^{z_{0}}_{n}F_{0}^{y}-P_{n}^{y}\partial_{x}F^{z_{0}}_{0}\\
\nonumber&=&\sum^{n}_{j=1}\left(\sum_{k,n\ \mbox{are not all zero}}\rho_{1}(\xi,t)\widehat{R_{0}^{z_{0}}}(k,\xi)
\widehat{R_{0}^{y_{j}}}(l,\xi)e^{\mathbf{i}\langle k+n,x\rangle}\right.\\
\nonumber&&\left.+\sum_{k,l,n\ \mbox{are not all zero}}\rho_{2}(\xi,t)\widehat{R_{0}^{z_{0}}}(k,\xi)
\widehat{R_{0}^{y_{j}}}(l,\xi)\widehat{R_{0}^{y_{j}}}(n,\xi)e^{\mathbf{i}\langle k+l+n,x\rangle}+...\right.\\
\nonumber&&\left.+\sum_{k,l,...,m,n\ \mbox{are not all zero}}\rho_{n+1}(\xi,t)\widehat{R_{0}^{z_{0}}}(k,\xi)
\widehat{R_{0}^{y_{j}}}(l,\xi)\cdot\cdot\cdot\widehat{R_{0}^{y_{j}}}(m,\xi)\widehat{R_{0}^{y_{j}}}(n,\xi) e^{\mathbf{i}\langle k+l+...+m+n,x\rangle}\right)\\
\nonumber&&-\left(\sum_{k,l\ \mbox{are not all zero}}\kappa_{1}(\xi,t)\widehat{R_{0}^{z_{0}}}(k,\xi)\widehat{R_{0}^{y_{j}}}(l,\xi)e^{\mathbf{i}\langle k+l,x\rangle}\right.\\
\nonumber&&\left.+\sum_{k,l,m\ \mbox{are not all zero}}\kappa_{2}(\xi,t)\widehat{R_{0}^{z_{0}}}(k,\xi)\widehat{R_{0}^{y_{j}}}(l,\xi)
\widehat{R_{0}^{y_{j}}}(m,\xi)e^{\mathbf{i}\langle k+l+m,x\rangle}+...\right.\\
\nonumber&&\left.+\sum_{k,l,...,m,n\ \mbox{are not all zero}}\kappa_{n+1}(\xi,t)\widehat{R_{0}^{z_{0}}}(k,\xi)
\widehat{R_{0}^{y_{j}}}(l,\xi)\cdot\cdot\cdot
\widehat{R_{0}^{y_{j}}}(m,\xi)\widehat{R_{0}^{y_{j}}}(n,\xi) e^{\mathbf{i}\langle k+l+...+m+n,x\rangle}\right).
\end{eqnarray}
Due to
\[
\widehat{R_{0}^{z_{0}}}(k,\xi)\widehat{R_{0}^{y_{j}}}(l,\xi)
\cdot\cdot\cdot\widehat{R_{0}^{y_{j}}}(m,\xi)\widehat{R_{0}^{y_{j}}}(n,\xi)=0,
\]
when all the $ n+1$ vectors $ k,l,...m,n $ satisfies the equation
\[
k+l+...+m+n=0,k\in V_{1},l,...,m,n\in V_{2},
\]
one finally obtains
\[
\widehat{P^{z_{0}}_{n+1}}(0,\xi)=0.
\]
Above of all proofs,\ we have
\[
\widehat{P^{z_{0}}_{0}}(0,\xi)=0.
\]
\end{proof}
We next want to compute the value $ Q^{z_{0}}_{0}$.
\begin{lemma}\label{l6} $ \widehat{Q^{z_{0}}_{0}}(0,\xi) = 0.$
\end{lemma}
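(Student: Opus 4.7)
My plan is to mirror the parity argument used in Lemma \ref{l5} by leveraging the gauge (phase) invariance inherited from the $L^2$-conservation law of the NLS equation (\ref{eq}). Expanding $Q_0 = R_0^{high}\circ X_{F_0}^{1}$ by Taylor's formula,
\[
Q_0 \;=\; \sum_{n\geq 0}\frac{1}{n!}\,\mathrm{ad}_{F_0}^n R_0^{high},
\qquad \mathrm{ad}_{F_0}(\cdot)=\{\cdot,F_0\},
\]
it suffices to prove, term by term, that the zero-th Fourier coefficient of the $z_0$-component of $T_n:=\frac{1}{n!}\mathrm{ad}_{F_0}^n R_0^{high}$ vanishes for every $n\geq 0$.

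For $n=0$ the claim follows from Lemma \ref{l3}: $R_0^{high}$ is obtained by the same symplectic substitution $q_{j_b}=\sqrt{\xi_b+y_b}\,e^{-\mathbf{i}x_b}$ as $R_0^{low}$, so its $z_0$-coefficient has Fourier support contained in the set $V_1$ introduced in the proof of Lemma \ref{l5} (vectors $k\in\mathbb{Z}^n$ with $k\cdot v_0=\pm 1$, where $v_0=(1,\ldots,1)^T$), and $0\notin V_1$. For the inductive step, I would use that each coefficient of $F_0$ is obtained from the corresponding coefficient of $R_0^{low}$ through the diagonal homological equation (\ref{014}), which preserves Fourier support; thus the $z_0$-, $\bar z_0$-, $z_j$-, $\bar z_j$-coefficients of $F_0$ are supported in $V_1$, while the $y$-, $z_0z_0$-, $z_0\bar z_0$-, $\bar z_0\bar z_0$-, $z_iz_j$-, $z_i\bar z_j$-, $\bar z_i\bar z_j$-, and $z_0z_j$-type coefficients are supported in the set $V_2$ of vectors with $k\cdot v_0\in\{0,\pm 2\}$. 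Since the Poisson bracket couples an ``odd-charge'' factor with an ``even-charge'' factor to produce an ``odd-charge'' result, this $V_1/V_2$ support structure is preserved at each iteration. Consequently the $z_0$-coefficient of $T_n$ is a finite sum of products
\[
\widehat{R_0^{z_0\text{-type}}}(k_1,\xi)\cdot\widehat{R_0^{y\text{-type}}}(k_2,\xi)\cdots \widehat{R_0^{y\text{-type}}}(k_{n+1},\xi)
\]
with $k_1\in V_1$ and $k_2,\ldots,k_{n+1}\in V_2$. Extracting the $k=0$ Fourier mode demands $k_1+k_2+\cdots+k_{n+1}=0$; projecting this identity onto $v_0$ gives $\pm 1+(\text{even})=0$, a contradiction.

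The main technical obstacle is the combinatorial bookkeeping needed to track the $V_1/V_2$ support through the three pairings contributing to each Poisson bracket $\{T_{n-1},F_0\}$: the $\partial_x\wedge\partial_y$ pairing, the $\partial_{z_0}\wedge\partial_{\bar z_0}$ pairing, and the $\partial_z\wedge\partial_{\bar z}$ pairing. Each of these couples exactly one ``odd-charge'' coefficient with one ``even-charge'' coefficient (or two ``even-charge'' ones producing an ``even-charge'' result), so the $V_1\cup V_2$ structure is stable under the operation. I would formalise this as a parity-preservation lemma exactly paralleling Cases 1 and 2 in the proof of Lemma \ref{l5}, tabulating the $z_0$-coefficients of $T_n$ in the two structural forms, and then invoke the induction. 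Once this bookkeeping is in place, the conclusion $\widehat{Q^{z_0}_0}(0,\xi)=0$ is immediate, and together with Lemma \ref{l5} it yields $\widehat{R^{z_0}_1}(0,\xi)=0$.
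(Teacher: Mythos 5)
Your approach --- expanding $Q_0=R_0^{\mathrm{high}}\circ X^1_{F_0}$ by Taylor's formula, classifying Fourier supports of the various coefficients by the parity of their charge $k\cdot v_0$ with $v_0=(1,\dots,1)^T$, observing that the Poisson bracket preserves this odd/even dichotomy, and concluding that the $z_0$-coefficient always carries odd charge so its zero Fourier mode must vanish --- is essentially identical to the paper's argument, which organises the same bookkeeping via the sets $V_3,V_4$ (and later $S_3,S_4$) rather than $V_1,V_2$; the distinction is immaterial since both classes share the same parity of $k\cdot v_0$, which is all the argument uses. The one slip is your treatment of the $n=0$ term: $R_0^{\mathrm{high}}$ contains only monomials with $2|\alpha|+|\beta|+|\gamma|\geq 3$, so it simply has no coefficient linear in $z_0$ alone --- the $n=0$ contribution to $\widehat{Q_0^{z_0}}(0,\xi)$ vanishes for that trivial reason, not via a Fourier-support argument as you state, and indeed the paper's expansion $\widehat{Q_0^{z_0}}(0,\xi)=\sum_{n\geq 1}\tfrac{1}{n!}\widehat{Q_n^{z_0}}(0,\xi)$ omits the $n=0$ term entirely.
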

\begin{proof}
From Lemma \ref{l3},\ $ R^{high}_{0}$ has the same form as $ P^{high}$.\ Due to
\begin{eqnarray}\nonumber
R^{high}_{0}\circ X^{1}_{F_{0}}&=&R^{high}_{0}+\{R^{high}_{0},F_{0}\}+\frac{1}{2!}\{\{R^{high}_{0},F_{0}\},F_{0}\}\\
\nonumber&&+...+\frac{1}{n!}\{...\{\{R^{high}_{0},F_{0}\},F_{0}\}...,F_{0}\}+....
\end{eqnarray}
Denote
\[
Q_{0}=R^{high}_{0}\circ X^{1}_{F_{0}},
\]
and
\[
Q_{1}=\{R^{high}_{0},F_{0}\}, Q_{2}= \{Q_{1},F_{0}\},..., Q_{n}=\{Q_{n-1},F_{0}\},...,
\]
one then has
\[
Q_{0}= R^{high}+ Q_{1}+\frac{1}{2!}\cdot Q_{2}+...+ \frac{1}{n!}\cdot Q_{n}+....
\]
More precisely,\ let
\[
Q_{1}=\{R^{high}_{0},F_{0}\} = Q^{low}_{1}+Q^{high}_{1},
\]
where $Q^{low}_{1}$ has the form
\begin{eqnarray}
\nonumber Q_{1}^{low} &=& Q_{1}^{x}(x,\xi)+\langle Q_{1}^{y}(x,\xi),y\rangle+\langle Q_{1}^{z}(x,\xi),z\rangle+\langle Q_{1}^{\bar{z}}(x,\xi),\bar{z}\rangle \\
\nonumber &&+\langle Q_{1}^{zz}(x,\xi)z,z\rangle+\langle Q_{1}^{z\bar{z}}(x,\xi)z,\bar{z}\rangle
      +\langle Q_{1}^{\bar{z}\bar{z}}(x,\xi)\bar{z},\bar{z}\rangle\\
\nonumber &&+\langle Q_{1}^{z_{0}}(x,\xi),z_{0}\rangle
    + \langle Q_{1}^{\bar{z}_{0}}(x,\xi),\bar{z}_{0}\rangle
    +\langle Q_{1}^{z_{0}z_{0}}(x,\xi)z_{0},z_{0}\rangle\\
\nonumber &&+\langle Q_{1}^{z_{0}\bar{z}_{0}}(x,\xi)z_{0},\bar{z}_{0}\rangle
    +\langle Q_{1}^{\bar{z}_{0}\bar{z}_{0}}(x,\xi)\bar{z}_{0},\bar{z}_{0}\rangle
    +\langle Q_{1}^{z_{0}z}(x,\xi)z_{0},z\rangle\\
\nonumber &&+ \langle Q_{1}^{\bar{z}_{0}z}(x,\xi)\bar{z}_{0},z\rangle
    +\langle Q_{1}^{z_{0}\bar{z}}(x,\xi)z_{0},\bar{z}\rangle
    +\langle Q_{1}^{\bar{z}_{0}\bar{z}}(x,\xi)\bar{z}_{0},\bar{z}\rangle,
\end{eqnarray}
and $Q^{high}_{1}$ has the form
\begin{eqnarray}
\nonumber  Q^{high}_{1} &=&\langle Q_{1}^{yy}(x,\xi)y,y\rangle+\langle Q_{1}^{yz}(x,\xi)y,z\rangle
   +\langle Q_{1}^{y\bar{z}}(x,\xi)y,\bar{z}\rangle
     +\langle Q_{1}^{yzz}(x,\xi)yz,z\rangle\\
\nonumber &&+\langle Q_{1}^{yz\bar{z}}(x,\xi)yz,\bar{z}\rangle
      +\langle Q_{1}^{y\bar{z}\bar{z}}(x,\xi)y\bar{z},\bar{z}\rangle
      +\langle Q_{1}^{yz_{0}}(x,\xi)y,z_{0}\rangle
   +\langle Q_{1}^{y\bar{z}_{0}}(x,\xi)y,\bar{z}_{0}\rangle\\
\nonumber && +\langle Q_{1}^{yz_{0}z_{0}}(x,\xi)yz_{0},z_{0}\rangle
    +\langle Q_{1}^{yz_{0}\bar{z}_{0}}(x,\xi)yz_{0},\bar{z}_{0}\rangle
    +\langle Q_{1}^{y\bar{z}_{0}\bar{z}_{0}}(x,\xi)y\bar{z}_{0},\bar{z}_{0}\rangle\\
\nonumber &&+\langle Q_{1}^{yz_{0}z}(x,\xi)yz_{0},z\rangle
    + \langle Q_{1}^{y\bar{z}_{0}z}(x,\xi)y\bar{z}_{0},z\rangle
    +\langle Q_{1}^{yz_{0}\bar{z}}(x,\xi)yz_{0},\bar{z}\rangle\\
\nonumber &&
    +\langle Q_{1}^{y\bar{z}_{0}\bar{z}}(x,\xi)y\bar{z}_{0},\bar{z}\rangle+
     \langle Q_{1}^{\bar{z}_{0}zz}(x,\xi)\bar{z}_{0}z,z\rangle
     +\langle Q_{1}^{\bar{z}zz}(x,\xi)\bar{z}z,z\rangle
     +\langle Q_{1}^{z_{0}z\bar{z}}(x,\xi)z_{0}z,\bar{z}\rangle\\
\nonumber && +\langle Q_{1}^{\bar{z}_{0}z\bar{z}}(x,\xi)\bar{z}_{0}z,\bar{z}\rangle
     +\langle Q_{1}^{z_{0}\bar{z}\bar{z}}(x,\xi)z_{0}\bar{z},\bar{z}\rangle
      +\langle Q_{1}^{z\bar{z}\bar{z}}(x,\xi)z\bar{z},\bar{z}\rangle\\
\nonumber &&+\langle Q_{1}^{\bar{z}_{0}z_{0}z_{0}}(x,\xi)\bar{z}_{0}z_{0},z_{0}\rangle
    +\langle Q_{1}^{\bar{z}z_{0}z_{0}}(x,\xi)\bar{z}z_{0},z_{0}\rangle
    +\langle Q_{1}^{zz_{0}\bar{z}_{0}}(x,\xi)zz_{0},\bar{z}_{0}\rangle\\
\nonumber &&+\langle Q_{1}^{\bar{z}z_{0}\bar{z}_{0}}(x,\xi)\bar{z}z_{0},\bar{z}_{0}\rangle
     +\langle Q_{1}^{z_{0}\bar{z}_{0}\bar{z}_{0}}(x,\xi)z_{0}\bar{z}_{0},\bar{z}_{0}\rangle
    +\langle Q_{1}^{z\bar{z}_{0}\bar{z}_{0}}(x,\xi)z\bar{z}_{0},\bar{z}_{0}\rangle\\
\nonumber &&+\langle Q_{1}^{z_{0}z_{0}{z}_{0}{z}_{0}}(x,\xi)z_{0}z_{0},\bar{z}_{0}\bar{z}_{0}\rangle
    +\langle Q_{1}^{z_{0}z_{0}\bar{z}\bar{z}}(x,\xi)z_{0}z_{0},\bar{z}\bar{z}\rangle
    +\langle Q_{1}^{z_{0}z_{0}\bar{z}_{0}\bar{z}}(x,\xi)z_{0}z_{0},\bar{z}_{0}\bar{z}\rangle\\
\nonumber &&+\langle Q_{1}^{z_{0}z{z}{z}}(x,\xi)z_{0}z,\bar{z}\bar{z}\rangle
     +\langle Q_{1}^{z_{0}z\bar{z}_{0}\bar{z}_{0}}(x,\xi)z_{0}z,\bar{z}_{0}\bar{z}_{0}\rangle
    +\langle Q_{1}^{z_{0}z\bar{z}_{0}\bar{z}}(x,\xi)z_{0}z,\bar{z}_{0}\bar{z}\rangle\\
\nonumber &&+\langle Q_{1}^{zz\bar{z}{z}}(x,\xi)zz,\bar{z}\bar{z}\rangle
     +\langle Q_{1}^{zz\bar{z}_{0}\bar{z}_{0}}(x,\xi)zz,\bar{z}_{0}\bar{z}_{0}\rangle
    +\langle Q_{1}^{zz\bar{z}_{0}\bar{z}}(x,\xi)zz,\bar{z}_{0}\bar{z}\rangle.
\end{eqnarray}
By the definition of Possion Bracket,\ one has\\
$\textbf{Case. 1.}$
\begin{eqnarray}\nonumber
Q_{1}^{y}(x,\xi)&=&-R_{0}^{yy}\partial_{x}F^{x}_{0}
+\mathbf{i}(R^{yz_{0}}_{0}F^{\bar{z}_{0}}_{0}-R^{y\bar{z}_{0}}_{0}F^{z_{0}}_{0}
+R^{yz}_{0}F^{\bar{z}}_{0}-R^{y\bar{z}}_{0}F^{z}_{0}),\\
\nonumber Q_{1}^{z_{0}}(x,\xi)&=&-R_{0}^{yz_{0}}\partial_{x}F^{x}_{0},\\
\nonumber Q_{1}^{\bar{z}_{0}}(x,\xi)&=& -R_{0}^{y\bar{z}_{0}}\partial_{x}F^{x}_{0}, \\
\nonumber Q_{1}^{z}(x,\xi)&=&-R_{0}^{yz}\partial_{x}F^{x}_{0},\\
\nonumber Q_{1}^{\bar{z}}(x,\xi)&=&-R_{0}^{y\bar{z}}\partial_{x}F^{x}_{0}, \\
\nonumber Q_{1}^{yz_{0}}(x,\xi)&=&\partial_{x}R_{0}^{yz_{0}}F^{y}_{0}-R_{0}^{yy}\partial_{x}F^{z_{0}}_{0}
+\mathbf{i}(R^{yz_{0}z_{0}}_{0}F^{\bar{z}_{0}}_{0}+R^{yz_{0}}_{0}F^{z_{0}\bar{z}_{0}}_{0}
-R^{yz_{0}\bar{z}_{0}}_{0}F^{z_{0}}_{0}\\
\nonumber &&-R^{y\bar{z}_{0}}_{0}F^{z_{0}z_{0}}_{0}
+R^{yz_{0}z}_{0}F^{\bar{z}}_{0}+R^{yz}_{0}F^{z_{0}\bar{z}}_{0}
-R^{yz_{0}\bar{z}}_{0}F^{z}_{0}-R^{y\bar{z}}_{0}F^{z_{0}z}_{0}),\\
\nonumber  Q_{1}^{\bar{z}_{0}z\bar{z}}(x,\xi)&=&\partial_{x}R_{0}^{y\bar{z}_{0}z\bar{z}}F^{y}_{0}
-R_{0}^{y\bar{z}_{0}z}\partial_{x}F^{\bar{z}}_{0}
-R_{0}^{y\bar{z}_{0}\bar{z}}\partial_{x}F^{z}_{0}
-R_{0}^{yz\bar{z}}\partial_{x}F^{\bar{z}_{0}}_{0}\\
\nonumber &&
-R_{0}^{y\bar{z}_{0}}\partial_{x}F^{z\bar{z}}_{0}
-R_{0}^{yz}\partial_{x}F^{\bar{z}_{0}\bar{z}}_{0}
-R_{0}^{y\bar{z}}\partial_{x}F^{\bar{z}_{0}z}_{0}
+\mathbf{i}(R^{\bar{z}zz_{0}}_{0}F^{\bar{z}_{0}\bar{z}_{0}}_{0}\\
\nonumber &&
-R^{\bar{z}_{0}z\bar{z}_{0}}_{0}F^{\bar{z}z_{0}}_{0}
+R^{\bar{z}_{0}\bar{z}z_{0}}_{0}F^{z\bar{z}_{0}}_{0}
-R^{z\bar{z}\bar{z}_{0}}_{0}F^{\bar{z}_{0}z_{0}}_{0}+
R^{\bar{z}_{0}zz}_{0}F^{\bar{z}\bar{z}}_{0}\\
\nonumber &&
-R^{\bar{z}_{0}z\bar{z}}_{0}F^{\bar{z}z}_{0}
+R^{\bar{z}_{0}\bar{z}z}_{0}F^{z\bar{z}}_{0}
-R^{z\bar{z}\bar{z}}_{0}F^{\bar{z}_{0}z}_{0}).
\end{eqnarray}
$\textbf{Case. 2.}$
\begin{eqnarray}
\nonumber
Q_{1}^{y}(x,\xi)&=&-R_{0}^{yy}\partial_{x}F^{x}_{0}
+\mathbf{i}(R^{yz_{0}}_{0}F^{\bar{z}_{0}}_{0}-R^{y\bar{z}_{0}}_{0}F^{z_{0}}_{0}
+R^{yz}_{0}F^{\bar{z}}_{0}-R^{y\bar{z}}_{0}F^{z}_{0}),\\
\nonumber Q_{1}^{z_{0}\bar{z}_{0}}(x,\xi)&=&-R_{0}^{yz_{0}}\partial_{x}F^{\bar{z}_{0}}_{0}
-R_{0}^{y\bar{z}_{0}}\partial_{x}F^{z_{0}}_{0}
-R_{0}^{yz_{0}\bar{z}_{0}}\partial_{x}F^{x}_{0}\\
\nonumber&&+\mathbf{i}(R^{z_{0}\bar{z}_{0}z_{0}}_{0}F^{\bar{z}_{0}}_{0}
-R^{z_{0}\bar{z}_{0}\bar{z}_{0}}_{0}F^{z_{0}}_{0}
+R^{z_{0}\bar{z}_{0}z}_{0}F^{\bar{z}}_{0}-R^{z_{0}\bar{z}_{0}\bar{z}}_{0}F^{z}_{0}),\\
\nonumber Q_{1}^{\bar{z}_{0}z}(x,\xi)&=&-R_{0}^{yz}\partial_{x}F^{\bar{z}_{0}}_{0}
-R_{0}^{y\bar{z}_{0}}\partial_{x}F^{z}_{0}
-R_{0}^{yz\bar{z}_{0}}\partial_{x}F^{x}_{0}\\
\nonumber &&
+\mathbf{i}(R^{z\bar{z}_{0}z_{0}}_{0}F^{\bar{z}_{0}}_{0}
-R^{z\bar{z}_{0}\bar{z}_{0}}_{0}F^{z_{0}}_{0}
+R^{z\bar{z}_{0}z}_{0}F^{\bar{z}}_{0}-R^{z\bar{z}_{0}\bar{z}}_{0}F^{z}_{0}),\\
\nonumber Q_{1}^{yy}(x,\xi)&=&\partial_{x}R_{0}^{yy}F^{y}_{0}-R_{0}^{yy}\partial_{x}F^{y}_{0},\\
\nonumber Q_{1}^{yz\bar{z}}(x,\xi)&=& \partial_{x}R_{0}^{yz\bar{z}}F^{y}_{0}
-R_{0}^{yy}\partial_{x}F^{z\bar{z}}_{0}
+\mathbf{i}(R^{yzz_{0}}_{0}F^{\bar{z}\bar{z}_{0}}_{0}
-R^{yz\bar{z}_{0}}_{0}F^{\bar{z}z_{0}}_{0}+R^{y\bar{z}z_{0}}_{0}F^{z\bar{z}_{0}}_{0}\\
\nonumber &&
-R^{y\bar{z}\bar{z}_{0}}_{0}F^{zz_{0}}_{0}+
R^{yzz}_{0}F^{\bar{z}\bar{z}}_{0}-R^{yz\bar{z}}_{0}F^{\bar{z}z}_{0}+
R^{y\bar{z}z}_{0}F^{z\bar{z}}_{0}-R^{y\bar{z}\bar{z}}_{0}F^{zz}_{0}),\\
\nonumber Q_{1}^{z_{0}z_{0}\bar{z}_{0}\bar{z}_{0}}(x,\xi)
&=&\partial_{x}R_{0}^{z_{0}z_{0}\bar{z}_{0}\bar{z}_{0}}F^{y}_{0}
+\mathbf{i}(
-R_{0}^{z_{0}z_{0}\bar{z}_{0}\bar{z}}F^{\bar{z}_{0}z}_{0}
+R_{0}^{z_{0}\bar{z}_{0}\bar{z}_{0}z}F^{z_{0}\bar{z}}_{0}).
\end{eqnarray}
For convenience,\ we only show part of representative terms.

Due to
\[
\widehat{Q^{z_{0}}_{0}}(0,\xi)= \widehat{Q^{z_{0}}_{1}}(0,\xi)
+ \frac{1}{2!}\widehat{Q^{z_{0}}_{2}}(0,\xi)+...+\frac{1}{n!}\widehat{Q^{z_{0}}_{n}}(0,\xi)+...,
\]
consider the term
\begin{eqnarray}
\label{231}Q^{z_{0}}_{1}(x,\xi)&=& -\sum^{n}_{j=1}\left(\sum_{k}\widehat{R_{0}^{y_{j}z_{0}}}(k,\xi)e^{\mathbf{i}\langle k,x\rangle})(\partial_{x_{j}}\sum_{l}\widehat{F^{x}_{0}}(l,\xi)e^{\mathbf{i}\langle l,x\rangle}\right)\\
\nonumber &=& -\sum^{n}_{j=1}\left(\sum_{k,l}l_{j}\widehat{R_{0}^{y_{j}z_{0}}}(k,\xi)\widehat{F^{x}_{0}}(l,\xi)e^{\mathbf{i}\langle k+l,x\rangle}\right)\\
\nonumber &=& -\sum^{n}_{j=1}\left(\sum_{k,l}l_{j}\widehat{R_{0}^{y_{j}z_{0}}}(k,\xi)
\widehat{R^{x}_{0}}(l,\xi)\frac{1}{\langle l,\omega_{0}\rangle}e^{\mathbf{i}\langle k+l,x\rangle}\right).
\end{eqnarray}
Let
\begin{eqnarray}
\nonumber V_{3} &=&\left\{x|x_{1}=(0,...-1,...0,...0,0,...)^{T}, x_{2}=(0,...1,...0,...0,...)^{T},\right.\\
      \nonumber && \left.x_{3}=(0,...-1,...1,...1,0,...)^{T}, x_{4}=(0,...-1,...2,...0,...)^{T},\right.\\
      \nonumber && \left.x_{5}=(0,...0,...1,...0,...)^{T},x_{6}=(0,...1,...-1,...-1,0,...)^{T},\right.\\
      \nonumber && \left.x_{7}=(0,...1,...-2,...0,...)^{T}, x_{8}=(0,...0,...-1,...0,...)^{T}\right\},\\
\nonumber V_{4}&=&\left\{y|y_{1}=(0,...-1,...1,...0,...)^{T}, y_{2}=(0,...1,...-1,...0,...)^{T},\right.\\
\nonumber && \left. y_{3}=(0,...1,...1,...0,...)^{T}, y_{4}=(0,...-1,...-1,...0,...)^{T},\right.\\
\nonumber && \left. y_{5}=(0,...0,...2,...0,...)^{T}, y_{6}=(0,...0,...-2,...0,...)^{T},\right.\\
\nonumber && \left. y_{7}=(0,...-1,...-1,...1,...1,...)^{T},y_{8}=(0,...1,...1,...-1,...-1...)^{T},\right.\\
\nonumber && \left. y_{9}=(0,...-1,...-1,...2,...)^{T}, y_{10}=(0,...1,...1,...-2,...)^{T},\right.\\
\nonumber && \left. y_{11}=(0,...-1,...1,...0,...)^{T}, y_{12}=(0,...1,...-1,...0,...)^{T},\right.\\
\nonumber && \left. y_{13}=(0,...-2,...2,...0,...)^{T}, y_{14}=(0,...2,...-2,...0,...)^{T}\right\},
\end{eqnarray}
where the nonzero elements are at any positions.\\
It is easy to verify that if and only if $ k\in V_{3}$,
\[
\widehat{R^{y_{j}z_{0}}_{0}}(k,\xi)\neq0,
\]
and if and only if $ l\in V_{4}$,
\[
\widehat{R^{x}_{0}}(l,\xi)\neq0.
\]
In order to estimate (\ref{231}),\ the equation
\begin{eqnarray}\label{232}
k+l=0,k\in V_{3},l\in V_{4};
\end{eqnarray}
should be solved.\ That is,\ if (\ref{232}) has a solution,\ let $ v_{0}=(1,1,...1)^{T}$,\ then $ (k+l)^{T}v_{0}=0$.\ But for any $ k\in V_{3}$,
\[
k\cdot v_{0}=\pm 1,
\]
and for any $ l\in V_{4} $,
\begin{eqnarray}\nonumber
l\cdot v_{0}=0\ \mbox{or}\ \pm2.
\end{eqnarray}
Clearly,\ (\ref{232}) is unsolved.\\
Hence
\[
\widehat{Q^{z_{0}}_{1}}(0,\xi)=0.
\]
Moreover,\ due to
\begin{eqnarray}\nonumber
Q_{2}=\{Q_{1},F_{0}\}=\{Q^{low}_{1},F_{0}\}+\{Q^{high}_{1},F_{0}\},
\end{eqnarray}
one has
\begin{eqnarray}
\nonumber Q^{z_{0}}_{2}(x,\xi)&=&\{Q^{low}_{1},F_{0}\}^{z_{0}}(x,\xi)+\{Q^{high}_{1},F_{0}\}^{z_{0}}(x,\xi)\\
\nonumber &=&\partial_{x}Q^{z_{0}}_{1}F_{0}^{y}
-\partial_{x}F^{z_{0}}_{0}Q_{1}^{y}-Q_{1}^{yz_{0}}\partial_{x}F^{x}_{0}+\mathbf{i}
(2Q^{z_{0}z_{0}}_{1}F^{\bar{z}_{0}}_{0}-2Q^{\bar{z}_{0}}_{1}F^{z_{0}z_{0}}_{0}
+Q^{z_{0}}_{1}F^{z_{0}\bar{z}_{0}}_{0}\\
\nonumber&&-Q^{z_{0}\bar{z}_{0}}_{1}F^{z_{0}}_{0}+Q^{z_{0}z}_{1}F^{\bar{z}}_{0}-Q^{\bar{z}}_{1}F^{z_{0}z}_{0}
+Q^{z}_{1}F^{z_{0}\bar{z}}_{0}-Q^{z_{0}\bar{z}}_{1}F^{z}_{0}).
\end{eqnarray}
Since
\begin{eqnarray}\nonumber
&&(\partial_{x}Q^{z_{0}}_{1}F_{0}^{y}-\partial_{x}F^{z_{0}}_{0}Q_{1}^{y})(x,\xi)\\
\nonumber&=&-\sum^{n}_{j=1}\left(\sum_{k,l,m\neq0}m_{j}\widehat{R_{0}^{y_{j}z_{0}}}(k,\xi)
\widehat{R^{x}_{0}}(l,\xi)\widehat{R^{y}_{0}}(m,\xi))\frac{1}{\langle l,\omega_{0}\rangle}\cdot\frac{1}{\langle m,\omega_{0}\rangle}\right.\\
\nonumber&&\left.+(\sum_{k,l,m\neq0}l_{j}\widehat{R_{0}^{z_{0}}}(k,\xi)\widehat{R_{0}^{yy_{j}}}(l,\xi)\widehat{R^{x}_{0}}(m,\xi)
\frac{1}{\langle k,\omega_{0}\rangle}\cdot\frac{1}{\langle m,\omega_{0}\rangle}\right.\\
\nonumber&&\left.-\mathbf{i}(\sum_{k,l,m\neq0}k_{j}\widehat{R_{0}^{z_{0}}}(k,\xi)\widehat{R_{0}^{yz_{0}}}(l,\xi)
\widehat{R^{\bar{z}_{0}}_{0}}(m,\xi)\frac{1}{\langle k,\omega_{0}\rangle}\cdot\frac{1}{\langle m,\omega_{0}\rangle}\right.\\
\nonumber&&\left.+\sum_{k,l,m\neq0}k_{j}\widehat{R_{0}^{z_{0}}}(k,\xi)\widehat{R_{0}^{y\bar{z}_{0}}}(l,\xi)
\widehat{R^{z_{0}}_{0}}(m,\xi)\frac{1}{\langle k,\omega_{0}\rangle}\cdot\frac{1}{\langle m,\omega_{0}\rangle}\right.\\
\nonumber&&\left.-\sum_{k,l,m\neq0}k_{j}\widehat{R_{0}^{z_{0}}}(k,\xi)\widehat{R_{0}^{yz}}(l,\xi)
\widehat{R^{\bar{z}}_{0}}(m,\xi)\frac{1}{\langle k,\omega_{0}\rangle}\cdot\frac{1}{\langle m,\omega_{0}\rangle+\widehat{\Omega}^{j}_{0}}\right.\\
\nonumber&&\left.+\sum_{k,l,m\neq0}k_{j}\widehat{R_{0}^{z_{0}}}(k,\xi)
\widehat{R_{0}^{y\bar{z}}}(l,\xi)\widehat{R^{z}_{0}}(m,\xi)
\frac{1}{\langle k,\omega_{0}\rangle}\cdot\frac{1}{\langle m,\omega_{0}\rangle-\widehat{\Omega}^{j}_{0}})\right)e^{\mathbf{i}\langle k+l+m,x\rangle},
\end{eqnarray}
we thus have
\begin{eqnarray}\nonumber
\widehat{(\partial_{x}Q^{z_{0}}_{1}F_{0}^{y}-\partial_{x}F^{z_{0}}_{0}Q_{1}^{y})}(0,\xi)=0.
\end{eqnarray}
In fact,\ the equations
\begin{eqnarray}\label{235}
k+l+m=0,k,l,m \in V_{3}
\end{eqnarray}
and
\begin{eqnarray}\label{236}
k+l+m=0,k\in V_{3},l,m \in V_{4}
\end{eqnarray}
are unsolved.\ If (\ref{235}) and (\ref{236}) have a solution,\ let $ v_{0}=(1,1,...1)^{T}$,\ then $ (k+l+m)^{T}v_{0}=0$.\ But for any $ k,l,m\in V_{3}$,
\[
k\cdot v_{0}= \pm 1,
\]
and for any $ l\in V_{4}$,
\begin{eqnarray}\nonumber
l\cdot v_{0}= 0\ \mbox{or}\ \pm 2.
\end{eqnarray}
Clearly,\ (\ref{235}) and (\ref{236}) are unsolved.\\
Similarly,\ one has
\begin{eqnarray}\nonumber
\nonumber\widehat{(Q^{z_{0}z_{0}}_{1}F^{\bar{z}_{0}}_{0}-Q^{\bar{z}_{0}}_{1}F^{z_{0}z_{0}}_{0})}(0,\xi)=0,\\
\nonumber\widehat{(Q^{z_{0}}_{1}F^{z_{0}\bar{z}_{0}}_{0}-Q^{z_{0}\bar{z}_{0}}_{1}F^{z_{0}}_{0})}(0,\xi)=0,\\
\nonumber\widehat{(Q^{z_{0}z}_{1}F^{\bar{z}}_{0}-Q^{\bar{z}}_{1}F^{z_{0}z}_{0})}(0,\xi)=0,\\
\nonumber\widehat{(Q^{z}_{1}F^{z_{0}\bar{z}}_{0}-Q^{z_{0}\bar{z}}_{1}F^{z}_{0})}(0,\xi)=0.
\end{eqnarray}
Moreover,\ due to
\begin{eqnarray}
\nonumber &&Q_{1}^{yz_{0}}\partial_{x}F^{x}_{0}(x,\xi)\\
\nonumber &=&(\partial_{x}R_{0}^{yz_{0}}F^{y}_{0}
-R_{0}^{yy}\partial_{x}F^{z_{0}}_{0}
+\mathbf{i}(R^{yz_{0}z_{0}}_{0}F^{\bar{z}_{0}}_{0}+R^{yz_{0}}_{0}F^{z_{0}\bar{z}_{0}}_{0}
-R^{yz_{0}\bar{z}_{0}}_{0}F^{z_{0}}_{0}-R^{y\bar{z}_{0}}_{0}F^{z_{0}z_{0}}_{0}\\
\nonumber&&+R^{yz_{0}z}_{0}F^{\bar{z}}_{0}+R^{yz}_{0}F^{z_{0}\bar{z}}_{0}
-R^{yz_{0}\bar{z}}_{0}F^{z}_{0}-R^{y\bar{z}}_{0}F^{z_{0}z}_{0}))\partial_{x}F^{x}_{0}\\
\nonumber &=&\sum^{n}_{j=1}\sum_{j,k,l\neq0}(m_{j}(\widehat{R_{0}^{yz_{0}}}(k,\xi)\widehat{R_{0}^{y_{j}}}(l,\xi)
\widehat{R_{0}^{x}}(m,\xi)\frac{1}{\langle l,\omega_{0}\rangle}\frac{1}{\langle m,\omega_{0}\rangle}\\
\nonumber&&-\widehat{R_{0}^{z_{0}}}(k,\xi)\widehat{R_{0}^{yy}}(l,\xi)
\widehat{R_{0}^{x}}(m,\xi)\frac{1}{\langle k,\omega_{0}\rangle}\frac{1}{\langle m,\omega_{0}\rangle})e^{\mathbf{i}\langle k+l+m,x\rangle}\\
\nonumber&&+\sqrt{-1}m_{j}((\widehat{R_{0}^{\bar{z}_{0}}}(k,\xi)
\widehat{R_{0}^{yz_{0}z_{0}}}(l,\xi)
\widehat{R_{0}^{x}}(m,\xi)\frac{1}{\langle k,\omega_{0}\rangle}\frac{1}{\langle m,\omega_{0}\rangle}\\
\nonumber&&+\widehat{R_{0}^{yz_{0}}}(k,\xi)\widehat{R_{0}^{z_{0}\bar{z}_{0}}}(l,\xi)
\widehat{R_{0}^{x}}(m,\xi)\frac{1}{\langle l,\omega_{0}}\frac{1}{\langle m,\omega_{0}\rangle}\\
\nonumber&&-\widehat{R_{0}^{z_{0}}}(k,\xi)\widehat{R_{0}^{yz_{0}\bar{z}_{0}}}(l,\xi)
\widehat{R_{0}^{x}}(m,\xi)\frac{1}{\langle k,\omega_{0}\rangle}\frac{1}{\langle m,\omega_{0}\rangle}\\
\nonumber&&-\widehat{R_{0}^{y\bar{z}_{0}}}(k,\xi)\widehat{R_{0}^{z_{0}z_{0}}}(l,\xi)
\widehat{R_{0}^{x}}(m,\xi)\frac{1}{\langle l,\omega_{0}+2\widehat{\Omega}^{0}_{0}}\frac{1}{\langle m,\omega_{0}\rangle})
e^{\mathbf{i}\langle k+l+m,x\rangle}\\
\nonumber&&+(
\widehat{R_{0}^{\bar{z}}}(k,\xi)\widehat{R_{0}^{yz_{0}z}}(l,\xi)
\widehat{R_{0}^{x}}(m,\xi)\frac{1}{\langle m,\omega_{0}\rangle}\frac{1}{\langle k,\omega_{0}\rangle-\widehat{\Omega}^{j}_{0}}\\
\nonumber&&+\widehat{R_{0}^{yz}}(k,\xi)\widehat{R_{0}^{z_{0}\bar{z}}}(l,\xi)
\widehat{R_{0}^{x}}(m,\xi)\frac{1}{\langle m,\omega_{0}\rangle}\\
\nonumber&&-\widehat{R_{0}^{z}}(k,\xi)\widehat{R_{0}^{yz_{0}\bar{z}}}(l,\xi)
\widehat{R_{0}^{x}}(m,\xi)\frac{1}{\langle m,\omega_{0}\rangle}\frac{1}{\langle k,\omega_{0}\rangle+\widehat{\Omega}^{j}_{0}}\\
\nonumber&&-\widehat{R_{0}^{y\bar{z}}}(k,\xi)\widehat{R_{0}^{z_{0}z}}(l,\xi)
\widehat{R_{0}^{x}}(m,\xi)\frac{1}{\langle m,\omega_{0}\rangle})
e^{\mathbf{i}\langle k+l+m,x\rangle})),
\end{eqnarray}
one obtains
\[
\widehat{(Q_{1}^{yz_{0}}\partial_{x}F^{x}_{0})}(0,\xi)=0.
\]
In fact,\ the equation
\begin{eqnarray}\label{237}
k+l+m=0,k\in V_{3},l,m\in V_{4}
\end{eqnarray}
is unsolved.\ If (\ref{237}) has a solution,\ let $ v_{0}=(1,1,...1)^{T}$,\ then $ (k+l+m)^{T}v_{0}=0$.\ But for any $ k\in V_{3}$,
\[
k \cdot v_{0}= \pm 1,
\]
and for any $ l,m \in V_{4}$,
\begin{eqnarray}\nonumber
(l+m)\cdot v_{0}=0\ \mbox{or}\ \pm2.
\end{eqnarray}
Clearly,\ (\ref{237}) is unsolved.\\
Hence,\ we obtain
\[
\widehat{Q^{z_{0}}_{2}}(0,\xi)=0.
\]
Let
\begin{eqnarray}
\nonumber S_{3}&=&\left\{\widehat{R_{0}^{yz_{0}}}(k,\xi),\widehat{R_{0}^{y\bar{z}_{0}}}(k,\xi)
\widehat{R_{0}^{yz}}(k,\xi),\widehat{R_{0}^{y\bar{z}}}(k,\xi),\right.\\
\nonumber && \left.
\widehat{R_{0}^{\bar{z}_{0}zz}}(k,\xi),\widehat{R_{0}^{\bar{z}zz}}(k,\xi),
\widehat{R_{0}^{z_{0}z\bar{z}}}(k,\xi),\widehat{R_{0}^{\bar{z}_{0}z\bar{z}}}(k,\xi),\right.\\
\nonumber && \left.\widehat{R_{0}^{z_{0}\bar{z}\bar{z}}}(k,\xi),
\widehat{R_{0}^{z\bar{z}\bar{z}}}(k,\xi),
\widehat{R_{0}^{\bar{z}_{0}z_{0}z_{0}}}(k,\xi),\widehat{R_{0}^{\bar{z}z_{0}z_{0}}}(k,\xi),\right.\\
\nonumber && \left.\widehat{R_{0}^{z_{0}z_{0}\bar{z}_{0}}}(k,\xi),\widehat{R_{0}^{\bar{z}_{0}z_{0}\bar{z}_{0}}}(k,\xi),
\widehat{R_{0}^{z_{0}\bar{z}_{0}\bar{z}_{0}}}(k,\xi),
\widehat{R_{0}^{z\bar{z}_{0}\bar{z}_{0}}}(k,\xi)\right\}.
\end{eqnarray}
Moreover,\ when $ |k|$ is even,\ all elements $ S_{3}$ are equal to $0$.\\
And let
\begin{eqnarray}\nonumber
S_{4}&=&\left\{\widehat{R_{0}^{y_{i}y_{j}}}(k,\xi),\widehat{R_{0}^{y_{j}z_{0}z_{0}}}(k,\xi),
\widehat{R_{0}^{y_{j}z_{0}\bar{z}_{0}}}(k,\xi),
\widehat{R_{0}^{y_{j}\bar{z}_{0}\bar{z}_{0}}}(k,\xi),\right.\\
\nonumber && \left.\widehat{R_{0}^{y_{l}z_{i}z_{j}}}(k,\xi),\widehat{R_{0}^{y_{l}z_{i}\bar{z}_{j}}}(k,\xi),
\widehat{R_{0}^{y_{l}\bar{z}_{i}\bar{z}_{j}}}(k,\xi),
\widehat{R_{0}^{y_{l}z_{0}z_{j}}}(k,\xi),\right.\\
\nonumber && \left.\widehat{R_{0}^{y_{l}z_{0}\bar{z}_{j}}}(k,\xi),
\widehat{R_{0}^{y_{l}\bar{z}_{0}z_{j}}}(k,\xi),
\widehat{R_{0}^{y_{l}\bar{z}_{0}\bar{z}_{j}}}(k,\xi),
\widehat{R_{0}^{z_{i}z_{j}\bar{z}_{k}\bar{z}_{l}}}(k,\xi)\right\}
\end{eqnarray}
Moreover,\ when $ |k|$ is odd,\ all elements $ S_{4}$ are equal to $0$.\\
Here we can let $\widehat{R_{0}^{yz}}(k,\xi)$ represents all the elements in $ S_{3} $ and
$\widehat{R_{0}^{yy}}(k,\xi)$ represents all the elements in $ S_{4} $.\\
Analogously,\ the coefficients of $ Q_{n}$ can be written into the form as follows:
\begin{eqnarray}\nonumber
Q_{n}^{yz}(x,\xi)&=&a_{1}(\xi,t)R_{0}^{yz}F^{y}_{0}\cdot\cdot\cdot F^{y}_{0}+a_{2}(\xi,t)F^{z}_{0}R_{0}^{yy}F^{y}_{0}\cdot\cdot\cdot F^{y}_{0}\\
\nonumber&=&\sum_{k,l,...,m\ \mbox{are not all zero}}a_{1}(\xi,t)\widehat{R_{0}^{yz}}(k,\xi)
\widehat{R_{0}^{y}}(l,\xi)\cdot\cdot\cdot\widehat{R_{0}^{y}}(m,\xi) e^{\mathbf{i}\langle k+l+...+m,x\rangle}\\
\nonumber&&+\sum_{k,l,...,m\ \mbox{are not all zero}}a_{2}(\xi,t)\widehat{R_{0}^{z}}(k,\xi)
\widehat{R_{0}^{yy}}(l,\xi)\cdot\cdot\cdot\widehat{R_{0}^{y}}(m,\xi) e^{\mathbf{i}\langle k+l+...+m,x\rangle}\\
\nonumber Q_{n}^{yy}(x,\xi)&=&b_{1}(\xi,t)R_{0}^{yz}F^{z}_{0}F^{y}_{0}\cdot\cdot\cdot F^{y}_{0}+b_{2}(\xi,t)R_{0}^{yy}F^{y}_{0}\cdot\cdot\cdot F^{y}_{0}\\
\nonumber&=&\sum_{k,l,...,m\ \mbox{are not all zero}}b_{1}(\xi,t)\widehat{R_{0}^{yz}}(k,\xi)\widehat{R_{0}^{z}}(l,\xi)
\widehat{R_{0}^{y}}(r,\xi)\cdot\cdot\cdot\widehat{R_{0}^{y}}(m,\xi) e^{\mathbf{i}\langle k+l+...+m,x\rangle}\\
\nonumber&&+\sum_{k,l,...,m\ \mbox{are not all zero}}b_{2}(\xi,t)\widehat{R_{0}^{yy}}(k,\xi)\widehat{R_{0}^{y}}(l,\xi)
\cdot\cdot\cdot\widehat{R_{0}^{y}}(m,\xi) e^{\mathbf{i}\langle k+l+...+m,x\rangle}\\
\nonumber Q_{n}^{yzz}(x,\xi)&=&c_{1}(\xi,t)R_{0}^{yz}F^{z}_{0}F^{y}_{0}\cdot\cdot\cdot F^{y}_{0}+c_{2}(\xi,t)R_{0}^{yy}F^{y}_{0}\cdot\cdot\cdot F^{y}_{0}\\
\nonumber &=&\sum_{k,l,...,m\ \mbox{are not all zero}}c_{1}(\xi,t)\widehat{R_{0}^{yz}}(k,\xi)\widehat{R_{0}^{z}}(l,\xi)
\widehat{R_{0}^{y}}(r,\xi)\cdot\cdot\cdot\widehat{R_{0}^{y}}(m,\xi) e^{\mathbf{i}\langle k+l+...+m,x\rangle}\\
\nonumber&&+\sum_{k,l,...,m\ \mbox{are not all zero}}c_{2}(\xi,t)\widehat{R_{0}^{yy}}(k,\xi)\widehat{R_{0}^{y}}(l,\xi)
\cdot\cdot\cdot\widehat{R_{0}^{y}}(m,\xi) e^{\mathbf{i}\langle k+l+...+m,x\rangle}\\
\nonumber Q_{n}^{zz}(x,\xi)&=&d_{1}(\xi,t)Q_{n-1}^{yz}F^{z}_{0}\cdot\cdot\cdot F^{y}_{0}+d_{2}(\xi,t)F^{z}_{0}F^{z}_{0}R_{0}^{yy}F^{y}_{0}\cdot\cdot\cdot F^{y}_{0}\\
\nonumber&=&\sum_{k,l,...,m\ \mbox{are not all zero}}d_{1}(\xi,t)\widehat{R_{0}^{yz}}(k,\xi)
\widehat{R_{0}^{z}}(l,\xi)\cdot\cdot\cdot\widehat{R_{0}^{y}}(m,\xi) e^{\mathbf{i}\langle k+l+...+m,x\rangle}\\
\nonumber&&+\sum_{k,l,...,m\ \mbox{are not all zero}}d_{2}(\xi,t)\widehat{R_{0}^{z}}(k,\xi)\widehat{R_{0}^{z}}(l,\xi)
\widehat{R_{0}^{yy}}(r,\xi)\cdot\cdot\cdot\widehat{R_{0}^{y}}(m,\xi) e^{\mathbf{i}\langle k+l+...+m,x\rangle}\\
\nonumber Q_{n}^{z}(x,\xi)&=&e_{1}(\xi,t)Q_{n-1}^{yz}F^{x}_{0}\cdot\cdot\cdot F^{y}_{0}+e_{2}(\xi,t)F^{z}_{0}R_{0}^{yy}F^{y}_{0}\cdot\cdot\cdot F^{y}_{0}\\
\nonumber &=&\sum_{k,l,...,m\ \mbox{are not all zero}}e_{1}(\xi,t)\widehat{R_{0}^{yz}}(k,\xi)
\widehat{R_{0}^{x}}(l,\xi)\cdot\cdot\cdot\widehat{R_{0}^{y}}(m,\xi) e^{\mathbf{i}\langle k+l+...+m,x\rangle}\\
\nonumber&&+\sum_{k,l,...,m\ \mbox{are not all zero}}e_{2}(\xi,t)\widehat{R_{0}^{z}}(k,\xi)\widehat{R_{0}^{z}}(k,\xi)
\widehat{R_{0}^{yy}}(l,\xi)\cdot\cdot\cdot\widehat{R_{0}^{y}}(m,\xi) e^{\mathbf{i}\langle k+l+...+m,x\rangle}\\
\nonumber&&+\sum_{k,l,...,m\ \mbox{are not all zero}}e_{3}(\xi,t)\widehat{R_{0}^{z}}(k,\xi)
\widehat{R_{0}^{z}}(k,\xi)\widehat{R_{0}^{z}}(r,\xi)
\cdot\cdot\cdot\widehat{R_{0}^{y}}(m,\xi) e^{\mathbf{i}\langle k+l+...+m,x\rangle}
\end{eqnarray}
Note that
\[
Q_{n+1}=\{Q_{n},F_{0}\}=\{Q^{low}_{n},F_{0}\}+\{Q^{high}_{n},F_{0}\}.
\]
It follows that
\begin{eqnarray}\nonumber
Q^{z_{0}}_{n+1}(x,\xi)&=&\{Q^{low}_{n},F_{0}\}^{z_{0}}(x,\xi)+\{Q^{high}_{n},F_{0}\}^{z_{0}}(x,\xi)\\
\nonumber &=&\partial_{x}Q^{z_{0}}_{n}F_{0}^{y}
-\partial_{x}F^{z_{0}}_{0}Q_{n}^{y}+\mathbf{i}
(2Q^{z_{0}z_{0}}_{n}F^{\bar{z}_{0}}_{0}-2Q^{\bar{z}_{0}}_{n}F^{z_{0}z_{0}}_{0}
+Q^{z_{0}}_{n}F^{z_{0}\bar{z}_{0}}_{0}-Q^{z_{0}\bar{z}_{0}}_{n}F^{z_{0}}_{0}\\
\nonumber&&+Q^{z_{0}z}_{n}F^{\bar{z}}_{0}-Q^{\bar{z}}_{n}F^{z_{0}z}_{0}
+Q^{z}_{n}F^{z_{0}\bar{z}}_{0}-Q^{z_{0}\bar{z}}_{n}F^{z}_{0})-Q_{n}^{yz_{0}}\partial_{x}F^{x}_{0}.
\end{eqnarray}
Due to
\begin{eqnarray}
\nonumber &&(\partial_{x}Q^{z_{0}}_{n}F_{0}^{y}-\partial_{x}F^{z_{0}}_{n}Q_{1}^{y})(x,\xi)\\
\nonumber&=&\sum_{k,l,...,m,n\ \mbox{are not all zero}}\left(f_{1}(\xi,t)\widehat{R_{0}^{yz}}(k,\xi)
\widehat{R_{0}^{x}}(l,\xi)\cdot\cdot\cdot\widehat{R_{0}^{y}}(m,\xi)\widehat{R_{0}^{y}}(n,\xi)\right.\\
\nonumber&&\left.+f_{2}(\xi,t)\widehat{R_{0}^{z}}(k,\xi)
\widehat{R_{0}^{yy}}(l,\xi)\cdot\cdot\cdot\widehat{R_{0}^{y}}(m,\xi)\widehat{R_{0}^{y}}(n,\xi)\right. \\
\nonumber&&\left.+f_{3}(\xi,t)\widehat{R_{0}^{z}}(k,\xi)
\widehat{R_{0}^{yz}}(k,\xi)\widehat{R_{0}^{z}}(r,\xi)
\cdot\cdot\cdot\widehat{R_{0}^{y}}(m,\xi)
\widehat{R_{0}^{y}}(n,\xi)\right)e^{\mathbf{i}\langle k+l+...+m+n,x\rangle},
\end{eqnarray}
the equations
\[
k+l+...+n=0,
\]
for any $ k\in V_{3},l,...m,n\in V_{4} $ and
\[
k+l+r+t...+n=0,
\]
for any $ k,l,r\in V_{3},t,...m,n\in V_{4} $ are unsolvable.\ Thus,
\[
\widehat{(\partial_{x}Q^{z_{0}}_{n}F_{0}^{y}-\partial_{x}F^{z_{0}}_{n}Q_{1}^{y})}(0,\xi)=0.
\]
Similarly, one obtains
\begin{eqnarray}
\nonumber \widehat{(Q^{z_{0}z_{0}}_{n}F^{\bar{z}_{0}}_{0}-Q^{\bar{z}_{0}}_{n}F^{z_{0}z_{0}}_{0})}(0,\xi)=0,\\
\nonumber \widehat{(Q^{z_{0}}_{n}F^{z_{0}\bar{z}_{0}}_{0}-Q^{z_{0}\bar{z}_{0}}_{n}F^{z_{0}}_{0})}(0,\xi)=0,\\
\nonumber \widehat{(Q^{z_{0}z}_{n}F^{\bar{z}}_{0}-Q^{\bar{z}}_{n}F^{z_{0}z}_{0})}(0,\xi)=0,\\
\nonumber \widehat{(Q^{z}_{n}F^{z_{0}\bar{z}}_{0}-Q^{z_{0}\bar{z}}_{n}F^{z}_{0})}(0,\xi)=0,\\
\nonumber \widehat{(Q_{n}^{yz_{0}}\partial_{x}F^{x}_{0})}(0,\xi)=0.
\end{eqnarray}
Hence
\[
\widehat{Q^{z_{0}}_{0}}(0,\xi)=0.
\]
\end{proof}
\begin{lemma}\label{l7} $ \widehat{R^{z_{0}}_{1}}(0,\xi) = 0.$
\end{lemma}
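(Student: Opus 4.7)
The plan is to assemble this from the two preceding lemmas. Recall from the iterative construction that
\[
R_{1} \;=\; \int_{0}^{1}\{(1-t)\widehat{N}_{0}+tR^{low}_{0},F_{0}\}\circ X^{t}_{F_{0}}\,\mathrm{d}t \;+\; R^{high}_{0}\circ X^{1}_{F_{0}} \;=\; P_{0}+Q_{0},
\]
with $P_{0}$ and $Q_{0}$ as defined just before Lemmas \ref{l5} and \ref{l6}. Taking the $z_{0}$-coefficient and extracting the $k=0$ Fourier mode is $\mathbb{C}$-linear, so
\[
\widehat{R^{z_{0}}_{1}}(0,\xi)\;=\;\widehat{P^{z_{0}}_{0}}(0,\xi)+\widehat{Q^{z_{0}}_{0}}(0,\xi).
\]
Hence I would simply invoke Lemma \ref{l5} to kill the first term and Lemma \ref{l6} to kill the second, and the conclusion $\widehat{R^{z_{0}}_{1}}(0,\xi)=0$ follows immediately.

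The only point worth a sentence of justification is that the decomposition $R_{1}=P_{0}+Q_{0}$ is the one already used to state Lemmas \ref{l5} and \ref{l6}; no further manipulation of $R_{1}$ is required, because $P_{0}$ already absorbs the whole flow integral built from $\widehat{N}_{0}$ and $R^{low}_{0}$, and $Q_{0}$ already absorbs the transported high-order tail $R^{high}_{0}\circ X^{1}_{F_{0}}$. Since there is no actual obstacle here, the proof is a one-line corollary; the hard work was done in Lemmas \ref{l5} and \ref{l6}, where the key parity/support observations $k\in V_{1}$ vs.\ $l\in V_{2}$ (and $V_{3}$ vs.\ $V_{4}$) ruled out the only possible contributing monomials by the $v_{0}=(1,\dots,1)^{T}$ inner-product argument.
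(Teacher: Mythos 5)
Your proof is correct and matches the paper's argument exactly: both decompose $R_{1}=P_{0}+Q_{0}$, take the $z_{0}$-coefficient at the zeroth Fourier mode, and conclude by applying Lemmas \ref{l5} and \ref{l6}. Nothing more is needed.
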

\begin{proof}
Combining Lemma \ref{l5} and Lemma \ref{l6},\ we have
\begin{eqnarray}
\widehat{R_{1}^{z_{0}}}(0,\xi)
&=\widehat{P^{z_{0}}_{0}}(0,\xi)+\widehat{Q^{z_{0}}_{0}}(0,\xi)=0.
\end{eqnarray}
\end{proof}

For the iterative process,\ since
\[
N_{2}=N_{1}+\widehat{N}_{1},R_{2}=\int^{1}_{0}\{(1-t)\widehat{N}_{1}+tR_{1},F_{1}\}\circ X^{t}_{F_{1}}\mathrm{d}t,
\]
and the Possion bracket keep the form,\ it is easy to check that $R_{2} $ has the same form with $ R_{1}$.\ Thus  we can also obtain
\[
\widehat{R_{2}^{z_{0}}}(0,\xi)=0,\  \widehat{R_{2}^{\bar{z}_{0}}}(0,\xi)=0.
\]

Inductively,\ for any $ n$,\ one has
\[
\widehat{R_{n}^{z_{0}}}(0,\xi)=0,\ \widehat{R_{n}^{\bar{z}_{0}}}(0,\xi)=0.
\]

Finally,\ by using the Theorem \ref{mainthm},\ we have
\[
H_{\infty}(x,y,z^{*},\bar{z}^{*},\xi)=\lim_{m\rightarrow\infty}H_{m}(x,y,z^{*},\bar{z}^{*},\xi)
=N_{\infty}(y,z^{*},\bar{z}^{*},\xi)+R_{\infty}(x,y,z^{*},\bar{z}^{*},\xi),
\]
where
\begin{eqnarray}
\nonumber N_{\infty}(y,z^{*},\bar{z}^{*},\xi) &=& \widehat{N^{x}_{\infty}}(\xi)+\langle{\omega_{\infty}}(\xi),y\rangle
+\langle{\Omega_{\infty}}(\xi)z,\bar{z}\rangle\\
\nonumber &&+\langle\widehat{N^{z_{0}z_{0}}_{\infty}}(\xi)z_{0},z_{0}\rangle
 +\langle\widehat{N^{z_{0}\bar{z}_{0}}_{\infty}}(\xi)z_{0},\bar{z}_{0}\rangle
+\langle\widehat{N^{\bar{z}_{0}\bar{z}_{0}}_{\infty}}(\xi)\bar{z}_{0},\bar{z}_{0}\rangle,
\end{eqnarray}
and
\begin{eqnarray}
\nonumber R_{\infty}(x,y,z^{*},\bar{z}^{*},\xi) &=& \sum_{\alpha\in \mathbb{N}^{n},\beta,\gamma\in \mathbb{N}^{\mathbb{N}},2|\alpha|+|\beta|+|\gamma|\geq3}
\widehat{R_{\infty}^{\alpha\beta\gamma}}(k,\xi)e^{\mathbf{i}\langle k,x\rangle}y^{\alpha}\{z^{*}\}^{\beta}\{\bar{z}^{*}\}^{\gamma}.
\end{eqnarray}
Therefore,\ it is easy to verify that
\[
\mathcal{T}^{n}_{0}=\mathbb{T}^{n}\times\{y=0\}\times\{z^{*}=0\}\times\{\bar{z}^{*}=0\}
\]
is an embedding torus with frequency $ \omega(\xi)\in\omega(\Pi_{\gamma})$ of the Hamiltonian $H_{\infty}(x,y,z^{*},\bar{z}^{*},\xi) $.\ We finish the proof of the existence of KAM torus.

\footnotesize

\end{document}